\DeclareMathOperator{\id}{id}
\DeclareMathOperator{\Aut}{Aut}
\DeclareMathOperator{\cd}{cd}
\DeclareMathOperator{\hd}{hd}
\DeclareMathOperator{\Hom}{Hom}
\DeclareMathOperator{\bHom}{{\bf Hom}}
\DeclareMathOperator{\Ext}{Ext}
\DeclareMathOperator{\Tor}{Tor}
\DeclareMathOperator{\bExt}{{\bf Ext}}
\DeclareMathOperator{\bTor}{{\bf Tor}}
\DeclareMathOperator{\res}{res}
\DeclareMathOperator{\cor}{cor}
\DeclareMathOperator{\ind}{ind}
\DeclareMathOperator{\bind}{{\bf ind}}
\DeclareMathOperator{\coind}{coind}
\DeclareMathOperator{\bcoind}{{\bf coind}}
\DeclareMathOperator{\im}{im}
\newcommand{\bdy}{\ensuremath{\partial}}
\renewcommand{\ss}[2]{\ensuremath{\mathbb{S}^{#1}\times\mathbb{S}^{#2}}}
\newcommand{\iso}{\ensuremath{\cong}}
\newcommand{\Z}[1][]{\ensuremath{\mathbb{Z}_{#1}}}
\newcommand{\Q}{\ensuremath{\mathbb{Q}}}
\newcommand{\N}{\ensuremath{\mathbb{N}}}
\newcommand{\F}{\ensuremath{\mathbb{F}}}
\newcommand{\nsgp}[1][]{\ensuremath{\triangleleft_{#1}}}
\newcommand{\gp}[1]{\ensuremath{\langle #1\rangle}}
\newcommand{\Gmod}[2][]{\ensuremath{{\mathfrak{#2}_{#1}(G)}}}
\newcommand{\famS}[0]{\ensuremath{\mathcal{S}}}
\newcommand{\Zpof}[1]{{\ensuremath{\Z[p][\![#1]\!]}}}
\newcommand{\Fpof}[1]{{\ensuremath{\F_p[\![#1]\!]}}}
\newcommand{\Zpiof}[1]{{\ensuremath{\Z[{\pi}][\![#1]\!]}}}
\newcommand{\ZG}[1]{{\ensuremath{\Z[\pi][\![G #1]\!]}}}
\newcommand{\ZGS}{\ensuremath{\Z[\pi][\![G/\famS]\!]}}
\newcommand{\Zhat}{\ensuremath{\widehat{\Z}}}
\newcommand{\fkB}{\ensuremath{\mathfrak{B}}}
\newcommand{\lqt}{\backslash}
\newcommand{\pFP}[1][]{\ensuremath{p}-\ensuremath{{\rm FP}_{#1}}}
\newcommand{\hotimes}[1][]{\ensuremath{\,\widehat{\otimes}_{#1}\,}}
\newcommand{\bhotimes}[1][]{\ensuremath{\,\widehat{\bm{\otimes}}_{#1}\,}}
\newtheorem{theorem}{Theorem}[section]
\newtheorem{prop}[theorem]{Proposition}
\newtheorem{lem}[theorem]{Lemma}
\newtheorem{clly}[theorem]{Corollary}
\theoremstyle{definition}
\newtheorem{defn}[theorem]{Definition}
\newtheorem*{cnv}{Conventions}
\theoremstyle{remark}
\newtheorem*{rmk}{Remark}
\theoremstyle{plain}
\newcounter{introthmcount}
\theoremstyle{definition}
\title{Relative cohomology theory for profinite groups}
\author{Gareth Wilkes}
\date{October 2, 2017}
\numberwithin{equation}{section}
\begin{document}
\maketitle

\begin{abstract}
In this paper we define and develop the theory of the cohomology of a profinite group relative to a collection of closed subgroups. Having made the relevant definitions we establish a robust theory of cup products and use this theory to define profinite Poincar\'e duality pairs. We use the theory of groups acting on profinite trees to give Mayer-Vietoris sequences, and apply this to give results concerning decompositions of 3-manifold groups. Finally we discuss the relationship between discrete duality pairs and profinite duality pairs, culminating in the result that profinite completion of the fundamental group of a compact aspherical 3-manifold is a profinite Poincar\'e duality group relative to the profinite completions of the fundamental groups of its boundary components.
\end{abstract}

\section*{Introduction}
The classical theory of the cohomology of groups has developed in several directions. Our principal source for the theory of relative group cohomology and Poincar\'e duality for group pairs will be the paper \cite{BE77} by Bieri and Eckmann. For the cohomology theory of profinite groups we have relied on Serre \cite{Serre13} and Symonds and Weigel \cite{SW00}. The principle aim of this paper is to establish a relative cohomology theory for profinite groups which exhibits the salient features of both these theories. Many of the outcomes closely parallel those of classical relative group cohomology, and naturally some of the arguments in this paper are broadly similar to those in \cite{BE77}---albeit with extra effort needed to cope with the topology. One major way in which the profinite theory differs from the classical theory is in the absence of a sufficiently broad cap product. Thus most results are stated and proved in terms of cup products and maps derived from them.

We will now describe the general plan of the paper. In the first section we will discuss the various preliminary concepts which will be required for our theory. Particular attention will be paid to the different categories involved. The homology theory of a profinite group naturally takes coefficients among compact modules, while the cohomology theory takes coefficients in discrete modules. The theory of duality groups requires, under some conditions, extension of the cohomology theory to compact modules and the homology theory of discrete modules. We will spend some time describing the way in which these categories fit together, as well as detailing the functors we wish to consider.

Section \ref{SecRelCohDef} we define the (co)homology of a profinite group pair and derive some basic properties, such as the appropriate version of an Eckmann-Shapiro lemma. We also define the cohomological dimension of a profinite group pair with respect to a choice of prime.

In Section \ref{SecCupProds} we will define and derive the properties of a cup product sufficiently broad for our needs. One might expect an article concerned with duality groups to rely heavily on cap products as \cite{BE77}. However as will be discussed at the time, the theory of cap products does not fit comfortably with the framework of profinite groups, and we will exclusively use cup products and mappings derived from them. 

Section \ref{SecTrees} develops various excision and Mayer-Vietoris-type results for profinite amalgamated products and HNN extensions, as well as other consequences for relative cohomology of an action on a profinite.

In Section \ref{SecFPPairs} we define the notion of a profinite duality pair at a prime $p$ and prove various foundational results concerning such pairs. We also use the Mayer-Vietoris sequences from the previous section to discuss graphs of Poincar\'e duality pairs. 

In Section \ref{SecGoodness} we explore the relationship between classical duality pairs and profinite duality pairs, in the context of `cohomological goodness' properties of discrete group pairs. We pay particular attention to the case of 3-manifold groups and conclude with an application of our theory, proving results concerning the Kneser-Milnor and JSJ decompositions of 3-manifolds. We intend to explore this direction further in a future paper.

The article concludes with two appendices proving technical results about profinite groups and modules. These results are not really necessary for an understanding of the rest of the paper, and the consequences deriving from them are plausible enough that the reader will believe them without subjecting themselves to the proofs. However the proofs should appear somewhere, so we leave them at the end of the paper.

\begin{cnv} The following conventions will be in force through the paper.
\begin{itemize}
\item Unless otherwise specified $G$ will be a profinite group and $\pi$ will be a non-empty set of primes.
\item For a set of primes $\pi$, the ring of $\pi$-adic integers (that is, the pro-$\pi$ completion of $\Z$) will be denoted $\Z[\pi]$. If $\pi=\{p\}$ then we will write $\Z[p]$ rather than $\Z[\{p\}]$. The field with $p$ elements will be denoted $\F_p$. 
\item Maps of topological groups or modules should be assumed to be continuous homomorphisms in the appropriate sense.
\item If $G$ is a profinite group, $H$ is a subgroup of $G$ and $M$ is a $G$-module of some kind, then $\res^G_H(M)$ denotes the $H$-module obtained by restriction.
\item If $G$ is a group and $M$ is a left $G$-module then $M^\perp$ will denote the canonical right $G$-module associated to $M$---that is, the  module with underlying abelian group $M$ and right $G$-action $m\cdot g=g^{-1}m$ for $g\in G$ and $m\in M$.
\item For $\pi$ a set of primes, $I_\pi$ will denote $\Q_\pi/\Z[\pi]$. A model for this is the group of rational numbers whose denominators are $\pi$-numbers.  
\item Conjugation in groups will be a right action, so that $x^y=y^{-1}xy$.
\end{itemize}
\end{cnv}

\section{Preliminaries}
\subsection{Categories of modules}\label{SecModuleCats}
There are several different categories in play in the theory to be developed. Let $G$ be a profinite group and let $\pi$ be a set of primes. Recall that an abelian group is called {\em $\pi$-primary} if it is a torsion group and the order of every element is divisible only by primes in $\pi$. Define the following categories. All modules are left modules unless otherwise specified.
\begin{defn} Let $G$ be a profinite group and let $\pi$ be a set of primes.
\begin{itemize}
\item \Gmod[\pi]{F} is the category of finite $\pi$-primary $G$-modules with $G$-linear morphisms
\item \Gmod[\pi]{C} is the category of compact pro-$\pi$ $G$-modules---that is, inverse limits of modules in \Gmod[\pi]{F}. The morphisms are continuous $G$-linear group homomorphisms. Similarly define $\Gmod[\pi]{C}^\perp$ to be the category of compact pro-$\pi$ right $G$-modules. These are abelian categories with enough projectives but not enough injectives.
\item \Gmod[\pi]{D} is the category of discrete $\pi$-primary $G$-modules---that is, direct limits of modules in \Gmod[\pi]{F}. The morphisms are the $G$-linear group homomorphisms.
This is an abelian category with enough injectives but not enough projectives.
\item \Gmod[\pi]{P} is the category of modules either in \Gmod[\pi]{C} or in \Gmod[\pi]{D}. The morphisms are those $G$-linear group homomorphisms which are continuous and \emph{strict}, in the sense that the quotient topology on the image of a map agrees with the subspace topology induced from the codomain. This is not an abelian category as it lacks a direct sum, but satisfies all the other axioms of an abelian category. Such a category is called an {\em exact category}. The injectives and projectives are precisely the injectives and projectives in \Gmod[\pi]{D} and \Gmod[\pi]{C} respectively.
\end{itemize}
\end{defn}
When $\pi$ is the set of all primes we omit $\pi$ from the notation. We will similarly omit $G$ when $G$ is the trivial group. We remark that our usage of the symbol \Gmod[]{D} disagrees with the usage in \cite{SW00}, where all discrete modules are allowed. Our restriction to torsion modules does therefore restrict the domain of definition of the cohomology theory. However allowing non-torsion modules does not seem to be particularly useful and does cause certain issues, so we will feel no qualms about ignoring them.

Since the category \Gmod[\pi]{P} is not quite an abelian category, and does not have enough projectives or injectives, one must consider a restricted notion of functor. Once this is done, the machinery of homological algebra functions in a satisfactory way. Effectively one reduces all proofs to deal first with the case when the argument lies in \Gmod[\pi]{D} (or \Gmod[\pi]{C}, depending on context) and take limits to deal with \Gmod[\pi]{C} (respectively \Gmod[\pi]{D}).
\begin{defn}
A covariant additive functor ${\bf T}\colon \Gmod[\pi]{P}\to \mathfrak{P}_\pi$ is called {\em continuous} if 
\begin{itemize}
\item ${\bf T}(M)\in\mathfrak{C}_\pi$ and ${\bf T}(A)\in\mathfrak{D}_\pi$ for $M\in\Gmod[\pi]{C}, N\in \Gmod[\pi]{D}$
\item the restriction of ${\bf T}$ to \Gmod[\pi]{D} commutes with direct limits
\item the restriction of ${\bf T}$ to \Gmod[\pi]{C} commutes with inverse limits
\end{itemize} 
\end{defn}
Dually, one has
\begin{defn}
A contravariant additive functor ${\bf T}\colon \Gmod[\pi]{P}\to \mathfrak{P}_\pi$ is called {\em co-continuous} if 
\begin{itemize}
\item ${\bf T}(M)\in\mathfrak{D}_\pi$ and ${\bf T}(A)\in\mathfrak{C}_\pi$ for $M\in\Gmod[\pi]{C}, N\in \Gmod[\pi]{D}$
\item the restriction of ${\bf T}$ to \Gmod[\pi]{D} commutes with direct limits in the sense that
\[{\bf T}|_{\Gmod[\pi]{D}}\circ \varinjlim = \varprojlim\circ\, {\bf T}|_{\Gmod[\pi]{D}}  \]
\item the restriction of ${\bf T}$ to \Gmod[\pi]{C} commutes with inverse limits in the sense that
\[{\bf T}|_{\Gmod[\pi]{C}}\circ \varprojlim = \varinjlim\circ\, {\bf T}|_{\Gmod[\pi]{C}}  \]
\end{itemize} 
\end{defn}
One defines notions of connected sequences of continuous functors, and continuous cohomological and homological functors in the natural way. A continuous cohomological functor ${\bf T}^\bullet$ is {\em co-effaceable} if ${\bf T}^i(J)=0$ for all $i>0$ and all injective modules $J\in \Gmod[\pi]{D}$. Similarly for effaceability.  
\begin{prop}[Proposition 3.6.1 of \cite{SW00}]\label{FptoPp1}
Let ${\bf S},{\bf T}\colon \Gmod[\pi]{P}\to \mathfrak{P}_\pi$ be continuous functors and let $f_0\colon {\bf S}|_{\Gmod[\pi]{F}}\to {\bf T}|_{\Gmod[\pi]{F}}$ be a natural transformation. Then there is a unique extension of $f_0$ to a natural transformation $f\colon {\bf S}\to {\bf T}$. If $f_0$ is a natural isomorphism, so is $f$.
\end{prop}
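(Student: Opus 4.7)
The plan is to extend $f_0$ separately on $\Gmod[\pi]{C}$ and $\Gmod[\pi]{D}$ by passing to limits of the canonical presentation of an object as a (co)limit of finite $\pi$-primary modules, then patch these together and check uniqueness and the isomorphism claim.

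First, for $M\in\Gmod[\pi]{C}$ write $M=\varprojlim_{U} M/U$, where $U$ ranges over the filtered poset of open $G$-submodules of $M$; each $M/U$ lies in $\Gmod[\pi]{F}$. Because $\mathbf{S}$ and $\mathbf{T}$ are continuous, the canonical maps induce isomorphisms $\mathbf{S}(M)\cong \varprojlim_U \mathbf{S}(M/U)$ and $\mathbf{T}(M)\cong \varprojlim_U \mathbf{T}(M/U)$. Define $f_M\colon \mathbf{S}(M)\to \mathbf{T}(M)$ as $\varprojlim_U f_0^{M/U}$; this makes sense because the $f_0^{M/U}$ assemble into a morphism of inverse systems (by naturality of $f_0$ under the projections $M/U\to M/V$ for $V\supseteq U$). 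Dually, for $N\in\Gmod[\pi]{D}$ write $N=\varinjlim_F F$ over the filtered poset of finite $G$-submodules, and define $f_N=\varinjlim_F f_0^F$ using naturality of $f_0$ under inclusions.

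Next, naturality. For a morphism $\varphi\colon M\to M'$ in $\Gmod[\pi]{C}$ one chooses, for each open $U'\subset M'$, an open $U\subset M$ with $\varphi(U)\subseteq U'$; then $\varphi$ is the inverse limit of the induced maps $M/U\to M'/U'$ in $\Gmod[\pi]{F}$, so naturality of $f$ at $\varphi$ follows from naturality of $f_0$ at each piece and passage to the inverse limit. The same argument dualises on $\Gmod[\pi]{D}$ using that any morphism $N\to N'$ restricts to finite submodules. The subtle case is a strict morphism $\varphi\colon M\to N$ between a compact and a discrete module: such a $\varphi$ has finite (hence both compact and discrete) image $L\in\Gmod[\pi]{F}$, and factors as $M\twoheadrightarrow L\hookrightarrow N$. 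Naturality for the two parts has just been established, so naturality of $f$ on $\varphi$ follows by composition.

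For uniqueness, if $g\colon \mathbf{S}\to \mathbf{T}$ is any natural transformation extending $f_0$, then for $M\in\Gmod[\pi]{C}$ the naturality squares for the projections $M\to M/U$ force $g_M$ to agree with $f_M$ after composition with $\mathbf{T}(M)\to \mathbf{T}(M/U)=\varprojlim$, hence $g_M=f_M$ by the universal property; the dual argument on $\Gmod[\pi]{D}$ uses $F\hookrightarrow N$. Finally, if $f_0$ is a natural isomorphism, then each transition map in the limit system defining $f_M$ is an isomorphism, so $f_M=\varprojlim f_0^{M/U}$ is an isomorphism; similarly $f_N$ on $\Gmod[\pi]{D}$, because filtered direct limits in $\Gmod[\pi]{D}$ preserve isomorphisms.

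The main obstacle I expect is the bookkeeping for naturality under strict morphisms between the two halves of $\Gmod[\pi]{P}$ and, relatedly, checking independence of the presentation used at the definition stage; the observation that such a strict map has finite image and the use of the canonical cofinal systems of open submodules (resp.\ finite submodules) are the devices that should make this routine rather than delicate.
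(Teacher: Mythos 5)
Your proof is correct and is essentially the argument behind the result the paper simply quotes from Symonds--Weigel (Proposition 3.6.1 of \cite{SW00}, cited without proof): extend along the canonical inverse system of finite quotients on \Gmod[\pi]{C} and the canonical direct system of finite submodules on \Gmod[\pi]{D}, and reduce mixed morphisms to \Gmod[\pi]{F} via their finite image. The only point worth adding is that strict morphisms in the other mixed direction, from a discrete module to a compact one, also have finite image (the image is discrete in the quotient topology, hence by strictness a discrete subgroup of a compact Hausdorff group, hence closed and finite), so your factorisation argument covers that case verbatim.
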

\begin{prop}[Corollary 3.6.3 of \cite{SW00}]\label{FptoPp2}
Let ${\bf T}^\bullet$ be a continuous co-effaceable cohomological \Gmod[\pi]{P}-functor, and ${\bf U}^\bullet$ a non-negative connected sequence of continuous functors. Then every natural transformation $f^0\colon {\bf T}^0\to {\bf U}^0$ lifts uniquely to a mapping of sequences $f^\bullet\colon {\bf T}^\bullet\to {\bf U}^\bullet$. If ${\bf U}^\bullet$ is also a co-effaceable cohomological functor and $f^0$ is a natural isomorphism, then all the $f^\bullet$ are natural isomorphisms. 
\end{prop}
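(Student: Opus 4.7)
The plan is to adapt the classical dimension-shifting proof that a co-effaceable cohomological functor is universal among connected sequences, carrying out the construction first on \Gmod[\pi]{D} (where enough injectives are available) and then transporting the result to all of \Gmod[\pi]{P} via Proposition~\ref{FptoPp1}.

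I would build $f^n$ inductively on \Gmod[\pi]{D} as follows. Given $M\in\Gmod[\pi]{D}$, embed $M$ into an injective $J$, with cokernel $N$, and consider the diagram of long exact sequences
\[
\begin{array}{ccccc}
{\bf T}^{n-1}(J) & \longrightarrow & {\bf T}^{n-1}(N) & \stackrel{\delta}{\twoheadrightarrow} & {\bf T}^n(M) \\
\downarrow f^{n-1} & & \downarrow f^{n-1} & & \\
{\bf U}^{n-1}(J) & \longrightarrow & {\bf U}^{n-1}(N) & \stackrel{\delta'}{\longrightarrow} & {\bf U}^n(M)
\end{array}
\]
in which $\delta$ is surjective because ${\bf T}^n(J)=0$ by co-effaceability. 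The composite $\delta'\circ f^{n-1}$ kills the image of ${\bf T}^{n-1}(J)$ by naturality of $f^{n-1}$ together with the vanishing of $\delta'\circ({\bf U}^{n-1}(J)\to {\bf U}^{n-1}(N))$, hence factors uniquely through $\delta$ to define $f^n(M)$. The usual arguments, using injectivity of the codomain to lift any morphism $M\to M'$ across chosen embeddings, show that $f^n(M)$ is independent of the embedding, natural in $M$, and compatible with the connecting maps of short exact sequences in \Gmod[\pi]{D}.

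Restricting each $f^n$ to \Gmod[\pi]{F} yields a natural transformation of continuous functors, which Proposition~\ref{FptoPp1} extends uniquely to all of \Gmod[\pi]{P}; by the uniqueness clause this extension agrees with the original $f^n$ on \Gmod[\pi]{D}. Compatibility with the connecting maps of strict short exact sequences in \Gmod[\pi]{C} follows by realising such a sequence as an inverse limit of sequences in \Gmod[\pi]{F} and appealing to continuity of all functors involved. For uniqueness of the lift, the difference $h^\bullet$ of two lifts satisfies $h^0=0$ and commutes with connecting maps; the surjectivity of $\delta$ and induction force $h^n=0$ on \Gmod[\pi]{D}, and this vanishing propagates to \Gmod[\pi]{P} by Proposition~\ref{FptoPp1}. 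If ${\bf U}^\bullet$ is also co-effaceable and $f^0$ is invertible, applying the existence statement to $(f^0)^{-1}$ produces a lift $g^\bullet\colon {\bf U}^\bullet\to {\bf T}^\bullet$ whose two compositions with $f^\bullet$ must equal the identity by uniqueness, so every $f^n$ is an isomorphism.

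The main obstacle, and the respect in which the argument differs from the classical one, is the absence of enough injectives in \Gmod[\pi]{C} and the consequent need to bootstrap the inductive construction from discrete modules up to all of \Gmod[\pi]{P} while preserving naturality, commutation with the connecting maps of compact and discrete short exact sequences, and uniqueness. Proposition~\ref{FptoPp1} is precisely what is needed to make this transfer clean, but each structural property must be verified on the discrete side first and then shown to survive the extension.
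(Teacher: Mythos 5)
The paper merely quotes this result as Corollary 3.6.3 of \cite{SW00} and gives no argument of its own, so there is no in-paper proof to compare against. Your reconstruction---dimension shifting on \Gmod[\pi]{D}\ using injective embeddings, transferring to all of \Gmod[\pi]{P}\ by Proposition~\ref{FptoPp1}, and the standard two-sided-inverse argument for the isomorphism clause---is correct and is, as far as the structure of the cited reference goes, essentially the argument underlying \cite{SW00}.
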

Both of these propositions of course have dual forms for co-continuous functors and for homological functors.

There are various functors that one may wish to define on these categories. The properties of these functors are largely what one expects upon taking limits of functors on finite modules. We will give a list here of those most directly involved and quote those results we shall need. For a fuller account of the theory of modules over profinite groups see for example \cite{brumer66, RZ00, SW00}. 

In the sequel we shall use the convention that unadorned symbols will denote those functors which are always defined, and bold symbols for those requiring additional conditions.

\begin{itemize}
\item For $M\in\Gmod[\pi]{C}^\perp$ and $N\in\Gmod[\pi]{C}$ one may form the {\em completed tensor product} $M\hotimes[G] N\in\mathfrak{C}_\pi$. For the theory of completed tensor products see \cite{RZ00}, Section 5.5. This functor commutes with inverse limits in both variables.
\item If $M\in\Gmod[\pi]{C}^\perp$ is a finitely generated module (over \Zpiof{G}) and $N\in\Gmod[\pi]{F}$ the tensor product $M\hotimes[G] N$ is finite and agrees with the usual tensor product. Therefore defining $M\bhotimes[G] A$ for $A\in\Gmod[\pi]{D}$ by endowing the usual tensor product with the discrete topology yields a continuous functor $M\bhotimes[G]-$ from \Gmod[\pi]{P} to $\mathfrak{P}_\pi$. 
\item The discrete tensor product functors on \Gmod[\pi]{D} are defined using their classical definitions.
\item For $M\in\Gmod[\pi]{C}$ the complete tensor product with diagonal action gives a functor $M\hotimes[]-$ from \Gmod[\pi]{C} to itself. Similarly for \Gmod[\pi]{D}.
\end{itemize}
We endow the various Hom-groups with the compact-open topology. 
\begin{itemize}
\item For $M\in\Gmod[\pi]{C}, A\in\Gmod[\pi]{D}$ the group $\Hom(M,A)$ is a discrete $\pi$-primary torsion module and may be equipped with the diagonal $G$-action. So $\Hom(-,-)$ is a functor \[\Gmod[\pi]{C}\times\Gmod[\pi]{D}\to \Gmod[\pi]{D}\] which is contravariant-covariant. It commutes with inverse limits in the first variable and direct limits in the second.
\item If $M\in\Gmod[\pi]{C}$ is finitely generated (over \Zpiof{G}) and $N\in\Gmod[\pi]{C}$ then $\bHom_G(M,N)$ is a compact module in \Gmod[\pi]{C}, and so we have a functor $\bHom_G(M,-)$ from \Gmod[\pi]{C} to itself. This commutes with inverse limits in the second variable. Taken together with the previous point $\bHom_G(M,-)$ is now a continuous functor from \Gmod[\pi]{P} to $\mathfrak{P}_{\pi}$.
\item If $J\in\Gmod[\pi]{D}$ is {\em co-finitely-generated} in the sense that it is a direct limit of finite modules with a uniform bound on the size of a generating set then $\Hom_G(M, J)$ is finite for any finite module $M$. In this case $\bHom_G(A, J)$ lies in $\mathfrak{C}_\pi$ for $A\in\Gmod[\pi]{D}$ and $\bHom_G(-, J)$ is a co-continuous contravariant functor from \Gmod[\pi]{P} to $\mathfrak{P}_\pi$. This notion is the Pontrjagin dual (see below) of the previous item.
\item These last two points have analogous functors $\bHom(-,-)$ defined when one of the arguments is (co-)finitely generated as a module over the trivial group.     
\end{itemize}
\begin{rmk}
The most correct notation for the tensor product of a compact right $G$-module $M$ and a compact left $G$-module $N$ would be $M\hotimes[\Zpiof{G}]N$. This is rather cumbersome so, as above, we will abbreviate various notations:
\[  M\hotimes[\ZG{}]N = M\hotimes[G] N, \quad \Hom_{\ZG{}}(M,A)=\Hom_G(M,A)\]
and so on where $M,N\in\Gmod[\pi]{C}$ and $A\in\Gmod[\pi]{D}$. Furthermore when the group involved is the trivial group it will be omitted, so that for example $\Hom(A,B)$ without qualification would mean $\Hom_{\Z[\pi]}(A,B)$. These contractions will be extended in the obvious way to the derived Tor and Ext functors.
\end{rmk}

\begin{prop}[`Pontrjagin duality', see Proposition 2.2.1 of \cite{SW00} or Section 5.1 of \cite{RZ00}]
Let $I_\pi=\Q_\pi/\Z[\pi]\in\mathfrak{D}_\pi$. Then the operation ${}^\ast$ from \Gmod[\pi]{P} to itself given by 
\[M^\ast = \bHom(M, I_\pi) \]
with $G$-action given by $(g\cdot f)(m)=f(g^{-1}m)$ is a contravariant exact additive functor. The composition ${}^\ast\circ {}^\ast$ is naturally isomorphic to the identity functor. Furthermore there is an isomorphism of topological \Z[\pi]-modules 
\[\bHom(A,B) \iso \bHom(B^\ast, A^\ast) \]
for $A,B\in\Gmod[\pi]{P}$. 
\end{prop}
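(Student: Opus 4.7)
The strategy is the standard one: establish the statement on the category $\Gmod[\pi]{F}$ of finite $\pi$-primary modules where everything reduces to classical Pontrjagin duality of finite abelian groups, then invoke the uniqueness-of-extension results (Proposition \ref{FptoPp1}) to propagate the conclusions to all of $\Gmod[\pi]{P}$.

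First I would treat $\Gmod[\pi]{F}$. For a finite $\pi$-primary abelian group $M$, every element has order dividing some $\pi$-number $n$, so $\bHom(M,I_\pi) = \Hom(M,\tfrac{1}{n}\Z/\Z) \iso \Hom(M,\Z/n\Z)$, giving the classical Pontrjagin dual. Exactness of $(-)^\ast$ on $\Gmod[\pi]{F}$ follows from $I_\pi$ being injective in the category of $\pi$-primary torsion abelian groups (it is divisible with the right torsion), and biduality $M\iso M^{\ast\ast}$ is the classical evaluation isomorphism. One checks additivity is automatic.

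Second, I would verify that $(-)^\ast$ is a co-continuous functor $\Gmod[\pi]{P}\to\Gmod[\pi]{P}$. The essential topological fact is that for any $M\in\Gmod[\pi]{C}$, a continuous homomorphism $M\to I_\pi$ must have finite image (since the image is a compact torsion subgroup of the discrete group $I_\pi$), so it factors through a finite quotient of $M$. Writing $M=\varprojlim M_i$ with $M_i\in\Gmod[\pi]{F}$ one gets $\bHom(M,I_\pi)=\varinjlim \bHom(M_i,I_\pi)$, naturally, as a discrete $\pi$-primary module; dually, for $A=\varinjlim A_i\in\Gmod[\pi]{D}$ one has $\bHom(A,I_\pi)=\varprojlim \bHom(A_i,I_\pi)$ in $\Gmod[\pi]{C}$. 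Exactness of $(-)^\ast$ on $\Gmod[\pi]{C}$ and $\Gmod[\pi]{D}$ now follows because direct limits are exact, inverse limits of surjections of finite modules are exact, and the finite case is already handled.

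Third, for biduality: the classical evaluation gives a natural transformation $\mathrm{ev}\colon \id\Rightarrow (-)^{\ast\ast}$ on $\Gmod[\pi]{F}$ which is a natural isomorphism there. The composite $(-)^{\ast\ast}$ of two co-continuous functors is a continuous functor $\Gmod[\pi]{P}\to\Gmod[\pi]{P}$ (compact $\to$ discrete $\to$ compact, and limits transform correctly). So Proposition \ref{FptoPp1} applies and extends $\mathrm{ev}$ uniquely to a natural isomorphism on all of $\Gmod[\pi]{P}$. For the final Hom-identification, the map $\Phi_{A,B}\colon \bHom(A,B)\to\bHom(B^\ast,A^\ast)$, $f\mapsto f^\ast$, is a continuous, functorial transformation in both variables; biduality shows it has the evident inverse $g\mapsto \mathrm{ev}^{-1}\circ g^\ast\circ \mathrm{ev}$. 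On $\Gmod[\pi]{F}\times\Gmod[\pi]{F}$ both sides are finite and $\Phi$ is the classical Pontrjagin isomorphism. Applying Proposition \ref{FptoPp1} variable-by-variable extends this isomorphism to all of $\Gmod[\pi]{P}\times\Gmod[\pi]{P}$.

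The main obstacle is bookkeeping the topologies in Step 2: one must verify carefully that the compact-open topology on $\bHom(M,I_\pi)$ agrees with the direct-limit topology coming from the $\bHom(M_i,I_\pi)$, and symmetrically that on $\bHom(A,I_\pi)$ the compact-open topology coincides with the inverse-limit topology. Once this comparison of topologies is in hand, everything else is a formal consequence of classical duality together with the extension principle of Proposition \ref{FptoPp1}.
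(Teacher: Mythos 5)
The paper does not supply a proof; this proposition is cited to Proposition 2.2.1 of \cite{SW00} and Section 5.1 of \cite{RZ00}. Your proof correctly reconstructs the standard argument from those references: reduce to classical Pontrjagin duality of finite abelian groups, verify that $(-)^\ast$ is a co-continuous functor (the essential point being that a continuous homomorphism from a compact module to the discrete $I_\pi$ has finite image and hence factors through a finite quotient, and dually), and propagate biduality and the $\bHom$-isomorphism to $\Gmod[\pi]{P}$ by the uniqueness-of-extension principle of Proposition~\ref{FptoPp1}. The topological caveat you flag resolves routinely: for compact $M$ the compact-open topology on $\Hom(M,I_\pi)$ is discrete, which matches the direct-limit topology from the $\Hom(M_i,I_\pi)$, while for discrete $A$ the compact-open topology on $\Hom(A,I_\pi)$ is that of pointwise convergence, matching the inverse-limit topology.
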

\begin{prop}[Corollary 2.2.2 of \cite{SW00}]\label{TorFreeImpliesFree}
If $M\in\mathfrak{C}_p$ is $p$-torsion-free then it is a free abelian pro-$p$ group and hence free and projective in $\mathfrak{C}_p$.
\end{prop}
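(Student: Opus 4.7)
The plan is to apply Pontryagin duality to reduce the statement to the classification of divisible $p$-primary abelian groups.

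First I would translate the hypothesis across the duality. Since $M$ is $p$-torsion-free, multiplication by $p$ is an injective endomorphism of $M$, giving a short exact sequence
\[ 0 \to M \xrightarrow{\;p\;} M \to M/pM \to 0 \]
in $\mathfrak{C}_p$. The contravariant functor ${}^\ast$ is exact, so applying it yields
\[ 0 \to (M/pM)^\ast \to M^\ast \xrightarrow{\;p\;} M^\ast \to 0, \]
which says that multiplication by $p$ is surjective on $M^\ast$. Since $M^\ast \in \mathfrak{D}_p$ is $p$-primary torsion, $p$-divisibility coincides with divisibility, so $M^\ast$ is a divisible discrete abelian $p$-group.

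Next I would invoke the classical structure theorem for divisible abelian $p$-groups: any such group is a direct sum of copies of the Pr\"ufer group, which here is precisely $I_p = \Q_p/\Z[p]$. Hence $M^\ast \cong \bigoplus_{i \in I} I_p$ for some index set $I$.

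Finally I would dualize back. The functor ${}^\ast = \bHom(-, I_p)$ sends direct limits in $\mathfrak{D}_p$ to inverse limits in $\mathfrak{C}_p$, and a standard computation gives $\bHom(I_p, I_p) \cong \Z[p]$. Therefore
\[ M \cong (M^\ast)^\ast \cong \prod_{i \in I} \Z[p], \]
exhibiting $M$ as a free pro-$p$ $\Z[p]$-module and in particular as a projective object of $\mathfrak{C}_p$. The main subtleties would be verifying that ${}^\ast$ really does exchange the direct sum $\bigoplus_i I_p$ with the product $\prod_i \Z[p]$, and that such a direct product is genuinely free and projective in $\mathfrak{C}_p$; both are standard facts of profinite module theory (see \cite{RZ00}). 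Everything else reduces to the classical algebra of divisible abelian groups.
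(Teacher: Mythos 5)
Your argument is correct, and it is essentially the standard proof of this fact (the paper itself gives no proof, simply citing Corollary 2.2.2 of \cite{SW00}, whose argument runs along exactly these dual lines: torsion-free dualises to divisible, divisible $p$-primary groups are sums of copies of $I_p$, and these dualise back to products of copies of $\Z[p]$). The one point you rightly flag as needing verification --- that $\prod_{i\in I}\Z[p]$ is free and projective in $\mathfrak{C}_p$ --- is indeed standard: this product is the free profinite $\Z[p]$-module on the one-point compactification of $I$ (pointed at $\infty$), and free profinite modules on pointed profinite spaces are projective because continuous surjections of profinite modules admit continuous pointed sections.
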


With the above functors are associated derived functors, defined in the usual way. 
\begin{defn}
For $M\in\Gmod[\pi]{C}^\perp$ one has the left-derived functors of $M\hotimes[G] -$, viz.
\[\Tor^G_\bullet(M,-)\colon \Gmod[\pi]{C}\to \mathfrak{C}_\pi \]
For $M\in\Gmod[\pi]{C}$ one has the right-derived functors of $\Hom_G(M,-)$, viz.
\[\Ext_G^\bullet(M,-)\colon \Gmod[\pi]{D}\to\mathfrak{D}_\pi \]
\end{defn}
One result that we note for future reference is that Pontrjagin duality induces isomorphisms
\[\Ext_G^\bullet(M, N^\ast)\iso \Tor^G_\bullet(M^\perp,N)^\ast \]
for $M\in\Gmod[\pi]{C}$, where $M^\perp$ is the canonical right $G$-module associated to $M$. This follows from the identification
\[\Hom_G(M, N^\ast)\iso (M^\perp\hotimes[G] N)^\ast, \quad f\mapsto \left(m\otimes n\mapsto f(m)(n) \right) \]
(where $f\in \Hom_G(M, N^\ast)$, $m\in M$, $n\in N$) and the exactness of the functor $(-)^\ast$. The reader is left to check that this morphism is actually well defined with regard to the specified $G$-actions. 

Since the categorical properties of the module categories involved are very similar to the classical case, these functors have properties closely analogous to those for modules over discrete groups. See \cite{Serre13, SW00} and \cite{RZ00}, Chapter 6. An important property of these functors, which we shall use constantly and without explicit citation, is the following. 
\begin{prop}[Corollaries 6.1.8 and 6.1.10 of \cite{RZ00}]
$\Tor^G_\bullet(M,-)$ commutes with inverse limits and $\Ext_G^\bullet(M,-)$ commutes with direct limits.
\end{prop}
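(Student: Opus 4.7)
The plan is to compute each derived functor via a projective resolution of the fixed argument $M$ and then commute the relevant limit past both the underlying bifunctor and the passage to (co)homology.

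For $\Ext_G^\bullet(M,-)$: choose a projective resolution $P_\bullet \to M$ in $\Gmod[\pi]{C}$, giving $\Ext_G^n(M,A) = H^n(\Hom_G(P_\bullet,A))$ for $A \in \Gmod[\pi]{D}$. It has already been recorded that $\Hom_G(P,-)$ commutes with direct limits in the discrete variable; the essential observation is that a continuous $G$-equivariant map from the compact module $P$ to a discrete target has finite image and so factors through a single term of any filtered system. Since filtered direct limits are exact in $\mathfrak{D}_\pi$, the cohomology of a direct limit of cochain complexes is the direct limit of the cohomologies, and these two facts combine to give the claim.

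Dually for $\Tor^G_\bullet(M,-)$: choose a projective resolution $P_\bullet \to M$ in $\Gmod[\pi]{C}^\perp$, giving $\Tor^G_n(M,N) = H_n(P_\bullet \hotimes[G] N)$ for $N \in \Gmod[\pi]{C}$. The completed tensor product commutes with inverse limits in each variable (as recorded in the list of functors above), so $P_\bullet \hotimes[G] \varprojlim N_\alpha \iso \varprojlim (P_\bullet \hotimes[G] N_\alpha)$. Exactness of inverse limits in $\mathfrak{C}_\pi$ then lets one pull $\varprojlim$ past $H_n$.

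The principal technical point is the exactness of inverse limits in $\mathfrak{C}_\pi$, which is standard: an inverse system of short exact sequences of compact pro-$\pi$ modules, indexed by a directed set, yields an exact inverse limit, as the relevant Mittag-Leffler-type condition is automatic by compactness. A route that sidesteps this worry entirely is to invoke Propositions \ref{FptoPp1} and \ref{FptoPp2}: the natural transformations $\Tor^G_\bullet(M, \varprojlim -) \to \varprojlim \Tor^G_\bullet(M, -)$ and its analogue for $\Ext$ are easily identified with the identity on finite modules (where all the relevant limits are attained at a single stage), and hence extend uniquely to isomorphisms of continuous, respectively co-continuous, (co)homological \Gmod[\pi]{P}-functors.
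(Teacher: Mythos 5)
The paper does not prove this proposition; it quotes it directly as Corollaries 6.1.8 and 6.1.10 of \cite{RZ00}, so there is no in-paper argument to compare against. Read on its own terms, your main argument is essentially the standard one and is correct, modulo two points worth tightening. First, you compute both derived functors from a projective resolution of the fixed argument $M$, whereas the paper defines $\Tor^G_\bullet(M,-)$ as the left-derived functor of $M\hotimes[G]-$ and $\Ext^\bullet_G(M,-)$ as the right-derived functor of $\Hom_G(M,-)$, i.e.\ by resolving the \emph{other} variable; switching to a resolution of $M$ is justified by balance of $\Tor$ and $\Ext$, which should be flagged. Second, the claim that a continuous $G$-map from the compact $P$ to a discrete filtered colimit ``has finite image and so factors through a single term'' is not quite a proof: having image contained in $\mathrm{im}(A_\alpha\to A)$ for some $\alpha$ does not supply a lift $P\to A_\alpha$ when that map is not injective. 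The clean argument is that such a map factors through a finite $G$-module quotient $\bar P$ of $P$, and finite modules are small objects of $\mathfrak{D}_\pi$, so $\Hom_G(\bar P,-)$ commutes with filtered colimits; one then interchanges the two colimits over $\alpha$ and over finite quotients of $P$. Finally, your proposed shortcut via Propositions \ref{FptoPp1} and \ref{FptoPp2} is not really available: those propositions concern functors defined on all of $\Gmod[\pi]{P}$ and \emph{already known} to be continuous or co-continuous (i.e.\ already known to commute with the relevant limits), whereas here $\Tor^G_\bullet(M,-)$ is defined only on $\Gmod[\pi]{C}$, and the statement to be proved is exactly that it commutes with inverse limits, so invoking them would be circular.
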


\begin{defn}
A module $M\in \Gmod[\pi]{C}$ is {\em of type FP$_n$} if it has a projective resolution $P_\bullet\to M$ with $P_k$ finitely generated for $k\leq n$, where $n$ may be an integer or infinity. 

Given a module $M$ of type ${\rm FP}_\infty$ and a projective resolution as above, we have continuous \Gmod[\pi]{P}-functors $\bHom_G(P_n,-)$ for each $n$ and can therefore define continuous right derived functors of the functor $\bHom_G(M,-)$ to be 
\[\bExt_G^\bullet(M,N)= H_n(\Hom(P_\bullet,N))  \]
for $N\in \Gmod[\pi]{P}$. Similarly one may define $\bTor^G_\bullet(M,-)$ for a right module of type ${\rm FP}_\infty$. 
\end{defn}
These functors also satisfy Pontrjagin duality. See Sections 3.7 and 4.2 of \cite{SW00} for more details. 

Finally one defines homology and cohomology groups of $G$ in the usual way.
\begin{defn}
For $N\in\Gmod[\pi]{C}$ and $A\in\Gmod[\pi]{D}$ define
\[H_\bullet(G,N)=\Tor^G_\bullet(\Z[\pi], C), \quad H^n(G,A)= \Ext^\bullet_G(\Z[\pi], A)  \]
If $G$ is of type $\pi$-FP$_\infty$---that is, the $G$-module $\Z[\pi]$ is of type FP$_\infty$---then we may define 
\[{\bf H}_\bullet(G,N)=\bTor^G_\bullet(\Z[\pi], N), \quad {\bf H}^n(G,A)= \bExt^\bullet_G(\Z[\pi], N)  \]
for $N\in \Gmod[\pi]{P}$. 
\end{defn}

\subsection{Different sets of primes}
The above definitions have been made with respect to a choice of some set of primes $\pi$. One could ask whether these definitions are invariant under enlarging the set of primes. For instance for $A\in\Gmod[\pi]{D}$ one could ask whether the cohomology groups $H^\bullet(G, A)$ depend on whether $A$ is considered as an object of \Gmod[\pi]{D} or of \Gmod[]{D}. As one may imagine from the fact that we have not troubled to include $\pi$ in the notation, all the relevant notions are canonically isomorphic for different choices of $\pi$ in a way we shall now describe.

All finite $G$-modules, being {\em a fortiori} finite abelian groups, have a unique $p$-Sylow subgroup for each $p$. So the module $M$ splits as a direct sum over $p\in\pi$ of modules in $\Gmod[p]{F}$ in a canonical way. The direct sum of those components for $p\in\pi$ is called the {\em $\pi$-primary component} $M(\pi)$. By taking limits all modules in \Gmod[]{C} have a similar canonical decomposition as a direct product of $p$-primary components, and all modules in \Gmod[]{D} have a direct sum decomposition. For instance we have
\[I_\pi=\Q_\pi/\Z[\pi]=\bigoplus_{p \in\pi} \Q_p/\Z[p] \]

Not only $G$-modules but also the maps between them split into $p$-primary components. This is an easy consequence of the fact that no non-trivial maps exist between a finite $p$-group and a finite $q$-group for distinct primes $p$ and $q$. 

One consequence of this that will be buried in the notation is that Pontrjagin duality is independent of $\pi$---that is, for $M\in\Gmod[\pi]{P}$ we have
\[M^\ast = \Hom_{\Z[\pi]}(M, \Q_\pi/\Z[\pi])= \Hom_{\Zhat}(M, \Q/\Z)\]

If $\pi'\subseteq \pi$ are non-empty sets of primes then all of this ultimately boils down to a statement that `taking $\pi'$-primary components is an exact functor from \Gmod[\pi]{P} to \Gmod[\pi']{P} taking projectives to projectives and injectives to injectives'. One may easily check that this exact functor behaves well with respect to tensor products and Hom-functors. Therefore the different possible definitions of, for instance, $H^\bullet(G; A)$ for $A\in\Gmod[\pi]{D}$ are canonically isomorphic. In the same way take the $\pi$-primary component any of the exact sequences we will describe will yield an analogous exact sequence. Hence every definition or theorem statement we make will be true independent of which theory we will choose to develop.

We also remark that the notion of `finitely generated projective' is invariant under change of $\pi$ in a certain sense. We state these as formal propositions as, unlike most of the remarks in this section, we will refer to it later.
\begin{prop}\label{DiffPrimes}
Let $\pi'\subseteq \pi$ be non-empty sets of primes.
\begin{itemize}
\item Let $P$ be a projective in $\Gmod[\pi]{C}$. Let $Q=P(\pi')$ be the $\pi'$-primary component of $P$. Then $Q$ is projective, both in $\Gmod[\pi']{C}$ and $\Gmod[\pi]{C}$.
\item If $M\in\Gmod[\pi]{C}$ is finitely generated (over \Zpiof{G}) then $M(\pi')$ is finitely generated (both over \Zpiof{G} and over $\Z[\pi'][\![G]\!]$).  
\end{itemize} 
\end{prop}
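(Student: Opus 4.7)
The plan is to exploit the fact that the $\pi'$-primary component functor $(-)(\pi')\colon \Gmod[\pi]{C}\to \Gmod[\pi']{C}$ is exact and commutes with direct sums and inverse limits, together with the ring-theoretic decomposition
\[\Z[\pi]\iso \Z[\pi']\times \Z[\pi\setminus\pi'], \qquad \Zpiof{G}\iso \Z[\pi'][\![G]\!]\times \Z[\pi\setminus\pi'][\![G]\!],\]
under which $\Zpiof{G}(\pi')=\Z[\pi'][\![G]\!]$.

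For the first bullet I would begin with the standard fact that every projective $P$ in $\Gmod[\pi]{C}$ is a direct summand of a free pro-$\pi$ $G$-module $F$, which is realised as an inverse limit of finite direct sums $\Zpiof{G}^{n}$. Applying the exact functor $(-)(\pi')$ (which commutes with direct sums and the relevant inverse limits, and which sends $\Zpiof{G}$ to $\Z[\pi'][\![G]\!]$) converts this into a presentation of $P(\pi')$ as a direct summand of the corresponding free pro-$\pi'$ $G$-module, so $P(\pi')$ is projective in $\Gmod[\pi']{C}$. To bootstrap this to projectivity in $\Gmod[\pi]{C}$ I would argue directly using the lifting criterion: given an epimorphism $A\twoheadrightarrow B$ in $\Gmod[\pi]{C}$ and a morphism $f\colon P(\pi')\to B$, the image of $f$ is pro-$\pi'$, so $f$ factors as $P(\pi')\to B(\pi')\hookrightarrow B$; the exactness of $(-)(\pi')$ yields an epimorphism $A(\pi')\twoheadrightarrow B(\pi')$ in $\Gmod[\pi']{C}$; and projectivity of $P(\pi')$ in $\Gmod[\pi']{C}$ then supplies the required lift $P(\pi')\to A(\pi')\hookrightarrow A$.

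For the second bullet I would start from a finite presentation $\Zpiof{G}^n\twoheadrightarrow M$. Applying the $\pi'$-primary functor (which is exact and additive) gives a surjection $\Z[\pi'][\![G]\!]^n\twoheadrightarrow M(\pi')$, directly exhibiting finite generation of $M(\pi')$ over $\Z[\pi'][\![G]\!]$. Precomposing with the $n$-fold power of the projection $\Zpiof{G}\twoheadrightarrow \Z[\pi'][\![G]\!]$ arising from the product decomposition above yields a surjection $\Zpiof{G}^n\twoheadrightarrow M(\pi')$, which establishes finite generation over $\Zpiof{G}$ as well.

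The only point of real delicacy I expect is verifying that $(-)(\pi')$ interacts well with the inverse limits used to construct free pro-$\pi$ modules and confirming the identification $\Zpiof{G}(\pi')=\Z[\pi'][\![G]\!]$; both ultimately rest on the splitting of $\Z[\pi]$ into its $\pi'$- and $(\pi\setminus\pi')$-parts and on the fact, already highlighted earlier in the section, that $\pi'$-primary components behave as a retraction onto the pro-$\pi'$ subcategory. So these checks are routine rather than a genuine obstacle.
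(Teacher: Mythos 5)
Your proposal is correct and rests on the same key fact as the paper's proof, namely the canonical splitting $M = M(\pi')\oplus M(\pi\smallsetminus\pi')$ and the corresponding decomposition of $\Zpiof{G}$. The only cosmetic difference is that your lifting-criterion argument for projectivity of $P(\pi')$ in $\Gmod[\pi]{C}$ can be shortcut by observing that $P(\pi')$ is a direct summand of the projective $P$, which is how the paper disposes of that case.
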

\begin{proof}
By an inverse limit starting from finite modules one finds that for any $M\in \Gmod[\pi]{C}$ we have
\[ M = M(\pi')\oplus M(\pi\smallsetminus\pi') \]
From this both the first statement and finite generation over \Zpiof{G}\ rapidly follow. For finite generation over $\Z[\pi'][\![G]\!]$ note that if $\phi\colon \Zpiof{G}^{\oplus n}\to M(\pi')$ is a surjection then $\phi$ vanishes on the $(\pi\smallsetminus\pi')$-primary component of $\Zpiof{G}^{\oplus n}$, so the induced map on $\pi'$-primary components $\phi_{\pi'}\colon \Z[\pi'][\![G]\!]^{\oplus n}\to M(\pi')$ is also a surjection.
\end{proof}
\begin{prop}\label{pPrimProjectives}
Let $M\in\Gmod[\pi]{C}$. Suppose that for all $p\in \pi$ the $p$-primary component $M(p)$ is projective in \Gmod[p]{C}. Then $M$ is projective in \Gmod[\pi]{C}.
\end{prop}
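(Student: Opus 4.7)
The plan is to verify the projective lifting property in $\Gmod[\pi]{C}$ directly, by reducing to the hypothesis one prime at a time via the canonical $p$-primary decomposition $M=\prod_{p\in\pi}M(p)$ discussed in the paragraphs just preceding the statement. The key facts I would draw from that discussion are that the $p$-primary component functor is exact from $\Gmod[\pi]{C}$ to $\Gmod[p]{C}$, and that every morphism of pro-$\pi$ modules splits as a product of its restrictions to the $p$-primary components (since no non-zero continuous homomorphism exists between a pro-$p$ and a pro-$q$ module for distinct primes).

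First I would fix a surjection $\phi\colon N\to N'$ in $\Gmod[\pi]{C}$ together with an arbitrary morphism $f\colon M\to N'$, and aim to construct a lift $\tilde f\colon M\to N$ with $\phi\circ\tilde f=f$. For each $p\in\pi$, applying the $p$-primary component functor gives a surjection $\phi(p)\colon N(p)\to N'(p)$ in $\Gmod[p]{C}$ by exactness, together with a morphism $f(p)\colon M(p)\to N'(p)$. The projectivity hypothesis on $M(p)$ in $\Gmod[p]{C}$ then supplies a lift $\tilde f_p\colon M(p)\to N(p)$ satisfying $\phi(p)\circ\tilde f_p=f(p)$.

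Finally I would assemble the family $\{\tilde f_p\}_{p\in\pi}$ into a single morphism using the product identifications $M=\prod_p M(p)$ and $N=\prod_p N(p)$, setting $\tilde f=\prod_p \tilde f_p$. Continuity follows from the universal property of the product topology, $G$-equivariance holds componentwise because the $G$-action respects the decomposition, and the relation $\phi\circ\tilde f=f$ can be checked on each $p$-primary component, where it reduces to the defining property of $\tilde f_p$. The only point genuinely meriting care is the verification that the assembled map $\tilde f$ is indeed a continuous $G$-equivariant morphism in $\Gmod[\pi]{C}$; beyond this bookkeeping there is no substantial obstacle, and projectivity of $M$ follows at once.
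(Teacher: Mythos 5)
Your argument is correct and is essentially identical to the paper's own proof: decompose the surjection and the map into their $p$-primary components, lift each component using the hypothesis, and reassemble the lifts via the canonical product decomposition. The continuity and equivariance bookkeeping you flag is routine and the paper likewise leaves it implicit.
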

\begin{proof}
Let $\phi\colon S\twoheadrightarrow T$ be a surjection of modules in \Gmod[\pi]{C} and let $f\colon M\to T$. Then each of $\phi$ and $f$ split as the direct product of their $p$-primary components, and each $\phi(p)\colon S(p)\to T(p)$ is a surjection. Then by assumption each $f(p)$ lifts to a map $\tilde f(p)\colon M(p)\to S(p)$. The direct product of all these lifts is a lift of $f$ to a map $M\to S$ as required.
\end{proof}
\subsection{Useful Identities}\label{SecUsefulIds}
We be frequently manipulating modules and subgroups, and we list here several identities of use and give indications of the proofs. We will use these often, and sometimes without explicit reference. The reader is warned that we will not always describe the $G$-actions on modules which appear later in the paper if the module is described in this section, as to do so would rather clutter the paper.

In cases where modules are equipped with extra finiteness properties yielding continuous (`bold') \Gmod[\pi]{P}-functors as in Section \ref{SecModuleCats}, one may take limits of the identities in this section to establish similar identities for the continuous \Gmod[\pi]{P}-functors. In all cases, the identites in this section can largely be derived from identities for finite modules via limiting process.

Throughout this section let $G$ be a profinite group and $S$ a closed subgroup of $G$.
\begin{defn}
Let $M\in \mathfrak{C}_\pi(S)$ and $A\in \mathfrak{D}_\pi(S)$. The {\em induced} and {\em coinduced} $G$-modules are the modules
\[\ind_G^S(M)=\ZG{}\hotimes[S]M\in\Gmod[\pi]{C}, \quad \coind_G^S(A)=\Hom_S(\ZG{}, A)\in\Gmod[\pi]{D} \]
where the $G$-actions are given by
\[g\cdot (x\otimes m)= (gx)\otimes m, \quad (g\cdot f)(x)=f(xg) \]
for $g,x\in G$, $m\in M$, $f\in\Hom_S(\ZG{}, A)$.
\end{defn}
 In the case where $U$ is open in $G$ so that $\Zpiof{G}$ is finitely generated as a $U$-module, one may also define
\[\bind_G^U(A)=\ZG{}\bhotimes[U]A\in\Gmod[\pi]{D}, \quad \bcoind_G^U(M)=\bHom_U(\ZG{}, M)\in\Gmod[\pi]{C} \]
for $M\in \mathfrak{C}_\pi(U)$ and $A\in \mathfrak{D}_\pi(U)$. In this case $\bind$ and $\bcoind$ are continuous functors $\mathfrak{P}_\pi(U)\to\Gmod[\pi]{P}$. 

There is an alternative formulation of these modules, in the case when $M$ and $A$ are in fact $G$-modules, with the $S$-module structure obtained by restriction.
\begin{prop}\label{IndandCoinsasDiags}
Let $M\in \mathfrak{C}_\pi(G)$ and $A\in \mathfrak{D}_\pi(G)$. Then there are isomorphisms of $G$-modules
\[\ind_G^S(M)\iso \ZG{/S}\hotimes[] M, \quad \coind_G^S(A) \iso \Hom(\ZG{/S}, A) \]
where the modules on the right hand side have the {\em diagonal} $G$-actions, given by
\[g\cdot (x\otimes m)= (gx)\otimes (gm), \quad (g\cdot f)(x)=gf(g^{-1}x) \]
for $g,x\in G$, $m\in M$, $f\in\Hom_S(\ZG{}, A)$.
\end{prop}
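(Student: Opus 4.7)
The plan is to construct explicit natural maps between the two sides in the category of finite modules, verify by direct computation that they are mutually inverse continuous $G$-equivariant isomorphisms, and then invoke Proposition \ref{FptoPp1} (and its dual) to extend to all of $\mathfrak{P}_\pi$. Both pairs of functors in question---namely $\ind_G^S\circ\res^G_S$ versus $\Zpiof{G/S}\hotimes[]-$ on the compact side, and $\coind_G^S\circ\res^G_S$ versus $\Hom(\Zpiof{G/S},-)$ on the discrete side---are continuous \Gmod[\pi]{P}-functors by the bullet points of Section \ref{SecModuleCats} describing the behaviour of completed tensor products and Hom with limits, so any natural isomorphism of their restrictions to $\Gmod[\pi]{F}$ lifts uniquely.

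For the induction formula, I would define
\[\phi_M(g\otimes m) = gS\otimes gm,\qquad \psi_M(gS\otimes m)=g\otimes g^{-1}m\]
on elementary tensors for $M\in\Gmod[\pi]{F}(G)$. The map $\phi_M$ is $S$-balanced since $\phi_M(gs\otimes m)=gS\otimes gsm=\phi_M(g\otimes sm)$, so it factors through $\hotimes[S]$; the expression $\psi_M$ is independent of the coset representative by the same $S$-balancing (now of the target). A direct check shows $\phi_M\circ\psi_M$ and $\psi_M\circ\phi_M$ are the identity, and $G$-equivariance compares the left action $h(g\otimes m)=hg\otimes m$ on $\ind$ with the diagonal action $h(gS\otimes gm)=hgS\otimes hgm$ on $\Zpiof{G/S}\hotimes[] M$.

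For coinduction I would define, for $A\in\Gmod[\pi]{F}(G)$ and $f\in\Hom_S(\Zpiof{G},A)$,
\[\Phi_A(f)(gS) = g\cdot f(g^{-1}),\qquad \Psi_A(\alpha)(g) = g^{-1}\cdot \alpha(gS).\]
Well-definedness on cosets follows from $S$-linearity of $f$, the inverse check is routine, and equivariance tracks: the prescribed action $(h\cdot f)(x)=f(xh)$ sends $\Phi_A(f)$ to $xS\mapsto xf(x^{-1}h)$, which is precisely the diagonal action $(h\cdot \Phi_A(f))(xS)=h\,\Phi_A(f)(h^{-1}xS)$ evaluated using the definition. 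Both $\phi_M$ and $\Phi_A$ are plainly natural in their arguments.

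The extension step is then purely formal: Proposition \ref{FptoPp1} (and the dual statement for co-continuous contravariant-free arguments) upgrades the natural isomorphisms $\phi$ and $\Phi$ on $\Gmod[\pi]{F}$ to natural isomorphisms on all of $\Gmod[\pi]{C}$ and $\Gmod[\pi]{D}$ respectively. The main obstacle is essentially bookkeeping rather than conceptual: the formulas are written on elementary tensors and on elements $g\in G$, but in the profinite setting there is typically no continuous section $G/S\to G$, so one cannot literally evaluate $\psi_M$ or $\Psi_A$ on general elements of the completed objects. This is precisely what the limiting argument is designed to sidestep---it is enough that the formulas define the maps unambiguously on the dense finite-module layer, and continuity of the ambient functors does the rest.
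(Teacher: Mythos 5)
Your route is the same as the paper's: the proof there consists precisely of exhibiting the maps $x\otimes m\mapsto (xS)\otimes (xm)$ and $f\mapsto\left(xS\mapsto xf(x^{-1})\right)$ and leaving the verification to the reader, so writing out the balancing, inverse and equivariance checks and then passing from $\Gmod[\pi]{F}$ to the full categories by limits is exactly the intended content. The induction half of your write-up is correct. However, your inverse for the coinduction isomorphism is wrong as stated. From $\Phi_A(f)(gS)=g\,f(g^{-1})$ one solves $f(x)=x\cdot\Phi_A(f)(x^{-1}S)$, so the inverse must be $\Psi_A(\alpha)(x)=x\cdot\alpha(x^{-1}S)$, not $x^{-1}\cdot\alpha(xS)$. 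With your formula the ``routine'' check fails: $\Phi_A(\Psi_A(\alpha))(xS)=x\cdot\Psi_A(\alpha)(x^{-1})=x\cdot x\cdot\alpha(x^{-1}S)\neq\alpha(xS)$, and moreover $\Psi_A(\alpha)$ does not even land in $\Hom_S(\Zpiof{G},A)$, since $\Psi_A(\alpha)(sx)=x^{-1}s^{-1}\alpha(sxS)$ and the coset $sxS$ bears no relation to $xS$. The corrected formula passes both tests, because $(sx)^{-1}S=x^{-1}s^{-1}S=x^{-1}S$ gives $\Psi_A(\alpha)(sx)=s\,\Psi_A(\alpha)(x)$. One further small correction: continuous (non-homomorphic) sections $G/S\to G$ do exist for profinite groups --- the paper uses one in Section \ref{SecSubgpPairs}, citing Proposition 2.2.2 of \cite{RZ00} --- so their alleged absence is not the reason the limiting step is needed; the limit argument is still the cleanest way to see that the maps are defined and continuous on the completed objects, but your stated justification for it is inaccurate.
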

\begin{proof}
The isomorphisms are given by the maps
\[x\otimes m\to (xS)\otimes(xm), \quad f\mapsto \left(xS\mapsto xf(x^{-1}) \right)\]
where $x\in G$, $m\in M$, $f\in\Hom_S(\ZG{}, A)$. The reader is left to verify that these are well-defined isomorphisms.
\end{proof}

\begin{prop}\label{IndEqualsCoind}
Let $G$ be a profinite group and let $U$ be an open subgroup of $G$. Then for any $M\in \mathfrak{C}_\pi(U)$ there is a natural isomorphism of $G$-modules
\[\ind^U_G(M)= \bcoind^U_G(M)=\bHom_U(\Zpiof{G}, M) \]
\end{prop}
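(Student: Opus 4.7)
The plan is to write down an explicit natural transformation $\phi\colon \Zpiof{G}\hotimes[U]M\to\bHom_U(\Zpiof{G},M)$ and verify it is a $G$-module isomorphism, exploiting the fact that $U$ being open in $G$ forces $G/U$ to be finite and hence $\Zpiof{G}$ to be a free $U$-module of finite rank on both sides. I will define
\[\phi(g\otimes m)(x) = \begin{cases} (xg)\cdot m & \text{if } xg\in U \\ 0 & \text{otherwise} \end{cases}\]
for $g,x\in G$ and $m\in M$, and extend $\Z[\pi]$-linearly and continuously. This is the analogue of the classical isomorphism witnessing that induction and coinduction agree for finite index subgroups of discrete groups, and essentially all the work is to check that the formula respects the various structures present in the profinite setting.

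The verifications I would carry out, in order: first, that for fixed $g,m$ the function $x\mapsto\phi(g\otimes m)(x)$ is a left $U$-module homomorphism, using the observation that $uxg\in U$ iff $xg\in U$; second, that $\phi$ descends through the balanced tensor product, i.e.\ $\phi(gu\otimes m)=\phi(g\otimes um)$ for $u\in U$; and third, $G$-equivariance, which follows immediately from associativity since $\phi((h\cdot(g\otimes m)))(x)=\phi(hg\otimes m)(x)=(x(hg))m=((xh)g)m=\phi(g\otimes m)(xh)=(h\cdot\phi(g\otimes m))(x)$ whenever either side is nonzero.

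To show that $\phi$ is bijective I would choose right coset representatives $g_1,\ldots,g_n$ for $G=\bigsqcup_i g_iU$, so that equivalently $G=\bigsqcup_i Ug_i^{-1}$. Then $\Zpiof{G}$ is free of rank $n$ as a right $U$-module on $\{g_i\}$, giving a canonical identification $\Zpiof{G}\hotimes[U]M\cong\bigoplus_i g_i\otimes M$; and it is free of rank $n$ as a left $U$-module on $\{g_i^{-1}\}$, giving $\bHom_U(\Zpiof{G},M)\cong\prod_i M$ via evaluation $f\mapsto(f(g_i^{-1}))_i$. Under these identifications, $\phi(g_i\otimes m)(g_j^{-1})=(g_j^{-1}g_i)m$ if $g_j^{-1}g_i\in U$, which by disjointness of cosets happens precisely when $i=j$, with value $m$. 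Hence $\phi$ is the obvious block-diagonal identification of $\bigoplus_i M$ with $\prod_i M$, which is bijective because the indexing set is finite.

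The only real obstacle is bookkeeping: keeping the left and right actions, the tensor and Hom conventions, and the coset decompositions straight. There is no deep content beyond the classical finite-index induction/coinduction equivalence, and one could alternatively invoke Proposition~\ref{FptoPp1} to reduce to finite modules (where the classical argument applies verbatim) and take limits, but writing the explicit formula seems at least as clean.
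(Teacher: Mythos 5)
Your proof is correct and establishes the isomorphism, but it takes the inverse direction and a more hands-on route than the paper. The paper writes the map $\psi\colon \Hom_U(\Zpiof{G},M)\to\Zpiof{G}\hotimes[U]M$ by the averaging formula $f\mapsto\sum_{x\in G/U}\sigma(x)\otimes f(\sigma(x)^{-1})$, establishes it first for finite $M$ (so that no topological issues arise), and then takes inverse limits, using that $\Zpiof{G}$ is finitely generated over $\Zpiof{U}$. Your $\phi$, given by $\phi(g\otimes m)(x)=(xg)m$ when $xg\in U$ and $0$ otherwise, is precisely $\psi^{-1}$ (a short computation checks $\phi\circ\psi=\mathrm{id}$), and your verification of bijectivity via the coset decompositions $G=\bigsqcup g_iU=\bigsqcup Ug_i^{-1}$ makes the block-diagonal structure explicit, which the paper leaves implicit. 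What you buy with the explicit block-diagonal picture is transparency of why both sides are $M^n$; what the paper buys with its finite-then-limit scheme is that it never has to justify the step ``extend $\Z[\pi]$-linearly and continuously,'' which for the completed tensor product is most cleanly handled exactly by that reduction. You flag this reduction as an alternative at the end, so there is no gap, but it is worth being aware that the continuous extension step in your direct argument is precisely the point where the paper's approach is tighter --- continuity of the extension uses that $\{(x,g)\in G\times G : xg\in U\}$ is clopen because $U$ is open, which is true but unmentioned.
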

\begin{proof}
Given a choice of section $\sigma\colon G/U\to G$ we may define a natural isomorphism of $G$-modules
\[\Hom_U(\Zpiof{G}, M) \longrightarrow \Zpiof{G}\hotimes[U] M, \quad f\mapsto \sum_{x\in G/U} \sigma(x)\otimes f(\sigma(x)^{-1}) \]
where $M\in\mathfrak{F}_\pi(U)$. This is independent of the choice of $\sigma$. Now taking an inverse limit (noting that $\Zpiof{G}$ is finitely generated over \Zpiof{U}) gives the result for all $M\in \mathfrak{C}_\pi(U)$.
\end{proof}

\begin{prop}\label{TensorHom}
Let $M,N\in\Gmod[\pi]{C}$ and $A\in \Gmod[\pi]{D}$. Then there is an isomorphism of $G$-modules
\[\Hom(M\hotimes[] N, A)\iso \Hom(M,\Hom(N,A)) \]
where all $\Hom$ and tensor modules inherit the diagonal $G$-action from their constituent modules. In particular, taking the submodules of $G$-invariants, there are isomorphisms of $\Zhat$-modules
\[\Hom_G(M\hotimes[] N, A)\iso \Hom_G(M,\Hom(N,A)) \]
\end{prop}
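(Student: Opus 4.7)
Plan: The candidate isomorphism is the classical hom-tensor adjunction map
\[\Phi\colon f \longmapsto \bigl(m\mapsto (n\mapsto f(m\otimes n))\bigr),\]
for $f\in\Hom(M\hotimes N, A)$. The task is to show $\Phi$ is a well-defined, continuous, $G$-equivariant isomorphism in the profinite/discrete setting, where both sides carry the diagonal $G$-actions.

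I would first reduce to finite modules. Write $M = \varprojlim M_i$ and $N = \varprojlim N_j$ with $M_i, N_j \in \mathfrak{F}_\pi$, and $A = \varinjlim A_k$ with $A_k \in \mathfrak{F}_\pi$. By the defining property of the completed tensor product (Section 5.5 of \cite{RZ00}), $M \hotimes N = \varprojlim_{i,j}(M_i \otimes N_j)$ as $G$-modules with the diagonal action. Since any continuous map from a compact module to a discrete module factors through a finite quotient, and $\Hom(-,-)$ commutes with direct limits in the second variable, we obtain
\[\Hom(M\hotimes N, A) = \varinjlim_{i,j,k} \Hom(M_i\otimes N_j, A_k).\]
Using the same facts twice, first to rewrite $\Hom(N,A) = \varinjlim_{j,k}\Hom(N_j, A_k)$ as a direct limit of finite $\pi$-primary modules (with diagonal $G$-action), we also get
\[\Hom(M, \Hom(N,A)) = \varinjlim_{i,j,k}\Hom(M_i, \Hom(N_j, A_k)).\]

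For each finite triple $(M_i, N_j, A_k)$ the classical hom-tensor adjunction supplies a natural isomorphism of abelian groups $\Hom(M_i\otimes N_j, A_k) \iso \Hom(M_i, \Hom(N_j, A_k))$ given by the same formula $\Phi$. A direct computation on elementary tensors verifies $G$-equivariance for the diagonal actions: if $g\in G$, then
\[(g\cdot f)(m\otimes n) = g\cdot f(g^{-1}m\otimes g^{-1}n),\]
which under $\Phi$ matches precisely the diagonal action on $\Hom(M_i, \Hom(N_j, A_k))$. These isomorphisms are natural in each variable and hence compatible with the structure maps of the limits above; taking the direct limit over $(i,j,k)$ assembles them into a continuous $G$-equivariant isomorphism, which is exactly $\Phi$.

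The second assertion is then immediate from the first by passing to $G$-invariants, noting that the $G$-invariants of $\Hom(X,Y)$ under the diagonal action are by definition $\Hom_G(X,Y)$. The main point requiring care is the bookkeeping of limits together with the diagonal actions: one must confirm that the inverse limit presentation of $M\hotimes N$ is compatible with the diagonal $G$-action (not merely with the separate actions on the tensor factors), and that the finite-level adjunction maps are natural in a strong enough sense to patch together across all three indexing systems. Once this is verified, the proof reduces to the classical finite-dimensional adjunction.
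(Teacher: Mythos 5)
Your proof is correct and follows exactly the route the paper intends: the paper gives no explicit proof of this proposition, but the preamble to Section 1.3 states that these identities "can largely be derived from identities for finite modules via limiting process," which is precisely your reduction to the finite-level hom-tensor adjunction followed by passage to (co)limits. The $G$-equivariance check and the appeal to the facts that $\Hom(-,-)$ commutes with inverse limits in the first variable and direct limits in the second (both stated earlier in the paper) are all that is needed.
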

In particular, using the formulations of induced and coinduced modules from Proposition \ref{IndandCoinsasDiags} we have the following.
\begin{clly}\label{HomIndAndCoind}
Let $M\in\Gmod[\pi]{C}$ and $A\in \Gmod[\pi]{D}$. There are natural isomorphisms
\[\Hom_G(\ind_G^S(M), A)\iso \Hom_G(M, \coind_G^S(A)) \]
\end{clly}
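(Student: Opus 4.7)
The plan is to reduce this to a direct application of tensor--Hom adjunction, by first transferring induction and coinduction to their ``diagonal-action'' descriptions as given by Proposition \ref{IndandCoinsasDiags}. Those descriptions replace the mixed bimodule expressions $\ZG{}\hotimes[S] M$ and $\Hom_S(\ZG{}, A)$ by the cleaner $G$-modules $\ZG{/S}\hotimes[] M$ and $\Hom(\ZG{/S}, A)$ with diagonal action. Once the two sides have been put in this form, both involve $\Hom_G$-groups of tensor products and Hom-groups taken over the coefficient ring $\Z[\pi]$ (with $G$ acting diagonally), which is exactly the setting of Proposition \ref{TensorHom}.

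First, I would apply Proposition \ref{IndandCoinsasDiags} to rewrite
\[\Hom_G(\ind_G^S(M), A)\iso \Hom_G(\ZG{/S}\hotimes[] M, A), \quad \Hom_G(M,\coind_G^S(A))\iso \Hom_G(M,\Hom(\ZG{/S}, A)).\]
Next I would use the symmetry isomorphism $\ZG{/S}\hotimes[] M\iso M\hotimes[] \ZG{/S}$, which is $G$-equivariant because both sides carry the diagonal action, so the swap $x\otimes m\mapsto m\otimes x$ intertwines $g\cdot(x\otimes m)=gx\otimes gm$ with $g\cdot(m\otimes x)=gm\otimes gx$. Finally I would invoke Proposition \ref{TensorHom} with $N=\ZG{/S}$ to obtain
\[\Hom_G(M\hotimes[] \ZG{/S}, A)\iso \Hom_G(M,\Hom(\ZG{/S}, A)).\]
Concatenating these three natural isomorphisms gives the desired adjunction, and naturality in $M$ and $A$ is inherited from the naturality of each individual step.

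The only real obstacle is bookkeeping: checking that the symmetry of the tensor product and the various identifications in Propositions \ref{IndandCoinsasDiags} and \ref{TensorHom} are genuinely $G$-equivariant, and that the composite isomorphism really sends a map $f\colon \ind_G^S(M)\to A$ to the expected adjoint $m\mapsto (xS\mapsto x\cdot f(1\otimes x^{-1}m))$ or similar. None of this requires new ideas beyond tracking the actions through the formulas in Proposition \ref{IndandCoinsasDiags}, so the argument is essentially a two-line calculation once those propositions are granted.
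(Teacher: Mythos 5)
Your proposal matches the paper's own argument exactly: the paper introduces the corollary with the phrase ``In particular, using the formulations of induced and coinduced modules from Proposition \ref{IndandCoinsasDiags} we have the following,'' so the intended proof is precisely to rewrite via Proposition \ref{IndandCoinsasDiags} and then apply Proposition \ref{TensorHom} (with the harmless tensor-symmetry swap you note). Nothing to correct.
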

\begin{prop}\label{IndAndCoindAsGMods}
Let $M\in \mathfrak{C}_\pi(S)$ and $C\in\Gmod[\pi]{D}$. There is a natural identification of $G$-modules
\[ \coind^S_G(\Hom(M,C))\iso \Hom(\ind^S_G(M), C) \]
where $\Hom$-modules have diagonal actions.
\end{prop}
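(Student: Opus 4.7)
The proof will proceed by constructing an explicit $G$-equivariant isomorphism, in the same spirit as the section-based map used in Proposition~\ref{IndEqualsCoind}. I would define the candidate map
\[\Phi \colon \Hom_S(\Zpiof{G}, \Hom(M,C)) \longrightarrow \Hom(\Zpiof{G}\hotimes[S] M,\, C),\qquad \Phi(f)(g\otimes m) = g\cdot f(g^{-1})(m),\]
for $g\in G$ and $m\in M$, extending by $\Z[\pi]$-linearity and continuity, and construct the inverse $\Psi(F)(g)(m) := g\cdot F(g^{-1}\otimes m)$. The naive formula $f(g)(m)$ does not work directly: the convention for $\coind$ treats $f$ as left $S$-linear with respect to left multiplication on $\Zpiof{G}$, whereas the balancing in $\Zpiof{G}\hotimes[S] M$ uses the right $S$-action via right multiplication, and the twist by $g\mapsto g^{-1}$ is precisely the device that reconciles these two handednesses.

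To verify well-definedness of $\Phi(f)$ on the balancing $gs\otimes m = g\otimes sm$, I would expand $\Phi(f)(gs\otimes m) = gs\cdot f(s^{-1}g^{-1})(m)$, apply left $S$-linearity of $f$ to rewrite this as $gs\cdot(s^{-1}\cdot f(g^{-1}))(m)$, and then use the diagonal $S$-action on $\Hom(M,C)$, namely $(s^{-1}\cdot h)(m)=s^{-1}h(sm)$, to collapse the expression to $g\cdot f(g^{-1})(sm) = \Phi(f)(g\otimes sm)$. Left $S$-linearity of $\Psi(F)$ follows by the same calculation in reverse, and the identities $\Phi\Psi=\id$ and $\Psi\Phi=\id$ reduce to the trivial cancellation $g\cdot g^{-1}=1$. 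Continuity is automatic from continuity of the defining formulas together with the universal property of the completed tensor product.

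Finally, $G$-equivariance is a direct computation: with the action $(g\cdot f)(x)=f(xg)$ on the coinduced side and the diagonal action $(g\cdot F)(y)=g\cdot F(g^{-1}y)$ on the right-hand side, both $\Phi(g\cdot f)(h\otimes m)$ and $(g\cdot\Phi(f))(h\otimes m)$ evaluate to $h\cdot f(h^{-1}g)(m)$. The main subtlety, as indicated, is pinning down the correct twist by inversion so that the left/right conventions align; once this is in place the remaining verifications are purely mechanical manipulations of the explicit formulas, and naturality in $M$ and $C$ is manifest.
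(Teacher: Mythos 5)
Your map $\Phi(f)(g\otimes m) = g\cdot f(g^{-1})(m)$ is exactly the isomorphism the paper uses, and your verifications of well-definedness and $G$-equivariance correctly spell out what the paper declares "readily verified." Same approach, just with the details made explicit.
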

\begin{proof}
The isomorphism is given by the map
\begin{equation*}
\Phi\colon \Hom_S(\Zpiof{G},\Hom(M, C))\to  \Hom(\Zpiof{G}\hotimes_S M, C),\quad
\Phi(f)(x\otimes m) = xf(x^{-1})(m)
\end{equation*}
for $f\in \Hom_S(\Zpiof{G},\Hom(M, C)), x\in G$ and $m\in M$. One may readily verify that this is well-defined and $G$-linear.
\end{proof}
\begin{prop}\label{preShapiro}
Let $P\in\Gmod[\pi]{C}$ and $A\in\mathfrak{D}_\pi(S)$. Then there is a natural isomorphism of $\Zhat$-modules
\[\Hom_S(P, A) \iso \Hom_G(P, \coind_G^S(A)) \]
\end{prop}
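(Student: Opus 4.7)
The plan is to exhibit an explicit pair of mutually inverse natural transformations, giving the restriction--coinduction adjunction directly in the profinite setting. All relevant topological issues will essentially evaporate because $P$ is compact and $A$ is discrete, so any continuous map factors through a finite quotient of $P$.

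Define $\Psi\colon \Hom_G(P,\coind_G^S(A))\to \Hom_S(P,A)$ by evaluation at $1=1_G\in \Zpiof{G}$:
\[\Psi(\phi)(p)=\phi(p)(1).\]
To check $S$-linearity, for $s\in S$ and $p\in P$ one computes
\[\Psi(\phi)(sp)=\phi(sp)(1)=(s\cdot \phi(p))(1)=\phi(p)(s)=s\cdot \phi(p)(1),\]
where the three middle equalities use, in turn, $G$-linearity of $\phi$, the definition of the $G$-action on $\coind_G^S(A)$, and $S$-linearity of $\phi(p)\in \Hom_S(\Zpiof{G},A)$.

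Define the candidate inverse $\Theta\colon \Hom_S(P,A)\to \Hom_G(P,\coind_G^S(A))$ by
\[\Theta(\psi)(p)(x)=\psi(xp)\qquad (x\in G),\]
extended $\Z[\pi]$-linearly in the first argument from $G$ to $\Zpiof{G}$. Fixing $p$, the map $x\mapsto \psi(xp)$ is continuous (by continuity of the $G$-action on $P$ and of $\psi$) and $S$-linear (since $\psi$ is), so it genuinely lies in $\coind_G^S(A)=\Hom_S(\Zpiof{G},A)$. The assignment $\Theta(\psi)$ is then $G$-linear because
\[\Theta(\psi)(gp)(x)=\psi(xgp)=\Theta(\psi)(p)(xg)=(g\cdot \Theta(\psi)(p))(x).\]

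Mutual inverseness is immediate: $\Psi\Theta(\psi)(p)=\psi(1\cdot p)=\psi(p)$, and
\[\Theta\Psi(\phi)(p)(x)=\Psi(\phi)(xp)=\phi(xp)(1)=(x\cdot \phi(p))(1)=\phi(p)(x),\]
so $\Theta\Psi(\phi)=\phi$. Naturality in both $P$ and $A$ is visible from the pointwise formulas. I don't anticipate any real obstacle here: the only step that might give pause is confirming that $\Theta(\psi)(p)$ defines a \emph{continuous} $S$-homomorphism on all of $\Zpiof{G}$, but as $\psi\colon P\to A$ factors through a finite quotient of $P$ (since $P$ is compact and $A$ discrete), the map $x\mapsto \psi(xp)$ factors through a finite quotient of $\Zpiof{G}$ and is trivially continuous. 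The same finiteness observation ensures that both Hom-groups carry the discrete topology, so the bijection $\Psi$ is automatically a topological isomorphism.
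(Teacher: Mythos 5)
Your map $\Theta$ is exactly the isomorphism the paper writes down, $h\mapsto(p\mapsto(x\mapsto h(xp)))$, and your verification (including the explicit inverse by evaluation at $1$ and the topological remarks) fills in precisely the checks the paper leaves to the reader. Correct, and essentially the same approach.
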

\begin{proof}
The isomorphism is given by the map
\[h\mapsto \left(p\mapsto (x\mapsto h(xp))\right) \]
where $h\in \Hom_S(P, A)$, $p\in P$ and $x\in G$.
\end{proof}
Finally there are several statements about derived functors arising from the above identites, which together are known by the collective name of `the Shapiro lemma' or `Shapiro-Eckmann identities'.
\begin{prop}\label{Shapiro1}[`(Absolute) Shapiro Lemma']
Let $M\in\mathfrak{C}_\pi(S)$ and $A\in\mathfrak{D}_\pi(S)$. There are natural isomorphisms
\[H_\bullet(G,\ind_G^S(M))\iso H_\bullet(S, M), \quad H^\bullet(G,\coind_G^S(A))\iso H^\bullet(S, A) \]
\end{prop}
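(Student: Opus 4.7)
The plan is to lift the adjunction identity of Proposition \ref{preShapiro}, together with its completed-tensor-product analogue
\[ P\hotimes[S]M \iso P\hotimes[G]\ind_G^S(M) \qquad (P\in\Gmod[\pi]{C}^\perp(G),\ M\in\Gmod[\pi]{C}(S)), \]
from the level of individual modules to the level of derived functors, by feeding each identity a projective resolution of the trivial module. The tensor identity itself is routine, following from associativity of completed tensor products together with the obvious natural isomorphism $P\hotimes[G]\Zpiof{G}\iso P|_S$ of right $S$-modules.

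For the cohomology statement I would take a projective resolution $P_\bullet\to\Z[\pi]$ in $\Gmod[\pi]{C}(G)$ and apply Proposition \ref{preShapiro} term by term to obtain a natural isomorphism of cochain complexes
\[ \Hom_G\bigl(P_\bullet, \coind_G^S(A)\bigr)\iso \Hom_S\bigl(P_\bullet, A\bigr). \]
The left side computes $H^\bullet(G, \coind_G^S(A))$ by definition, so provided $P_\bullet|_S$ remains a projective resolution of $\Z[\pi]$ over $S$ the right side computes $H^\bullet(S, A)$. The homology statement is essentially dual: take a projective resolution $P_\bullet\to\Z[\pi]$ in $\Gmod[\pi]{C}^\perp(G)$ and apply the tensor identity above level-wise, reducing again to the same question about $P_\bullet|_S$.

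The main---in fact the only---obstacle is therefore the stability statement: restriction from $G$ to a closed subgroup $S$ must preserve projectives in both $\Gmod[\pi]{C}(G)$ and $\Gmod[\pi]{C}^\perp(G)$. By the $(\res,\coind)$ adjunction this is equivalent to exactness of $\coind_G^S$ (and of its right-module counterpart), which in turn reduces to the standard fact that $\Zpiof{G}$ is projective as a $\Zpiof{S}$-module whenever $S$ is a closed subgroup (an inverse-limit argument from the case of finite quotients, where one has a genuine direct sum decomposition over coset representatives). Granting this, exactness of restriction is automatic, so $P_\bullet|_S$ is a bona fide projective resolution, and the two natural transformations above descend to natural isomorphisms of derived functors, proving both halves of the proposition simultaneously.
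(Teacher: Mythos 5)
Your proposal is correct and follows essentially the same route as the paper: the cohomology half is Proposition \ref{preShapiro} applied termwise to a projective resolution of $\Z[\pi]$ by left $G$-modules, the homology half is the identity $P_\bullet\hotimes[G]\ZG{}\hotimes[S]M = P_\bullet\hotimes[S]M$, and in both cases the only substantive input is that restriction to a closed subgroup preserves projectives, which the paper quotes as Proposition \ref{ResProjectives} (from Symonds--Weigel) and which you re-derive from projectivity of $\Zpiof{G}$ over $\Zpiof{S}$. No gaps.
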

\begin{proof}
Take a projective resolution $P_\bullet$ of $\Z[\pi]$ by right $G$-modules. By Proposition \ref{ResProjectives} this is also a projective resolution in $\mathfrak{C}(S)$ by restriction. The identity
\[P_\bullet\hotimes[G]\ZG{}\hotimes[S]M = P_\bullet\hotimes[S] M \]
gives the first part of the proposition upon passing to homology.

For the cohomology identity take a projective resolution $P_\bullet$ of $\Z[\pi]$ by left $G$-modules. Applying Proposition \ref{preShapiro} and passing to cohomology gives the result.
\end{proof}
\begin{rmk}
In the case $U$ is open in $G$ and when $\Z[\pi]$ has type FP$_\infty$ as a $G$-module---and hence as a $U$-module by Proposition \ref{FPnModsAndSubgps} below---then all the homology, cohomology functors and the induction and coinduction functors are continuous functors and one may take limits to find natural isomorphisms
\[{\bf H}_\bullet(G,\bind_G^U(M))\iso {\bf H}_\bullet(U, M), \quad {\bf H}^\bullet(G,\bcoind_G^U(M))\iso {\bf H}^\bullet(U, M) \]
for all $M\in \mathfrak{P}_\pi(U)$.
\end{rmk}
Let $S$ be a normal subgroup of $G$ and let $M\in\Gmod[\pi]{C}, A\in\Gmod[\pi]{D}$. Then the cohomology groups of $S$ acquire $G$-actions as follows. Let $P_\bullet$ be a projective resolution of $\Z[\pi]$ in $\Gmod[\pi]{C}$. For $g\in G$, $p\in P_r$ and $f\in\Hom_S(P_r, A)$ define 
\[ (g\star f)(p) = gf(g^{-1}p)  \]
One may verify that this is well defined and so gives a left action of $G$ on $H^r(S, \res^G_S(A))$. Furthermore if we identify $\coind^S_G(A)$ with $\Hom(\Zpiof{G/S}, A)$ by Proposition \ref{IndandCoinsasDiags} then for $g\in G$ and $f\in \Hom_G(P_r, \coind^S_G(A))$ then we may define
\[(g\star f)(p)(xS) = gf(p)(xgS) \]
for $p\in P_r$ and $x\in G$. This gives a $G$-action on $H^r(G,\coind^S_G(A))$.
We may define similar constructs in homology. Let $P_\bullet$ be a projective resolution of $\Z[\pi]$ in $\Gmod[\pi]{C}^\perp$. Define a right $G$-action on $P_r\hotimes[S] \res^G_S(M)$ by 
\[(p\otimes m )\star g = (pg)\otimes (g^{-1}m)\]
for $p\in P_r, m\in M$ and $g\in G$. Identifying $\ind^S_G(M)$ with $\Zpiof{G/S}\hotimes M$ there is a right $G$-action on $P_\bullet\hotimes[G] \ind^S_G(M)$ given by 
\[(p\otimes xS\otimes m)\star g = p\otimes xgS \otimes m  \]
for $p\in P$, $g,x\in G$ and $m\in M$. These constructions give right actions on $H_r(S, \res^G_S(M))$ and $H_r(G, \ind^S_G(M)$.
\begin{prop}\label{ShapiroWithAction}
Suppose that $S$ is a normal subgroup of $G$ and let $M\in\Gmod[\pi]{C}, A\in\Gmod[\pi]{D}$. Then the Shapiro isomorphisms
\[H_\bullet(G,\ind_G^S(M))\iso H_\bullet(S, \res^G_S(M)), \quad H^\bullet(G,\coind_G^S(A))\iso H^\bullet(S, \res^G_S(A))\]
are isomorphisms of $G$-modules where all modules are endowed with the natural $G$-actions as defined above.
\end{prop}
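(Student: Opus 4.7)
The plan is to observe that each Shapiro isomorphism from Proposition \ref{Shapiro1} admits an explicit chain-level representative, to verify that each of the chain-level actions in the paragraphs preceding the proposition is well-defined and commutes with the differentials, and finally to check by direct computation that the chain-level Shapiro maps are $G$-equivariant. Equivariance then passes automatically to (co)homology.

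First I would verify well-definedness of each chain-level action. For the homology action $(p\otimes m)\star g = pg \otimes g^{-1}m$ on $P_\bullet \hotimes[S] \res^G_S(M)$, compatibility with the $S$-balanced tensor relations hinges precisely on normality of $S$: one writes $psg = pg\cdot(g^{-1}sg)$ with $g^{-1}sg\in S$, so that $psg\otimes g^{-1}m = pg\otimes (g^{-1}sg)(g^{-1}m) = pg\otimes g^{-1}sm$. For the cohomology action on $\Hom_G(P_r, \coind^S_G(A))$ one notes that the operation $\phi\mapsto [xS\mapsto \phi(xgS)]$ defines a continuous self-map of $\Hom(\Zpiof{G/S}, A)$ which commutes with the diagonal $G$-module structure supplied by Proposition \ref{IndandCoinsasDiags}, and therefore descends to an action on $\Hom_G(P_r, -)$. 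Similar checks dispose of the homology-on-$\ind$ and cohomology-on-$\res$ sides. That each chain-level action commutes with the differentials is then immediate, so one obtains actions on (co)homology as asserted.

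Second, I would verify $G$-equivariance of the chain-level Shapiro maps themselves. For homology, the identification $P_\bullet \hotimes[G] (\ZG{} \hotimes[S] M) = P_\bullet \hotimes[S] M$ used in the proof of Proposition \ref{Shapiro1} sends $p\otimes x\otimes m$ to $px\otimes m$. Pulling the diagonal-formulation action on $\Zpiof{G/S}\hotimes M$ back through Proposition \ref{IndandCoinsasDiags} yields the equivalent direct-formulation action $(p\otimes x\otimes m)\star g = p\otimes xg\otimes g^{-1}m$, which maps under Shapiro to $pxg\otimes g^{-1}m = (px\otimes m)\star g$, as required. For cohomology, composing Proposition \ref{preShapiro} with Proposition \ref{IndandCoinsasDiags} gives a chain-level map $\Phi\colon \Hom_G(P_r, \Hom(\Zpiof{G/S}, A))\to \Hom_S(P_r, A)$ of the form $\Phi(\tilde f)(p) = \tilde f(p)(S)$. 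A direct calculation gives $\Phi(g\star\tilde f)(p) = \tilde f(p)(gS)$, while $(g\star\Phi(\tilde f))(p) = g\tilde f(g^{-1}p)(S)$; the $G$-equivariance of $\tilde f$ under the diagonal action turns the latter into $g\cdot (g^{-1}\cdot\tilde f(p))(S) = \tilde f(p)(gS)$, matching the former.

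The main difficulty is bookkeeping rather than conceptual: one must track the twin direct and diagonal descriptions of $\ind^S_G(M)$ and $\coind^S_G(A)$, distinguish between the left and right settings for cohomology and homology respectively, and keep the continuity of each construction in view. Once this is organised, the proof is a sequence of routine chain-level computations.
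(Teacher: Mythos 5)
Your proposal carries out exactly the verification the paper summarizes in one line, namely ``tracing the various isomorphisms around and verifying that the $G$-actions match up'': you exhibit chain-level representatives of the Shapiro isomorphisms, use normality of $S$ to justify well-definedness of the $\star$-actions, and check equivariance by direct computation, so the approach is the same as the paper's. The computations are correct; the only point worth flagging is that on the cohomology side you have used the formula $(g\star\tilde f)(p)(xS)=\tilde f(p)(xgS)$, whereas the paper prints $(g\star f)(p)(xS)=gf(p)(xgS)$ with an extra leading $g$ --- the printed version would not produce a $G$-linear map (nor be compatible with the Shapiro map), so your reading is the correct one and silently repairs a typo.
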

The proof is simply a matter of tracing the various isomorphisms around and verifying that the $G$-actions match up. These constructions are unsurprisingly Pontrjagin dual to one another. In particular we note the special case that
\[H_0(1, \res^G_1(M)) \iso  M^\perp, \quad H^0(1, \res^G_1(A)) \iso A \]
are isomorphisms of $G$-modules.
\begin{prop}\label{Shapiro2}
Let $M\in\mathfrak{C}_\pi(G)$ and $A\in\mathfrak{D}_\pi(G)$. There are natural isomorphisms
\[\Tor_\bullet^G(\Zpiof{S\lqt G}, M)\iso \Tor_\bullet^S(\Z[\pi], M), \quad \Ext^\bullet_G(\ZG{/S}, A)\iso \Ext^\bullet_S(\Z[\pi], A) \]
\end{prop}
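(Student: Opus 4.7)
The plan is to prove the two identities in parallel to Proposition \ref{Shapiro1}, but this time using projective resolutions on the \emph{other} variable so that the induction ends up on the first argument of Tor/Ext.

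For the Tor identity, I would take a projective resolution $Q_\bullet\to M$ in $\Gmod[\pi]{C}$. By (the left-module analogue of) Proposition \ref{ResProjectives}, restriction to $S$ preserves both exactness and projectivity, so $Q_\bullet\to \res^G_S(M)$ is again a projective resolution. The key computation is then the cancellation identity
\[\Zpiof{S\lqt G}\hotimes[G] Q_r \;=\; \bigl(\Z[\pi]\hotimes[S]\Zpiof{G}\bigr)\hotimes[G] Q_r \;\iso\; \Z[\pi]\hotimes[S] Q_r,\]
where one regards $\Zpiof{S\lqt G}$ as the right-module induction of the trivial $S$-module $\Z[\pi]$ (as in the discussion preceding Proposition \ref{IndandCoinsasDiags}, but on the right). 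Passing to homology gives $\Tor^S_\bullet(\Z[\pi],M)$ on the one side and $\Tor^G_\bullet(\Zpiof{S\lqt G},M)$ on the other.

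For the Ext identity, I would instead take a projective resolution $P_\bullet\to\Z[\pi]$ in $\mathfrak{C}_\pi(S)$, and then induce. Using that $\Zpiof{G}$ is a free profinite right $\Zpiof{S}$-module (obtained as the inverse limit of the free $\Z[\pi][S/(S\cap N)]$-modules $\Z[\pi][G/N]$ over a continuous section of $G\to G/S$, as in the argument of Proposition \ref{IndEqualsCoind}), induction $\ind_G^S=\Zpiof{G}\hotimes[S]-$ is exact; being left adjoint to the exact restriction functor, it also preserves projectives. Hence $\ind_G^S(P_\bullet)\to\ind_G^S(\Z[\pi])=\Zpiof{G/S}$ is a projective resolution in $\Gmod[\pi]{C}$. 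Applying $\Hom_G(-,A)$ and using the Hom--tensor adjunction
\[\Hom_G\!\bigl(\Zpiof{G}\hotimes[S]P_r,\,A\bigr)\;\iso\;\Hom_S\!\bigl(P_r,\,\res^G_S(A)\bigr)\]
identifies the resulting cochain complex with $\Hom_S(P_\bullet,A)$, whose cohomology is $\Ext_S^\bullet(\Z[\pi],A)$.

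There is no real obstacle here beyond bookkeeping: the content is entirely encoded in the two facts that (i) a $G$-projective resolution restricts to an $S$-projective resolution and (ii) induction from a closed subgroup is exact and preserves projectives. The only place where profiniteness rather than the discrete setting makes a difference is verifying flatness/freeness of $\Zpiof{G}$ over $\Zpiof{S}$ for \emph{closed} (rather than open) $S$, which is handled by taking inverse limits of the obvious statements for $G/N$ with $N$ open normal in $G$; naturality of all the maps involved is immediate from their explicit formulae.
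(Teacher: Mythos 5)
Your Tor computation is essentially the paper's: both take a projective resolution of $M$ by left $G$-modules, use the cancellation $\Zpiof{S\lqt G}\hotimes[G]Q_\bullet\iso\Z[\pi]\hotimes[S]Q_\bullet$, and invoke Proposition \ref{ResProjectives} to recognise the restricted complex as an $S$-projective resolution of $M$. For the Ext identity, however, you take a genuinely different route. The paper resolves the \emph{second} variable: it takes an injective resolution $J^\bullet$ of $A$ in $\Gmod[\pi]{D}$, notes via Pontrjagin duality and Proposition \ref{ResProjectives} that $\res^G_S(J^\bullet)$ is still injective over $S$, and then runs the chain $\Hom_G(\Zpiof{G/S},J^\bullet)\iso\Hom_G(\Z[\pi],\coind_G^S(J^\bullet))\iso\Hom_S(\Z[\pi],J^\bullet)$ using Propositions \ref{IndandCoinsasDiags}, \ref{preShapiro} and Corollary \ref{HomIndAndCoind}. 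You instead resolve the \emph{first} variable: induce an $S$-projective resolution of $\Z[\pi]$ up to a $G$-projective resolution of $\Zpiof{G/S}$ and apply the adjunction $\Hom_G(\Zpiof{G}\hotimes[S]P_r,A)\iso\Hom_S(P_r,A)$. This is correct --- exactness of $\ind_G^S$ and preservation of projectives are available (Proposition \ref{IndAndFPProjs} gives exactness and the finitely generated case; the general case follows, as you say, from $\ind_G^S$ being left adjoint to the exact restriction functor) --- but note one small point of bookkeeping: the paper \emph{defines} $\Ext_G^\bullet(M,-)$ as the right-derived functor of $\Hom_G(M,-)$, i.e.\ via injective resolutions of the discrete argument, so your computation implicitly relies on the standard balancing of $\Ext$ between projective resolutions in $\Gmod[\pi]{C}$ and injective resolutions in $\Gmod[\pi]{D}$. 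That fact is true and is used freely elsewhere in the paper (e.g.\ whenever $H^\bullet(G,A)$ is computed from a projective resolution of $\Z[\pi]$), so it is not a gap, but it is the hypothesis your route needs and the paper's route avoids; what your route buys in exchange is a computation that never touches injective objects and runs exactly parallel to the Tor half.
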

\begin{proof}
Take an projective resolution $P_\bullet$ of $M$ by left $G$-modules. The identity
\[\Zpiof{S\lqt G}\hotimes[G]P_\bullet=\Z[\pi]\hotimes[S]\ZG{}\hotimes[G]P_\bullet=\Z[\pi]\hotimes[S]P_\bullet \]
gives the required isomorphism on passing to homology. 

For the second identity take an injective resolution $J^\bullet$ of $A$ in $\Gmod[\pi]{D}$. By Proposition \ref{ResProjectives} and Pontrjagin duality $\res^G_S(J^\bullet)$ is still an injective resolution of $A$ in $\mathfrak{D}_{\pi}(S)$ by restriction. Using Propositions \ref{IndandCoinsasDiags} and \ref{preShapiro} and Corollary \ref{HomIndAndCoind} we have
\[\Hom_G(\Zpiof{G/S}, J^\bullet) =  \Hom_G(\ind_G^S(\Z[\pi]), J^\bullet) \iso \Hom_G(\Z[\pi], \coind^S_G(J^\bullet))\iso\Hom_S(\Z[\pi], J^\bullet)  \] 
Now pass to cohomology to get the result.  
\end{proof}
\subsection{Modules of type FP$_\infty$}
We mention some results about modules of type FP$_\infty$ which we shall need. The first is a consequence of the generalised Schanuel's Lemma. See Section VIII.4 of \cite{BrownCohom}.
\begin{prop}\label{Schanuel}
Let $M$ be a finitely generated module in \Gmod[\pi]{C}. Then the following are equivalent:
\begin{itemize}
\item $M$ has type FP$_n$
\item there exists a partial projective resolution $F_n\to\cdots \to F_0\to M$ with each $F_i$ free of finite rank
\item for every partial projective resolution $P_k\to\cdots \to P_0\to M$ for $k<n$ with each $P_i$ finitely generated, $\ker(P_k\to P_{k-1})$ is finitely generated.
\end{itemize}
\end{prop}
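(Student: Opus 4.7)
The plan is to establish the cycle of implications $(2)\Rightarrow(1)\Rightarrow(3)\Rightarrow(2)$, using Schanuel's Lemma, which extends to the abelian category \Gmod[\pi]{C} (which has enough projectives) by the standard pullback argument. The implication $(2)\Rightarrow(1)$ is immediate, since a free module of finite rank is in particular a finitely generated projective.

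For $(1)\Rightarrow(3)$, fix a partial projective resolution $P_k\to\cdots\to P_0\to M$ with each $P_i$ finitely generated and $k<n$, and let $Q_\bullet\to M$ be a projective resolution witnessing (1), so that $Q_i$ is finitely generated for $i\leq n$. Writing $K=\ker(P_k\to P_{k-1})$ and $L=\ker(Q_k\to Q_{k-1})$ (with the convention $P_{-1}=Q_{-1}=M$ when $k=0$), both
\[0\to K\to P_k\to P_{k-1}\to\cdots\to P_0\to M\to 0 \quad\text{and}\quad 0\to L\to Q_k\to Q_{k-1}\to\cdots\to Q_0\to M\to 0\]
are exact sequences with projective middle terms. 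Iterated application of the short Schanuel Lemma to these two resolutions of $M$ of length $k+1$ yields an isomorphism
\[K \oplus Q_k\oplus P_{k-1}\oplus Q_{k-2}\oplus\cdots \;\iso\; L \oplus P_k\oplus Q_{k-1}\oplus P_{k-2}\oplus\cdots\]
The right hand side is finitely generated, since $L=\im(Q_{k+1}\to Q_k)$ is a quotient of $Q_{k+1}$, which is finitely generated because $k+1\leq n$, and every $P_i,Q_i$ for $i\leq k$ is finitely generated by hypothesis. Hence $K$ appears as a summand of a finitely generated module, so is itself finitely generated.

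For $(3)\Rightarrow(2)$, I would build the required resolution inductively. Since $M$ is finitely generated, choose a surjection $F_0\twoheadrightarrow M$ with $F_0$ free of finite rank. Having produced a partial free resolution $F_j\to\cdots\to F_0\to M$ of length $j<n$ with each $F_i$ free of finite rank, hypothesis (3) applied at level $j$ tells us that $\ker(F_j\to F_{j-1})$ is finitely generated, so admits a surjection from an $F_{j+1}$ free of finite rank; extending the resolution preserves exactness. Iterating up to $j=n-1$ produces the free partial projective resolution demanded by (2). The main technical point throughout is the validity of Schanuel's Lemma in \Gmod[\pi]{C}; however since this category is abelian with enough projectives, the classical pullback construction transfers verbatim, so this should not present a genuine obstacle.
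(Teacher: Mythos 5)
Your proof is correct and follows exactly the route the paper intends: the paper gives no argument of its own but simply cites the generalised Schanuel Lemma (Section VIII.4 of Brown), and your cycle $(2)\Rightarrow(1)\Rightarrow(3)\Rightarrow(2)$ with the iterated Schanuel isomorphism is precisely that standard proof, transferred to $\Gmod[\pi]{C}$ using the fact that it is an abelian category with enough projectives. The only point worth noting is that $(2)\Rightarrow(1)$ also requires extending the partial free resolution to a full projective resolution, which is immediate from the existence of enough projectives.
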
 
\begin{prop}[Proposition 3.3.1 of \cite{SW00}]\label{ResProjectives}
Let $P\in\Gmod[\pi]{C}$ be projective and $S$ a closed subgroup of $G$. Then the restriction $\res^G_S(P)$ is projective in $\mathfrak{C}_{\pi}(S)$.
\end{prop}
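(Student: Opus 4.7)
The plan is to reduce the proposition to showing that the canonical module $\Zpiof{G}$, viewed as a $\Zpiof{S}$-module by restriction, is free. Every projective in $\Gmod[\pi]{C}$ is a direct summand of a free object, and such a free object has the form $F=\Zpiof{G}\hotimes\Zpiof{X}$ for some profinite space $X$, with $G$ acting only on the first factor. Restriction is additive, respects direct summands, and commutes with the completed tensor product with $\Zpiof{X}$, so the claim follows once I show $\res^G_S(\Zpiof{G})$ is a free object of $\mathfrak{C}_\pi(S)$.

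For the core step I view $G$ as a topological $S$-space under left multiplication. The $S$-action is free with orbit space the coset space $S\lqt G$. The standard fact that the quotient map $G\to S\lqt G$ admits a continuous section $\sigma$ then furnishes an $S$-equivariant homeomorphism
\[S\times (S\lqt G)\xrightarrow{\,\sim\,}G,\quad (s,x)\mapsto s\cdot\sigma(x),\]
with $S$ acting on the first factor only. Applying the canonical functor $X\mapsto\Zpiof{X}$ from profinite spaces to $\mathfrak{C}_\pi$, and invoking the behaviour of $\hotimes$ with respect to profinite spaces (\cite{RZ00}, \S 5.5), produces an isomorphism of $\Zpiof{S}$-modules
\[\res^G_S(\Zpiof{G})\iso \Zpiof{S}\hotimes\Zpiof{S\lqt G},\]
where the action on the right-hand factor is trivial. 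This displays $\res^G_S(\Zpiof{G})$ as a free profinite $\Zpiof{S}$-module on the profinite set $S\lqt G$, which is in particular projective.

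The principal obstacle is the topological input, namely the existence of the continuous section of $G\to S\lqt G$. One establishes this by choosing coset representatives in each finite quotient $G/N$ for a cofinal system of open normal subgroups $N\triangleleft G$ consistently with the transition maps, and passing to the inverse limit. Alternatively the entire proof can be phrased as an inverse-limit manipulation: for each such $N$ the ring $\Z[\pi][G/N]$ is manifestly free as a $\Z[\pi][S_N]$-module, where $S_N$ denotes the image of $S$ in $G/N$, and a coherent choice of basis across the system delivers the required decomposition of $\res^G_S(\Zpiof{G})$ in the limit---the essential combinatorial task of making coherent choices across the tower is the same as producing the section.
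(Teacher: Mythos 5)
Your argument is correct, and it is essentially the argument that Symonds and Weigel give for Proposition 3.3.1 in \cite{SW00}; the present paper does not reprove the statement but simply cites that reference. The two key inputs you identify — that every projective in $\mathfrak{C}_\pi(G)$ is a direct summand of a free module $\Zpiof{G}\hotimes\Zpiof{X}$, and that a continuous section $\sigma\colon S\lqt G\to G$ (Proposition 2.2.2 of \cite{RZ00}) furnishes an $S$-equivariant homeomorphism $S\times(S\lqt G)\iso G$ and hence an isomorphism $\res^G_S(\Zpiof{G})\iso\Zpiof{S}\hotimes\Zpiof{S\lqt G}$ of free $\Zpiof{S}$-modules — are exactly the ingredients in the cited proof, so there is nothing to compare or flag.
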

\begin{prop}\label{FPnModsAndSubgps}
Let $G$ be a profinite group and $U$ an open subgroup of $G$. Let $M\in\Gmod[\pi]{C}$. Then $M$ has type FP$_n$ if and only if $\res^G_U(M)$ is of type FP$_n$ as a $U$-module.
\end{prop}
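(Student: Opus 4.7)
The plan is to prove both implications using the Schanuel-style characterisation of type FP$_n$ given in Proposition \ref{Schanuel}, combined with the fact that $\Zpiof{G}$ is finitely generated as a module over $\Zpiof{U}$ whenever $U$ is open in $G$ (indeed, it is free of rank $[G:U]$ on a set of coset representatives). In both directions the key observation is that restriction behaves well with respect to both the projectivity (by Proposition \ref{ResProjectives}) and the finite generation of terms in a resolution.

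For the forward direction, I would start from a partial projective resolution $F_n\to\cdots\to F_0\to M$ with each $F_i$ free of finite rank over $\Zpiof{G}$, as provided by Proposition \ref{Schanuel}. Restricting to $U$, Proposition \ref{ResProjectives} gives that each $\res^G_U(F_i)$ remains projective, and since $\Zpiof{G}$ is finitely generated over $\Zpiof{U}$ (as $U$ is open), each $\res^G_U(F_i)$ is finitely generated over $\Zpiof{U}$. Hence $\res^G_U(M)$ has type FP$_n$.

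For the reverse direction, I would proceed by induction on $n$. The base case $n=0$ is immediate: any $\Zpiof{U}$-generating set for $\res^G_U(M)$ is \emph{a fortiori} a $\Zpiof{G}$-generating set for $M$, since $\Zpiof{U}\subseteq \Zpiof{G}$ and $M$ is a $G$-submodule of itself. For the inductive step, assume $M$ has type FP$_{n-1}$ over $G$, and choose a partial projective resolution $P_{n-1}\to\cdots\to P_0\to M$ with each $P_i$ finitely generated and projective over $\Zpiof{G}$. Let $K=\ker(P_{n-1}\to P_{n-2})$; it must be shown that $K$ is finitely generated as a $G$-module. Restricting the resolution to $U$, each $\res^G_U(P_i)$ is projective (Proposition \ref{ResProjectives}) and finitely generated over $\Zpiof{U}$ (since $U$ is open). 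Since $\res^G_U(M)$ has type FP$_n$ over $U$, the third characterisation in Proposition \ref{Schanuel} shows that $\res^G_U(K)$ is finitely generated over $\Zpiof{U}$. As in the base case, any such $U$-generating set is also a $G$-generating set, so $K$ is finitely generated over $\Zpiof{G}$, completing the induction.

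The only step requiring any care is the transfer of finite generation from $U$-module to $G$-module, which is in fact a triviality once one observes $\Zpiof{U}\subseteq\Zpiof{G}$; the rest is a direct application of Propositions \ref{Schanuel} and \ref{ResProjectives} together with the openness of $U$. There is no genuine obstacle—the result simply packages together these two facts.
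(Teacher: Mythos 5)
Your proof is correct and follows essentially the same route as the paper: the forward direction is immediate from the facts that restriction to an open subgroup preserves projectivity and finite generation, and the reverse direction applies the Schanuel characterisation (Proposition \ref{Schanuel}) to a restricted partial resolution together with the observation that a $U$-generating set is already a $G$-generating set. The only cosmetic difference is that you organise the converse as an explicit induction where the paper verifies the third Schanuel condition directly for all partial resolutions, but the content is identical.
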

\begin{proof}
Since $\res^G_U$ takes finitely generated modules to finitely generated modules and projective modules to projective modules, one direction is clear. For the other, assume $M'=\res^G_U(M)$ is of type FP$_n$ as a $U$-module. First note that since $M'$ is finitely generated as a $U$-module, $M$ is a finitely generated $G$-module. Suppose we have a partial resolution $P_k\to\cdots\to P_0\to M$ ($0\leq k<n$) with each $P_k$ a finitely generated projective $G$-module. We must show that the kernel $\ker(P_k\to P_{k-1})$ is finitely generated over $G$. But since $\res^G_U(P_\bullet)$ is a partial resolution of $M'$ by finitely generated $U$-modules, the kernel is finitely generated over $U$ and we are done.
\end{proof}
\begin{prop}\label{IndAndFPProjs}
Let $G$ be a profinite group and $S$ a closed subgroup of $G$. Then $\ind^S_G(-)$ is an exact functor taking finitely generated projectives to finitely generated projectives. 
\end{prop}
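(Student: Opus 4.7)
The plan is to reduce everything to the module $\Zpiof{S}$ and then use additivity. First I will handle exactness, then the behaviour on finitely generated projectives.

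For exactness, note that $\ind_G^S(-) = \Zpiof{G}\hotimes[S] -$ where $\Zpiof{G}$ is regarded as a right $\Zpiof{S}$-module via the inclusion $S\hookrightarrow G$. By Proposition \ref{ResProjectives} (applied in the right-module category $\Gmod[\pi]{C}^\perp$), restriction of the free right $G$-module $\Zpiof{G}$ to $S$ yields a projective object in $\mathfrak{C}_\pi^\perp(S)$. Hence $\Zpiof{G}$ is flat as a right $\Zpiof{S}$-module and the completed tensor product $\Zpiof{G}\hotimes[S] -$ is exact on $\mathfrak{C}_\pi(S)$, as required.

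For the second assertion, begin with the base case $M=\Zpiof{S}$. The canonical identification
\[ \ind_G^S(\Zpiof{S}) = \Zpiof{G}\hotimes[S]\Zpiof{S} \iso \Zpiof{G} \]
shows that $\ind_G^S$ sends $\Zpiof{S}$ to the free $G$-module of rank one. Since $\ind_G^S$ is additive (it is a tensor product), it sends $\Zpiof{S}^{\oplus n}$ to $\Zpiof{G}^{\oplus n}$ for every finite $n$. Now let $P$ be a finitely generated projective in $\mathfrak{C}_\pi(S)$. By definition of projectivity, $P$ is a direct summand of a finitely generated free module $\Zpiof{S}^{\oplus n}$; choose $Q\in\mathfrak{C}_\pi(S)$ with $P\oplus Q \iso \Zpiof{S}^{\oplus n}$. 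Applying the additive functor $\ind_G^S$ gives
\[ \ind_G^S(P) \oplus \ind_G^S(Q) \iso \Zpiof{G}^{\oplus n}, \]
so $\ind_G^S(P)$ is a direct summand of a finitely generated free $G$-module, and hence is itself a finitely generated projective in $\Gmod[\pi]{C}$.

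There is no substantial obstacle here; the only step requiring minor care is ensuring that the flatness argument in the first paragraph genuinely goes through for completed tensor products rather than naive ones. This is handled by appealing to Proposition \ref{ResProjectives} in the right-module category, together with the standard fact (Chapter 5 of \cite{RZ00}) that $-\hotimes[S] N$ is exact whenever the left-hand factor is a projective object, so that no ad hoc flatness verification is needed.
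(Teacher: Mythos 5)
Your proof is correct and follows essentially the same route as the paper: exactness of $\ind_G^S$ via projectivity (the paper cites freeness, you cite Proposition \ref{ResProjectives}) of $\Zpiof{G}$ as an $S$-module, and then the observation that a finitely generated projective is a summand of $\Zpiof{S}^{\oplus n}$, which the additive functor $\ind_G^S$ carries to a summand of $\Zpiof{G}^{\oplus n}$.
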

\begin{proof}
Since $\Zpiof{G}$ is free as an $S$-module, it is projective and hence $\Zpiof{G}\hotimes[S]-$ is an exact functor. A finitely generated projective is by definition a summand of some finitely generated free module $\Zpiof{S}^{\oplus m}$. Taking induced modules gives a direct summand of the finitely generated free module $\Zpiof{G}^{\oplus m}$, hence a finitely generated projective module in \Gmod[\pi]{C}.
\end{proof}
\begin{prop}\label{IndAndFPMods1}
Let $G$ be a profinite group and $S$ a closed subgroup of $G$. Let $M\in \mathfrak{C}_\pi(S)$. If $M$ is finitely generated as an $S$-module then $\ind^S_G(M)$ is finitely generated as a $G$-module. 
\end{prop}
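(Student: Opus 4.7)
The plan is to mimic the argument used for Proposition \ref{IndAndFPProjs}, only now with finite generation in place of projectivity. Since $M$ is finitely generated as an $S$-module, there exists some $n$ and a surjection $\Zpiof{S}^{\oplus n} \twoheadrightarrow M$ in $\mathfrak{C}_\pi(S)$. The functor $\ind^S_G(-) = \Zpiof{G}\hotimes[S] -$ is right-exact—indeed, by Proposition \ref{IndAndFPProjs} it is fully exact, since $\Zpiof{G}$ is free (hence projective) over $\Zpiof{S}$ by Proposition \ref{ResProjectives} applied to the projective $G$-module $\Zpiof{G}$. Applying $\ind^S_G$ to the surjection above therefore yields a surjection
\[\Zpiof{G}\hotimes[S] \Zpiof{S}^{\oplus n} \twoheadrightarrow \Zpiof{G}\hotimes[S] M = \ind^S_G(M).\]

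The left-hand side is naturally isomorphic to $\Zpiof{G}^{\oplus n}$ (the completed tensor product of the free module $\Zpiof{S}^{\oplus n}$ over $S$ with $\Zpiof{G}$ simply restores the scalars to $G$). Thus $\ind^S_G(M)$ is the image of a finitely generated free $G$-module and is hence finitely generated as a $G$-module.

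There is no real obstacle here; the only minor point to be careful about is the identification $\Zpiof{G}\hotimes[S]\Zpiof{S}^{\oplus n} \iso \Zpiof{G}^{\oplus n}$, which follows from the fact that completed tensor products commute with finite direct sums together with the standard identity $\Zpiof{G}\hotimes[S]\Zpiof{S} \iso \Zpiof{G}$—both of which are immediate by taking inverse limits from the corresponding statements for finite modules.
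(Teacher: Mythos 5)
Your proof is correct and is essentially the paper's own argument: apply $\ind^S_G$ to a surjection from a finitely generated free $S$-module and observe that the result is a surjection from a finitely generated free $G$-module. The extra care about the identification $\Zpiof{G}\hotimes[S]\Zpiof{S}^{\oplus n}\iso\Zpiof{G}^{\oplus n}$ is fine but not a point of difference; note also that only right-exactness of the completed tensor product is needed here, so the appeal to full exactness is slightly more than required.
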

\begin{proof}
If $F\to M$ is an epimorphism from a finitely generated free $S$-module then applying $\ind^S_G$ gives an epimorphism from a finitely generated free $G$-module to $\ind^S_G(M)$.
\end{proof}
In the world of discrete groups the converse of this proposition is true, and indeed not difficult. Any element of the induced module may be written as a finite sum of basic tensors, hence finite generating set for the induced module gives a finite generating set consisting only of basic tensors. From this it is not difficult to derive a finite generating set for $M$. 

It is much less obvious whether the converse to Proposition \ref{IndAndFPMods1} is true---indeed it appears to be unknown.  Therefore we will coin the following property, while hoping that it turns out to be vacuous.
\begin{defn}
Let $G$ be a profinite group and let $\pi$ be a set of primes. Then the pair $(G, S)$ {\em has property FIM (with respect to $\pi$)} if the following property holds:
\begin{quote}
For every $M\in \mathfrak{C}_\pi(S)$ the induced module $\ind^S_G(M)$ is finitely generated as a $G$-module if and only if $M$ is finitely generated as an $S$-module.
\end{quote}
We say $G$ {\em has property FIM (with respect to $\pi$)} if $(G,S)$ has property FIM with respect to $\pi$ for all closed subgroups $S$ of $G$. 
\end{defn}
We will usually omit the set of primes when clear from the context. Here `FIM' stands for `finiteness (properties) of induced modules'. There is however no obstruction for open subgroups. 
\begin{prop}\label{IndAndFPMods1b}
Let $G$ be a profinite group and $U$ a open subgroup of $G$. Then $(G,U)$ has property FIM with respect to any set of primes. 
\end{prop}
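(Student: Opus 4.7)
The forward direction of the FIM property is already supplied by Proposition \ref{IndAndFPMods1}, so the work lies entirely in the converse: assuming $\ind^U_G(M)$ is finitely generated as a $G$-module, I want to conclude that $M$ is finitely generated as a $U$-module. The plan is to exploit the openness of $U$ in two ways, first to pass from $G$-finite generation down to $U$-finite generation for the induced module, and then to extract $M$ itself as a $U$-linear quotient of $\res^G_U(\ind^U_G(M))$.

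First I would record a general easy fact. Since $U$ is open, the coset space $G/U$ is finite, and a choice of transversal exhibits $\Zpiof{G}$ as a free right $\Zpiof{U}$-module of finite rank. Consequently, whenever $N$ is a $G$-module finitely generated over $G$, the restriction $\res^G_U(N)$ is finitely generated over $U$: a generating set $\{n_1,\dots,n_r\}$ for $N$ over $G$ lifts to a surjection $\Zpiof{G}^{\oplus r}\twoheadrightarrow N$, and restricting to $U$ gives a surjection from a finitely generated free $U$-module onto $N$. Applied to $N=\ind^U_G(M)$, this reduces the problem to showing that $\res^G_U(\ind^U_G(M))$ admits $M$ as a $U$-linear quotient.

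The main step is to produce such a quotient map. I would fix a transversal $\{g_1,\dots,g_n\}$ for $U$ in $G$ with $g_1=1$, so that $\Zpiof{G}=\bigoplus_{i=1}^n g_i\Zpiof{U}$ as a right $\Zpiof{U}$-module, and hence
\[
\ind^U_G(M) \;=\; \Zpiof{G}\hotimes[U] M \;=\; \bigoplus_{i=1}^n g_i\otimes M
\]
as abelian groups. The candidate projection is $\pi\colon g_i\otimes m\mapsto \delta_{i,1}\,m$. To verify $U$-linearity, note that left multiplication by $u\in U$ permutes the cosets $g_iU$, fixing $g_1U=U$ and moving every other $g_iU$ off of $U$; writing $ug_i=g_{\sigma_u(i)}h_{u,i}$ with $h_{u,i}\in U$, one checks that $\sigma_u(1)=1$ with $h_{u,1}=u$, while $\sigma_u(i)\neq 1$ for $i\neq 1$. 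Then a one-line calculation shows $\pi(u\cdot(g_i\otimes m))=u\,\pi(g_i\otimes m)$ in both cases, so $\pi$ is a $U$-linear surjection. Combining this with the previous step finishes the proof.

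I do not anticipate any significant obstacle; the argument is largely a bookkeeping exercise once one notices that the distinguished coset $U=g_1U$ produces a canonical $U$-equivariant splitting. The only subtle point is verifying $U$-linearity of $\pi$, and this turns on the elementary observation that the orbit of the trivial coset under left multiplication by $U$ is a singleton. (Alternatively, one could realise the same quotient map via the isomorphism $\ind^U_G(M)\iso\bcoind^U_G(M)$ of Proposition \ref{IndEqualsCoind} as evaluation at $1\in\Zpiof{G}$, which is a neater but perhaps less transparent presentation of the same construction.)
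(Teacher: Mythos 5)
Your proof is correct and follows the same route as the paper: restriction along the open inclusion preserves finite generation, and $M$ is recovered as a $U$-linear retract of $\res^G_U(\ind^U_G(M))$. The only difference is that the paper cites this retraction from Section 6.11 of \cite{RZ00}, whereas you construct the projection $g_i\otimes m\mapsto\delta_{i,1}m$ explicitly, which is a correct unpacking of that citation.
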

\begin{proof}
Since $U$ is open, if $\ind^U_G(M)$ is finitely generated as a $G$-module then it is also finitely generated as a $U$-module. Since $M$ is a direct summand of $\res^G_U(\ind^U_G(M))$ (see Section 6.11 of \cite{RZ00}) there is a $U$-linear retraction to $U$ and we are done.
\end{proof}
When one only considers one prime $p$ at a time and considers a (virtually) pro-$p$ group one can dispense with some of the difficulty of the finiteness conditions for modules.
\begin{prop}[Proposition 4.2.3 of \cite{SW00}]\label{FPnForVirtProP}
Let $G$ be a virtually pro-$p$ group and $M\in\Gmod[p]{C}$. Then $M$ is of type FP$_n$ if and only if $\Ext^k_G(M, F)$ is finite for every (simple) $F\in\Gmod[p]{F}$ and all $k\leq n$.
\end{prop}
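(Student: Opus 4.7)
The plan is to tackle the two implications separately. The forward direction is immediate from the definitions once a small finite module is fed into $\Hom_G$, while the reverse direction proceeds by induction on $n$, with the base case $n=0$ carrying all the essential content.

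For the forward direction, suppose $M$ is of type FP$_n$ and choose a projective resolution $P_\bullet \to M$ in $\Gmod[p]{C}$ with $P_k$ finitely generated for each $k\leq n$. Each such $P_k$ is a direct summand of some $\Zpof{G}^{r_k}$, and for a simple $F\in\Gmod[p]{F}$ the group $\Hom_G(\Zpof{G}^{r_k},F) \iso F^{r_k}$ is finite. Hence $\Hom_G(P_k,F)$ is finite for $k\leq n$, and $\Ext^k_G(M,F)$, being a subquotient of $\Hom_G(P_k,F)$, is finite as well.

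For the reverse direction I would first dispatch the base case $n=0$: assuming $\Hom_G(M,F)$ is finite for every simple $F$, I want $M$ finitely generated over $\Zpof{G}$. Fix an open normal pro-$p$ subgroup $U\leq G$. Any simple $F\in\Gmod[p]{F}$ must have trivial $U$-action, since the fixed submodule $F^U$ is nonzero (a pro-$p$ group acting on a finite $p$-group has a fixed point) and, being $U$-normal, is $G$-invariant, hence all of $F$ by simplicity. Thus the simple $p$-primary $G$-modules are exactly the simple modules over the finite group algebra $\F_p[G/U]$, of which there are only finitely many up to isomorphism. Let $J\subseteq \Zpof{G}$ be the preimage of the Jacobson radical of $\F_p[G/U]$; then $\Zpof{G}/J$ is a finite semisimple $\F_p$-algebra and the semisimple module $M/JM$ decomposes as $\bigoplus_i F_i^{n_i}$ where each $n_i$ is controlled by $\Hom_G(M,F_i)$. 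By hypothesis each $n_i$ is finite, so $M/JM$ is a finite abelian group, and the topological Nakayama lemma for compact $\Zpof{G}$-modules (using that $J$ behaves like a maximal ideal modulo the pro-$p$ radical) lifts generators of $M/JM$ to a finite generating set of $M$.

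The inductive step is then routine. Assuming the statement for $n-1$, the hypothesis for $n$ implies in particular that $M$ has type FP$_{n-1}$, so by Proposition \ref{Schanuel} one has a partial resolution $P_{n-1}\to\cdots\to P_0\to M\to 0$ by finitely generated projectives. Set $K=\ker(P_{n-1}\to P_{n-2})$; dimension-shifting identifies $\Ext^k_G(K,F)\iso \Ext^{k+n}_G(M,F)$ for $k\geq 1$, while the short exact sequence $0\to K\to P_{n-1}\to \ker(P_{n-2}\to P_{n-3})\to 0$ places $\Hom_G(K,F)$ into a long exact sequence between the finite group $\Hom_G(P_{n-1},F)$ (finite by the forward direction) and a subgroup of $\Ext^n_G(M,F)$ (finite by hypothesis). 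Hence $\Hom_G(K,F)$ is finite for every simple $F$, so the base case applies to $K$ and gives $K$ finitely generated; Proposition \ref{Schanuel} then upgrades the partial resolution to show $M$ has type FP$_n$. The main obstacle is the Nakayama argument in the base case: one must verify that the topological version of Nakayama genuinely applies to an arbitrary compact module over $\Zpof{G}$, which is where the virtually pro-$p$ hypothesis is essential, since without it $J$ need not have the pro-nilpotent behaviour that makes $\{J^iM\}$ a fundamental system of open neighbourhoods of $0$.
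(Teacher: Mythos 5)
Your proof is correct, but note that the paper itself offers no argument for this statement: it is quoted verbatim as Proposition 4.2.3 of \cite{SW00}, so there is nothing internal to compare against. Your reconstruction is the standard one and, as far as I can tell, essentially the argument of Symonds--Weigel: the forward direction by finiteness of $\Hom_G(\Zpof{G}^r,F)$, the base case by counting simples and Nakayama, and the inductive step by dimension shifting onto the syzygy $K=\ker(P_{n-1}\to P_{n-2})$ followed by Proposition \ref{Schanuel}. All the individual steps check out: the virtually pro-$p$ hypothesis enters exactly where you place it, namely in showing every simple $F\in\Gmod[p]{F}$ is inflated from the finite group $G/U$ (so that there are only finitely many simples and $\Zpof{G}/J$ is finite semisimple); and the identification of the multiplicities $n_i$ via $\Hom_G(M,F_i)=\Hom_G(M/JM,F_i)$ is fine, provided you remember that a priori $M/JM$ is a (possibly infinite) \emph{product} of simples and it is the finiteness of the continuous Hom that forces each exponent to be finite. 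Your closing worry about the topological Nakayama lemma is the one place where you are being more cautious than necessary: one does not need $\{J^iM\}$ to be a neighbourhood basis of $0$, only the statement that $JN\neq N$ for every nonzero compact $N$, and this follows immediately because any nonzero compact module surjects onto a nonzero finite module, hence onto a simple one, which is annihilated by $J$. With that observation the lifting of generators is the usual two-line argument (the closed submodule $N$ generated by the lifts satisfies $(M/N)/J(M/N)=0$, so $M/N=0$), and the proof is complete.
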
 
\begin{prop}
Let $G$ be a pro-$p$ group. Then $G$ has property FIM with respect to $\pi=\{p\}$.
\end{prop}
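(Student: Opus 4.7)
The plan is to reduce the question to a statement over $\F_p$ via a Nakayama-type criterion for pro-$p$ group rings, and then to observe that the relevant $\F_p$-vector space is independent of whether one works over $G$ or over $S$.

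First I would note that $S$, being a closed subgroup of the pro-$p$ group $G$, is itself pro-$p$. For any pro-$p$ group $H$ the completed group ring $\Zpof{H}$ is a pseudocompact local ring whose maximal ideal $\mathfrak{m}_H$ is generated by $p$ and the augmentation ideal, with residue field $\F_p$. A standard Nakayama-type argument (see Section 7.2 of \cite{RZ00}) then gives the criterion that $N\in\mathfrak{C}_p(H)$ is finitely generated as an $H$-module if and only if $\F_p\hotimes[H] N = N/\mathfrak{m}_H N$ is finite. Applying this to $H=G$ with $N=\ind^S_G(M)$ and to $H=S$ with $N=M$, the proposition reduces to producing a natural isomorphism $\F_p\hotimes[G]\ind^S_G(M)\iso \F_p\hotimes[S] M$.

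Second, I would produce this isomorphism by associativity of the completed tensor product combined with the identity $\F_p\hotimes[G]\Zpof{G}\iso \F_p$ of right $S$-modules (where the $S$-action is trivial since the $G$-action is). Concretely,
\[\F_p\hotimes[G]\ind^S_G(M) = \F_p\hotimes[G]\bigl(\Zpof{G}\hotimes[S] M\bigr)\iso \bigl(\F_p\hotimes[G]\Zpof{G}\bigr)\hotimes[S] M\iso \F_p\hotimes[S] M.\]

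The main obstacle is justifying associativity of the completed tensor product in this generality. I would handle it by first checking the identity when $M$ is finite, where the completed tensor product reduces to the usual one and associativity is classical, and then passing to the general case via a presentation $M=\varprojlim M_\alpha$ with each $M_\alpha$ finite, using that each of the functors $\F_p\hotimes[G]-$, $\Zpof{G}\hotimes[S]-$, and $\F_p\hotimes[S]-$ commutes with inverse limits in the relevant argument. The chain of equivalences above also recovers the easy direction (Proposition \ref{IndAndFPMods1}) in the pro-$p$ setting, so nothing beyond this single natural isomorphism is required.
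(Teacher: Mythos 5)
Your proof is correct, but takes the Pontrjagin-dual route to the one in the paper. The paper's argument invokes Proposition \ref{FPnForVirtProP} (Prop.\ 4.2.3 of \cite{SW00}) at $n=0$: for a pro-$p$ group $H$, a compact module $N$ is finitely generated iff $\Hom_H(N,\F_p)$ is finite, since $\F_p$ is the unique simple $p$-primary $H$-module. It then transfers this finiteness check between $G$ and $S$ via the Hom-form of induction adjunction, $\Hom_G(\ind^S_G(M),\F_p)\iso\Hom_S(M,\F_p)$. You instead use a Nakayama criterion over the pseudocompact local ring $\Zpof{H}$ — $N$ is finitely generated iff $\F_p\hotimes[H]N$ is finite — and transfer via the tensor form of the same adjunction, $\F_p\hotimes[G]\ind^S_G(M)\iso\F_p\hotimes[S]M$, established by associativity of the completed tensor product. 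These two paths are exact Pontrjagin duals of one another ($\Hom_H(N,\F_p)$ is the dual of $\F_p\hotimes[H]N$), so they are logically equivalent and each is valid only in the pro-$p$ setting where $\Zpof{H}$ is local. What your version buys is a somewhat more self-contained presentation: Nakayama over a compact local ring and associativity of $\hotimes[]$ can be set up directly, whereas the paper outsources the finiteness criterion to \cite{SW00}. Both rely on the same core facts — locality of the pro-$p$ group ring and commutation of the relevant functors with inverse limits.
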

\begin{proof}
This follows from Proposition \ref{FPnForVirtProP} and the identity
\[\Hom_G(\ind^S_G(M), \F_p) = \Hom_S(M,F_p)\]
noting that the only simple $p$-primary $G$ (or $S$) module is $\F_p$ (Lemma 7.1.5 of \cite{RZ00}). 
\end{proof}
\begin{prop}\label{IndAndFPMods2}
Let $G$ be a profinite group and $S$ a closed subgroup of $G$. Let $M\in\mathfrak{C}_\pi(S)$. If $M$ has type FP$_\infty$ as an $S$-module then $\ind^S_G(M)$ has type FP$_\infty$ as a $G$-module. If $(G,S)$ has property FIM with respect to $\pi$ then the converse also holds.
\end{prop}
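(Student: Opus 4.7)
The plan is to establish the forward direction directly from Proposition \ref{IndAndFPProjs} and to handle the converse by induction on $n$, using Schanuel's Lemma together with property FIM to convert finiteness statements about $\ind^S_G$ of a module back to statements about the module itself.

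For the forward direction, suppose $M$ is of type FP$_\infty$ as an $S$-module and pick a projective resolution $P_\bullet \to M$ by finitely generated projective $S$-modules. Since $\ind^S_G$ is exact and sends finitely generated projectives to finitely generated projectives by Proposition \ref{IndAndFPProjs}, the complex $\ind^S_G(P_\bullet) \to \ind^S_G(M)$ is a projective resolution of $\ind^S_G(M)$ over $G$ by finitely generated projectives, so $\ind^S_G(M)$ has type FP$_\infty$ as a $G$-module.

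For the converse I would assume $(G,S)$ has property FIM and $\ind^S_G(M)$ has type FP$_\infty$ as a $G$-module, and show by induction on $n$ that $M$ has type FP$_n$ as an $S$-module. The base case $n=0$ is precisely the assertion of property FIM applied to $M$. For the inductive step, suppose $M$ is of type FP$_{n-1}$. By Proposition \ref{Schanuel} we may choose a partial projective resolution
\[ P_{n-1}\to\cdots\to P_0\to M \]
over $S$ with each $P_i$ finitely generated (and free, if desired). Applying the exact functor $\ind^S_G$ produces, again by Proposition \ref{IndAndFPProjs}, a partial projective resolution of $\ind^S_G(M)$ over $G$ by finitely generated projectives. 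Because $\ind^S_G(M)$ has type FP$_\infty$ and hence FP$_n$, the third clause of Proposition \ref{Schanuel} forces the kernel $\ker(\ind^S_G(P_{n-1})\to \ind^S_G(P_{n-2}))$ to be finitely generated as a $G$-module. Exactness of $\ind^S_G$ identifies this kernel with $\ind^S_G(K)$, where $K=\ker(P_{n-1}\to P_{n-2})$. Property FIM then gives that $K$ is finitely generated over $S$, so by Proposition \ref{Schanuel} $M$ is of type FP$_n$, completing the induction.

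No serious obstacle is anticipated: the argument is essentially the standard profinite analogue of the classical fact that induction preserves and (in good cases) detects FP$_\infty$. The only point requiring attention is that $\ind^S_G$ preserves kernels, but this is immediate from its exactness, which in turn follows from the fact that $\Zpiof{G}$ is free (hence projective) as an $S$-module.
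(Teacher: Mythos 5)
Your proof is correct and follows essentially the same route as the paper's: the forward direction via Proposition \ref{IndAndFPProjs} and exactness of $\ind^S_G$, and the converse by induction using Schanuel's Lemma to get finite generation of $\ind^S_G(K)$ and then property FIM to descend to $K$. Your explicit treatment of the base case ($n=0$ being exactly property FIM) is a small clarification the paper leaves implicit, but the argument is the same.
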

\begin{proof}
The forwards direction follows immediately from Proposition \ref{IndAndFPProjs}. For the other direction proceed by induction. Suppose $\ind^S_G(M)$ has type FP$_\infty$ as a $G$-module. Suppose we have proved that $M$ has type FP$_{n}$ and take a projective resolution $P_n\to\cdots \to P_0\to M$ with each $P_i$ finitely generated. We must prove that $K=\ker(P_n\to P_{n-1})$ is finitely generated. Applying $\ind^S_G$ gives a partial resolution of $\ind^S_G(M)$ finitely generated in each dimension. Then by Proposition \ref{Schanuel} the kernel $\ker(\ind^S_G(P_n)\to \ind^S_G(P_{n-1}))$ is a finitely generated $G$-module. By exactness of $\ind^S_G(-)$ this kernel is precisely $\ind^S_G(K)$ so by property FIM we are done.
\end{proof}

\begin{prop}\label{DirSumsAndFPn}
Let $M$ be a finite direct sum of $G$-modules $M_1,\ldots, M_r\in\Gmod[\pi]{C}$. Then $M$ has type FP$_\infty$ if and only if each $M_i$ does. 
\end{prop}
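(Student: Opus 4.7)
The plan is to invoke the characterisations of type FP$_n$ given in Proposition~\ref{Schanuel} and induct on $n$, establishing the biconditional at each stage. By induction on $r$ it suffices to handle the case $r=2$, so write $M=M_1\oplus M_2$.

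First I would handle the base case $n=0$, namely that $M_1\oplus M_2$ is finitely generated in $\Gmod[\pi]{C}$ if and only if each $M_i$ is. The $(\Leftarrow)$ direction is immediate by taking the union of generating sets. For $(\Rightarrow)$, composing a finite generating set of $M$ with the canonical projection $M\twoheadrightarrow M_i$ yields a finite generating set for $M_i$.

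For the inductive step, assume the statement for FP$_n$. The $(\Leftarrow)$ direction is easy: given resolutions $P_\bullet^{(i)}\to M_i$ with $P_k^{(i)}$ finitely generated for $k\leq n+1$, the direct sum $P_\bullet^{(1)}\oplus P_\bullet^{(2)}\to M_1\oplus M_2$ is still a projective resolution (direct sums of projectives are projective, and a finite direct sum of exact sequences is exact), with each term finitely generated in dimensions $\leq n+1$, so $M$ has type FP$_{n+1}$ by Proposition~\ref{Schanuel}. For $(\Rightarrow)$, suppose $M$ has type FP$_{n+1}$. Then $M$ has type FP$_n$, so by the inductive hypothesis each $M_i$ has type FP$_n$; choose partial projective resolutions $P_n^{(i)}\to\cdots\to P_0^{(i)}\to M_i$ with each $P_k^{(i)}$ finitely generated. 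Summing gives a partial resolution of $M$ by finitely generated projectives, and since $M$ is FP$_{n+1}$, the third clause of Proposition~\ref{Schanuel} forces
\[
\ker\!\left(P_n^{(1)}\oplus P_n^{(2)}\to P_{n-1}^{(1)}\oplus P_{n-1}^{(2)}\right) \;=\; \ker\!\left(P_n^{(1)}\to P_{n-1}^{(1)}\right)\oplus \ker\!\left(P_n^{(2)}\to P_{n-1}^{(2)}\right)
\]
to be finitely generated. Applying the base case to this direct sum decomposition shows each summand is finitely generated, so again by Proposition~\ref{Schanuel} each $M_i$ has type FP$_{n+1}$, completing the induction.

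The only mildly delicate step is verifying that the kernel of the direct sum map genuinely splits as the direct sum of the kernels, but this is automatic from the naturality of the product/coproduct decomposition in $\Gmod[\pi]{C}$; apart from that, every ingredient is either formal or supplied by Proposition~\ref{Schanuel}, so I do not anticipate any real obstacle.
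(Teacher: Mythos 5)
Your proof is correct and follows essentially the same route as the paper: both arguments reduce to $r=2$, induct on $n$ using the direct sum of partial projective resolutions, apply the Schanuel-type characterisation (Proposition \ref{Schanuel}) to conclude the kernel in degree $n$ is finitely generated, and then observe that this kernel splits as the direct sum of the two kernels so each summand is finitely generated. Your explicit treatment of the base case (finite generation of a direct summand via the projection) is a point the paper leaves implicit, but the substance is identical.
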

\begin{proof}
If all $M_i$ are of type FP$_\infty$ then taking a direct sum of projective resolutions of the $M_i$, finitely generated in each degree, gives the required resolution of $M$. 

So suppose $M$ has type FP$_\infty$. Without loss of generality we will prove that $M_1$ has type FP$_\infty$; and by combining the other summands we may assume that $r=2$. We will prove by induction that both $M_1$ and $M_2$ have type FP$_\infty$. Suppose we know that they both have type FP$_n$ and let $P_n\to\cdots \to P_0\to M_1$ and $Q_n\to\cdots \to Q_0\to M_2$ be partial resolutions with all $P_k$ and $Q_k$ finitely generated projectives. The direct sum $(P_k\oplus Q_k)_{0\leq k\leq n}$ is a partial resolution of $M$ finitely generated in each dimension. Hence by Proposition \ref{Schanuel} the kernel $P_n\oplus Q_n\to P_{n-1}\oplus Q_{n-1}$ is finitely generated. But this kernel is simply the sum of the kernels $P_n\to P_{n-1}$ and $Q_n\to Q_{n-1}$ which are therefore finitely generated and we are done.
\end{proof}

\begin{prop}\label{SESandFPn}
Let $0\to M'\to M\to M''\to 0$ be a short exact sequence in \Gmod[\pi]{C}. If $M'$ has type FP$_\infty$ then $M$ has type FP$_\infty$ if and only if $M''$ does.
\end{prop}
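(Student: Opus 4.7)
The plan is to reduce both implications to the Schanuel-style characterization of FP$_n$ given in Proposition \ref{Schanuel} by constructing compatible partial projective resolutions of $M'$, $M$, $M''$ via the horseshoe lemma. The horseshoe construction goes through without modification in \Gmod[\pi]{C}: given surjections $P' \twoheadrightarrow M'$ and $P'' \twoheadrightarrow M''$ with $P', P''$ projective, the lifting property of $P''$ against the surjection $M \twoheadrightarrow M''$ lets one assemble a compatible surjection $P' \oplus P'' \twoheadrightarrow M$ fitting into a short exact sequence of augmented chain complexes. Iterating produces a short exact sequence of partial projective resolutions in which the middle term at each level is $P_n = P'_n \oplus P''_n$ as a module (only the boundary maps are modified from the direct sum).

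For the implication "$M''$ of type FP$_\infty$ implies $M$ of type FP$_\infty$", one simply starts with projective resolutions of $M'$ and $M''$ finitely generated in each degree (available by hypothesis) and observes that the horseshoe construction then produces a projective resolution of $M$ with each term finitely generated.

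For the converse, assume $M'$ and $M$ are FP$_\infty$ and induct on $n$ to show $M''$ is FP$_n$. The base case is immediate: $M''$ is a quotient of the finitely generated module $M$. For the inductive step, suppose $M''$ is already known to be FP$_{n-1}$, and pick finitely generated partial projective resolutions $P'_{n-1} \to \cdots \to P'_0 \to M'$ and $Q_{n-1} \to \cdots \to Q_0 \to M''$. Applying the horseshoe lemma yields a partial projective resolution $P_{n-1} \to \cdots \to P_0 \to M$ with each $P_i = P'_i \oplus Q_i$ finitely generated. Passing to $(n-1)$-st syzygies produces a short exact sequence
\[0 \to K' \to K \to K'' \to 0\]
whose terms are the kernels of the top maps in the three resolutions. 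By Proposition \ref{Schanuel}, finite generation of $K'$ and $K$ follows from $M'$ and $M$ respectively being FP$_\infty$; hence $K''$ is finitely generated as a quotient of $K$, and Proposition \ref{Schanuel} applied in the other direction gives that $M''$ has type FP$_n$.

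The only delicate point is confirming that the horseshoe construction is valid in the exact category \Gmod[\pi]{C}, which is immediate from the defining lifting property of projectives; the induction otherwise runs exactly as in the classical discrete setting.
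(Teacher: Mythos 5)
Your proof is correct. The first implication is handled exactly as in the paper: the Horseshoe Lemma assembles a projective resolution of $M$ from finitely generated resolutions of $M'$ and $M''$. For the converse, however, you take a genuinely different route. The paper lifts the map $M'\to M$ to a chain map $f_\bullet\colon P_\bullet\to Q_\bullet$ between full finitely generated projective resolutions and observes that the mapping cone $C_\bullet$, with $C_n=P_{n-1}\oplus Q_n$, is a finitely generated projective resolution of $M''$ in one stroke. You instead run an induction on $n$: horseshoe a partial resolution of $M$ out of partial resolutions of $M'$ and $M''$, pass to the short exact sequence of $(n-1)$-st syzygies $0\to K'\to K\to K''\to 0$, invoke Proposition \ref{Schanuel} to get finite generation of $K'$ and $K$ from the FP$_\infty$ hypotheses on $M'$ and $M$, and conclude $K''$ is finitely generated as a quotient of $K$. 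Both arguments are sound in $\Gmod[\pi]{C}$, which is abelian with enough projectives, so the Horseshoe Lemma, the snake-lemma argument producing the syzygy sequence, and Schanuel all apply. The mapping cone is the more economical argument---it produces the entire resolution of $M''$ at once with no induction---whereas your version is more elementary in its toolkit (only Schanuel and the horseshoe construction, both already quoted in the paper) at the cost of the inductive bookkeeping. Your approach also isolates the degreewise statement that FP$_n$ for $M'$ and $M$ forces FP$_n$ for $M''$ only up to the same $n$, which the mapping cone proof obscures slightly; neither advantage is needed for the proposition as stated.
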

\begin{proof}
Suppose that $M'$ and $M''$ have type FP$_\infty$ and take projective resolutions $P'_\bullet$ and $P''_\bullet$ of $M'$ and $M''$ which are finitely generated in each dimension. By the Horseshoe Lemma (\cite{Weibel95}, Lemma 2.2.8) there is a projective resolution of $M$ whose objects are $P'_n\oplus P''_n$, hence is finitely generated in each dimension. So $M$ has type FP$_\infty$ as required.

Suppose that $M'$ and $M$ have type FP$_\infty$. Take projective resolutions $P_\bullet$ and $Q_\bullet$ of $M'$ and $M$ which are finitely generated in each dimension and lift the map $M'\to M$ to a chain map $f_\bullet\colon P_\bullet\to Q_\bullet$. The mapping cone $C_\bullet$ of $f_\bullet$ is a chain complex with modules $C_n=P_{n-1}\oplus Q_n$ and is therefore projective and finitely generated in each dimension. By the long exact sequence for the mapping cone $C_\bullet$ is exact at each degree $n\geq 1$ and has homology $M''$ in degree zero. So the mapping cone is the required projective resolution. Refer to \cite{Weibel95}, Section 1.5 for the mapping cone construction. 
\end{proof}

\subsection{Double complexes}\label{SecDblComplexes}
We will now set up our conventions for tensor products of chain complexes. Take two chain complexes $P_\bullet$ and $Q_\bullet$, with $P_r=0$ for all $r<r_0$ and $Q_s=0$ for all $s<s_0$ for some integers $r_0, s_0$, in an abelian monoidal category with monoidal product $\boxtimes$, for instance \Gmod[\pi]{C} with product $\widehat{\otimes}_{\Z[\pi]}$ (and with diagonal actions on the product). Their tensor product $P_\bullet\boxtimes Q_\bullet$ will be the double complex $R_{\bullet \bullet}$ with objects $R_{rs}=P_r\boxtimes Q_s$ and with differentials
\[d^{\rm hor}_{rs} = d^P_r\boxtimes\id\colon P_r\boxtimes Q_s\to P_{r-1}\boxtimes Q_s \]
\[d^{\rm ver}_{rs} = (-1)^r\id\boxtimes\, d^Q_s\colon P_r\boxtimes Q_s\to P_r\boxtimes Q_{s-1} \] 
The total complex ${\rm tot}(R_{\bullet\bullet})$ is of course the complex $C_\bullet$ with 
\[C_n = \bigoplus_{r+s=n}R_{rs}, \quad d^C_n= \bigoplus_{r+s=n} (d^{\rm hor}_{rs}+d^{\rm ver}_{rs}) \] 
The theory of double complexes is fairly standard and the definition was included here only to note the sign convention. We will however recall the following standard fact. Here $\hotimes$ denotes $\hotimes[{\Z[\pi]}]$ in accordance with our standard conventions.
\begin{prop}\label{DblResolutions}
Let $G$ be a profinite group and let $\pi$ be a set of primes. Let $P_\bullet\to \Z[\pi]$ and $Q_\bullet\to\Z[\pi]$ be projective resolutions of $\Z[\pi]$ in \Gmod[\pi]{C}. Then ${\rm tot}(P_\bullet\hotimes Q_\bullet)$ with augmentation $P_0\hotimes Q_0\to \Z[\pi]\hotimes \Z[\pi] = \Z[\pi]$ is a projective resolution of $\Z[\pi]$. 
\end{prop}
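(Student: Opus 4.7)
The plan is to verify two things: (i) each bidegree $P_r \hotimes Q_s$ is projective in $\Gmod[\pi]{C}$ under the diagonal $G$-action, and (ii) the augmented total complex is exact, where the augmentation is the given map $P_0 \hotimes Q_0 \to \Z[\pi] \hotimes \Z[\pi] = \Z[\pi]$. Together these give the required projective resolution.

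For (i), since $\hotimes$ preserves completed direct sums in each variable and passes to summands, it suffices to treat the case $P_r = Q_s = \Zpiof{G}$. I would take the inverse limit of the shear isomorphisms at finite quotients, $\Z[\pi][G/N] \otimes \Z[\pi][G/N] \to \Z[\pi][G/N] \otimes \Z[\pi][G/N]$, $gN \otimes hN \mapsto gN \otimes g^{-1}hN$, which intertwines the diagonal $G/N$-action on the source with the action on the first tensor factor alone on the target. These are compatible across $N$, so in the limit they produce a $G$-linear isomorphism identifying $\Zpiof{G} \hotimes \Zpiof{G}$ with diagonal action with $\Zpiof{G} \hotimes \res^G_1(\Zpiof{G}) = \ind^1_G(\res^G_1(\Zpiof{G}))$, where the latter action is only on the first factor. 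Now $\res^G_1(\Zpiof{G})$ is projective in $\mathfrak{C}_\pi$ by Proposition \ref{ResProjectives}, and so by (the evident extension to the non-finitely-generated case of) Proposition \ref{IndAndFPProjs}, the result is projective in $\Gmod[\pi]{C}$.

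For (ii), I would run the standard spectral sequence of the double complex, filtered by columns. By Proposition \ref{ResProjectives} applied with $S = 1$, each $Q_s$ is projective in $\mathfrak{C}_\pi$, hence flat for $\hotimes = \hotimes[{\Z[\pi]}]$. Therefore tensoring the exact complex $\cdots \to P_1 \to P_0 \to \Z[\pi] \to 0$ with $Q_s$ yields an exact row, so the column $P_\bullet \hotimes Q_s$ has homology equal to $\Z[\pi] \hotimes Q_s = Q_s$ concentrated in degree zero. The $E^1$-page thus collapses to the single row $Q_\bullet$ equipped with the $Q$-differential, and $E^2$ collapses further to $\Z[\pi]$ in bidegree $(0,0)$ since $Q_\bullet \to \Z[\pi]$ is itself a resolution. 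Hence $H_0(\mathrm{tot}(P_\bullet \hotimes Q_\bullet)) = \Z[\pi]$ via the product augmentation and all higher homologies vanish.

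The main technical obstacle is ensuring the classical arguments transfer to the compact/topological setting. The shear map must be continuous, which it is as an inverse limit of finite isomorphisms; $\hotimes$ must commute with the relevant inverse limits, which is recalled in Section \ref{SecModuleCats}; and the double-complex spectral sequence must converge in $\Gmod[\pi]{C}$ with $\hotimes$-flatness of projectives standing in for ordinary flatness. All of these points reduce to standard statements at the level of finite $\pi$-primary modules together with the exactness of $\varprojlim$ on inverse systems in $\mathfrak{C}_\pi$.
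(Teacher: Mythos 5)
Your proof is correct and is essentially the argument the paper relies on: the paper simply cites (the proof of) Theorem 2.7.2 of Weibel together with the observation that $-\hotimes\Z[\pi]$ is exact, and that proof is precisely your column-filtration argument using the $\Z[\pi]$-flatness of the $Q_s$. The one thing you make explicit that the paper leaves implicit is the projectivity of each $P_r\hotimes Q_s$ in $\Gmod[\pi]{C}$ under the diagonal action, which you handle correctly via the shear isomorphism reducing to $\Zpiof{G}\hotimes\Zpiof{G}\iso\ind^1_G(\res^G_1(\Zpiof{G}))$.
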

\begin{proof}
This follows immediately from (the proof of) Theorem 2.7.2 of \cite{Weibel95}, noting that $-\hotimes \Z[\pi]$ is exact (indeed it is the identity functor).
\end{proof}
\subsection{Continuously indexed families of subgroups}\label{SecCtslyIndFams}
\begin{defn}\label{DefCtsIndex}
Let $G$ be a profinite group, and $\famS=\{S_x\}_{x\in X}$ a family of subgroups of $G$ indexed by a profinite space $X$. We say that \famS{} is {\em continuously indexed by $X$} if whenever $U$ is an open subset of $G$ the set \[\left\{x\in X \mid S_x\subseteq U\right\}\] is open in $X$. An equivalent definition is that the set
\[\left\{(g,x)\in G\times X\mid g\in S_x \right\} \]
is a closed subset of $G\times X$. 
\end{defn}
\begin{prop}
Let $G$ be a profinite group and $\famS=\{S_x\}$ a family of subgroups continuously indexed by $X$. Consider the equivalence relation on $X\times G$ given by
\[(x,g)\sim_\famS (x',g') \Longleftrightarrow x=x'\text{ and } g^{-1}g'\in S_x \]
Then the quotient space $G/\famS=X\times G/\sim_\famS$, equipped with the quotient topology, is a profinite space. 
\end{prop}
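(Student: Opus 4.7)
The plan is to verify that $G/\famS$ is compact, Hausdorff and totally disconnected---the three defining properties of a profinite space. Compactness is immediate since $X \times G$ is compact and $G/\famS$ is a continuous image.

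For Hausdorffness I will show that $\sim_\famS$ is a closed subset of $(X \times G) \times (X \times G)$, whence the quotient of a compact Hausdorff space by a closed equivalence relation is Hausdorff. Let $R = \{(x,g) \in X \times G : g \in S_x\}$, which is closed by the equivalent formulation of continuous indexing in Definition \ref{DefCtsIndex}. The map $\psi : (X \times G)^2 \to X \times G$ defined by $\psi((x,g),(x',g')) = (x, g^{-1}g')$ is continuous, and $\sim_\famS$ is the intersection of the closed set $\psi^{-1}(R)$ with the (closed) preimage of the diagonal $\Delta_X$ under the projection $(X \times G)^2 \to X^2$. Hence $\sim_\famS$ is closed.

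For total disconnectedness I will separate any two distinct equivalence classes $[(x_0, g_0)] \neq [(x_1, g_1)]$ in $G/\famS$ by a clopen $\sim_\famS$-saturated subset $W \subseteq X \times G$; since the quotient map $\pi$ is both open and closed on saturated sets (the latter because $\pi$ is a closed map, by compactness), the image $\pi(W)$ is the required separating clopen set. If $x_0 \neq x_1$, profiniteness of $X$ supplies a clopen $V \subseteq X$ containing $x_0$ but not $x_1$, and $W := V \times G$ is automatically saturated because $\sim_\famS$ preserves the $X$-coordinate. If instead $x_0 = x_1$ but $g_0^{-1} g_1 \notin S_{x_0}$, then since $S_{x_0}$ is a closed subgroup of the profinite group $G$, one can find an open normal subgroup $N \triangleleft G$ with $g_0^{-1} g_1 \notin S_{x_0} N$. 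Applying continuous indexing to the open set $S_{x_0} N$ yields a clopen neighbourhood $V$ of $x_0$ in $X$ on which $S_x \subseteq S_{x_0} N$. Take $W := V \times g_0 S_{x_0} N$.

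The main point, and the place I expect the argument to feel most delicate, is the verification that this $W$ is saturated: for $(x,g) \in W$ and $s \in S_x$ we need $gs \in g_0 S_{x_0} N$, which holds because $S_x \subseteq S_{x_0} N$ and $S_{x_0} N$ is a subgroup of $G$ (here using normality of $N$). Then $W$ is clopen, being the product of a clopen set in $X$ with a coset of the open subgroup $S_{x_0} N$ of $G$, it contains $(x_0, g_0)$, and it excludes $(x_1, g_1)$. The subtle coordination required---choosing $N$ small enough that $g_0$ and $g_1$ are separated modulo $S_{x_0} N$, while simultaneously $S_{x_0} N$ is large enough that $S_x \subseteq S_{x_0}N$ persists on a whole neighbourhood of $x_0$---is precisely where the continuous indexing hypothesis earns its keep.
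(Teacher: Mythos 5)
Your proof is correct, and it follows the same basic strategy as the paper's: compactness is free, and separating two distinct classes splits into the case $x_0\neq x_1$ (pull back a clopen subset of $X$) and the case $x_0=x_1$ (combine closedness of $S_{x_0}$, which yields an open normal $N$ with $g_0^{-1}g_1\notin S_{x_0}N$, with the continuous-indexing hypothesis applied to the open set $S_{x_0}N$). Two points of packaging differ, both in your favour. First, you dispose of Hausdorffness separately by observing that $\sim_\famS$ is a closed subset of $(X\times G)^2$, a step the paper does not take. Second, and more substantively, in the hard case the paper exhibits the two clopen sets $Y\times g_1U$ and $Y\times g_2U$ for a small open normal subgroup $U$ and asserts that their images in the quotient are disjoint clopen neighbourhoods; these sets are not saturated, so while disjointness of the images is clear, the \emph{openness} of the images requires an extra argument that is left implicit. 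Your single set $W=V\times g_0S_{x_0}N$, built from a coset of the open subgroup $S_{x_0}N$ that absorbs every $S_x$ for $x\in V$, is explicitly saturated, so $\pi(W)$ is manifestly clopen and contains one class but not the other. This is exactly the point you flagged as delicate, and your treatment of it is tighter than the paper's.
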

\begin{proof}
The quotient is of course compact, so we must show that it is Hausdorff and totally disconnected. Note also that the map $X\times G\to X$ factors through $G/\famS$. Take two distinct points $[(x_1, g_1)]$ and $[(x_2, g_2)]$ in $G/\famS$, where we use square brackets to denote an equivalence class under $\sim_\famS$. If $x_1\neq x_2$ then take preimages of $X$-clopen sets which separate $x_1$ and $x_2$; these give clopen sets in $G/\famS$ separating our two points. 

So suppose $x_1 = x_2=x$ so that $g_1 S_x\cap g_2 S_x=\emptyset$. Then since $S_x$ is a closed subgroup of $G$ there is an open normal subgroup $W$ of $G$ such that $Wg_1^{-1} g_2 \cap W S_x =\emptyset$. Since $\famS$ is continuously indexed by $X$ there is a clopen neighbourhood $Y$ of $x$ in $X$ such that $S_y\subseteq WS_x$ for all $y\in Y$. The map 
\[W\times W\to G, \quad (w_1, w_2)\mapsto (g_1w_1)^{-1}(g_2 w_2)\]
being continuous, and mapping $(1,1)$ to a point outside $W$, there is an open normal subgroup $U$ of $G$ such that $(g_1u_1)^{-1}(g_2 u_2)$ is not in $W$---and hence not in any $S_y$ for $y\in Y$---for all $u_1, u_2\in U$. Thus the clopen sets $Y\times g_1U$ and $Y\times g_2 U$ of $X\times G$ are disjoint upon passing to the quotient by $\sim_\famS$, and provide disjoint clopen neighbourhoods of $[(x,g_1)]$ and $[(x, g_2)]$ as required.
\end{proof}

\begin{prop}\label{FamiliesUpToConjugacy}
Let $G$ be a profinite group and let $\famS=\{S_x\}_{x\in X}$ be a family of subgroups of $G$ continuously indexed over a profinite set $X$. Let $\gamma\colon X\to G$ be a continuous function. Let $\famS'$ be the family of subgroups 
\[\famS' = \left\{S_x^{\gamma(x)}\mid x\in X \right\} \]
Then $\famS'$ is continuously indexed by $X$ and there is homeomorphism
\[G/\famS\iso G/\famS'\]
compatible with the left $G$-actions on these spaces.
\end{prop}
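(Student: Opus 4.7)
The plan has two parts: first verify that $\famS'$ is continuously indexed by $X$, then construct an explicit $G$-equivariant homeomorphism $G/\famS \iso G/\famS'$.

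For the first part, I would use the alternative characterisation in Definition \ref{DefCtsIndex} that the set $C = \{(g, x) \in G \times X \mid g \in S_x\}$ is closed. The corresponding set for $\famS'$ is
\[ C' = \{(g, x) \mid g \in S_x^{\gamma(x)}\} = \{(g, x) \mid \gamma(x) g \gamma(x)^{-1} \in S_x\}, \]
which is the preimage of $C$ under the continuous map $\Phi \colon G \times X \to G \times X$ sending $(g, x)$ to $(\gamma(x) g \gamma(x)^{-1}, x)$. Hence $C'$ is closed, so $\famS'$ is continuously indexed by $X$.

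For the homeomorphism, I would consider the continuous map $\tilde\phi \colon X \times G \to X \times G$ defined by $\tilde\phi(x, g) = (x, g\gamma(x))$, which has continuous inverse $(x, g) \mapsto (x, g\gamma(x)^{-1})$. To see that $\tilde\phi$ descends to a well-defined map $\phi \colon G/\famS \to G/\famS'$, suppose $(x, g) \sim_\famS (x, g')$, so that $g^{-1}g' \in S_x$. Then
\[ (g\gamma(x))^{-1}(g'\gamma(x)) = \gamma(x)^{-1}(g^{-1}g')\gamma(x) \in \gamma(x)^{-1} S_x \gamma(x) = S_x^{\gamma(x)}, \]
so $\tilde\phi(x, g) \sim_{\famS'} \tilde\phi(x, g')$. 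A symmetric calculation shows $\tilde\phi^{-1}$ also descends, so we obtain mutually inverse continuous bijections, and hence a homeomorphism $\phi$. Since the left $G$-action on each quotient is induced by left multiplication on the $G$-factor of $X\times G$, which commutes with the right multiplication by $\gamma(x)$ used in defining $\tilde\phi$, the map $\phi$ is automatically $G$-equivariant.

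The proof is essentially a verification, so there is no serious obstacle; the only point one must track with care is the right-action convention $S_x^{\gamma(x)} = \gamma(x)^{-1}S_x\gamma(x)$, which dictates that the correct formula is multiplication by $\gamma(x)$ on the \emph{right} rather than the left, so as to turn $\famS$-equivalence into $\famS'$-equivalence.
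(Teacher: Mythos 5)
Your proof is correct and follows essentially the same route as the paper: the homeomorphism is induced by the same map $(x,g)\mapsto(x,g\gamma(x))$ on $X\times G$, with the verification that it carries $\sim_\famS$ to $\sim_{\famS'}$ spelled out where the paper leaves it implicit. The one genuine (minor) divergence is in the continuous-indexing step, where you take the preimage of the closed set under the conjugation map $(g,x)\mapsto(\gamma(x)g\gamma(x)^{-1},x)$; this is actually slightly more careful than the paper, whose right-translation map sends $\{(g,x)\mid g\in S_x\}$ to $\{(g,x)\mid g\in S_x\gamma(x)\}$ rather than literally to the set for $\famS'$.
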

\begin{proof}
Consider the homeomorphism
\[X\times G\to X\times G,\quad (x,g)\mapsto (x,g\gamma(x)) \]
This takes the closed subset
\[\left\{(g,x)\in G\times X\mid g\in S_x \right\} \]
to the corresponding one for $\famS'$, thus proving that $\famS'$ is continuously indexed by $X$. Furthermore the homeomorphism takes the equivalence relation $\sim_\famS$ to $\sim_{\famS'}$, hence induces the claimed homeomorphism of quotient spaces. 
\end{proof}

\subsection{Sheaves of modules}
It will unfortunately prove necessary for certain applications to consider infinite collections of subgroups, or more generally infinite collections of compact modules. However there is no sensible compact topology on the abstract direct sum of infinitely many compact $G$-modules, so one must use a slightly modified notion of `profinite direct sum of a sheaf of modules'. For the most part this has similar properties to the abstract direct sum, so the casual reader may choose to omit this section.  In this section all modules will be {\em compact}.

\begin{defn}
Let $R$ be a profinite ring. A {\em sheaf of $R$-modules} consists of a triple $({\cal M}, \mu, X)$ with the following properties.
\begin{itemize}
\item ${\cal M}$ and $X$ are profinite spaces and $\mu\colon{\cal M}\to X$ is a continuous surjection.
\item Each {\em `fibre'} ${\cal M}_x=\mu^{-1}(x)$ is endowed with the structure of a compact $R$-module such that the maps
\[R\times{\cal M} \to {\cal M}, \quad (r,m)\to r\cdot m \]
\[{\cal M}^{(2)}=\{(m,n)\in{\cal M}^2\mid \mu(m)=\mu(n)\} \to {\cal M}, \quad (m,n)\to m+n \]
are continuous.
\end{itemize} 

A {\em morphism of sheaves} $(\alpha, \bar\alpha)\colon ({\cal M}, \mu, X) \to ({\cal M'}, \mu', X') $ consists of continuous maps $\alpha:{\cal M}\to {\cal M'}$ and $\bar\alpha\colon X\to X'$ such that $\mu'\alpha = \bar\alpha\mu$ and such that the restriction of $\alpha$ to each fibre is a morphism of $R$-modules ${\cal M}_x\to{\cal M'}_{\bar\alpha(x)}$.
\end{defn}

We often contract `the sheaf $({\cal M}, \mu, X)$' to simply `the sheaf $\cal M$'. Regarding an $R$-module as a sheaf over the one-point space one may talk of a sheaf morphism from a sheaf to an $R$-module.

\begin{defn}
A {\em profinite direct sum} of a sheaf $\cal M$ consists of an $R$-module $\bigboxplus_X\cal M$ and a sheaf morphism $\omega\colon {\cal M}\to \bigboxplus_X \cal M$ (sometimes called the `canonical morphism' such that for any $R$-module $N$ and any sheaf morphism $\beta\colon{\cal M}\to N$ there is a unique morphism of $R$-modules $\tilde\beta\colon\bigboxplus_X{\cal M}\to N$ such that $\tilde\beta\omega=\beta$.
\end{defn}
We will sometimes call this an `external direct sum'. Note that as any compact module is an inverse limit of finite modules it is sufficient to verify this universal property for finite $N$. We may also denote the profinite direct sum as $\bigboxplus_{x\in X} {\cal M}_x$. 

In \cite{Ribes17} this sum is simply denoted with $\bigoplus$. We prefer to use a different notation to remind the reader that the notion is in a certain sense `more rigid' than a traditional direct sum. For example an identification of each fibre ${\cal M}_x$ with some module $N_x$, depending on some choices, need not give a sensible identification of the entire direct sum if the $N_x$ do not form a sheaf in any natural way such that the identifications depend continuously on $X$. A more precise difference from the classical direct sum is that there may no longer be a projection to each fibre ${\cal M}_x$ which vanishes on all the other fibres.  Slightly weaker statements, for instance Corollary \ref{WeakProjections}, must be used instead.

With certain exceptions such as this, the properties of the profinite direct sum are fairly intuitive and mirror those of the abstract direct sum in those respects which will be useful to us. Therefore we will leave the statements of properties of the direct sum that we will use, and their proofs, to \ref{AppSheaves}.

\section{Basic properties of relative cohomology}\label{SecRelCohDef}
\subsection{Definitions and long exact sequence}\label{SecBasicDefs}
\begin{defn}
Let $G$ be a profinite group and let $\famS=\{S_x\}_{x\in X}$ be a family of (closed) subgroups of $G$ continuously indexed by a non-empty profinite space $X$. We will often abbreviate this to `let $(G,\famS)$ be a profinite group pair'. Recall from Section \ref{SecCtslyIndFams} that there is a natural topology on the set 
\[G/\famS = \bigsqcup_{x\in X} G/S_x \]
making it into a profinite space such that the natural map $X\times G\to G/\famS$ is continuous. Here the symbol $\bigsqcup$ denotes `disjoint union of sets'. Let \ZGS\ be the free $\Z[\pi]$-module on $G/\famS$, viewed as a \ZG{}-module via the natural $G$-action on $G/\famS$. Note that by Proposition \ref{DirSumsAndFreeMods} another way of expressing \ZGS\ is as the profinite internal direct sum
\[\ZGS =\bigboxplus_{x\in X}\ZG{/S_x} \]
 
Now consider the augmentation map $\ZGS \to \Z[\pi]$. The kernel, an object of \Gmod[\pi]{C}, will be denoted $\Delta$, or $\Delta_{G,\famS}$ when more precision is desired. This will be the crucial actor in the development of the theory. Note that it is topologically generated by the elements $gS_x-S_y$ for $g\in G$ and $x,y\in X$, as may be readily seen by an inverse limit argument from the finite case. For modules $A\in \Gmod[\pi]{D}, M\in\Gmod[\pi]{C}$ define 
\[H_k(G,\famS;M)=H_{k-1}(G; \Delta\hotimes M),\quad H^k(G,\famS;A)=H^{k-1}(G;\Hom(\Delta,A))\]
Note the dimension shift. Here $\Delta\hotimes M$ and $\Hom(\Delta, A)$ are equipped with diagonal actions, i.e.
\[g\cdot(\delta\otimes m)=(g\delta)\otimes (gm),\quad (g\cdot f)(\delta) = gf(g^{-1}\delta)\]
where $g\in G,m\in M, \delta \in \Delta, f\in \Hom(\Delta, A)$. 
\end{defn}

There is another characterisation of these functors, for which we require the following lemma. 
\begin{lem}
$\Delta$ is projective as a \Z[\pi]-module.
\end{lem}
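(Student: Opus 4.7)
The plan is to realise $\Delta$ as a direct summand of $\Zpiof{G/\famS}$ regarded purely as a $\Z[\pi]$-module (forgetting the $G$-action) and then observe that this ambient module is projective in $\mathfrak{C}_\pi$.

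First I would produce a $\Z[\pi]$-linear splitting of the augmentation $\epsilon\colon \Zpiof{G/\famS}\to \Z[\pi]$. Since $X$ is non-empty by hypothesis, we may choose a point $x_0\in X$ and the coset $S_{x_0}\in G/\famS$, and define $\sigma\colon \Z[\pi]\to \Zpiof{G/\famS}$ by $1\mapsto S_{x_0}$. This is a continuous $\Z[\pi]$-module map satisfying $\epsilon\sigma=\id$ (note $\sigma$ is emphatically not $G$-equivariant, which is fine, as we only ask that $\Delta$ be projective as a $\Z[\pi]$-module). Consequently there is a splitting
\[ \Zpiof{G/\famS}\iso \Z[\pi]\oplus \Delta \]
in $\mathfrak{C}_\pi$.

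Second, I would argue that $\Zpiof{G/\famS}$ is projective as a $\Z[\pi]$-module. By construction it is the free profinite $\Z[\pi]$-module on the profinite space $G/\famS$, and such free modules are projective (indeed free) in $\mathfrak{C}_\pi$ by their universal property. Since direct summands of projectives are projective, $\Delta$ is projective in $\mathfrak{C}_\pi$.

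There is no serious obstacle here; the proof is a one-line splitting argument, and if preferred one can avoid invoking the free-module characterisation directly by noting that $\Zpiof{G/\famS}$, being the inverse limit of finitely generated free $\Z[\pi]$-modules on finite quotients of $G/\famS$, is abelian torsion-free, so that $\Delta$ is torsion-free; then Proposition \ref{TorFreeImpliesFree} applied to each $p$-primary component $\Delta(p)$ together with Proposition \ref{pPrimProjectives} gives projectivity in $\mathfrak{C}_\pi$ directly.
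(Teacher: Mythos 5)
Your proof is correct, but your primary route differs from the paper's. You split the augmentation $\Zpiof{G/\famS}\to\Z[\pi]$ by $1\mapsto S_{x_0}$ (which is indeed a continuous $\Z[\pi]$-linear section, though not $G$-linear) and exhibit $\Delta$ as a $\Z[\pi]$-module direct summand of the free profinite module $\Zpiof{G/\famS}$, then invoke projectivity of free profinite modules on profinite spaces. The paper instead passes to $p$-primary components: $\Delta(p)$ is a submodule of the torsion-free module $\Zpof{G/\famS}$, hence $p$-torsion-free, hence free over $\Z[p]$ by Proposition \ref{TorFreeImpliesFree}, and then Proposition \ref{pPrimProjectives} assembles these into projectivity over $\Z[\pi]$. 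Your argument is more robust in that it never uses the special fact that torsion-free compact $\Z[p]$-modules are free (so it would transfer to other base rings), but it does lean on the projectivity of free profinite modules over profinite spaces, a standard fact from \cite{RZ00} that the paper does not quote; the paper's route gets by with only the two propositions it has already stated. Your closing alternative (torsion-freeness of the submodule $\Delta$ plus Propositions \ref{TorFreeImpliseFree}-style reasoning componentwise) is essentially verbatim the paper's proof, so either version is acceptable.

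One cosmetic caveat: your parenthetical ``(indeed free)'' should be read as ``free profinite module on a profinite space'' rather than ``isomorphic to a direct sum of copies of $\Z[\pi]$''; only the former follows from the universal property without further argument.
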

\begin{proof}
Consider the $p$-primary component of $\Delta$---this is the kernel of the augmentation map $\Zpof{G/\famS}\to \Z[p]$. This is a submodule of the free module $\Zpof{G/\famS}$, hence is $p$-torsion-free and hence $\Z[p]$-free by Proposition \ref{TorFreeImpliesFree}. So each $p$-primary component of $\Delta$ is a projective $\Z[p]$-module; hence $\Delta$ is a projective \Z[\pi]-module by Proposition \ref{pPrimProjectives}. 
\end{proof}
\begin{prop}\label{relequalsext}
There are natural isomorphisms of functors
\[ \Tor^G_\bullet(\Delta^\perp, M)\iso H_{\bullet+1}(G,\famS; M)\iso \Tor^G_\bullet(M^\perp, \Delta), \quad \Ext_G^\bullet(\Delta, M)\iso H^{\bullet+1}(G,\famS; M)\]
where $\Delta^\perp$ denotes $\Delta$ with the canonical right $G$-action $\delta\cdot g= g^{-1}\delta$.
\end{prop}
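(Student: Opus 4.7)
The plan is to build a projective $G$-resolution of $\Delta$ out of a projective $G$-resolution of $\Z[\pi]$, by tensoring with $\Delta$ using the diagonal action, and then to read off all three identities from this single resolution.

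The crucial ingredient, and the main obstacle, is the following \emph{untwisting lemma}: if $P$ is projective in $\Gmod[\pi]{C}$ and $B \in \Gmod[\pi]{C}$ has a $\Z[\pi]$-projective underlying module, then $P \hotimes B$ with the diagonal $G$-action is projective in $\Gmod[\pi]{C}$. I would first reduce to $P = \Zpiof{G}$ and then exhibit the continuous $G$-module isomorphism
\[\Zpiof{G} \hotimes B \longrightarrow \Zpiof{G} \hotimes B_0, \qquad g \otimes b \mapsto g \otimes g^{-1}b,\]
where $B_0$ denotes $B$ with trivial $G$-action; the right-hand side is $\ind^1_G(B_0)$, which is $G$-projective by the adjunction $\Hom_G(\ind^1_G B_0, -) \iso \Hom_{\Z[\pi]}(B_0, -)$ combined with the $\Z[\pi]$-projectivity of $B_0$. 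The real work lies in verifying continuity and well-definedness of the untwisting map at the level of the completed tensor product (the discrete analogue is an elementary change of basis). The symmetric statement for right modules is identical.

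Granting the untwisting lemma, choose a projective $G$-resolution $P_\bullet \to \Z[\pi]$ in $\Gmod[\pi]{C}$. The preceding lemma says $\Delta$ is $\Z[\pi]$-projective, hence $\Z[\pi]$-flat, so $P_\bullet \hotimes \Delta$ with diagonal action remains exact, and each term is $G$-projective by the untwisting lemma. Thus $P_\bullet \hotimes \Delta \to \Delta$ is a projective $G$-resolution of $\Delta$. The Ext identity now follows by tensor-Hom adjunction (Proposition \ref{TensorHom}):
\[\Ext^\bullet_G(\Delta, M) = H^\bullet(\Hom_G(P_\bullet \hotimes \Delta, M)) \iso H^\bullet(\Hom_G(P_\bullet, \Hom(\Delta, M))) = H^\bullet(G; \Hom(\Delta, M)),\]
which is $H^{\bullet+1}(G, \famS; M)$ by definition.

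For the first $\Tor$ identity, take $P_\bullet \to \Z[\pi]$ a projective \emph{right} $G$-resolution. The right-module version of the untwisting shows that $P_\bullet \otimes \Delta^\perp$ (with diagonal right action) is a projective right $G$-resolution of $\Delta^\perp$, and a second application of untwisting produces a natural chain isomorphism $(P_\bullet \otimes \Delta^\perp) \hotimes_G M \iso P_\bullet \hotimes_G (\Delta \hotimes M)$; taking homology gives $\Tor^G_\bullet(\Delta^\perp, M) \iso H_\bullet(G; \Delta \hotimes M) = H_{\bullet+1}(G, \famS; M)$. The remaining identity $\Tor^G_\bullet(\Delta^\perp, M) \iso \Tor^G_\bullet(M^\perp, \Delta)$ follows from the natural abelian-group isomorphism $A^\perp \hotimes_G B \iso B^\perp \hotimes_G A$ for left $G$-modules (given by the swap $a \otimes b \mapsto b \otimes a$): apply it termwise to a projective left resolution $R_\bullet \to \Delta$, noting that its perp $R_\bullet^\perp$ is simultaneously a projective right resolution of $\Delta^\perp$, so the termwise swap is a chain isomorphism computing the same $\Tor$.
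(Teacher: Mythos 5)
Your proof is correct, but it takes a genuinely different route from the paper's. The paper never resolves $\Delta$: it works entirely in the coefficient variable, observing that since $\Delta$ is $\Z[\pi]$-projective the functor $\Hom(\Delta,-)$ is exact (and, via Proposition \ref{TensorHom}, preserves injectives), so the right derived functors of $\Hom_G(\Delta,-)\iso\Hom_G(\Z[\pi],\Hom(\Delta,-))$ are exactly $\Ext^\bullet_G(\Z[\pi],\Hom(\Delta,-))=H^{\bullet+1}(G,\famS;-)$; the homology statements are then dispatched by Pontrjagin duality. You instead build an explicit $G$-projective resolution $P_\bullet\hotimes\Delta$ of $\Delta$ via the untwisting lemma and read off all three isomorphisms by adjunction. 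Your key lemma is not really an obstacle: for $P=\Zpiof{G}$ it is precisely Proposition \ref{IndandCoinsasDiags} with $S=1$ (and the well-definedness on the completed tensor product is handled there by the same inverse-limit argument you sketch), after which projectivity of $\ind^1_G(B_0)$ for $\Z[\pi]$-projective $B_0$ follows from the induction adjunction as you say; just note that the reduction is to arbitrary free modules $\Zpiof{G\times X}$, not only to $\Zpiof{G}$ itself. Your route does silently invoke balancedness of $\Ext$ and $\Tor$ (you compute $\Ext_G^\bullet(\Delta,M)$ from a projective resolution of the first variable, whereas the paper defines it as derived in the second), but the paper uses this freely elsewhere, e.g.\ in the Shapiro lemma. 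What each approach buys: the paper's argument is shorter and requires no new construction; yours produces a reusable explicit resolution of $\Delta$ and treats the two $\Tor$ identities and the $\Ext$ identity uniformly from a single object rather than appealing to duality.
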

\begin{proof}
We will prove the statement for cohomology. By the previous lemma, $\Delta$ is projective over $\Z[\pi]$. Thus the functor $\Hom(\Delta,-)$ is an exact functor on \Z[\pi]-modules, and hence also on \ZG{}-modules. Therefore the right derived functors of $\Hom_G(\Z[\pi],\Hom(\Delta,-))$ are precisely 
$\Ext_G^\bullet(\Z[\pi], \Hom(\Delta, -))$. But since there is an isomorphism
\[\Hom_G(\Delta, -)\iso \Hom_G(\Z[\pi], \Hom(\Delta, -)) \]
these are also the right derived functors of $\Hom_G(\Delta, -)$. So we have the required isomorphisms
\begin{equation*} 
\Ext_G^\bullet(\Delta, -)\iso \Ext_G^\bullet(\Z[\pi], \Hom(\Delta, -)) \eqqcolon H^{\bullet+1}(G, \famS; -)  
\end{equation*}
The homology statement is similar, or may be deduced using Pontrjagin duality.
\end{proof}

If $\Delta$ happens to be a module of type FP$_\infty$ then we may extend these definitions to \Gmod[\pi]{P}-functors
\[{\bf H}_{\bullet+1}(G,\famS; M)= \bTor^G_\bullet(\Delta^\perp, M), \quad {\bf H}^{\bullet+1}(G,\famS; M)= \bExt_G^\bullet(\Delta, M)\]
Judicious use of Propositions \ref{FptoPp1} and Proposition \ref{FptoPp2} allows us to extend the various propositions we will prove to the case of these `bold' functors when all required modules have type FP$_\infty$.

\begin{prop}
Let $(G,\famS)$ be a profinite group pair. There is a natural long exact sequence
\begin{equation}
\cdots \to  H^k(G, A)\to H^k(\famS, A) \to  H^{k+1}(G,\famS; A)\to  H^{k+1}(G,A)\to \cdots\label{RelCohLES}
\end{equation} 
where $A\in\Gmod[\pi]{D}$. Similarly in homology.
\end{prop}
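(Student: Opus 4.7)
The plan is to apply the long exact sequence of derived functors to the fundamental short exact sequence
\[
0 \to \Delta \to \ZGS \to \Z[\pi] \to 0
\]
of $G$-modules used to define $\Delta$. Applying $\Hom_G(-, A)$ and passing to its right derived functors yields a long exact sequence
\[
\cdots \to \Ext^k_G(\Z[\pi], A) \to \Ext^k_G(\ZGS, A) \to \Ext^k_G(\Delta, A) \to \Ext^{k+1}_G(\Z[\pi], A) \to \cdots
\]
which relies only on the standard $\delta$-functor property of $\Ext^\bullet_G(-, A)$ in its first argument; this property carries over intact from the classical setting once one works with a projective resolution of $\Z[\pi]$ in $\Gmod[\pi]{C}$ and applies $\Hom_G(-, A)$ term by term.

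It then remains to translate each term into the required cohomological notation. The leftmost term is $H^k(G, A)$ by definition. By Proposition \ref{relequalsext}, the third term $\Ext^k_G(\Delta, A)$ is $H^{k+1}(G, \famS; A)$, producing precisely the dimension shift the assertion requires. For the middle term I would use the decomposition $\ZGS = \bigboxplus_{x \in X} \ZG{/S_x}$ together with the identification $\ZG{/S_x} = \ind_G^{S_x}(\Z[\pi])$. By the Shapiro identity (Proposition \ref{Shapiro2} with $M = \Z[\pi]$), each single-subgroup contribution satisfies $\Ext^k_G(\ZG{/S_x}, A) \cong H^k(S_x, A)$, so under the natural definition of $H^k(\famS, A)$ as the profinite-direct-sum analogue of $\prod_{x \in X} H^k(S_x, A)$, the middle term becomes $H^k(\famS, A)$. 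The homology statement then follows by the entirely parallel argument: apply $\Tor^G_\bullet(M^\perp, -)$ to the same fundamental sequence and invoke the $\Tor$-halves of Propositions \ref{relequalsext} and \ref{Shapiro1} (or equivalently the Shapiro argument with $\ZG{/S_x}$ interpreted via $\ind$).

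The main obstacle is the identification of $\Ext^k_G(\ZGS, A)$ with $H^k(\famS, A)$ when the indexing space $X$ is infinite: for finite $X$ the $\bigboxplus$ is an ordinary finite direct sum and $\Ext^\bullet_G$ converts this into a product termwise, so the reduction to per-subgroup Shapiro is immediate; but for infinite $X$ one must invoke the profinite direct-sum formalism of Section~\ref{SecCtslyIndFams} (and the sheaf-of-modules machinery flagged in the excerpt) to control how $\Ext^\bullet_G$ behaves on a $\bigboxplus$. Provided the appropriate compatibility is in force---essentially that $\Ext^\bullet_G$ converts a $\bigboxplus$ in the first slot into a continuous product of cohomology groups, which is the natural definition of $H^\bullet(\famS, -)$---the remainder of the argument is the purely formal $\delta$-functor manipulation described above.
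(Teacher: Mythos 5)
Your proposal is correct in substance, but it takes the route that the paper only mentions in the remark \emph{after} the proposition, rather than the route of the proof itself. The paper's proof does not derive the sequence from the $\delta$-functor property of $\Ext^\bullet_G(-,A)$ in the first variable; instead it uses the fact that the fundamental sequence $0\to\Delta\to\ZGS\to\Z[\pi]\to 0$ is $\Z[\pi]$-split (because $\Delta$ is $\Z[\pi]$-projective, by the preceding lemma), applies the plain functor $\Hom(-,A)$ over $\Z[\pi]$ to obtain a short exact sequence of \emph{coefficient} $G$-modules $0\to A\to\Hom(\ZGS,A)\to\Hom(\Delta,A)\to 0$, and then takes the ordinary long exact sequence of $H^\bullet(G,-)$. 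The two routes agree by Proposition \ref{relequalsext} and its analogue for $\ZGS$, so your argument is a legitimate alternative; what the paper's version buys is that the middle term $H^k(\famS,A)$ appears \emph{by definition} as $H^k(G,\Hom(\ZGS,A))$, so no further identification is needed. This also dissolves the ``main obstacle'' you flag at the end: the paper defines $H^k(\famS;A):=H^k(G,\Hom(\ZGS,A))$ and only asserts the decomposition into $\bigoplus_i H^k(S_i,A)$ when $\famS$ is a finite family, so no Shapiro argument fibre-by-fibre over an infinite $X$, and no claim that $\Ext^\bullet_G$ converts $\bigboxplus$ into a continuous product, is required for this proposition. One smaller caveat: to get the long exact sequence of $\Ext^\bullet_G(-,A)$ in the first variable you should either invoke the balance of $\Ext$ (computing via an injective resolution of $A$, against which $\Hom_G(-,J)$ is exact) or the horseshoe lemma; a single projective resolution of $\Z[\pi]$, as you describe, computes the outer terms but does not by itself produce the connecting maps for the sequence $0\to\Delta\to\ZGS\to\Z[\pi]\to 0$.
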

\begin{rmk}
For brevity we have defined
 \[H_k(\famS; M) = H_k(G, \ZGS\hotimes M), \quad H^k(\famS; A) = H^k(G, \Hom(\ZGS, A))\]
When \famS\ is a finite collection these are simply the direct sums of the homology and cohomology of the $S_i$. Note that due to the dimension shift inherent in the definition of relative cohomology, the `connecting homomorphisms' in this sequence are actually the maps from $H^\bullet(G,\famS)$ to $H^\bullet (G)$. 
\end{rmk}
\begin{proof}
Let $A\in\Gmod[\pi]{D}$. The short exact sequence
\[\begin{tikzcd} 0\ar{r}& \Delta\ar{r} & \ZGS\ar{r} & \Z[\pi]\ar{r} & 0 \end{tikzcd}\]
splits as an exact sequence of \Z[\pi]-modules, hence remains exact when the functor $\Hom(-,A)$ is applied. So we have a short exact sequence of $G$-modules
\[\begin{tikzcd} 0&\ar{l} \Hom(\Delta, A) &\ar{l} \Hom(\ZGS, A) &\ar{l} A& 0 \ar{l} \end{tikzcd}\]
and applying the functor $H^\bullet(G,-)$ gives the familiar long exact sequence for relative homology.
\end{proof}
\begin{rmk}
Via Proposition \ref{relequalsext}---and a similar one for the module \ZGS---one may also see this as the long exact sequence deriving from the application of the functor $\Ext_G^\bullet(-, A)$ to the short exact sequence
\begin{equation*}\label{DeltaDef}\begin{tikzcd} 0\ar{r}& \Delta\ar{r} & \ZGS\ar{r} & \Z[\pi]\ar{r} & 0 \end{tikzcd}
\end{equation*}
\end{rmk}
\begin{defn}
Let $G$ be a profinite group and let $\famS=\{S_x\}_{x\in X}$ be a family of (closed) subgroups of $G$ continuously indexed by a non-empty profinite space $X$ and let $H$ be a profinite group with a family $\mathcal{T}=\{T_y\}_{y\in Y}$ of closed subgroups, continuously indexed over a profinite set $Y$. A {\em map of profinite group pairs} $(\phi,f)\colon(H,{\cal T})\to (G,\famS)$ consists of a group homomorphism $\phi\colon H\to G$ and a continuous function $f\colon Y\to X$ such that $\phi(T_x)\subseteq S_{f(y)}$ for each $y\in Y$. 
\end{defn}
\begin{prop}\label{GrpPairNatural}
The relative (co)homology functors, and the long exact sequence \eqref{RelCohLES}, are natural with respect to maps of group pairs $(\phi,f)\colon(H,{\cal T})\to (G,\famS)$. 
\end{prop}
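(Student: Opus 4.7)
The plan is to reduce the proposition to the functoriality of $\Ext$, $\Tor$ and the standard restriction maps in profinite group cohomology. The whole proof is driven by one construction: a canonical morphism of short exact sequences built from $(\phi,f)$.

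First I would construct the underlying map of profinite spaces. The hypothesis $\phi(T_y)\subseteq S_{f(y)}$ ensures that the assignment $(h,y)\mapsto (\phi(h),f(y))$ descends to a well-defined map $\psi\colon H/\mathcal{T}\to G/\famS$, which is continuous by continuity of $\phi$ and $f$ and the definition of the quotient topology in Section \ref{SecCtslyIndFams}, and $H$-equivariant once $G/\famS$ is viewed as an $H$-space via $\phi$. Linearising $\psi$ gives a continuous $H$-linear map $\Zpiof{H/\mathcal{T}}\to \res^G_H\Zpiof{G/\famS}$ compatible with the augmentations, producing a commutative ladder with exact rows in $\mathfrak{C}_\pi(H)$:
\[\begin{tikzcd}
0\ar{r}&\Delta_{H,\mathcal{T}}\ar{r}\ar{d}&\Zpiof{H/\mathcal{T}}\ar{r}\ar{d}&\Z[\pi]\ar{r}\ar{d}{=}&0\\
0\ar{r}&\res^G_H\Delta_{G,\famS}\ar{r}&\res^G_H\Zpiof{G/\famS}\ar{r}&\Z[\pi]\ar{r}&0
\end{tikzcd}\]

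For the induced maps on relative cohomology I would appeal to Proposition \ref{relequalsext} to identify $H^\bullet(G,\famS;A)\iso\Ext_G^{\bullet-1}(\Delta_{G,\famS},A)$. The natural map $H^\bullet(G,\famS;A)\to H^\bullet(H,\mathcal{T};\res^G_H A)$ is then the composition of the standard restriction $\Ext_G^{\bullet-1}(\Delta_{G,\famS},A)\to\Ext_H^{\bullet-1}(\res^G_H\Delta_{G,\famS},\res^G_H A)$ with the map $\Ext_H^{\bullet-1}(\res^G_H\Delta_{G,\famS},\res^G_H A)\to\Ext_H^{\bullet-1}(\Delta_{H,\mathcal{T}},\res^G_H A)$ induced by the left-hand vertical arrow above. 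Functoriality in $(\phi,f)$---in particular compatibility with composition of pair morphisms---is immediate from functoriality of $\psi$ in each variable and of $\Ext$. The homology statement is obtained by the Pontrjagin-dual argument with $\Tor$.

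For naturality of the long exact sequence \eqref{RelCohLES}, I would apply $\Hom(-,A)$ to the ladder above. Each row remains short exact because $\Delta$ is $\Z[\pi]$-projective (the lemma preceding Proposition \ref{relequalsext}), so I obtain a morphism of short exact sequences of $H$-modules with diagonal actions. Feeding the bottom row into $H^\bullet(H,-)$ and the top row into $H^\bullet(G,-)$ followed by the canonical restriction $H^\bullet(G,N)\to H^\bullet(H,\res^G_H N)$ yields two long exact sequences and a ladder between them; commutativity at the connecting maps is the standard fact that restriction commutes with the $\delta$-operator of a morphism of short exact sequences, which in turn is just naturality of the snake lemma applied to a projective resolution of $\Z[\pi]$ over $G$ restricted to $H$.

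The only genuine point to check is that the maps built from $\psi$ are continuous and strict, and that the diagonal $H$-actions on $\Hom(\Delta,A)$ and $\Hom(\Zpiof{G/\famS},A)$ are preserved under the various restrictions---both of which are automatic for compact modules with closed images and for the diagonal-action formulae spelled out in Section \ref{SecBasicDefs}. No substantial obstacle arises; the proposition is essentially a bookkeeping exercise assembling the functoriality of $\Ext$, $\Tor$, restriction, and $\psi$.
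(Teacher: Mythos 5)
Your proposal is correct and follows essentially the same route as the paper: both construct the commuting ladder of defining short exact sequences from $(\phi,f)$ and then invoke the standard functoriality of (co)homology in the group variable (equivalently, of $\Ext$ and $\Tor$ via Proposition \ref{relequalsext}) together with naturality of connecting homomorphisms. Your write-up simply makes explicit the details — the construction and continuity of $\psi$, and the $\Z[\pi]$-splitness/projectivity needed to keep the $\Hom(-,A)$ rows exact — that the paper compresses into ``one sees immediately from the definitions.''
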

\begin{proof}
One sees immediately from the definitions that there is a commuting diagram of $H$-modules
\[\begin{tikzcd}
0\ar{r}&  \Delta_{H,\mathcal{T}}\ar{r}\ar{d} & \Z[\pi][\![H/\mathcal{T}]\!]\ar{r}\ar{d} & \Z[\pi]\ar{r}\ar{d}  & 0\\
0\ar{r}& \Delta_{G,\famS}\ar{r} & \ZGS\ar{r} & \Z[\pi]\ar{r}& 0
\end{tikzcd}\]
and applying the usual functoriality of homology in the group variable gives maps in relative cohomology 
\[H^\bullet(G, \famS; A)\to H^\bullet(H,\mathcal{T}; A)\]
when $A\in \Gmod[\pi]{D}$ is regarded as an $H$-module via $\phi$. Indeed, we obtain maps of the entire long exact sequence \eqref{RelCohLES}. Similarly for homology.
\end{proof}

This is of course not the only `relative cohomology sequence'. We mention in particular that there is a long exact sequence which allows one to carry out certain inductions when $S$ is a finite family.
\begin{prop}
Suppose that $G$ is a profinite group and $\famS_i$ is a family of subgroups of $G$ continuously indexed over a non-empty profinite set $X_i$ for $i=1,2$. Let $\famS=\famS_1\sqcup \famS_2$ be the natural family of subgroups of $G$ continuously indexed over the disjoint union of $X_1$ and $X_2$. Then there is a natural long exact sequence
\[\cdots \to  H^k(G,\famS_1; A)\to H^k(\famS_2; A) \to  H^{k+1}(G,\famS; A)\to  H^{k+1}(G, \famS_1;A)\to \cdots \]
$A\in\Gmod[\pi]{D}$. Similarly in homology.
\end{prop}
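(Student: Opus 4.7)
The plan is to build the long exact sequence from an appropriate short exact sequence of kernels, analogous to how the original long exact sequence \eqref{RelCohLES} was obtained.

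First I would observe that $G/\famS = G/\famS_1 \sqcup G/\famS_2$ as profinite $G$-spaces, so
\[ \Z[\pi][\![G/\famS]\!] \iso \Z[\pi][\![G/\famS_1]\!] \oplus \Z[\pi][\![G/\famS_2]\!] \]
as $G$-modules (by Proposition \ref{DirSumsAndFreeMods} or direct inspection). Writing $\epsilon_i \colon \Z[\pi][\![G/\famS_i]\!]\to\Z[\pi]$ for the augmentations, the augmentation of $\Z[\pi][\![G/\famS]\!]$ is $\epsilon_1+\epsilon_2$. Because $X_1$ is non-empty, $\epsilon_1$ is surjective, so projection onto the second summand yields a surjection $\Delta_{G,\famS}\twoheadrightarrow \Z[\pi][\![G/\famS_2]\!]$ whose kernel is precisely $\Delta_{G,\famS_1}$ (regarded as a submodule of $\Z[\pi][\![G/\famS]\!]$ via the first summand). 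This gives a natural short exact sequence of $G$-modules
\[ 0 \to \Delta_{G,\famS_1} \to \Delta_{G,\famS} \to \Z[\pi][\![G/\famS_2]\!] \to 0. \]

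Next I would note that this sequence is $\Z[\pi]$-split: $\Z[\pi][\![G/\famS_2]\!]$ is free over $\Z[\pi]$, so the sequence splits as a sequence of $\Z[\pi]$-modules (equivalently, $\Delta_{G,\famS_1}$ is $\Z[\pi]$-projective by the lemma preceding Proposition \ref{relequalsext}). Consequently, applying $\Hom(-,A)$ for $A\in\Gmod[\pi]{D}$ preserves exactness and yields a short exact sequence of $G$-modules (with diagonal actions)
\[ 0 \to \Hom(\Z[\pi][\![G/\famS_2]\!],A) \to \Hom(\Delta_{G,\famS},A) \to \Hom(\Delta_{G,\famS_1},A) \to 0. \]

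Then I would apply the functor $H^\bullet(G,-)$ to obtain a long exact sequence and identify the three families of terms using the definitions in Section \ref{SecBasicDefs}: the outer terms become $H^{\bullet+1}(G,\famS_1;A)$ and (using the convention $H^k(\famS_2;A)=H^k(G,\Hom(\Z[\pi][\![G/\famS_2]\!],A))$) $H^\bullet(\famS_2;A)$, while the middle term becomes $H^{\bullet+1}(G,\famS;A)$. A routine reindexing produces the stated sequence. Naturality is automatic since every construction above is functorial in the short exact sequence of $\Delta$'s, which in turn is induced by the canonical decomposition of $G/\famS$.

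The only genuine subtlety is verifying the short exact sequence $0\to\Delta_{G,\famS_1}\to\Delta_{G,\famS}\to\Z[\pi][\![G/\famS_2]\!]\to 0$ itself; once that and the $\Z[\pi]$-splitting are in hand, the rest is mechanical. For the homological version one replaces $\Hom(-,A)$ by $-\hotimes M$ (which is also exact on this sequence, again because $\Z[\pi][\![G/\famS_2]\!]$ is $\Z[\pi]$-free) and runs the same argument with $H_\bullet(G,-)$, yielding the dual long exact sequence.
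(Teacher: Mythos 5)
Your proof is correct and follows essentially the same route as the paper: the paper likewise applies $\Ext^\bullet_G(-,A)$ to the short exact sequence $0\to\Delta_{G,\famS_1}\to\Delta_{G,\famS}\to\Zpiof{G/\famS_2}\to 0$, which it extracts from the $3\times 3$ diagram built on the splitting $\Zpiof{G/\famS}\iso\Zpiof{G/\famS_1}\oplus\Zpiof{G/\famS_2}$. Your explicit identification of the kernel of the projection and the $\Z[\pi]$-splitness check are exactly the verifications implicit in that diagram.
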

\begin{proof}
Apply the functor $\Ext^\bullet_G(-,A)$ to the top row of the commuting diagram with exact rows and columns
\begin{equation}\label{InductionOnPairs}\begin{tikzcd}
\Delta_{G, \famS_1}\ar[hookrightarrow]{r}\ar[hookrightarrow]{d} & \Delta_{G, \famS} \ar[twoheadrightarrow]{r}\ar[hookrightarrow]{d} & \Zpiof{G/\famS_2}\ar[equal]{d} \\
\Zpiof{G/\famS_1} \ar[hookrightarrow]{r}\ar[twoheadrightarrow]{d} & 
\Zpiof{G/\famS} \ar[twoheadrightarrow]{r}\ar[twoheadrightarrow]{d} &\Zpiof{G/\famS_2} \\
\Z[\pi]\ar[equal]{r} & \Z[\pi] & 
\end{tikzcd} \end{equation}
This exact sequence also of course has the expected naturality properties.
\end{proof} 
\begin{prop}
Pontrjagin duality holds for relative cohomology. That is, for $M\in\Gmod[]{C}$ there are natural isomorphisms
\[H_n(G,\famS;M)^\ast\iso H^n(G,\famS;M^\ast) \]
\end{prop}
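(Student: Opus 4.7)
The plan is to reduce the claim immediately to the Pontrjagin duality identity already recorded in Section \ref{SecModuleCats}, namely
\[\Ext_G^\bullet(N, B^\ast) \iso \Tor^G_\bullet(N^\perp, B)^\ast\]
for $N\in\Gmod[\pi]{C}$, $B\in\Gmod[\pi]{C}$. The only real work is to rewrite both sides of the claim in terms of $\Ext$ and $\Tor$ of the module $\Delta$, which Proposition \ref{relequalsext} does for us.

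First I would apply Proposition \ref{relequalsext} to both sides. On the cohomology side, $H^n(G,\famS;M^\ast) \iso \Ext_G^{n-1}(\Delta, M^\ast)$. On the homology side, $H_n(G,\famS;M) \iso \Tor_{n-1}^G(\Delta^\perp, M)$, and Pontrjagin duality (being a contravariant exact additive involution) commutes with taking homology of a chain complex, so $H_n(G,\famS;M)^\ast \iso \Tor_{n-1}^G(\Delta^\perp, M)^\ast$. Now substitute $N=\Delta$ and $B=M$ into the duality identity cited above, noting that the hypothesis $M\in\Gmod[]{C}$ is precisely what is needed to form $M^\ast$ and to have $\Tor^G_\bullet(\Delta^\perp, M)$ be a compact module on which Pontrjagin duality applies. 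This immediately yields the required isomorphism.

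The final step is naturality. Since the identity of Section \ref{SecModuleCats} is derived from the natural isomorphism $\Hom_G(N, B^\ast) \iso (N^\perp \hotimes[G] B)^\ast$ by choosing a projective resolution of $N$ and passing to (co)homology, and since Proposition \ref{relequalsext} is itself natural in the pair $(G,\famS)$, the resulting isomorphism $H_n(G,\famS;M)^\ast\iso H^n(G,\famS;M^\ast)$ is natural both in $M$ and in maps of group pairs in the sense of Proposition \ref{GrpPairNatural}.

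I do not expect a substantial obstacle here: the heavy lifting (projectivity of $\Delta$ over $\Z[\pi]$, the identification of relative (co)homology with $\Ext/\Tor$ of $\Delta$, and the duality identity between $\Ext$ and $\Tor$) has already been carried out, so the proof is essentially a two-line diagram chase. The only point deserving explicit mention is that the dimension shift by $1$ appears symmetrically on both sides, so no discrepancy arises.
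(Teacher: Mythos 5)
Your proof is correct, but it takes a slightly different route from the paper's. The paper argues directly from the definitions $H_n(G,\famS;M)=H_{n-1}(G;\Delta\hotimes M)$ and $H^n(G,\famS;A)=H^{n-1}(G;\Hom(\Delta,A))$: it applies absolute Pontrjagin duality for $G$ to get $H_{n-1}(G,\Delta\hotimes M)^\ast\iso H^{n-1}(G,(\Delta\hotimes M)^\ast)$ and then invokes Proposition \ref{TensorHom} to identify $(\Delta\hotimes M)^\ast$ with $\Hom(\Delta,M^\ast)$ as $G$-modules. You instead pass through Proposition \ref{relequalsext} to rewrite both sides as $\Ext_G^{n-1}(\Delta,M^\ast)$ and $\Tor^G_{n-1}(\Delta^\perp,M)^\ast$, and then quote the Ext--Tor duality identity $\Ext_G^\bullet(N,B^\ast)\iso\Tor_\bullet^G(N^\perp,B)^\ast$ recorded in Section \ref{SecModuleCats}. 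The two arguments are essentially equivalent in content --- that Ext--Tor identity is itself derived from the adjunction $\Hom_G(N,B^\ast)\iso(N^\perp\hotimes[G]B)^\ast$, which is the same underlying fact as Proposition \ref{TensorHom} specialised to $A=I_\pi$ --- but yours has the minor advantage of not needing to track the diagonal $G$-actions on $\Delta\hotimes M$ and $\Hom(\Delta,M^\ast)$ explicitly, since Proposition \ref{relequalsext} has already absorbed that bookkeeping. One cosmetic remark: the sentence about Pontrjagin duality commuting with homology of complexes is not needed where you place it (applying $(-)^\ast$ to the isomorphism of Proposition \ref{relequalsext} requires only functoriality); that exactness is used inside the proof of the Ext--Tor identity, which you are entitled to cite as established.
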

\begin{proof}
For $M\in\Gmod[]{C}$ we have
\begin{align*} 
H_n(G,\famS;M)^\ast & = H_{n-1}(G, \Delta\hotimes M)^\ast \\ &= H^{n-1}(G, (\Delta\hotimes M)^\ast)\\ &= H^{n-1}(G, \Hom(\Delta, M^\ast))\\ & = H^n(G, \famS; M^\ast)
\end{align*}
where the manipulation from the second to the third lines is an application of Proposition \ref{TensorHom}.
\end{proof}

We also remark that the relative cohomology is `invariant up to conjugacy of \famS' in the following manner.
\begin{prop}\label{DeltaUpToConjugacy}
Let $G$ be a profinite group and let $\famS=\{S_x\}_{x\in X}$ be a family of subgroups of $G$ continuously indexed over a profinite set $X$. Let $\gamma\colon X\to G$ be a continuous function. Let $\famS'$ be the family of subgroups 
\[\famS^\gamma = \left\{S_x^{\gamma(x)}\mid x\in X \right\} \]
Then $\famS^\gamma$ is continuously indexed by $X$ and there is a commuting diagram of isomorphisms
\[\begin{tikzcd} 
0\ar{r}& \Delta_{G,\famS}\ar{r}\ar{d}{\iso} & \ZGS\ar{r}\ar{d}{\iso} & \Z[\pi]\ar{r}\ar{d}{\id} & 0 \\
0\ar{r}& \Delta_{G,\famS^\gamma} \ar{r} & \Zpiof{G/\famS^\gamma}\ar{r} & \Z[\pi]\ar{r} & 0 
\end{tikzcd}\]
and hence isomorphisms of functors
\[H_\ast(G,\famS;-)\iso H_\ast(G,\famS^\gamma;-),\quad H^\ast(G,\famS;-)\iso H^\ast(G,\famS^\gamma;-) \]
\end{prop}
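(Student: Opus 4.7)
The plan is to deduce everything from Proposition \ref{FamiliesUpToConjugacy}, which already supplies a $G$-equivariant homeomorphism of the underlying profinite spaces $G/\famS \to G/\famS^\gamma$ induced by $(x,g)\mapsto(x, g\gamma(x))$. All remaining work is bookkeeping: promote this homeomorphism to an isomorphism of module exact sequences and then apply functoriality of $\Tor$ and $\Ext$.

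First, I would note that continuous indexing of $\famS^\gamma$ is already part of Proposition \ref{FamiliesUpToConjugacy}. Let $\phi\colon G/\famS\to G/\famS^\gamma$ be the $G$-equivariant homeomorphism it provides. Since $\ZGS$ and $\Zpiof{G/\famS^\gamma}$ are, by Proposition \ref{DirSumsAndFreeMods} (or rather by the free-module construction on a profinite basis), the free $\Z[\pi]$-modules on $G/\famS$ and $G/\famS^\gamma$ respectively, the homeomorphism $\phi$ extends uniquely to a continuous $\Z[\pi]$-linear isomorphism $\widetilde\phi\colon\ZGS\to\Zpiof{G/\famS^\gamma}$, and $G$-equivariance of $\phi$ implies $G$-equivariance of $\widetilde\phi$. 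The augmentation map on each free module sends every basis element to $1$, so $\widetilde\phi$ intertwines the two augmentations; restricting to kernels gives the middle vertical isomorphism $\Delta_{G,\famS}\to\Delta_{G,\famS^\gamma}$ and makes the claimed diagram commute.

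Finally, via the identification of relative (co)homology with $\Tor^G_\bullet(\Delta^\perp,-)$ and $\Ext_G^\bullet(\Delta,-)$ from Proposition \ref{relequalsext}, the isomorphism $\Delta_{G,\famS}\cong\Delta_{G,\famS^\gamma}$ induces natural isomorphisms
\[
H_\ast(G,\famS;-)\iso H_\ast(G,\famS^\gamma;-),\qquad H^\ast(G,\famS;-)\iso H^\ast(G,\famS^\gamma;-).
\]

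The only step with any subtlety is the passage from the set-theoretic homeomorphism $\phi$ to the $\Z[\pi]$-module isomorphism $\widetilde\phi$: one must verify that the extension is continuous in the profinite (pseudocompact) topology on the free modules, and that it really is $G$-equivariant. Both are immediate from the universal property of the free profinite $\Z[\pi]$-module on a profinite space applied to the continuous $G$-map $\phi$, so this is not a genuine obstacle—it is just the one point a careful reader would want checked.
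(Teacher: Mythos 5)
Your proposal is correct and follows the same route as the paper, which simply cites Proposition \ref{FamiliesUpToConjugacy} and leaves the bookkeeping implicit; you have merely spelled out the passage from the $G$-equivariant homeomorphism of $G/\famS$ with $G/\famS^\gamma$ to the isomorphism of free modules, augmentation kernels, and derived functors. No gaps.
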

\begin{proof}
This follows immediately from Proposition \ref{FamiliesUpToConjugacy}.
\end{proof}

\subsection{Restriction to subgroups}\label{SecSubgpPairs}
Let $G$ be a profinite group and let \famS\ be a family of subgroups continuously indexed by the profinite space $X$. Let $H$ be a closed subgroup of $G$. Fix a continuous section $\sigma\colon H\lqt G\to G$ of the quotient map $G\to H\lqt G$. Such a section exists by Proposition 2.2.2 of \cite{RZ00}. There is a family of subgroups 
\[\famS^H_\sigma = \left\{ H\cap \sigma(y)S_x \sigma(y)^{-1} \mid x\in X, y\in H\lqt G/S_x\right\} \]
indexed over the profinite set \[H\lqt G/\famS=\bigsqcup_{x\in X} H\lqt G/S_x \] This indexing set has a natural topology making it a profinite space such that the natural quotient $G/\famS\to H\lqt G/\famS$ is continuous. Here we have abused notation by writing $\sigma$ for the section $H\lqt G/\famS\to G/\famS$ induced by $\sigma\colon H\lqt G\to G$.

Before we proceed further we must check that $\famS^H_\sigma$ is continuously indexed by $H\lqt G/\famS$. This is true provided that the subset
\[\left\{(HgS_x, h)\in H\lqt G/\famS\times H \mid h\in H\cap \sigma(y)S_x \sigma(y)^{-1} \right\} \]
is a closed subset of the constant sheaf $H\lqt G/\famS \times H$. This subset is the preimage of the closed subset $\{1S_x\mid x\in X\}\subseteq G/\famS$ under the continuous map
\[H\lqt G/\famS \times H\to G/\famS, \quad (HgS_x, h)\mapsto \sigma(HgS_x)^{-1} h \sigma(HgS_x)S_x \]
hence is closed. 

If $\sigma'\colon H\lqt G\to G$ is another section then we have a continuous function $\gamma(Hg) = \sigma(Hg)\sigma'(Hg)^{-1}$ from $H\lqt G$ to $H$. Furthermore for every $y\in H\lqt G$ we have 
\[H\cap \sigma'(y)S_x \sigma'(y)^{-1} = \left(H\cap \sigma(y)S_x \sigma(y)^{-1}\right)^{\gamma(y)}\]
Hence by Proposition \ref{DeltaUpToConjugacy} the module $\Delta_{H,\famS^H_\sigma}$ does not depend on $\sigma$ (up to canonical isomorphism). We will usually therefore drop $\sigma$ from the notation.

So we may consider the relative (co)homology of $H$ relative to $\famS^H$. To compare this to the relative (co)homology of $G$ we note the following important result.
\begin{prop}\label{SubgpPairDelta}
Let $G$ be a profinite group, \famS\ a family of subgroups continuously indexed by the profinite space $X$. Let $H$ be a closed subgroup of $G$. Then for any continuous section $\sigma\colon H\lqt G\to G$ we have a canonical isomorphism of $H$-modules
\[\Delta_{H,\famS^H_\sigma} \iso \res^G_H(\Delta_{G,\famS}) \]
\end{prop}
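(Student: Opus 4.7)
The plan is to construct an explicit $H$-equivariant homeomorphism between the profinite $H$-spaces $H/\famS^H_\sigma$ and $G/\famS$ (the latter with $G$-action restricted to $H$), and then apply the free profinite module functor $\Zpiof{-}$ to get an $H$-module isomorphism between $\Zpiof{H/\famS^H_\sigma}$ and $\res^G_H(\Zpiof{G/\famS})$ that commutes with the augmentations. The isomorphism on the augmentation kernels is then the desired identification of $\Delta$-modules.

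First I would define a map
\[\Phi\colon H/\famS^H_\sigma \longrightarrow G/\famS\]
by sending a point in the fibre $H/(H\cap\sigma(y)S_x\sigma(y)^{-1})$ over $y=HgS_x$ represented by $h\in H$ to the coset $h\sigma(y)S_x\in G/S_x\subseteq G/\famS$. Well-definedness on cosets is immediate: if $h_0\in H\cap\sigma(y)S_x\sigma(y)^{-1}$ write $h_0=\sigma(y)s\sigma(y)^{-1}$ with $s\in S_x$, whence $hh_0\sigma(y)S_x=h\sigma(y)sS_x=h\sigma(y)S_x$. The map is $H$-equivariant by construction, and continuity follows by inspecting the composition $H\times(H\lqt G/\famS)\xrightarrow{\id\times\sigma}H\times G\to G\to G/\famS$, using that $\sigma$ is continuous in both of its incarnations.

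Next I would check that $\Phi$ is a bijection. For surjectivity, any $gS_x\in G/\famS$ has $\sigma(HgS_x)\in HgS_x$, so we can write $gS_x=h\sigma(HgS_x)S_x$ for some $h\in H$. For injectivity, if $h\sigma(y)S_x=h'\sigma(y')S_{x'}$ in $G/\famS$ then $x=x'$ (by the disjoint-union structure), and the two elements lie in the same $H$-orbit on $G/S_x$, forcing the double cosets to coincide so $y=y'$; then $h^{-1}h'\in H\cap\sigma(y)S_x\sigma(y)^{-1}$, so the two source cosets agree. Since both source and target are compact Hausdorff, the continuous equivariant bijection $\Phi$ is automatically a homeomorphism.

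Finally, applying $\Zpiof{-}$ produces an $H$-module isomorphism $\Zpiof{H/\famS^H_\sigma}\iso\res^G_H(\Zpiof{G/\famS})$, which fits into a commutative diagram
\[\begin{tikzcd}
0\ar{r}& \Delta_{H,\famS^H_\sigma}\ar{r}\ar[dashed]{d} & \Zpiof{H/\famS^H_\sigma}\ar{r}\ar{d}{\iso} & \Z[\pi]\ar{r}\ar[equal]{d}& 0\\
0\ar{r}& \res^G_H(\Delta_{G,\famS})\ar{r} & \res^G_H(\Zpiof{G/\famS})\ar{r} & \Z[\pi]\ar{r}& 0
\end{tikzcd}\]
with exact rows, because $\Phi$ carries the basis of $\Zpiof{H/\famS^H_\sigma}$ bijectively onto that of $\Zpiof{G/\famS}$ and the augmentations simply send each basis element to $1$. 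Restricting the middle isomorphism to the augmentation kernels (equivalently, applying the five lemma) produces the canonical isomorphism $\Delta_{H,\famS^H_\sigma}\iso\res^G_H(\Delta_{G,\famS})$.

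The main obstacle is verifying continuity cleanly, given the slight abuse of notation in extending $\sigma$ from $H\lqt G$ to a section $H\lqt G/\famS\to G/\famS$; one has to use the fact established earlier in this subsection that $\famS^H_\sigma$ is itself continuously indexed by $H\lqt G/\famS$, which guarantees that the fibrewise description of $\Phi$ fits together into a continuous map of total spaces. Naturality in $\sigma$, already established via Proposition \ref{DeltaUpToConjugacy}, then removes the ostensible dependence on choices.
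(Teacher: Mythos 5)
Your proposal is correct and follows essentially the same route as the paper: the paper's proof defines exactly the same map $h(H\cap\sigma(y)S_x\sigma(y)^{-1})\mapsto h\sigma(y)S_x$, observes it is a continuous $H$-equivariant bijection of profinite spaces (hence a homeomorphism), applies $\Zpiof{-}$, and identifies the augmentation kernels. You have merely filled in the well-definedness, bijectivity, and continuity checks that the paper leaves to the reader.
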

\begin{proof}
Consider the continuous map
\[H/\famS^H_\sigma \to G/\famS, \quad h(H\cap \sigma(y)S_x \sigma(y)^{-1})\mapsto h\sigma(y)S_x\] which is easily checked to be a bijection, hence a homeomorphism as both spaces are profinite. It is also compatible with the left $H$-action, so there is an isomorphism of $H$-modules
\[\Zpiof{H/\famS^H_\sigma}\iso\Zpiof{G/\famS} \] 
This isomorphism commutes with the augmentation maps to \Z[\pi] so the kernels of these maps are isomorphic $H$-modules. This is precisely the statement of the proposition.
\end{proof}

\begin{clly}\label{FPnAndFISubGps}
If $U$ is an open subgroup of $G$ then $\Delta_{G,\famS}$ is of type FP$_\infty$ as a $G$-module if and only if $\Delta_{U,\famS^U}$ is of type FP$_\infty$ as a $U$-module.
\end{clly}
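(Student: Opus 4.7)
The plan is to observe that this corollary is an almost immediate consequence of two results that have already been established, namely Proposition \ref{SubgpPairDelta} (identifying $\Delta_{U,\famS^U}$ with the restriction of $\Delta_{G,\famS}$) and Proposition \ref{FPnModsAndSubgps} (which says that for an open subgroup, type $\mathrm{FP}_n$ is preserved and reflected by restriction).

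More concretely, I would first fix a continuous section $\sigma\colon U\lqt G\to G$ of the quotient map, which exists by Proposition 2.2.2 of \cite{RZ00}, so that the relative augmentation module $\Delta_{U,\famS^U_\sigma}$ is well-defined. Next, I would invoke Proposition \ref{SubgpPairDelta} with $H=U$ to get a canonical isomorphism of $U$-modules
\[
\Delta_{U,\famS^U_\sigma} \iso \res^G_U(\Delta_{G,\famS}).
\]
At this point the problem has been reduced to the purely module-theoretic question of when a module in $\Gmod[\pi]{C}$ and its restriction to an open subgroup simultaneously have type $\mathrm{FP}_\infty$.

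Finally, I would apply Proposition \ref{FPnModsAndSubgps} to $M=\Delta_{G,\famS}$: it states exactly that $\Delta_{G,\famS}$ has type $\mathrm{FP}_n$ as a $G$-module if and only if $\res^G_U(\Delta_{G,\famS})$ has type $\mathrm{FP}_n$ as a $U$-module, for every $n$ (and hence for $n=\infty$ by passage through all finite $n$). Combining this equivalence with the isomorphism above yields the claim.

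There is no real obstacle here; the only minor subtlety is to note that the conclusion is independent of the choice of section $\sigma$, which was already handled in the discussion preceding Proposition \ref{SubgpPairDelta} via Proposition \ref{DeltaUpToConjugacy}. Thus the corollary is a direct corollary, as its name suggests, and the proof amounts to little more than chaining the two propositions together.
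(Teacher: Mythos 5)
Your proof is correct and matches the paper's argument exactly: the corollary is obtained by identifying $\Delta_{U,\famS^U}$ with $\res^G_U(\Delta_{G,\famS})$ via Proposition \ref{SubgpPairDelta} and then applying Proposition \ref{FPnModsAndSubgps}. The paper's one-line proof simply cites the latter, with the former understood from the immediately preceding discussion, so you have spelled out the same chain of reasoning.
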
 
\begin{proof}
This now follows immediately from Proposition \ref{FPnModsAndSubgps}.
\end{proof}

As a result of Proposition \ref{SubgpPairDelta}, we obtain a `relative Shapiro lemma'.
\begin{prop}[`Relative Shapiro Lemma']
Let $(G,\famS)$ be a group pair and $H$ a closed subgroup of $G$. Then for any $M\in \mathfrak{C}_\pi(H)$ and any $A\in\mathfrak{D}_\pi(H)$ there are natural isomorphisms
\[H_\bullet(H, \famS^H; M)\iso H_\bullet(G,\famS; \ind^H_G(M)), \quad H^\bullet(H, \famS^H; A)\iso H^\bullet(G,\famS; \coind^H_G(A)) \]
\end{prop}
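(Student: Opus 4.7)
The plan is to replicate the proof of the absolute Shapiro Lemma (Proposition \ref{Shapiro1}), using $\Delta_{G,\famS}$ in place of $\Z[\pi]$. The identification that makes this substitution possible is precisely Proposition \ref{SubgpPairDelta}, which gives $\res^G_H(\Delta_{G,\famS}) \iso \Delta_{H,\famS^H}$, and hence also $\res^G_H(\Delta_{G,\famS}^\perp) \iso \Delta_{H,\famS^H}^\perp$ on the right-module side. Once we have this, the relative Shapiro isomorphisms will follow from the characterisation of relative (co)homology in terms of $\Ext$ and $\Tor$ given in Proposition \ref{relequalsext}.

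For the cohomology statement, take a projective resolution $P_\bullet \to \Delta_{G,\famS}$ in $\Gmod[\pi]{C}$. By Proposition \ref{ResProjectives} each $\res^G_H(P_n)$ is projective in $\mathfrak{C}_\pi(H)$, so $\res^G_H(P_\bullet) \to \Delta_{H,\famS^H}$ is a projective resolution (using Proposition \ref{SubgpPairDelta}). Applying Proposition \ref{preShapiro} termwise yields a natural isomorphism of cochain complexes
\[\Hom_G(P_\bullet, \coind^H_G(A)) \iso \Hom_H(\res^G_H(P_\bullet), A),\]
and passing to cohomology gives $\Ext^\bullet_G(\Delta_{G,\famS}, \coind^H_G(A)) \iso \Ext^\bullet_H(\Delta_{H,\famS^H}, A)$. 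Reinterpreting both sides via Proposition \ref{relequalsext} delivers the required isomorphism $H^\bullet(G,\famS; \coind^H_G(A)) \iso H^\bullet(H, \famS^H; A)$.

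For the homology statement, take a projective resolution $P_\bullet \to \Delta_{G,\famS}^\perp$ by right $G$-modules. Restricting to $H$ again gives a projective resolution of $\Delta_{H,\famS^H}^\perp$. The standard identity
\[P_\bullet \hotimes[G] \ind^H_G(M) = P_\bullet \hotimes[G] \Zpiof{G} \hotimes[H] M = P_\bullet \hotimes[H] M\]
holds termwise, and passing to homology yields $\Tor^G_\bullet(\Delta_{G,\famS}^\perp, \ind^H_G(M)) \iso \Tor^H_\bullet(\Delta_{H,\famS^H}^\perp, M)$, which by Proposition \ref{relequalsext} is exactly $H_\bullet(G,\famS; \ind^H_G(M)) \iso H_\bullet(H, \famS^H; M)$.

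There is no real obstacle here once Proposition \ref{SubgpPairDelta} is in hand; the only thing to be mildly careful about is that the isomorphism between $\res^G_H \Delta_{G,\famS}$ and $\Delta_{H,\famS^H}$ is canonical (independent of the section $\sigma$ up to the identification discussed in Section \ref{SecSubgpPairs}), so that naturality of the Shapiro isomorphism in $M$ and $A$ is preserved. Everything else is just reading the absolute Shapiro proof with $\Z[\pi]$ replaced by $\Delta_{G,\famS}$.
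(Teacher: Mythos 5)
Your proof is correct, but it takes a genuinely different route from the paper's. The paper stays with the coefficient-module formulation $H_k(G,\famS;M)=H_{k-1}(G,\Delta\hotimes M)$ and $H^k(G,\famS;A)=H^{k-1}(G,\Hom(\Delta,A))$ and reduces directly to the \emph{absolute} Shapiro lemma by exhibiting explicit `projection formula' isomorphisms of $G$-modules
\[\ind^H_G\bigl(\res^G_H(\Delta)\hotimes M\bigr)\iso \Delta\hotimes\bigl(\ind^H_G(M)\bigr),\qquad \coind^H_G\bigl(\Hom(\res^G_H(\Delta),A)\bigr)\iso\Hom\bigl(\Delta,\coind^H_G(A)\bigr),\]
given by concrete formulas whose well-definedness and $G$-linearity have to be checked by hand. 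You instead pass through Proposition \ref{relequalsext}, resolve $\Delta_{G,\famS}$ itself by projectives, and rerun the proof of Proposition \ref{Shapiro1} verbatim with $\Delta_{G,\famS}$ in place of $\Z[\pi]$; the inputs are Propositions \ref{SubgpPairDelta} and \ref{ResProjectives}, the termwise application of Proposition \ref{preShapiro}, and the standard associativity of the completed tensor product. What your route buys is that no new module isomorphism needs to be verified; what it costs is an implicit appeal to the balancedness of $\Ext$ and $\Tor$, since the paper defines $\Ext^\bullet_G(\Delta,-)$ and $\Tor^G_\bullet(\Delta^\perp,-)$ by resolving the second variable while you resolve $\Delta$ (and, on the homology side, you need the right-module version of Proposition \ref{ResProjectives}). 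Both of these are standard in the profinite setting and are used elsewhere in the paper, so this is not a gap; both arguments ultimately rest on the same essential content, namely Proposition \ref{SubgpPairDelta}.
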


\begin{proof}
Let $\Delta=\Delta_{G,\famS}$, so that $\Delta_{H,\famS^H}=\res^G_H(\Delta)$. There is an isomorphism of left $G$-modules
\begin{eqnarray*}\ZG{}\hotimes[H](\res^G_H(\Delta)\hotimes M) &\iso& \Delta\hotimes[](\ZG{}\hotimes[H] M) \\
g\otimes \delta\otimes m &\mapsto& (g\delta)\otimes g\otimes m
\end{eqnarray*}
Here, as in Section \ref{SecUsefulIds}, the tensor products over $\Z[\pi]$ are given the diagonal action and the tensor products over $H$ have the left $G$-action given by the left action on \ZG{}.
From this isomorphism and the usual Shapiro isomorphisms (Proposition \ref{Shapiro1}) we have the first isomorphism from the statement of the proposition. The second follows from the isomorphism
\begin{eqnarray*}\Hom_H(\ZG{},\Hom(\res^G_H(\Delta), A)) &\iso& \Hom(\Delta, \Hom_H(\ZG{}, A)) \\
f  &\mapsto& \big(\delta\mapsto \left(g\mapsto f(g)(g\delta) \right) \big)
\end{eqnarray*}
and the absolute Shapiro lemma.
\end{proof}
\begin{rmk}
Suppose $U$ is open in $G$ and $\Delta_{G,\famS}$ is of type FP$_\infty$ as a $G$-module. Then as for the absolute Shapiro lemma one may take limits to find natural isomorphisms
\[{\bf H}_\bullet(G,\famS;\bind_G^U(M))\iso {\bf H}_\bullet(U,\famS^U; M), \quad {\bf H}^\bullet(G,\famS;\bcoind_G^U(M))\iso {\bf H}^\bullet(U,\famS^U; M) \]
for all $M\in \mathfrak{P}_\pi(U)$.
\end{rmk} 

We may now define restriction and corestriction maps.
\begin{defn}
Let $(G,\famS)$ be a profinite group pair and $H$ a closed subgroup of $G$. For $M\in\Gmod[\pi]{C}$ the {\em restriction map on homology} is given by the map
\[\res^H_G\colon H_\bullet(H,\famS^H; M)\iso H_\bullet(G,\famS; \ind_G^H(M))\to H_\bullet(G,\famS; M)\]
corresponding to the map 
\[ \ZG{}\hotimes[H] M\to M, \quad g\otimes m\mapsto gm\]
For $A\in \Gmod[\pi]{D}$ the {\em restriction map on cohomology} is the map
\[\res^G_H\colon H^\bullet(G,\famS; A)\to H^\bullet(G,\famS; \coind^H_G(A)) \iso H^\bullet(H, \famS^H; A)\]
induced by the module map
\[A\to\Hom_H(\ZG{}, A), \quad a\mapsto \left(g\mapsto ga\right) \]
\end{defn}
One may verify that these agree with the functorial maps arising from the obvious map of group pairs $(H,\famS^H)\to(G,\famS)$.
\begin{defn}
Let $(G,\famS)$ be a profinite group pair and $U$ a open subgroup of $G$. For $M\in\Gmod[\pi]{C}$ the {\em corestriction map on homology} is given by the map
\[\cor^G_U\colon H_\bullet(G,\famS; M) \to H_\bullet(G,\famS; \ind_G^U(M))\iso H_\bullet(U,\famS^U; M)\]
corresponding to the map 
\[ M \to \ZG{}\hotimes[U]M , \quad m\mapsto \sum_i g_i\otimes (g_i^{-1}m)\]
where $\{g_i\}$ is a complete set of coset representatives of $G/U$. The map above does not depend on the choice of these representatives.

For $A\in \Gmod[\pi]{D}$ the {\em corestriction map on cohomology} is the map
\[\cor^U_G\colon H^\bullet(U, \famS^U; A)\iso H^\bullet(G,\famS; \coind_G^U(A))\to   H^\bullet(G,\famS; A)\]
induced by the module map
\[\Hom_U(\ZG{}, A)\to A, \quad \left(f\mapsto \sum g_i f(g_i^{-1}) \right)\]
where again $\{g_i\}$ is an (irrelevant) choice of coset representatives of $G/U$.
\end{defn}
The corestriction map is also often known as a {\em transfer map}. The following proposition follows immediately from the definitions.
\begin{prop}\label{corres}
For a profinite group pair $(G,\famS)$ and an open subgroup $U$ of $G$, the composition \[\cor^U_G\circ \res^G_U\colon H^\bullet(G,\famS; A)\to H^\bullet(G,\famS; A) \]
is multiplication by $[G:U]$ for any $A\in \Gmod[\pi]{D}$. Similarly for any $M\in\Gmod[\pi]{C}$ the composition  
\[\res^U_G\circ \cor^G_U\colon H_\bullet(G,\famS; M)\to H_\bullet(G,\famS; M) \]
is multiplication by $[G:U]$.
\end{prop}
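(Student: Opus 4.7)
The plan is to verify the claim at the level of the defining module maps and appeal to functoriality. The compositions $\cor^U_G\circ\res^G_U$ and $\res^U_G\circ\cor^G_U$ are, by construction, induced by the composition of the corresponding module homomorphisms, so it suffices to compute those compositions.

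First I would treat the cohomology case. The restriction is induced by the $G$-map $\iota\colon A\to\Hom_U(\Zpiof{G},A)$, $a\mapsto(g\mapsto ga)$, and the corestriction by $\tau\colon\Hom_U(\Zpiof{G},A)\to A$, $f\mapsto\sum_i g_i f(g_i^{-1})$, where $\{g_i\}$ is a set of coset representatives of $G/U$. Then $\tau\circ\iota$ sends $a$ to $\sum_i g_i\cdot g_i^{-1}a=[G:U]\,a$, i.e.\ $\tau\circ\iota=[G:U]\cdot\id_A$. Since both $\res^G_U$ and $\cor^U_G$ are natural in the coefficient module (and the intermediate Shapiro isomorphism is an equality once the two constructions are composed), the induced map on $H^\bullet(G,\famS;A)$ is multiplication by $[G:U]$.

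The homology case is dual and equally direct. The corestriction is induced by $M\to\Zpiof{G}\hotimes_U M$, $m\mapsto\sum_i g_i\otimes g_i^{-1}m$, and the restriction by $\Zpiof{G}\hotimes_U M\to M$, $g\otimes m\mapsto gm$; their composition sends $m$ to $\sum_i g_i\cdot g_i^{-1}m=[G:U]\,m$. Applying the functors $H_\bullet(G,\famS;-)$ and the relative Shapiro isomorphism (which matches the two definitions as explained after each definition) gives multiplication by $[G:U]$ on $H_\bullet(G,\famS;M)$.

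There is no real obstacle here: the only thing one must check carefully is that the module-level maps written above genuinely assemble into the (co)restriction maps as defined, i.e.\ that the Shapiro isomorphisms used in the definitions of $\res$ and $\cor$ cancel properly when the two are composed. This is immediate from tracking the isomorphism $\ind^U_G=\bcoind^U_G$ of Proposition \ref{IndEqualsCoind} through the construction, together with the remark immediately after the definitions that $\res$ and $\cor$ agree with the functorial maps coming from the evident map of pairs $(U,\famS^U)\to(G,\famS)$.
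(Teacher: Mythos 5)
Your proposal is correct and is exactly the argument the paper has in mind: the paper dismisses this proposition with "follows immediately from the definitions," and your computation $\sum_i g_i\cdot(g_i^{-1}a)=[G:U]\,a$ (and its homological counterpart), together with the observation that the intermediate Shapiro isomorphisms cancel, is precisely that immediate verification.
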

\begin{prop}\label{IntersectionsOfSubGps}
Let $(G,\famS)$ be a profinite group pair and let $K_i$ be a nested descending sequence of closed subgroups of $G$. For each $i$ let $A_i\in\mathfrak{D}_\pi(K_i)$ and let $A_i\to A_{i+1}$ be a sequence of maps compatible with the inclusions $K_{i+1}\to K_i$. For each $i$ let  $M_i\in\mathfrak{C}_\pi(K_i)$ be a $K_i$-module and let $M_{i+1}\to M_{i}$ be a sequence of maps compatible with the inclusions $K_{i+1}\to K_i$. Then if $L=\bigcap K_i$ we have
\[H_\bullet(L, \famS^L; \varprojlim M_i) = \varprojlim H_\bullet(K_i,\famS^{K_i}; M_i), \quad H^\bullet(L, \famS^L; \varinjlim A_i) = \varinjlim H^\bullet(K_i,\famS^{K_i}; A_i) \]
\end{prop}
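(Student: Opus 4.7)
The plan is to reduce the statement, via Propositions \ref{SubgpPairDelta} and \ref{relequalsext}, to an assertion about the $\Ext$ and $\Tor$ of a single fixed module $\Delta := \Delta_{G,\famS}$, and then to prove that assertion using one common projective resolution of $\Delta$.

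First, Proposition \ref{SubgpPairDelta} gives $\Delta_{K,\famS^K}\iso \res^G_K(\Delta)$ for every closed subgroup $K\le G$. Combined with Proposition \ref{relequalsext}, the proposition we wish to prove becomes the pair of identities
\[\varinjlim_i \Ext^n_{K_i}(\Delta, A_i) \iso \Ext^n_L(\Delta, \varinjlim_j A_j), \quad \varprojlim_i \Tor^{K_i}_n(M_i^\perp, \Delta) \iso \Tor^L_n\bigl((\varprojlim_j M_j)^\perp, \Delta\bigr),\]
where all of $L$ and the $K_i$ act on the fixed $G$-module $\Delta$ by restriction.

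Next, fix a projective resolution $P_\bullet \to \Delta$ in $\Gmod[\pi]{C}$. By Proposition \ref{ResProjectives} its restriction to any closed subgroup remains projective, so for each $K\in\{L\}\cup \{K_i\}$ the relevant $\Ext$ and $\Tor$ groups are the (co)homology of the complexes $\Hom_K(P_\bullet, A)$ and $M^\perp \hotimes[K] P_\bullet$. Direct limits in $\mathfrak{D}_\pi$ are exact, and cofiltered inverse limits in $\mathfrak{C}_\pi$ are exact (a standard consequence of compactness of the objects involved), so both sorts of limit commute with taking homology of such complexes. The claim therefore reduces to the degree-wise identities
\[\varinjlim_i \Hom_{K_i}(P_n, A_i) \iso \Hom_L(P_n, \varinjlim_j A_j), \quad \varprojlim_i (M_i^\perp \hotimes[K_i] P_n) \iso (\varprojlim_j M_j)^\perp \hotimes[L] P_n.\]

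The main obstacle is this final degree-wise identification. Since every projective $P_n$ is a direct summand of a profinite direct sum of copies of $\Zpiof{G}$, and all functors above respect direct summands and profinite direct sums, it suffices to treat the case $P_n = \Zpiof{G}$. Here $\Hom_K(\Zpiof{G}, A)\iso C(K\lqt G, A)$ (continuous functions to the discrete module $A$), and $M^\perp \hotimes[K] \Zpiof{G}$ decomposes as a profinite direct sum of copies of $M^\perp$ indexed by $K\lqt G$. The required isomorphisms then become statements about continuous functions on the profinite coset spaces: one uses the standard homeomorphism $L\lqt G \iso \varprojlim_i K_i\lqt G$ (valid for any descending nested intersection of closed subgroups of a profinite group) together with the universal property of profinite inverse limits, namely that any continuous map from a profinite limit to a discrete target factors through some finite stage, and dually for the profinite-direct-sum side. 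Carrying this through is a purely technical exercise but is where the real work lies.
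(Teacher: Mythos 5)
Your proposal is correct, but it takes a different route from the paper's. The paper's proof is a three-line affair: it asserts the natural $G$-module isomorphism $\varinjlim \Hom_{K_i}(\Zpiof{G}, A_i)\iso \Hom_L(\Zpiof{G},\varinjlim A_i)$, feeds it into the relative Shapiro lemma (so that every $H^\bullet(K_i,\famS^{K_i};A_i)$ becomes $H^\bullet(G,\famS;\coind^{K_i}_G(A_i))$, and the direct limit can be pulled inside because $\Ext_G^\bullet(\Delta,-)$ commutes with direct limits), and then disposes of the homology half by Pontrjagin duality. You instead work at the chain level over the subgroups themselves: restrict a fixed projective resolution of $\Delta$, commute the limits with homology of complexes, and prove the degree-wise identities by reducing to free modules. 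The key compactness fact is the same in both arguments --- it is exactly your observation that a continuous $L$-equivariant map out of $\Zpiof{G}$ into a discrete module is already $K_i$-equivariant into some $A_j$ for $i,j$ large, via $L\lqt G\iso\varprojlim K_i\lqt G$ --- and the paper leaves it just as unproved as you do. What Shapiro buys the paper is that it only ever needs this identity for the single free module $\Zpiof{G}$ and can quote already-established continuity of the cohomological functors; your route must additionally justify that cofiltered inverse limits of compact modules are exact (true, but it is an extra ingredient) and that the degree-wise identification extends from $\Zpiof{G}$ to arbitrary projectives, i.e.\ to direct summands of $\Zpiof{G\times X}$ for a possibly infinite profinite space $X$, where ``respecting profinite direct sums'' for $\Hom$ into a discrete module needs the sheaf formalism of Appendix A rather than being automatic. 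You could also have saved yourself the entire Tor half by invoking Pontrjagin duality for relative (co)homology, as the paper does.
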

\begin{proof}
There is a natural isomorphism of $G$-modules
\[ \varinjlim \Hom_{K_i}(\Zpiof{G}, A_i) \stackrel{\iso}{\longrightarrow} \Hom_L(\Zpiof{G}, \varinjlim A_i)\]
and the cohomology statement follows from the relative Shapiro Lemma. The homology statement follows by Pontrjagin duality.
\end{proof}
\subsection{Cohomological Dimension of a Pair}
Let $p$ be a prime. By analogy with the absolute case one may define
\[ \hd_p(G,\famS) = \max\left\{n\mid \exists M\in\Gmod[p]{C} \text{ such that }H_n(G,\famS; M)\neq 0\right\}\] 
\[ {\rm cd}_p(G,\famS) = \max\left\{n\mid \exists A\in\Gmod[p]{D} \text{ such that }H^n(G,\famS; A)\neq 0\right\}\] 
These are in fact equal by the Pontrjagin duality of the last section, and we will use them interchangeably. By convention we set the dimension to be zero if the (co)homology vanishes in every dimension for all modules, and $\infty$ if the defining sets are unbounded.

Of course by the definition of relative cohomology we have $\cd_p(G,\famS)\leq \cd_p(G)+1$. Furthermore from the long exact sequence in relative homology we see that if $\cd_p(S_x)<\cd_p(G)-1$ for all $x\in X$ then $\cd_p(G,\famS)\leq \cd_p(G)$. 
\begin{prop}
Let $(G,\famS)$ be a profinite group pair and $H$ a closed subgroup of $G$. Then
\[\cd_p(H,\famS^H)\leq \cd_p(G,\famS) \]
\end{prop}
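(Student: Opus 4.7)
The plan is to reduce the inequality to the Relative Shapiro Lemma just proved. Fix $n = \cd_p(G, \famS)$; if $n = \infty$ the statement is trivial, so assume $n$ is finite. We want to show that $H^k(H, \famS^H; A) = 0$ for every $A \in \mathfrak{D}_p(H)$ and every $k > n$.

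The key step is this: given any $A \in \mathfrak{D}_p(H)$, form the coinduced module $\coind^H_G(A) = \Hom_H(\Zpof{G}, A)$, which lies in $\mathfrak{D}_p(G)$ (it is a submodule of a product of copies of $A$, hence still $p$-primary and discrete). By the Relative Shapiro Lemma there is a natural isomorphism
\[
H^k(H, \famS^H; A) \iso H^k(G, \famS; \coind^H_G(A)).
\]
Since $\coind^H_G(A) \in \mathfrak{D}_p(G)$ and $k > n = \cd_p(G, \famS)$, the right-hand side vanishes by the definition of $\cd_p$. Hence $H^k(H, \famS^H; A) = 0$ as well.

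Taking the supremum over all $k$ and $A$ for which $H^k(H, \famS^H; A) \neq 0$ gives $\cd_p(H, \famS^H) \leq n = \cd_p(G, \famS)$, which is the claim. There is no real obstacle here once the Relative Shapiro Lemma is in hand; the only thing to verify is that coinduction preserves the category $\mathfrak{D}_p$, which is immediate from its construction as a submodule of $\prod A$ with the discrete topology and pointwise $G$-action. Equivalently, one could argue on the homology side: for $M \in \mathfrak{C}_p(H)$ the induced module $\ind^H_G(M)$ lies in $\mathfrak{C}_p(G)$, and the analogous isomorphism $H_\bullet(H, \famS^H; M) \iso H_\bullet(G, \famS; \ind^H_G(M))$ together with the equality $\hd_p = \cd_p$ (noted at the start of this subsection via Pontrjagin duality) gives the same conclusion.
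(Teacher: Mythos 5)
Your argument is correct and is exactly the paper's proof: the paper simply invokes the relative Shapiro lemma, $H^k(H,\famS^H;A)\iso H^k(G,\famS;\coind^H_G(A))$, and the vanishing for $k>\cd_p(G,\famS)$ follows as you say. You have merely written out the details that the paper leaves implicit.
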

\begin{proof}
This follows immediately from the relative Shapiro lemma of the last section.
\end{proof}

The next propositions are simply relative forms of Propositions 21 and $21'$ of \cite{Serre13}, and the proofs are much the same. We will give a sketch here for the convenience of the reader, but the details will be left out. 
\begin{prop}\label{CDofProP}
For a profinite group pair $(G,\famS)$, assume that $G$ is a pro-$p$ group. Then the following are equivalent:
\begin{enumerate}[(1)]
\item $\cd_p(G,\famS)\leq n$
\item $H^{n+1}(G, \famS; \F_p)=0$
\end{enumerate}
\end{prop}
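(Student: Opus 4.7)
The forward implication is immediate from the definition of $\cd_p$. For the converse, the plan is to use the identification $H^{k+1}(G,\famS;A)\iso\Ext^k_G(\Delta,A)$ from Proposition \ref{relequalsext} to reduce the problem to a standard Ext-theoretic argument, closely mirroring Serre's Proposition 21, but with the coefficient module $\Z[p]$ replaced by $\Delta=\Delta_{G,\famS}$. So the entire argument is essentially an application of dimension-shifting and devissage in the variable $A$.

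First I would show that $H^{n+1}(G,\famS;A)=0$ for every $A\in\Gmod[p]{D}$. Write $A=\varinjlim A_i$ as a direct limit of its finite submodules. Since $\Ext^\bullet_G(\Delta,-)$ commutes with direct limits, it suffices to prove vanishing when $A$ is a finite $p$-primary $G$-module. For such $A$ I would induct on the length of a composition series. The key input from the pro-$p$ hypothesis on $G$ is that every simple finite $p$-primary $G$-module is isomorphic to $\F_p$ with trivial action (the $G$-action factors through a finite pro-$p$ quotient, whose only simple $\F_p$-module is the trivial one). Hence there is a short exact sequence $0\to A'\to A\to\F_p\to 0$ with $A'$ of strictly smaller length, and the associated long exact sequence
\[ H^{n+1}(G,\famS;A')\to H^{n+1}(G,\famS;A)\to H^{n+1}(G,\famS;\F_p) \]
has vanishing outer terms, the right one by hypothesis and the left by induction. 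Hence the middle vanishes, completing the devissage.

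Next I would deduce $H^k(G,\famS;A)=0$ for every $A\in\Gmod[p]{D}$ and every $k\ge n+1$ by induction on $k$, using dimension shifting. The base case $k=n+1$ is the previous paragraph. For the inductive step, embed $A$ into an injective object $J$ of $\Gmod[p]{D}$, giving a short exact sequence $0\to A\to J\to A''\to 0$. Since $J$ is injective in $\Gmod[p]{D}$ we have $\Ext^{k-1}_G(\Delta,J)=0$ for $k\ge 2$, so $H^k(G,\famS;J)=0$ for $k\ge 2$. The coefficient long exact sequence then gives a surjection
\[ H^{k-1}(G,\famS;A'')\twoheadrightarrow H^k(G,\famS;A), \]
and the inductive hypothesis applied to $A''$ at degree $k-1\ge n+1$ yields $H^k(G,\famS;A)=0$. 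Combining the two steps gives $\cd_p(G,\famS)\le n$.

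The main obstacle here is not conceptual but a matter of keeping the various categories and functors straight: one must use that $\Gmod[p]{D}$ has enough injectives and that $\Ext^\bullet_G(\Delta,-)$ behaves well on direct limits, both of which were set up in Section \ref{SecModuleCats}. Once those are in hand, the pro-$p$ hypothesis is used only to classify the simple finite $p$-primary modules, and the rest of the proof is formal dimension shifting in the second variable of Ext.
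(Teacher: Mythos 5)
Your proposal is correct and follows essentially the same route as the paper, which explicitly sketches the argument of Serre's Proposition 21 adapted to $\Ext^\bullet_G(\Delta,-)$: d\'evissage over the unique simple module $\F_p$, passage to direct limits, and the standard dimension shift. You have merely written out in full the dimension-shifting step via injectives that the paper leaves implicit in its sketch.
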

\begin{proof}
This follows from a standard d\'evissage argument. All non-simple modules in \Gmod[\pi]{F} are successive extensions by smaller finite modules, and the only simple module is $\F_p$ (see Lemma 7.1.5 of \cite{RZ00}). Now apply the long exact sequence for $\Ext^\bullet_G(\Delta, -)$ to find $H^{n+1}(G,\famS; M)=0$ for all $M\in\Gmod[\pi]{F}$. Taking a direct limit gives the result.
\end{proof}

\begin{prop}\label{cdintermsofFp}
The following are equivalent, for a profinite group pair $(G,\famS)$.
\begin{enumerate}[(1)]
\item $\cd_p(G,\famS)\leq n$
\item $H^{n+1}(H, \famS^H; \F_p)=0$ for all closed subgroups $H$ of $G$
\item $H^{n+1}(U, \famS^U; \F_p)=0$ for all open subgroups $U$ of $G$
\end{enumerate}
\end{prop}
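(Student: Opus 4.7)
The plan is to prove the implications (1)$\Rightarrow$(2)$\Rightarrow$(3)$\Rightarrow$(1). Implication (1)$\Rightarrow$(2) is immediate from the preceding proposition (which bounds $\cd_p(H,\famS^H)\leq\cd_p(G,\famS)$ for closed $H\leq G$), and (2)$\Rightarrow$(3) is trivial since open subgroups are closed. All the work is therefore in (3)$\Rightarrow$(1), and my approach is to reduce to the pro-$p$ case via a Sylow subgroup and transfer back using corestriction-restriction.

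For (3)$\Rightarrow$(1): let $G_p$ be a pro-$p$ Sylow subgroup of $G$, realised as the intersection of the open subgroups $P_N$ obtained as preimages of $p$-Sylow subgroups of the finite quotients $G/N$ for $N$ open normal in $G$. Each such $P_N$ has index prime to $p$, and the $P_N$ form a cofinal filtered family among the open supergroups of $G_p$. Applying Proposition \ref{IntersectionsOfSubGps} (or its straightforward extension from nested sequences to filtered inverse systems of closed subgroups) gives
\[H^{n+1}(G_p,\famS^{G_p};\F_p)=\varinjlim_N H^{n+1}(P_N,\famS^{P_N};\F_p),\]
which vanishes by hypothesis (3). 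Because $G_p$ is pro-$p$, Proposition \ref{CDofProP} then yields $\cd_p(G_p,\famS^{G_p})\leq n$; in particular $H^{n+1}(G_p,\famS^{G_p};A)=0$ for every $A\in\Gmod[p]{D}$.

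To finish, I transfer this vanishing back to $G$. Fix $A\in\Gmod[p]{D}$ and any open $U$ with $G_p\leq U$; then $[G:U]$ is prime to $p$. Since $A$ is $p$-primary, the group $H^{n+1}(G,\famS;A)=\Ext_G^n(\Delta,A)$ is also $p$-primary, being a filtered colimit of Ext groups into finite $p$-primary modules. Proposition \ref{corres} says that $\cor^U_G\circ\res^G_U$ is multiplication by $[G:U]$ on $H^{n+1}(G,\famS;A)$, which is therefore an automorphism, and in particular $\res^G_U$ is injective. Taking the filtered direct limit over $U\supseteq G_p$ produces an injection $H^{n+1}(G,\famS;A)\hookrightarrow H^{n+1}(G_p,\famS^{G_p};A)=0$, so $\cd_p(G,\famS)\leq n$.

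The main technical obstacle I anticipate is justifying that Proposition \ref{IntersectionsOfSubGps} applies to the (possibly uncountable) filtered family of open supergroups of $G_p$ rather than to a nested descending sequence; when $G$ is second-countable a cofinal nested chain is immediate, and in general the proof of that proposition only uses formal properties of filtered limits and colimits and so should adapt, but this must be checked. Once that technicality is resolved, the proof is a routine assembly of the Sylow reduction, Proposition \ref{CDofProP} in the pro-$p$ case, and the corestriction-restriction identity of Proposition \ref{corres}.
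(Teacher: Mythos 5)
Your proof is correct and follows essentially the same route as the paper's: the paper also proves (3)$\Rightarrow$(2) via Proposition \ref{IntersectionsOfSubGps}, then passes to a $p$-Sylow subgroup, applies Proposition \ref{CDofProP}, and transfers back with the $\cor\circ\res = [G:U]$ identity; you have merely telescoped this into a direct (3)$\Rightarrow$(1). The filtered-versus-nested technicality you flag is real but harmless (and is equally present, silently, in the paper's own invocation of a ``decreasing sequence'' of open subgroups intersecting in the Sylow subgroup).
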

\begin{proof}
That (1) implies (2) is an immediate consequence of Proposition \ref{CDofProP}. That (2) implies (3) is trivial. That (3) implies (2) follows from Proposition \ref{IntersectionsOfSubGps} applied to the trivial module $\F_p$ and a descending sequence of open subgroups of $G$ intersecting in $H$.

Finally assume that (2) holds and let $A\in\Gmod[p]{D}$. Taking $H$ to be a $p$-Sylow subgroup of $G$, the previous proposition implies that $\cd_p(H,\famS^H)\leq n$. So $H^{n+1}(H,\famS^H; A)=0$. Now write $H$ as the intersection of a decreasing sequence of open subgroups $U_i$. By Proposition \ref{corres} the composition 
\[\cor^{U_i}_G\circ \res^G_{U_i}\colon H^{n+1}(G,\famS;A)\to H^{n+1}(G,\famS;A) \]
is multiplication by $[G:U_i]$, hence is an injective map since $[G:U_i]$ is coprime to $p$. Hence for every $i$ the restriction map \[H^{n+1}(G,\famS;A)\to H^{n+1}(U_i,\famS^{U_i};A)\] is injective. Hence the restriction map
\[H^{n+1}(G,\famS;A)\to \varinjlim H^{n+1}(U_i,\famS^{U_i};A) = H^{n+1}(H,\famS^{H};A)= 0\]
is also injective and we are done. 
\end{proof}

We note one intuitively clear result for future use.
\begin{lem}\label{LemReductionOfFamily}
Suppose that the family $\famS$ is such that at most one subgroup $S_0$ is non-trivial. Then for every $k> 1$ and every $M\in \Gmod[\pi]{C}, A\in\Gmod[\pi]{D}$, we have 
\[H_k(G,\famS; M)=H_k(G, \{S_0\}; M), \quad H^k(G,\famS; A)=H^k(G, \{S_0\};A)  \]
In particular, $\cd_p(G,\famS)=\cd_p(G,\{S_0\})$ for all $p\in \pi$ except possibly if one  dimension is 1 and the other 0.
\end{lem}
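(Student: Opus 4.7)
I first plan to dispose of a mild topological subtlety: in the case $S_0\neq 1$, I would use the continuous indexing condition (Definition \ref{DefCtsIndex}) to show that the distinguished point $x_0\in X$ carrying the non-trivial subgroup is isolated. Concretely, choosing any $g\in S_0\smallsetminus\{1\}$ and setting $U=G\smallsetminus\{g\}$, the set $\{x\in X: S_x\subseteq U\}=X\smallsetminus\{x_0\}$ is open, so $\{x_0\}$ is clopen and $X=\{x_0\}\sqcup Y$ for some clopen $Y$. I would then write $\famS=\{S_0\}\sqcup\famS_2$ with $\famS_2$ the family of trivial subgroups indexed by $Y$, and appeal to the top row of the Induction on Pairs diagram \eqref{InductionOnPairs} for a short exact sequence of $G$-modules
\[0\to\Delta_{G,\{S_0\}}\to\Delta_{G,\famS}\to\Zpiof{G/\famS_2}\to 0.\]

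The heart of the argument will be to observe that $\Zpiof{G/\famS_2}$ is $G$-projective, and hence that the above sequence splits. Since $S_y=1$ for all $y\in Y$, there is a $G$-equivariant homeomorphism $G/\famS_2\iso Y\times G$ with $G$ acting only on the second factor, so the $G$-action is free and $\Zpiof{G/\famS_2}$ is in fact a free $\Zpiof{G}$-module on the profinite set $Y$. Applying $\Ext_G^k(-,A)$ and $\Tor^G_k((-)^\perp,M)$ to the split decomposition $\Delta_{G,\famS}\iso\Delta_{G,\{S_0\}}\oplus\Zpiof{G/\famS_2}$ and using Proposition \ref{relequalsext}, the projective summand contributes $0$ in every positive degree, yielding $H^k(G,\famS;A)\iso H^k(G,\{S_0\};A)$ and $H_k(G,\famS;M)\iso H_k(G,\{S_0\};M)$ for all $k>1$.

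It remains only to dispatch the residual case $S_0=1$ (all subgroups in $\famS$ trivial), which I expect to be easier and to run by the same projectivity principle: both $\Zpiof{G/\famS}$ and $\Zpiof{G/\{1\}}=\Zpiof{G}$ are $G$-projective, as the $G$-actions on $X\times G$ and on $G$ are free, so dimension-shifting in the defining sequences of $\Delta_{G,\famS}$ and $\Delta_{G,\{1\}}$ identifies both relative cohomologies with $H^k(G;A)$ in degrees $k>1$, and dually for homology. The $\cd_p$ conclusion then falls out immediately: whenever either side is at least $2$ the isomorphism forces equality, leaving only the anomaly between dimensions $0$ and $1$ described in the statement. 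The only step requiring any care is the verification that $\{x_0\}$ is clopen; the rest is a routine projective-summand argument.
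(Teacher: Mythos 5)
There is a genuine gap at the step you yourself single out as ``the only step requiring any care'': the claim that $\{x_0\}$ is clopen. Your argument shows that $X\smallsetminus\{x_0\}=\{x\in X: S_x\subseteq U\}$ is \emph{open}, from which it follows only that $\{x_0\}$ is \emph{closed} --- which is automatic in a Hausdorff space and gives nothing. Definition \ref{DefCtsIndex} can never certify that $\{x_0\}$ is open: any open $U\subseteq G$ with $S_{x_0}\subseteq U$ contains $1$, hence contains every trivial $S_x$, so $\{x: S_x\subseteq U\}$ is all of $X$ rather than $\{x_0\}$. And the claim is genuinely false: take $X=\N\cup\{\infty\}$ the one-point compactification of a discrete countable set, with $S_\infty=S_0$ non-trivial and $S_n=1$ for $n\in\N$. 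This family is continuously indexed (the set $\{x:S_x\subseteq U\}$ is either $X$, $X\smallsetminus\{\infty\}$ or $\emptyset$, all open), yet $\{\infty\}$ is not open. Consequently the decomposition $\famS=\{S_0\}\sqcup\famS_2$ over a disjoint union of profinite spaces, and with it the short exact sequence from diagram \eqref{InductionOnPairs}, is not available: the complement $X\smallsetminus\{x_0\}$ need not even be compact, so ``the family of trivial subgroups indexed by $Y$'' and the free module $\Zpiof{G/\famS_2}$ do not exist as you describe them. The remainder of your argument (freeness of the module on a space with free $G$-action, splitting, vanishing of $\Ext$ and $\Tor$ against a projective in positive degrees, and the easy case $S_0=1$) is sound, so your proof is correct whenever $x_0$ happens to be isolated --- in particular whenever $X$ is finite --- but not in general.

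The paper's proof avoids isolating $x_0$ altogether: it decomposes not the module $\Zpiof{G/\famS}$ into a direct sum of submodules but rather the groups $H_k(\famS;M)=\Tor_k^G(\Z[\pi],\Zpiof{G/\famS}\hotimes M)$ into a profinite direct sum of fibres via Proposition \ref{DirectSumsAndTor}; for $k\geq 1$ every fibre except the one at $x_0$ vanishes by the absolute Shapiro lemma, and a sheaf with at most one non-zero fibre collapses to that fibre by Lemma \ref{LemSheafOfTrivs}. One then compares the two long exact sequences via the map of pairs $(G,\{S_0\})\to(G,\famS)$ and applies the five lemma. If you want to rescue your splitting strategy instead, you would need the pointed-profinite-space formalism (the quotient $\Zpiof{G/\famS}/\Zpiof{G/S_{x_0}}$ is the free module on the pointed space $\bigl(G/\famS\bigr)/\bigl(G/S_{x_0}\bigr)$, on which $G$ acts freely away from the basepoint), which the paper deliberately does not develop.
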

\begin{proof}
Note that for all $k$
\[ H_k(\famS; M) = \Tor^{G}_k(\Z[\pi], \Zpiof{G/\famS}\hotimes[] M)=\bigboxplus_{S_x\in\famS} \Tor^{G}_k(\Z[\pi], \ind^{S_x}_G(M))  = \Tor^{G}_k(\Z[\pi], \ind^{S_0}_G(M)) = H_k(S_0; M) \]
for $k\geq 1$. The second equality follows from Proposition \ref{DirectSumsAndTor}. Using the absolute Shapiro Lemma all the groups \[\Tor^{G}_k(\Zhat, \ind^{S_0}_G(M))= \Tor^{G}_k(\Zhat, \ind^{1}_G(M))=H_k(1, M) \]
vanish for $S_x\neq S_0$ so the third equality follows from Lemma \ref{LemSheafOfTrivs}.

 The result on homology now follows from the 5-Lemma and the map of long exact sequences in relative homology corresponding to the map of pairs $(G, \{S_0\})\to (G, \famS)$. The cohomology statement follows by Pontrjagin duality.
\end{proof}
\begin{rmk}
The discrepancy in the cohomological dimension statement is just about possible---for example $\cd_p(1, \{1\})=0$ but $\cd_p(1, \{1\}\sqcup\{1\})=1$.
\end{rmk}
\subsection{Spectral sequence}
We briefly mention that there is a spectral sequence associated to an `extension of group pairs' in a suitable sense. Recall first that the Lyndon-Hochschild-Serre spectral sequence exists in the world of profinite groups.
\begin{prop}[see Theorem 7.2.4 of \cite{RZ00}]\label{LHS1}
Let $G$ be a profinite group and $N$ a closed normal subgroup of $G$ with $G/N=Q$. For $A\in\Gmod[\pi]{D}$ there is a natural convergent first quadrant cohomological spectral sequence
\[E_2^{rs}=H^r(Q, H^s(N; A))\Rightarrow H^{r+s}(G,A) \]
\end{prop}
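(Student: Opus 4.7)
The plan is to apply Grothendieck's theorem on the spectral sequence associated to a composition of functors. The key observation is that on $\Gmod[\pi]{D}(G)$, the functor of $G$-invariants factors as
\[(-)^G = (-)^Q \circ (-)^N,\]
where $(-)^N$ is regarded as a functor $\Gmod[\pi]{D}(G) \to \Gmod[\pi]{D}(Q)$. One can verify directly that if $A$ is a discrete $\pi$-primary $G$-module then $A^N$, equipped with the residual action of $Q = G/N$, is again discrete and $\pi$-primary, and that the identification $(A^N)^Q = A^G$ is natural.

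First I would check that $(-)^N\colon\Gmod[\pi]{D}(G)\to\Gmod[\pi]{D}(Q)$ is a well-defined additive left-exact functor, and that its right derived functors coincide with $H^s(N;\res^G_N(-))$ equipped with the $Q$-action described earlier in Section~\ref{SecUsefulIds}. The proof of the latter proceeds by taking an injective resolution $J^\bullet$ of $A$ in $\Gmod[\pi]{D}(G)$ and invoking Proposition~\ref{ResProjectives} (via Pontrjagin duality) to observe that $\res^G_N(J^\bullet)$ remains an injective resolution of $\res^G_N(A)$ in $\Gmod[\pi]{D}(N)$.

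The crucial step, and the one I expect to be the main obstacle, is showing that $(-)^N$ sends injective objects of $\Gmod[\pi]{D}(G)$ to objects acyclic for $(-)^Q$. The standard route uses that every injective in $\Gmod[\pi]{D}(G)$ is a summand of a coinduced module of the form $\coind^1_G(I) \iso \Hom(\Zpiof{G}, I)$ for $I$ an injective object of $\mathfrak{D}_\pi$. Using Propositions~\ref{IndandCoinsasDiags} and~\ref{preShapiro}, one computes
\[(\coind^1_G(I))^N \iso \Hom_N(\Zpiof{G}, I) \iso \Hom(\Zpiof{Q}, I) \iso \coind^1_Q(I),\]
the middle isomorphism using that $I$ is $N$-trivial so any $N$-linear map from $\Zpiof{G}$ to $I$ factors through $\Zpiof{Q}=\Zpiof{N\lqt G}$. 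This identifies $(-)^N$ applied to a sufficient supply of injectives with injective $Q$-modules, and the acyclicity follows. Care is needed to check continuity of these Hom-isomorphisms in the profinite topology: one verifies them first for $I$ finite, so that the Homs reduce to finite modules, and then passes to the direct limit using that $\Zpiof{G}$ is finitely generated as a $\Z[\pi]$-algebra in the appropriate sense.

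With the acyclicity in hand, Grothendieck's theorem (valid in any abelian category with enough injectives, which applies to $\Gmod[\pi]{D}(G)$ and $\Gmod[\pi]{D}(Q)$) produces the first-quadrant cohomological spectral sequence
\[E_2^{rs}=H^r(Q, H^s(N;A))\Rightarrow H^{r+s}(G,A)\]
together with its naturality in $A$. Convergence is automatic from the first-quadrant condition.
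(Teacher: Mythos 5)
The paper does not supply its own proof of this statement; it cites Theorem~7.2.4 of Ribes--Zalesskii. Your proposal is a correct reconstruction of the standard argument: the factorisation $(-)^G=(-)^Q\circ(-)^N$, the identification of the derived functors of $(-)^N$ with $H^\bullet(N;\res^G_N(-))$ together with its $Q$-action, and the key acyclicity step showing $(\coind^1_G(I))^N\iso\coind^1_Q(I)$ for $I$ injective in $\mathfrak{D}_\pi$ (note that for this to make sense as stated one uses normality of $N$ to identify $N$-invariance under the right-translation action with factoring through $\Zpiof{G/N}$), so that injectives in $\Gmod[\pi]{D}(G)$ become injectives --- in particular $(-)^Q$-acyclics --- in $\Gmod[\pi]{D}(Q)$, after which Grothendieck's composite-functor spectral sequence applies. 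This is the same approach underlying the cited reference.
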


\begin{prop}
Let $(G,\famS)$ be a profinite group pair where $\famS$ is continuously indexed by a profinite space $X$ and let $N$ be a closed normal subgroup of $G$ contained in $S$ for all $S\in\famS$. Let $Q=G/N$ and let ${\cal T}=\{S/N\mid S\in\famS\}$ be a family of subgroups of $Q$; then $\cal T$ is continuously indexed by $X$. The group $N$ acts trivially on $\Delta_{G,\famS}$ and $\Delta_{G,\famS}$, regarded as a $Q$-module, is naturally isomorphic to $\Delta_{Q,\cal T}$.
\end{prop}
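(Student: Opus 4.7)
The plan is to verify the three assertions in order, each via a fairly direct check.

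\medskip

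First, to see that $\cal T$ is continuously indexed by $X$, I would use the characterisation from Definition \ref{DefCtsIndex}: given an open subset $V$ of $Q$, its preimage $U$ under the quotient map $G\to Q$ is open in $G$, and since every $S_x$ contains $N$ we have $S_x/N\subseteq V$ if and only if $S_x\subseteq U$. The set $\{x\in X\mid S_x\subseteq U\}$ is open in $X$ by the continuous indexing of $\famS$, so the corresponding set for $\cal T$ is open as required.

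\medskip

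Next, for the triviality of the $N$-action on $\Delta_{G,\famS}$, it suffices to show $N$ acts trivially on $G/\famS$. For $n\in N$, $g\in G$ and $x\in X$, normality of $N$ gives $g^{-1}ng\in N\subseteq S_x$, whence $ngS_x=g(g^{-1}ng)S_x=gS_x$. Hence $N$ acts trivially on $\ZGS$ and consequently on its submodule $\Delta_{G,\famS}$. The resulting $Q$-action then makes sense without ambiguity.

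\medskip

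For the identification with $\Delta_{Q,\cal T}$, the key point is to construct a continuous $G$-equivariant bijection
\[ G/\famS\longrightarrow Q/\cal T,\quad gS_x\mapsto (gN)(S_x/N). \]
This is well defined (since $N\subseteq S_x$), and has the obvious inverse induced by a continuous section of $G\to Q$ applied fibrewise. Both sides are profinite spaces and the map is a continuous bijection, hence a homeomorphism; $G$-equivariance is immediate. It therefore induces an isomorphism $\ZGS\iso \Z[\pi][\![Q/\cal T]\!]$ of $G$-modules, which by the previous paragraph descends to an isomorphism of $Q$-modules. This isomorphism commutes with the two augmentation maps to $\Z[\pi]$, so restricting to the kernels gives the desired natural isomorphism $\Delta_{G,\famS}\iso \Delta_{Q,\cal T}$ of $Q$-modules.

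\medskip

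The only point that requires a little care is the continuity of the bijection $G/\famS\to Q/\cal T$ and its inverse; everything else is formal. Since both spaces are constructed as quotients of $X\times G$ and $X\times Q$ by the equivalence relations of Section \ref{SecCtslyIndFams}, the map is induced from the continuous surjection $\id_X\times (G\to Q)$, and the fact that both quotient spaces are profinite (in particular Hausdorff and compact) lets one conclude that a continuous bijection is automatically a homeomorphism, so no further work is needed.
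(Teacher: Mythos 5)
Your proposal is correct and follows essentially the same route as the paper: the same direct check of continuous indexing, the same conjugation computation $ngS_x = g(g^{-1}ng)S_x = gS_x$ for triviality of the $N$-action, and the same identification $G/\famS \iso Q/\cal T$ inducing a $G$-module isomorphism of free modules compatible with the augmentations. The only superfluous element is the appeal to a continuous section for the inverse, which, as you note yourself, is unnecessary since a continuous bijection of profinite spaces is automatically a homeomorphism.
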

\begin{proof}
The statement that $\cal T$ is continuously indexed follows immediately from the definition. For $x\in G$, $n\in N$, $S\in\famS$ the formula
\[n\cdot xS=xx^{-1}nxS=xS \]
shows that $N$ acts trivially on $\Zpiof{G/\famS}$, hence on $\Delta_{G,\famS}$. Finally the natural continuous bijections $G/S\iso (G/N)/(S/N)$ show that $\Zpiof{G/\famS}$ is naturally isomorphic as a $G$-module to $\Zpiof{Q/\cal T}$ in a way compatible with the augmentation to \Z[\pi]. Hence  $\Delta_{G,\famS}$ is naturally isomorphic to $\Delta_{Q,\cal T}$.
\end{proof}

\begin{prop}\label{LHS2}
Let $(G,\famS)$ be a profinite group pair and let $N$ be a closed normal subgroup of $G$ contained in $S$ for all $S\in\famS$. Let $Q=G/N$ and let ${\cal T}=\{S/N\mid S\in\famS\}$. For $A\in\Gmod[\pi]{D}$ there is a natural convergent first quadrant cohomological spectral sequence
\[E_2^{rs}=H^r(Q,{\cal T}; H^s(N; A))\Rightarrow H^{r+s}(G,\famS; A) \]
\end{prop}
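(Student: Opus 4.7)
The plan is to bootstrap from the absolute Lyndon--Hochschild--Serre spectral sequence of Proposition \ref{LHS1}, applied to the coefficient module $B = \Hom(\Delta, A) \in \Gmod[\pi]{D}$, where $\Delta = \Delta_{G,\famS}$. This gives a convergent first-quadrant spectral sequence
\[E_2^{r,s} = H^r(Q, H^s(N, \Hom(\Delta, A))) \Rightarrow H^{r+s}(G, \Hom(\Delta, A)),\]
whose abutment equals $H^{r+s+1}(G, \famS; A)$ by the very definition of relative cohomology. It remains only to identify the $E_2$ page in the desired form and re-index.

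The key identification is
\[H^s(N, \Hom(\Delta, A)) \iso \Hom(\Delta, H^s(N, A))\]
as $Q$-modules. By the preceding proposition, $N$ acts trivially on $\Delta$ and $\Delta \iso \Delta_{Q, \cal T}$ as $Q$-modules, so as a functor on $\mathfrak{D}_\pi(N)$, $\Hom(\Delta, -)$ agrees with $\Hom_{\Z[\pi]}(\Delta, \res^G_N(-))$. Since $\Delta$ is $\Z[\pi]$-projective (as already used in Proposition \ref{relequalsext}), this functor is exact; moreover it is right adjoint to $\Delta \hotimes -$ with diagonal $N$-action (which is trivial on the $\Delta$-factor), and that left adjoint is exact for the same reason. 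A right adjoint to an exact functor preserves injectives, so $\Hom(\Delta, -)$ carries an injective resolution $A \to J^\bullet$ in $\mathfrak{D}_\pi(N)$ to an injective resolution of $\Hom(\Delta, A)$. Combined with the further identity $\Hom_N(\Z[\pi], \Hom(\Delta, -)) = \Hom(\Delta, (-)^N)$---again because $N$ acts trivially on $\Delta$---this yields the claim upon passage to cohomology, with $Q$-equivariance automatic from functoriality.

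Substituting and using $\Delta \iso \Delta_{Q, \cal T}$, the $E_2$ page becomes $H^{r+1}(Q, \cal T; H^s(N, A))$, so we obtain a convergent spectral sequence
\[E_2^{r,s} = H^{r+1}(Q, \cal T; H^s(N, A)) \Rightarrow H^{r+s+1}(G, \famS; A).\]
A re-indexing $r \mapsto r-1$ preserves the bidegree $(r, -r+1)$ of each $d_r$, producing the claimed spectral sequence; it is first quadrant because $H^0(Q, \cal T; -) \equiv 0$ by definition (the new $r=0$ column being the old $r=-1$ column, which vanishes). Naturality in $A$ is inherited from Proposition \ref{LHS1} and each subsequent identification. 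The main obstacle is establishing the $E_2$-page identification, specifically the preservation of injectives by $\Hom_{\Z[\pi]}(\Delta, -)$ in the discrete $N$-module category---this requires care with the $N$-equivariance of the Hom-tensor adjunction under the diagonal actions, though once set up correctly the $\Z[\pi]$-projectivity of $\Delta$ closes the argument cleanly.
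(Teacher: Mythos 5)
Your overall strategy is exactly the paper's: apply the absolute Lyndon--Hochschild--Serre spectral sequence of Proposition \ref{LHS1} to the coefficient module $\Hom(\Delta,A)$, identify $H^s(N,\Hom(\Delta,A))\iso\Hom(\Delta,H^s(N,A))$ as $Q$-modules using the fact that $N$ acts trivially on the $\Z[\pi]$-projective module $\Delta\iso\Delta_{Q,\cal T}$, and absorb the dimension shift into the definition of relative cohomology. Your reindexing discussion, including the vanishing of the new $r=0$ column, is correct.

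The one step whose justification does not hold up as written is the claim that $\Hom(\Delta,-)$ preserves injectives of $\mathfrak{D}_\pi(N)$ because it is right adjoint to the exact functor $\Delta\hotimes-$. For the lemma ``a right adjoint of an exact functor preserves injectives'' to apply, the left adjoint must be a functor $\mathfrak{D}_\pi(N)\to\mathfrak{D}_\pi(N)$; but $\Delta$ is in general an infinite compact module (a free profinite $\Z[\pi]$-module over a profinite space), so $A\otimes\Delta$ for discrete $A$ is not discrete --- for instance $\F_p\otimes\Delta$ is an infinite compact module. This is precisely the compact/discrete tension flagged in the remark on cap products at the start of Section \ref{SecCupProds}, and the adjunction of Proposition \ref{TensorHom} only admits compact modules in the tensor slots. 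The statement you want is nevertheless true and, more to the point, avoidable: the paper computes $H^s(N,-)$ from a projective resolution $P_\bullet$ of $\Z[\pi]$ in $\mathfrak{C}_\pi(N)$ and uses the identity $\Hom_N(P,\Hom(\Delta,A))=\Hom(\Delta,\Hom_N(P,A))$ (valid because $N$ acts trivially on $\Delta$) together with exactness of $\Hom(\Delta,-)$, so no preservation of injectives is required. If you insist on injective resolutions, injectivity of $\Hom(\Delta,J)$ should instead be deduced via Pontrjagin duality: $\Hom(\Delta,J)\iso(J^\ast\hotimes\Delta)^\ast$ with diagonal action, and $J^\ast\hotimes\Delta$ is projective in $\mathfrak{C}_\pi(N)$ because $\Zpiof{N}\hotimes\Delta$ is a free $\Zpiof{N}$-module on the underlying profinite space of $\Delta$.
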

\begin{proof}
This follows from the spectral sequence in Proposition \ref{LHS1} via the natural isomorphism
\[H^s(N; \Hom(\Delta, A)) = \Hom(\Delta, H^s(N, A)) \]
where $\Delta=\Delta_{G,\famS}$. This isomorphism holds because $\Delta$, viewed as an $N$-module, is simply a free \Z[\pi]-module with trivial $N$-action. Hence the functor $\Hom(\Delta, -)$ is an exact functor with
\[ \Hom_N(P, \Hom(\Delta, A)) = \Hom(\Delta, \Hom_N(P,A))\]  
(as $G$-modules) for every $P\in \mathfrak{C}_\pi(N)$ and the cohomology isomorphisms follow.
\end{proof}

\section{Cup products}\label{SecCupProds}
\begin{rmk}\label{nocapprods}
The classical theory of Poincar\'e duality pairs in \cite{BE77} makes extensive use of cap products, particularly with respect to a `fundamental class' in the top-dimensional homology. The cap product in its most general form consists of a family of homomorphisms
\[H_{r+s}(G,C)\otimes H^s(G,A)\to H_r(G,C\otimes A)\]
However here we run into an issue. Without additional conditions on $G$, group homology is defined only with compact coefficients, and cohomology with discrete torsion coefficients. So to make sense of the above homomorphisms in general one needs to make sense of the tensor product of a compact module $C$ with a discrete module $A$ in such a way that $C\otimes A$ is a compact module. There is no way to do this in the generality required, so we must consider cap products to be largely unavailable to us. Therefore we will develop a duality theory only making use of cup products---and therefore only using discrete modules. 

One may note in passing that such a cap product is well defined and behaves as one would expect in the case when $A$ is finite. This is not really sufficient for our purposes, but does suffice to treat absolute duality groups. The reader interested in the theory of absolute profinite duality groups developed in this way is directed to \cite{Pletch80a, Pletch80b}.
\end{rmk}
\subsection{Definition and basic properties}
Let $C, A, B\in \Gmod[\pi]{D}$ and $\fkB\in\Hom_G(C\otimes A, B)$. We define a cup product as follows. Let $P_\bullet\to\Z[\pi]$ be a projective resolution of \Z[\pi] by left $G$-modules. Then the complex $Q_\bullet= {\rm tot}(P_\bullet \hotimes P_\bullet)$ is also a projective resolution of \Z[\pi]. For $\zeta\in \Hom_G(P_r,C)$ and $\xi\in\Hom_G(P_s,A)$ we may define a cup product 
\[\zeta\smile\xi = \zeta\smile_\fkB \xi \colon Q_{r+s} \stackrel{\rm pr}{\longrightarrow} P_r\hotimes P_s \stackrel{\zeta\otimes\xi}{\longrightarrow} C\otimes A\stackrel{\fkB}{\longrightarrow} B\] 
at the level of cochains. One may readily check that this descends to a map on cohomology using the formula 
\[d(\zeta\smile\xi)=d\zeta\smile \xi +(-1)^r\zeta\smile d\xi \]
Standard arguments from homological algebra show that this definition is independent of the chosen resolution $P_\bullet$. 

We will mostly be utilising the cup product to explore duality notions, and hence require a species of adjoint cup product. In the classical case one does this by taking cap products with respect to a chosen class in $H_n$. Here we use the dual notion. Again given $C, A, B\in \Gmod[\pi]{D}$ and $\fkB\in\Hom_G(C\otimes A, B)$, choose also a `coclass' $e\colon H^n(G,B)\to I_\pi$. Now define the `adjoint cup product'
\[\Upsilon_e=\Upsilon_{e,\fkB}\colon H^k(G,C)\to H^{n-k}(G,A)^\ast \]
by means of the composition
\[H^k(G,C)\otimes H^{n-k}(G,A)\stackrel{\smile}{\longrightarrow} H^n(G,B)\stackrel{e}{\longrightarrow} I_\pi \] 
One basic example of a cup product that we will encounter frequently arises from the pairing 
\begin{eqnarray*}
\fkB\colon \Hom(\Delta,C)\otimes A &\to & \Hom(\Delta, C\otimes A) \\
f\otimes a&\mapsto & (\delta\mapsto f(\delta)\otimes a) 
\end{eqnarray*}
which yields, by definition of relative cohomology, a cup product
\[H^r(G,\famS; C)\otimes H^s(G, A)\to H^{r+s}(G,\famS; C\otimes A)\]

The cup product is natural with respect to group homomorphisms in the following sense. 
\begin{prop}\label{GrpPairCupProd1}
Let $G, H$ be profinite groups. Let $C, A, B\in \Gmod[\pi]{D}$ and $\fkB\in\Hom_G(C\otimes A, B)$. Let $C', A', B'\in \mathfrak{D}_\pi(H)$ and $\fkB'\in\Hom_H(C'\otimes A', B')$. Let $f\colon H\to G$ be a group homomorphism, and let $c\colon C\to C'$, $a\colon A\to A'$, $b\colon B\to B'$ be continuous group homomorphisms compatible with $f$ in the obvious way, and assume that $b\circ \fkB = \fkB'\circ(c\otimes a)$. Then the diagram 
\[\begin{tikzcd}
H^r(G,C)\otimes H^s(G,A) \ar{r}\ar{d}{\smile_\fkB} & H^r(H,C')\otimes H^s(H,A')\ar{d}{\smile_{\fkB'}}\\
H^{r+s}(G,B)\ar{r} & H^{r+s}(H,B')
\end{tikzcd}\]
commutes, so that cup products are natural with respect to group homomorphisms.  
\end{prop}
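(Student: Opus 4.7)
The plan is to verify the commutativity at the cochain level using a compatible pair of projective resolutions, one for each group.

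First, choose a projective resolution $P_\bullet \to \Z[\pi]$ in $\Gmod[\pi]{C}$ and a projective resolution $P'_\bullet \to \Z[\pi]$ in $\mathfrak{C}_\pi(H)$. Via $f$ the resolution $P_\bullet$ is a (not necessarily projective) resolution in $\mathfrak{C}_\pi(H)$, so by the comparison theorem of homological algebra there exists an $H$-linear chain map $\phi_\bullet \colon P'_\bullet \to P_\bullet$ lifting the identity on $\Z[\pi]$; any two such lifts are chain-homotopic. By Proposition \ref{DblResolutions} the total complexes $Q_\bullet = {\rm tot}(P_\bullet \hotimes P_\bullet)$ and $Q'_\bullet = {\rm tot}(P'_\bullet \hotimes P'_\bullet)$ are projective resolutions of $\Z[\pi]$ over $G$ and $H$ respectively, and $\phi_\bullet \hotimes \phi_\bullet \colon Q'_\bullet \to Q_\bullet$ is a chain map over $H$ lifting the identity and compatible with the natural projections to the direct summands $P_r \hotimes P_s$ and $P'_r \hotimes P'_s$.

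Next, unwind both compositions in the diagram at the cochain level. Take representatives $\zeta \in \Hom_G(P_r,C)$ and $\xi \in \Hom_G(P_s,A)$. Following the diagram right-then-down, the naturality maps send $\zeta$ to $c\circ\zeta\circ\phi_r \in \Hom_H(P'_r,C')$ and $\xi$ to $a\circ\xi\circ\phi_s \in \Hom_H(P'_s,A')$; then $\smile_{\fkB'}$ gives the composite
\[
Q'_{r+s} \xrightarrow{\operatorname{pr}} P'_r \hotimes P'_s \xrightarrow{(c\zeta\phi_r)\otimes(a\xi\phi_s)} C'\otimes A' \xrightarrow{\fkB'} B'.
\]
Following the diagram down-then-right, we first form $\smile_\fkB$ to get the composite
\[
Q_{r+s} \xrightarrow{\operatorname{pr}} P_r \hotimes P_s \xrightarrow{\zeta\otimes\xi} C\otimes A \xrightarrow{\fkB} B,
\]
and then precompose with $\phi_\bullet \hotimes \phi_\bullet$ and postcompose with $b$.

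To see these agree, observe that $(c\zeta\phi_r) \otimes (a\xi\phi_s) = (c\otimes a)\circ(\zeta\otimes\xi)\circ(\phi_r\otimes\phi_s)$ and that the projections commute with $\phi_\bullet \hotimes \phi_\bullet$ by construction of the total complex. Hence both cochains are equal to
\[
Q'_{r+s} \xrightarrow{\operatorname{pr}} P'_r \hotimes P'_s \xrightarrow{\phi_r\otimes\phi_s} P_r\hotimes P_s \xrightarrow{\zeta\otimes\xi} C\otimes A \xrightarrow{c\otimes a} C'\otimes A' \xrightarrow{\fkB'} B',
\]
where in the bottom-right-route we use the hypothesis $b\circ\fkB = \fkB'\circ(c\otimes a)$ to replace $\fkB' \circ (c\otimes a)$ by $b\circ\fkB$. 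Since cohomology classes are independent of the choice of resolution and chain lift, this proves commutativity at the level of cohomology.

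There is no real obstacle to this argument: everything reduces to the fact that $\phi_\bullet$ and $\phi_\bullet \hotimes \phi_\bullet$ can simultaneously realise the naturality maps on both $H^\bullet(G,-)$ and on the cohomology of the product resolution $Q_\bullet$. The only mild subtlety, which is handled by Proposition \ref{DblResolutions}, is confirming that $Q'_\bullet$ is itself an $H$-projective resolution so that the cup product over $H$ can be computed using it; once this is in hand the diagram commutes on the nose at the cochain level, not merely up to homotopy.
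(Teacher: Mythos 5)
Your argument is correct and is precisely the cochain-level verification that the paper's proof gestures at (the paper simply says ``one may readily check this on the level of cochains''). The key points — that $\phi_\bullet\hotimes\phi_\bullet$ is an $H$-chain map of total complexes lifting the identity and compatible with the projections, and that the hypothesis $b\circ\fkB=\fkB'\circ(c\otimes a)$ makes the two composites agree on the nose — are exactly what is needed, so there is nothing to add.
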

\begin{proof}
 One may readily check this on the level of cochains.
\end{proof} 
We will of course be requiring maps not just of single cohomology groups, but of entire exact sequences. Given short exact sequences of modules in \Gmod[\pi]{D}
\begin{eqnarray*}
0\to C_1\stackrel{\gamma_1}{\longrightarrow} C_2\stackrel{\gamma_2}{\longrightarrow} C_3\to 0\\
0\to A_3\stackrel{\alpha_2}{\longrightarrow} A_2\stackrel{\alpha_1}{\longrightarrow} A_1\to 0
\end{eqnarray*}
and pairings $\fkB_i\in \Hom_G(C_i\otimes A_i, B)$, we say that these pairings are {\em compatible} with the maps in the short exact sequences if the diagrams
\[\begin{tikzcd} 
C_1\otimes A_2 \ar{r}{\id\otimes \alpha_1}\ar{d}{\gamma_1\otimes\id} & C_1\otimes A_1 \ar{d}{\fkB_1} & & C_2\otimes A_3 \ar{r}{\id\otimes \alpha_2}\ar{d}{\gamma_2\otimes\id} & C_2\otimes A_2 \ar{d}{\fkB_2} \\
C_2\otimes A_2\ar{r}{\fkB_2} & B & & C_3\otimes A_3\ar{r}{\fkB_3} & B
\end{tikzcd}\]
commute. 
\begin{theorem}\label{BaseCupProdLES}
Consider short exact sequences of modules in \Gmod[\pi]{D}\ and compatible pairings as above. Assume also that these short exact sequences split as sequences of \Z[\pi]-modules (i.e.\ as abelian groups). Choose a coclass $e\colon H^n(G,B)\to I_\pi$. Then the diagram of long exact sequences
\begin{equation}\label{cupprodLES}\begin{tikzcd}[cramped]
H^k(G,C_1) \ar{r}\ar{d}{\Upsilon_{e,\fkB_1}} & H^k(G, C_2)\ar{r}\ar{d}{\Upsilon_{e,\fkB_2}} &H^k(G,C_3)\ar{r}\ar{d}{\Upsilon_{e,\fkB_3}} & H^{k+1}(G, C_1)\ar{d}{\Upsilon_{e,\fkB_1}}\\
H^{n-k}(G,A_1)^\ast \ar{r} & H^{n-k}(G, A_2)^\ast\ar{r} &H^{n-k}(G,A_3)^\ast\ar{r}{(-1)^k} & H^{n-k-1}(G, A_1)^\ast
\end{tikzcd}\end{equation} 
commutes where the lower sequence is the dual of the coefficient sequence for the $A_i$, with an occasional sign change in the connecting map as shown. 
\end{theorem}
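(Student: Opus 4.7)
\bigskip

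The plan is to verify the commutativity of each of the three squares in diagram \eqref{cupprodLES} separately. The two leftmost squares, which involve only the maps $\gamma_i$ and $\alpha_i$ arising from the coefficient sequences, follow immediately from the naturality of the cup product under module homomorphisms (a straightforward adaptation of Proposition \ref{GrpPairCupProd1} for pairings rather than pairs of group homomorphisms). Indeed, the compatibility of the pairings $\fkB_i$ with the maps $\gamma_i,\alpha_i$ is exactly what is required for the cochain-level cup products $\smile_{\fkB_i}$ to intertwine the long exact sequences; one then applies $e$ to obtain the dual statement. These cases will be quick to dispose of.

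The main obstacle is the rightmost square involving the connecting homomorphism, and the sign $(-1)^k$ is the source of much of the required care. The approach is to work at the cochain level, using a projective resolution $P_\bullet \to \Z[\pi]$ and the doubled resolution $Q_\bullet = \mathrm{tot}(P_\bullet \hotimes P_\bullet)$ introduced in Section \ref{SecDblComplexes}. Given a cocycle $\zeta \in \Hom_G(P_k, C_3)$ representing a class in $H^k(G,C_3)$, the projectivity of $P_k$ lets us lift to $\tilde\zeta \in \Hom_G(P_k, C_2)$ with $\gamma_2 \circ \tilde\zeta = \zeta$; then $d\tilde\zeta$ factors through $\gamma_1$ as $\gamma_1 \circ \zeta'$ for a cocycle $\zeta' \in \Hom_G(P_{k+1}, C_1)$ representing $\partial\zeta$. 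Symmetrically, for a cocycle $\xi \in \Hom_G(P_{n-k-1}, A_1)$ we pick a lift $\tilde\xi \in \Hom_G(P_{n-k-1}, A_2)$ along $\alpha_1$, and $d\tilde\xi = \alpha_2 \circ \xi'$ represents $\partial\xi$ via a cocycle $\xi' \in \Hom_G(P_{n-k}, A_3)$.

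Now form the cochain $\tilde\zeta \smile_{\fkB_2} \tilde\xi \in \Hom_G(Q_{n-1}, B)$. Applying the Leibniz rule
\[
d(\tilde\zeta \smile_{\fkB_2} \tilde\xi) \;=\; d\tilde\zeta \smile_{\fkB_2} \tilde\xi \;+\; (-1)^k \tilde\zeta \smile_{\fkB_2} d\tilde\xi,
\]
the left-hand side is a coboundary and hence vanishes in $H^n(G,B)$. The compatibility of the pairings translates the two right-hand terms precisely:
\[
d\tilde\zeta \smile_{\fkB_2} \tilde\xi \;=\; \fkB_2 \circ (\gamma_1 \otimes \id) \circ (\zeta' \otimes \tilde\xi) \;=\; \fkB_1 \circ (\id \otimes \alpha_1)\circ(\zeta'\otimes\tilde\xi) \;=\; \zeta' \smile_{\fkB_1} \xi,
\]
and similarly $\tilde\zeta \smile_{\fkB_2} d\tilde\xi = \zeta \smile_{\fkB_3} \xi'$. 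Hence in $H^n(G,B)$ we obtain $\partial\zeta \smile_{\fkB_1} \xi = -(-1)^k\, \zeta \smile_{\fkB_3} \partial\xi$. Composing with the coclass $e$ yields the commutativity of the rightmost square of \eqref{cupprodLES} with the indicated sign $(-1)^k$ (modulo an overall sign convention absorbed into the labelling of connecting maps).

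The hypothesis that the coefficient sequences split over $\Z[\pi]$ is used implicitly in ensuring that the cochain-level lifts $\tilde\zeta, \tilde\xi$ behave well and that the long exact sequences of cohomology arise from genuine short exact sequences of cochain complexes after tensoring with $\Delta$ or applying $\Hom(\Delta,-)$ where necessary; together with the projectivity of each $P_r$ this makes the diagram chase above completely rigorous. The principal difficulty throughout is sign bookkeeping, and I would discharge this by computing carefully on the total complex $Q_\bullet$ with the signs of Section \ref{SecDblComplexes}, rather than arguing abstractly.
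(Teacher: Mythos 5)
Your handling of the two left-hand squares coincides with the paper's: both reduce to the cochain-level identities $\fkB_2\circ(\gamma_1\otimes\id)=\fkB_1\circ(\id\otimes\alpha_1)$ and $\fkB_2\circ(\id\otimes\alpha_2)=\fkB_3\circ(\gamma_2\otimes\id)$ and then dualise. For the square involving the connecting homomorphisms your route is genuinely different. The paper factors both composites through the tensor-product coefficient modules: it writes $\partial\zeta\smile_{\fkB_1}\xi$ and $\zeta\smile_{\fkB_3}\partial\xi$ as images of $\zeta\smile\xi\in H^{r+s}(G,C_3\otimes A_1)$ under the two connecting maps coming from the $3\times 3$ double complex of modules $C_i\otimes A_j$, and then compares those two connecting maps by analysing $\ker(d_{-1})$ of the total complex. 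That is precisely where the $\Z[\pi]$-splitness is consumed: without it the sequences $C_i\otimes A_\bullet$ and $C_\bullet\otimes A_j$ need not be exact and the intermediate connecting maps are undefined. Your argument instead stays entirely at the cochain level, choosing explicit lifts $\tilde\zeta,\tilde\xi$ and applying the Leibniz formula on ${\rm tot}(P_\bullet\hotimes P_\bullet)$; the compatibility of the pairings enters only as an identity of compositions, never as a statement about exactness of tensored sequences. This is cleaner and, as far as I can see, never actually uses the splitness hypothesis. Your closing paragraph locating that hypothesis in ``tensoring with $\Delta$'' is off the mark --- no $\Delta$ appears in this theorem (that belongs to the application in Section \ref{SubSecRelCohomSeq}) --- and the lifts exist simply because $\Hom_G(P,-)$ is exact on \Gmod[\pi]{D} for $P$ projective in \Gmod[\pi]{C} (reduce to free modules, where one lifts locally constant functions to a discrete quotient). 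You should either identify a genuine use of the splitness or record that your argument dispenses with it. Finally, your computation yields $\partial\zeta\smile_{\fkB_1}\xi=(-1)^{k+1}\,\zeta\smile_{\fkB_3}\partial\xi$, which differs from the stated $(-1)^k$ by a global sign; this is an orientation convention for one of the connecting maps and is harmless for every later use of the theorem (which goes through the Five Lemma), but it should be pinned down rather than waved away.
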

\begin{rmk}
The restriction that the short exact sequences are $\Z[\pi]$-split will not hinder the results of the rest of the paper, but is necessary here.
\end{rmk}
\begin{proof}
Take a projective resolution $P_\bullet\to\Z[\pi]$ and let $Q_\bullet={\rm tot}(P_\bullet \hotimes P_\bullet)$. Equipped with the augmentation map $P_0\hotimes P_0 \to \Z[\pi]\hotimes \Z[\pi]=\Z[\pi]$ we know that $Q_\bullet$ is also a projective resolution of $\Z[\pi]$ by Proposition \ref{DblResolutions}. Commutativity of diagrams such as
\[\begin{tikzcd}[cramped]
\Hom(P_r,C_1)\otimes \Hom(P_s, A_2) \ar{dd}{\id\otimes\alpha_1} \ar{rr}{\gamma_1\otimes\id}\ar{dr}{\smile} & &\Hom(P_r,C_2)\otimes\Hom(P_s,A_2)\ar{d}{\smile}  \\
& \Hom(Q_{r+s}, C_1\otimes A_2) \ar{d}{\id\otimes\alpha_1} \ar{r}{\gamma_1\otimes\id} & \Hom(Q_{r+s}, C_2\otimes A_2)\ar{d}{\fkB_2} \\
\Hom(P_r,C_1)\otimes\Hom(P_s,A_1) \ar{r}{\smile}  &\Hom(Q_{r+s}, C_1\otimes A_1) \ar{r}{\fkB_1} & \Hom(Q_{r+s}, B)
\end{tikzcd}\]
follows immediately from the definitions. Taking homology gives a commuting diagram
\begin{equation}\label{CupProductSquare}
\begin{tikzcd} H^r(G,C_1)\otimes H^s(G,A_2)\ar{r}{\gamma_1 \otimes\id} \ar{d}{\id\otimes\alpha_1} & H^r(G,C_2)\otimes H^s(G,A_2)\ar{d}{\smile_{\fkB_2}} \\ 
H^r(G,C_1)\otimes H^s(G,A_1)\ar{r}{\smile_{\fkB_1}}& H^{r+s}(G,B) \end{tikzcd}
\end{equation}
Passing to the adjoint cup product, this diagram guarantees that the first square in \eqref{cupprodLES} does indeed commute. Similarly for the second square.

To deal with the square involving the connecting homomorphisms we must show that the diagram 
\[\begin{tikzcd}[cramped, column sep= small]
H^r(G,C_3)\otimes H^s(G,A_1) \ar{dd}{(-1)^r\id\otimes\delta} \ar{rr}{\delta\otimes\id}\ar{dr}{\smile} & &H^{r+1}(G,C_1)\otimes H^{s}(G,A_1)\ar{d}{\smile}  \\
& H^{r+s}(G, C_3\otimes A_1) \ar{d}{\delta} \ar{r}{\delta} & H^{r+s+1}(G, C_1\otimes A_1)\ar{d}{\fkB_1} \\
H^{r}(G,C_3)\otimes H^{s+1}(G,A_3) \ar{r}{\smile}  &H^{r+s+1}(G, C_3\otimes A_3) \ar{r}{\fkB_3} & H^{r+s+1}(G, B)
\end{tikzcd}\]
commutes, where the various symbols $\delta$ denote connecting homomorphisms. That the uppermost and leftmost quadrilaterals commute may be deduced very quickly from the definitions of cup products and connecting maps. The sign in the leftmost quadrilateral arises since the relevant part of the connecting homomorphism on the right vertical side of the quadrilateral derives via the snake lemma from the differentials
\[P_r\hotimes P_{s+1} \to P_r\hotimes P_s\]
in the double complex, which are given by $(-1)^r\id\otimes\, d^P_{s+1}$. 

The lower-right square requires considerably more effort. Viewing the given short exact sequences of modules as chain complexes with $A_2$ and $C_2$ in degree zero, consider the double complex below. All rows and columns are exact by the splitness assumption. The signs on the arrows are to remind the reader of the sign convention from Section \ref{SecDblComplexes}.
\[\begin{tikzcd}
& 0\ar{d} & 0\ar{d} &0\ar{d} & \\
0\ar{r} &C_1\otimes A_3 \ar{r}\ar{d}{-} &C_2\otimes A_3 \ar{r}\ar{d} &C_3\otimes A_3 \ar{r}\ar{d}{-} & 0\\
0\ar{r} &C_1\otimes A_2 \ar{r}\ar{d}{-} &C_2\otimes A_2 \ar{r}\ar{d} &C_3\otimes A_2 \ar{r}\ar{d}{-} & 0\\
0\ar{r} &C_1\otimes A_1 \ar{r}\ar{d} &C_2\otimes A_1 \ar{r}\ar{d} &C_3\otimes A_1 \ar{r}\ar{d} & 0\\
&0&0&0&
\end{tikzcd}\]
One may verify by a direct diagram chase that there is a commuting diagram in which both rows are exact 
\begin{equation}\begin{tikzcd}\label{kerd1seq}
0\ar{r}& C_1\otimes A_1\oplus C_3\otimes A_3 \ar{r} &C_2\otimes A_1\oplus C_3\otimes A_2 \ar{r} & C_3\otimes A_1\oplus C_3\otimes A_1 \ar{r} & 0\\
0\ar{r}& C_1\otimes A_1\oplus C_3\otimes A_3 \ar{r}\ar[equal]{u} &{\rm ker }( d_{-1}) \ar{r}\ar[hookrightarrow]{u} &C_3\otimes A_1 \ar{r}\ar[hookrightarrow]{u}{\rm diag} & 0
\end{tikzcd}\end{equation}
where ${\rm ker }( d_{-1})$ is the kernel of the differential map of the total complex, and is mapped to $C_3\otimes A_1$ via the map $C_2\otimes A_1\oplus C_3\otimes A_2 \to C_2\otimes A_1 \to C_3\otimes A_1$. There is also a commuting diagram
\begin{equation*}\begin{tikzcd}[cramped]
0\ar{r}& C_1\otimes A_1\oplus C_3\otimes A_3 \ar{r} \ar[equal]{d} &{\rm ker }( d_{-1}) \ar{r}\ar{d}{\iso} &C_3\otimes A_1 \ar{r}\ar[equal]{d} & 0\\
0\ar{r}& C_1\otimes A_1\oplus C_3\otimes A_3 \ar{r}\ar{d}{\fkB_1 + \fkB_3} &\left(C_1\otimes A_1\oplus C_2\otimes A_2\oplus C_3\otimes A_3\right) / {\rm im}(d_0)\ar{r}\ar{d}{\fkB_1 + \fkB_2 + \fkB_3} & C_3\otimes A_1 \ar{r}\ar{d} & 0\\
0\ar{r}&B\ar[equal]{r} & B \ar{r} & 0 \ar{r} & 0
\end{tikzcd}\end{equation*}
where the bottom left square commutes because of the compatibility of the pairings with the maps in the short exact sequences. Applying $H^\bullet(G,-)$ to the lower two rows gives us a commuting diagram
\begin{equation}\label{Diag37}
\begin{tikzcd} H^{r+s}(G,C_3\otimes A_1)\ar{r}{\delta}\ar{d} & H^{r+s+1}(G,C_1\otimes A_1\oplus C_3\otimes A_3) \ar{d}{\fkB_1\oplus\fkB_3}  \\
H^{r+s}(G, 0)\ar{r}  & H^{r+s+1}(G, B)
\end{tikzcd}
\end{equation}
Finally note that due to the long exact sequences associated to the diagram \eqref{kerd1seq} and additivity of connecting homomorphisms, the top arrow in diagram \eqref{Diag37} is simply the difference (because of the signs in the double complex) between the connecting homomorphisms 
\[H^{r+s}(G,C_3\otimes A_1) \to H^{r+s+1}(G,C_1\otimes A_1) \quad\text{and}\quad H^{r+s}(G,C_3\otimes A_1) \to H^{r+s+1}(G,C_3\otimes A_3) \]
Since the composition of this difference with the map induced by $\fkB_1+\fkB_3$ vanishes, this gives the commuting square 
\[\begin{tikzcd}
H^{r+s}(G, C_3\otimes A_1) \ar{d}{\delta} \ar{r}{\delta} & H^{r+s+1}(G, C_1\otimes A_1)\ar{d}{\fkB_1} \\
H^{r+s+1}(G, C_3\otimes A_3) \ar{r}{\fkB_3} & H^{r+s+1}(G, B)
\end{tikzcd}\]
that we required.
\end{proof}
One particular sequence that we will need later is the following.
\begin{prop}\label{leswithwedge}
Let $J\in\Gmod[\pi]{D}$ be such that $J^\ast$ is finitely generated and free as a \Z[\pi]-module, and let $j\colon H^n(G,\famS; J)\to I_\pi$ be a coclass. Then for any short exact sequence
\[0\to C_1\to C_2\to C_3\to 0 \]
in \Gmod[\pi]{D} the cup product induces a map of long exact sequences 
\[\begin{tikzcd}[cramped]
H^k(G,C_1) \ar{r}\ar{d}{\Upsilon_{j,{\rm ev}_1}} & H^k(G, C_2)\ar{r}\ar{d}{\Upsilon_{j,{\rm ev}_2}} &H^k(G,C_3)\ar{r}\ar{d}{\Upsilon_{j,{\rm ev}_3}} & H^{k+1}(G, C_1)\ar{d}{\Upsilon_{j,{\rm ev}_1}}\\
H^{n-k}(G,C_1^\vee)^\ast \ar{r} & H^{n-k}(G, C_2^\vee)^\ast\ar{r} &H^{n-k}(G,C_3^\vee)^\ast\ar{r}{(-1)^k} & H^{n-k-1}(G, C_1^\vee)^\ast
\end{tikzcd} \]
where $C_i^\vee = \bHom(C_i, J)$ and ${\rm ev}_i\colon C_i\otimes C_i^\vee \to J$ is the evaluation map.
\end{prop}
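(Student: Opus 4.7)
The plan is to recognise this as a special case of Theorem~\ref{BaseCupProdLES} applied to the given short exact sequence $0\to C_1\to C_2\to C_3\to 0$ as the first sequence, and its $J$-dual $0\to C_3^\vee\to C_2^\vee\to C_1^\vee\to 0$, where $C_i^\vee=\bHom(C_i,J)$, as the second; the module $B$ will be $J$, the pairings the evaluation maps $\mathrm{ev}_i$, and the coclass the given $j$.

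The first task is to exhibit the $J$-dual sequence as exact. Since $J^\ast$ is finitely generated and free over $\Z[\pi]$, we have $J^\ast\iso \Z[\pi]^n$ for some $n$, so by Pontrjagin duality $J\iso I_\pi^n$. Thus $J$ is a finite direct sum of copies of the injective cogenerator $I_\pi$ of $\Gmod[\pi]{D}$ and is in particular injective there, whence $(-)^\vee=\bHom(-,J)$ is an exact contravariant functor producing the desired dual SES in $\Gmod[\pi]{C}$. Compatibility of the pairings $\mathrm{ev}_i$ with the maps in the two sequences is then immediate from the definition of the transpose map: for any $\gamma\colon C\to C'$ appearing in the original SES, any $c\in C$ and any $f\in(C')^\vee$, we have $\mathrm{ev}(c,\gamma^\vee f)=(\gamma^\vee f)(c)=f(\gamma c)=\mathrm{ev}'(\gamma c,f)$, which is exactly the compatibility needed in the hypotheses of Theorem~\ref{BaseCupProdLES}.

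The main obstacle is that Theorem~\ref{BaseCupProdLES} assumes both short exact sequences split as $\Z[\pi]$-modules, whereas no such splitness hypothesis is made on the given SES. In the proof of that theorem, splitness is used only to guarantee exactness of the rows and columns of the double complex $C_i\otimes C_j^\vee$ from which the final square (with its $(-1)^k$ sign) is extracted. In the present setting this exactness can be restored without splitness by exploiting the tensor--hom adjunction $\Hom(M\otimes N,J)\iso\Hom(M,\bHom(N,J))$ to re-express the relevant tensor-product sequences as $\bHom(-,J)$ applied to exact sequences built from the original SES; exactness then follows from the injectivity of $J$ established above. With this replacement the double-complex argument of Theorem~\ref{BaseCupProdLES}, including the sign tracking that produces the $(-1)^k$ in the connecting-homomorphism square, carries through verbatim, yielding the required map of long exact sequences.
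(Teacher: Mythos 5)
Your first two steps coincide with the paper's: the proposition is indeed an instance of Theorem~\ref{BaseCupProdLES} with second sequence $0\to C_3^\vee\to C_2^\vee\to C_1^\vee\to 0$, pairings the evaluation maps and $B=J$; exactness of the dual sequence holds because $\bHom(-,J)$ is exact (the paper deduces this from $\bHom(A,J)\iso\bHom(J^\ast,A^\ast)$ and freeness of $J^\ast$, which is the same fact as your $\Z[\pi]$-injectivity of $J\iso I_\pi^{\oplus m}$ --- note it is injectivity over $\Z[\pi]$, not in \Gmod[\pi]{D}, that is used); and compatibility of the pairings is immediate. The gap is in your removal of the splitness hypothesis. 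The rows and columns of the double complex $C_i\otimes C_j^\vee$ are \emph{not} exact for a non-split coefficient sequence, even in this special case: exactness of $0\to C_1\otimes C_j^\vee\to C_2\otimes C_j^\vee\to C_3\otimes C_j^\vee\to 0$ is obstructed by $\Tor_1^{\Z[\pi]}(C_3,C_j^\vee)$, which does not vanish here. Take $\pi=\{p\}$ and $J=I_p$ (so $J^\ast=\Z[p]$ is free of rank one and $C^\vee=C^\ast$), and the non-split sequence $0\to\Z/p\xrightarrow{\times p}\Z/p^2\to\Z/p\to 0$; then $C_3^\vee\iso\Z/p$ and the corresponding row reads $0\to\Z/p\to\Z/p\to\Z/p\to 0$, which cannot be exact (the three terms all have length one), and indeed its first map $a\otimes b\mapsto pa\otimes b=a\otimes pb=0$ vanishes. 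The tensor--hom adjunction cannot repair this: it identifies the $J$-dual of the row, $\Hom(C_\bullet\otimes C_j^\vee,J)\iso\Hom(C_\bullet,\bHom(C_j^\vee,J))$, and $\Hom(-,D)$ is only left exact in its first variable, so the dualised row is again non-exact (in the example the restriction map $\Hom(\Z/p^2,\Z/p)\to\Hom(\Z/p,\Z/p)$ along multiplication by $p$ is the zero map); since $\Hom(-,J)$ reflects exactness, this reconfirms the failure rather than curing it.

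What the paper does instead is purely formal: Theorem~\ref{BaseCupProdLES} gives commutativity for all $\Z[\pi]$-split sequences, the vertical maps assemble into a natural transformation of connected sequences of functors $H^\bullet(G,-)\to H^{n-\bullet}(G,(-)^\vee)^\ast$, and Proposition~3.5.2 of \cite{SW00} (a rigidity statement for morphisms of connected sequences of co-effaceable cohomological functors) then upgrades ``commutes with connecting homomorphisms for split sequences'' to ``commutes with all connecting homomorphisms''. Your argument needs some such $\delta$-functor step, or an explicit comparison of an arbitrary short exact sequence with a split one via a map of short exact sequences; the double-complex computation itself genuinely requires the splitness and cannot be carried through verbatim.
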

\begin{proof}
Firstly note that since $J^\ast$ is finitely generated as a \Z[p]-module, $\bHom(M,J)$ is finite for any finite module $M$, and hence $\bHom(-,J)$ is a well-defined functor from \Gmod[p]{D} to itself. Since $J^\ast$ is a free \Z[p]-module, this functor is exact. Hence the lower of the long exact sequences is well-defined. The map of long exact sequences commutes by Theorem \ref{BaseCupProdLES} when the short exact sequence $C_\bullet$ is \Z[p]-split. In this situation we have not just a map of one pair of long exact sequences, but a natural transformation of functors 
\[H^\bullet(G,-)\to H^{n-\bullet}(G,(-)^\vee)^\ast \]
which commutes with connecting homomorphisms for split short exact sequences. Proposition 3.5.2 of \cite{SW00} then guarantees that the diagram of long exact sequences commutes even when the sequence does not split.
\end{proof}

\subsection{The relative cohomology sequence}\label{SubSecRelCohomSeq}
The key example to which we apply the long exact sequence of (adjoint) cup products is the exact sequence for relative cohomology. We will primarily be interested in using cup products in connection with Poincar\'e duality pairs. As we shall see in Section \ref{SecFPPairs} in this case this will force $\famS$ to be a finite family of subgroups, and therefore we will make this simplifying assumption here.

Let $C, A\in \Gmod[\pi]{D}$ and consider the short exact sequences
\begin{eqnarray*}
0\to C\stackrel{\gamma_1}{\longrightarrow} \Hom(\ZGS, C)\stackrel{\gamma_2}{\longrightarrow} \Hom(\Delta, C)\to 0\\
0\to A\stackrel{\alpha_2}{\longrightarrow} \Hom(\ZGS, A)\stackrel{\alpha_1}{\longrightarrow} \Hom(\Delta, A)\to 0
\end{eqnarray*}
arising from applying Hom functors to the short exact sequence
\[ 0\to \Delta\longrightarrow\ZGS\longrightarrow\Z[\pi]\to 0\]
which splits as a sequence of \Z[\pi]-modules. Define pairings 
\begin{eqnarray*}
\fkB_1\colon \Hom(\Delta,C)\otimes A &\to & \Hom(\Delta, C\otimes A) \\
f\otimes a&\mapsto & (\delta\mapsto f(\delta)\otimes a) \\
\fkB_3\colon C\otimes \Hom(\Delta,A) &\to & \Hom(\Delta, C\otimes A) \\
c\otimes f&\mapsto & (\delta\mapsto  c\otimes f(\delta)) 
\end{eqnarray*}
and define 
\[\fkB_2\colon \Hom(\ZGS,C)\otimes \Hom(\ZGS, A) \to \Hom(\Delta, C\otimes A)\]
to be the map
\begin{eqnarray*}
{\rm coev}\colon \Hom(\ZGS,C)\otimes \Hom(\ZGS, A) &\to &  \Hom(\ZGS, C\otimes A)\\
f\otimes g &\mapsto & (xS_i \mapsto f(xS_i)\otimes g(xS_i)) 
\end{eqnarray*}
where $x\in G$ followed by the map
\[\Hom(\ZGS,C\otimes A) \to \Hom(\Delta, C\otimes A)\]
induced by inclusion. Note that the first map is only given by this formula on the (\Z[\pi]-)basis vectors $xS_i$ of \ZGS, where $x\in G$ and is extended to the whole module linearly. In particular a generator $xS_i-S_y$ of $\Delta$ is mapped to
\[xS_i-S_y\mapsto f(xS_i)\otimes g(xS_i)-f(S_y)\otimes g(S_y)\]
One may now readily check that these pairings are compatible with the maps in the short exact sequence. Hence given a coclass \[e\colon H^{n-1}(G,\Hom(\Delta, C\otimes A))\to I_\pi\] we obtain a map of long exact sequences
\begin{equation}\begin{tikzcd}[]\label{LESRelCohCup1}
\cdots \ar{r}& H^k(G,C) \ar{r}\ar{d}{\Upsilon_{e,\fkB_1}} & H^k(G,\Hom(\ZGS,C))\ar{r}\ar{d}{\Upsilon_{e,\fkB_2}} &H^{k+1}(G,\famS; C)\ar{d}{\Upsilon_{e,\fkB_3}}\ar{r}&\cdots\\
\cdots \ar{r}& H^{n-k}(G,\famS;A)^\ast \ar{r} & H^{n-k-1}(G,\Hom(\ZGS, A))^\ast\ar{r} &H^{n-k-1}(G,A)^\ast \ar{r}& \cdots
\end{tikzcd}\end{equation} 
Now the middle vertical map is identified with a map
\[\bigoplus_i H^k(S_i, C) \to \bigoplus_i H^{n-k-1}(S_i, A)^\ast \]
which the reader is no doubt expecting to correspond with the cup product on each $S_i$. This follows immediately from the following proposition. When this proposition is used $M$ will be either $\Z[\pi]$ or $\Delta_{S, \cal T}$ for some family of subgroups $\cal T$ of $S$.
\begin{prop}\label{ShapiroAndCupProd}
Let $G$ be a profinite group and $S$ a closed subgroup of $G$. Let $C, A\in\Gmod[\pi]{D}$ and $M\in\mathfrak{C}_\pi(S)$. Let ${\rm coev}$ be the map
\begin{eqnarray*}
{\rm coev}\colon \Hom(\ind^S_G(M),C)\otimes \Hom(\Zpiof{G/S}, A) &\to &  \Hom(\ind^S_G(M), C\otimes A)\\
f\otimes g &\mapsto & (x\otimes m \mapsto f(x\otimes m)\otimes g(xS)) 
\end{eqnarray*}
where $x\in G$ and $m\in M$. Let $\tau_r$ be the Shapiro isomorphism 
\[\tau_r\colon H^r(S,-) \stackrel{\iso}{\longrightarrow} H^r(G, \coind^S_G(-))\]
and recall from Proposition \ref{IndAndCoindAsGMods} that there is a natural identification of $G$-modules
\[\sigma\colon  \coind^S_G(\Hom(M,C)) \iso \Hom(\ind^S_G(M), C)\]
Then there is a commuting diagram
\[\begin{tikzcd}[column sep = large]
H^r(S, \Hom(M,C)) \otimes H^s(S, A) \ar{r}{(\sigma\tau_r)\otimes (\sigma\tau_s)}[swap]{\iso} \ar{d}{\smile_{\id_{C\otimes A}}} & H^r(G,\Hom(\ind^S_G(M),C)) \otimes H^s(G,\Hom(\Zpiof{G/S},A))\ar{d}{\smile_{\rm coev}} \\
H^{r+s}(S, \Hom(M,C\otimes A))\ar{r}{\sigma\tau_{r+s}}[swap]{\iso} & H^{r+s}(G,\Hom(\ind^S_G(M),C\otimes A))
\end{tikzcd}\]
In particular if $B\in \Gmod[\pi]{D}$ and $\fkB$ is any pairing 
\[\Hom(\ind^S_G(M),C)\otimes \Hom(\Zpiof{G/S}, A) \to B \]
which factors as $h\circ{\rm coev}$ where $h$ is a module map 
\[h\colon \Hom(\ind^S_G(M), C\otimes A) \to B \]
and $e\colon H^n(G,B)\to I_\pi$ is a coclass then there is a commuting diagram of adjoint cup products
\[\begin{tikzcd}
H^r(S, \Hom(M,C)) \ar{r}{\sigma\tau_r}[swap]{\iso} \ar{d}{\Upsilon_{\bdy e, \id_{C\otimes A}}} & H^r(G, \Hom(\ind^S_G(M), C) \ar{d}{\Upsilon_{e, \fkB}} \\
H^{n-r}(S, A)^\ast \ar{r}{\sigma\tau_{n-r}}[swap]{\iso} & H^{n-r}(G, \Hom(\Zpof{G/S}, A)) ^\ast
\end{tikzcd} \]
where $\bdy e$ is the coclass defined by 
\[H^n(S, \Hom(M,C\otimes A))\stackrel{\sigma\tau_{n}}{\longrightarrow} H^n(G, \Hom(\ind^S_G(M), C\otimes A) \xrightarrow{e\circ\fkB} I_\pi \]
\end{prop}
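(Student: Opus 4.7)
The plan is to verify the first commuting diagram on cochains by unfolding the Shapiro isomorphism explicitly, and then to deduce the adjoint cup product diagram formally.

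Fix a projective resolution $P_\bullet\to\Z[\pi]$ of $G$-modules; by Proposition~\ref{ResProjectives} its restriction to $S$ remains projective, and both cup products may be computed using the same double complex $Q_\bullet={\rm tot}(P_\bullet\hotimes P_\bullet)$, which is a projective resolution of $\Z[\pi]$ over both $G$ and $S$ by Proposition~\ref{DblResolutions}. Combining Propositions~\ref{preShapiro} and~\ref{IndAndCoindAsGMods} yields the explicit formulae
\[(\sigma\tau_r\zeta)(p)(x\otimes m)=x\cdot\zeta(x^{-1}p)(m),\qquad(\sigma\tau_s\xi)(p)(xS)=x\cdot\xi(x^{-1}p),\]
for $\zeta\in\Hom_S(P_r,\Hom(M,C))$, $\xi\in\Hom_S(P_s,A)$ and $p\in P_\bullet$, $x\in G$, $m\in M$. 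Now evaluate both cup products on an element $p_1\otimes p_2\in P_r\hotimes P_s\subseteq Q_{r+s}$ (both vanish on the complementary summands). Using the diagonal action $x^{-1}(p_1\otimes p_2)=x^{-1}p_1\otimes x^{-1}p_2$ and the diagonal action on $C\otimes A$, one computes
\[\sigma\tau_{r+s}(\zeta\smile\xi)(p_1\otimes p_2)(x\otimes m)=x\zeta(x^{-1}p_1)(m)\otimes x\xi(x^{-1}p_2),\]
and direct unwrapping of $\smile_{\rm coev}$ from the right-hand formulae gives exactly the same expression. This establishes the first square.

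For the adjoint cup product diagram, fix $\zeta\in H^r(S,\Hom(M,C))$ and $\eta\in H^{n-r}(S,A)$. At the cochain level the factorisation $\fkB=h\circ{\rm coev}$ gives $\smile_\fkB=h_\ast\circ\smile_{\rm coev}$; combining this with the identity proved above yields
\[\sigma\tau_r(\zeta)\smile_\fkB\sigma\tau_{n-r}(\eta)=h_\ast\bigl(\sigma\tau_n(\zeta\smile\eta)\bigr)\in H^n(G,B).\]
Applying $e$ to both sides produces $(\bdy e)(\zeta\smile\eta)$ on the right by the definition of $\bdy e$, and $\Upsilon_{e,\fkB}(\sigma\tau_r(\zeta))(\sigma\tau_{n-r}(\eta))$ on the left by the definition of $\Upsilon_{e,\fkB}$. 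Since $\sigma\tau_{n-r}$ is an isomorphism, this is precisely the statement that the adjoint square commutes.

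The only real obstacle is bookkeeping: keeping the three different $G$-actions---on $\ind^S_G(M)$, on $\Zpiof{G/S}$, and diagonally on $C\otimes A$---consistently aligned through the Shapiro map, and tracking $G$- versus $S$-equivariance when the diagonal approximation is applied. The substantive content is simply that a single $G$-linear double complex computes both cup products, after which the argument reduces to symbol-pushing.
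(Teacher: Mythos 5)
Your proof is correct and takes essentially the same approach as the paper: the paper likewise fixes a $G$-projective resolution $P_\bullet$, writes out $\tau_r$ and $\sigma$ explicitly at the cochain level, and then asserts that the first square commutes on cochains and that the adjoint statement follows. You have simply carried out the cochain computation and the deduction of the adjoint square that the paper leaves to the reader.
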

\begin{proof}
Let $P_\bullet\to \Z[\pi]$ be a projective resolution of \Z[\pi] by left $G$-modules. From the proofs in Section \ref{SecUsefulIds}, the Shapiro isomorphism is given by the map induced on cohomology by the chain isomorphism
\[\tau_r\colon \Hom_{S_i}(P_r, E)\to  \Hom_G(P_r, \Hom_S(\Zpiof{G},E)) \]
given by 
\[ \tau_r(f)(p)(x) = f(xp) \qquad(f\in \Hom_{S_i}(P_r, C), p\in P_r, x\in G)\] 
where $E\in\mathfrak{D}_\pi(S)$. Furthermore $\sigma$ is given by
\[\sigma(f)(x\otimes m) = xf(x^{-1})(m) \]
for $f\in \Hom_S(\Zpiof{G},\Hom(M, C)), x\in G$ and $m\in M$. One may now readily check that the first diagram in the statement commutes at the level of cochains. The second statement follows immediately.
\end{proof}
Thus we have the familiar chain of adjoint cup products for relative cohomology.
\begin{prop}\label{LESRelCohCup2}
Let $C, A\in \Gmod[\pi]{D}$ and let $e\colon H^{n-1}(G,\famS;C\otimes A)\to I_\pi$ be a coclass. Then there is a commutative diagram of long exact sequences and adjoint cup product maps
\[\begin{tikzcd}[cramped]
\cdots H^k(G,C) \ar{r}\ar{d}{\Upsilon_{e}} & H^k(\famS,C)\ar{r}\ar{d}{\Upsilon_{\bdy e}} &H^{k+1}(G,\famS; C)\ar{d}{\Upsilon_{e}}\ar{r}&H^{k+1}(G,C) \ar{d}{\Upsilon_{e}}\cdots\\
\cdots H^{n-k}(G,\famS;A)^\ast \ar{r} & H^{n-k-1}(\famS, A)^\ast\ar{r} &H^{n-k-1}(G,A)^\ast \ar{r}{(-1)^k}& H^{n-k-1}(G,\famS;A)^\ast \cdots
\end{tikzcd}\]
where $\bdy e=\bdy_i e\colon \bigoplus_i H^{n-1}(S_i; C\otimes A)\to I_\pi$. 

Let $(\phi,f)\colon(H,{\cal T})\to (G,\famS)$ be a map of group pairs, let $c',A'\in \mathfrak{D}_\pi(H)$ and let $e_H\colon H^{n-1}(H, {\cal T}; C'\otimes A')\to I_\pi$ be a coclass. Assume that the coclass $e$ on $H^{n-1}(G,\famS)$ is the composition
\[H^{n-1}(G,\famS;C\otimes A) \longrightarrow  H^{n-1}(H, {\cal T}; C'\otimes A')\stackrel{e_H}{\longrightarrow} I_\pi\] 
Then the commuting diagram above is natural with respect to $(\phi,f)$ in the obvious sense.
\end{prop}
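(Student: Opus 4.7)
The plan is to derive the diagram from Theorem \ref{BaseCupProdLES} applied to the coefficient sequences and pairings constructed in Section \ref{SubSecRelCohomSeq}, then to interpret the middle column as a sum of cup products on the subgroups $S_i$ via Proposition \ref{ShapiroAndCupProd}. First I would verify the hypotheses of Theorem \ref{BaseCupProdLES}. The two coefficient short exact sequences
\[0\to C\to \Hom(\ZGS,C)\to \Hom(\Delta,C)\to 0,\qquad 0\to A\to \Hom(\ZGS,A)\to \Hom(\Delta,A)\to 0\]
are $\Z[\pi]$-split because $\Delta$ is projective over $\Z[\pi]$, so the defining sequence $0\to\Delta\to\ZGS\to\Z[\pi]\to 0$ splits as $\Z[\pi]$-modules and $\Hom(-,M)$ preserves this splitting. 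Compatibility of $\fkB_1,\fkB_2,\fkB_3$ with the maps in the short exact sequences is verified directly on generators of $\Delta$; the key observation is that on $xS_i-S_j\in\Delta$ one has $\mathrm{coev}(f\otimes g)(xS_i-S_j)=f(xS_i)\otimes g(xS_i)-f(S_j)\otimes g(S_j)$, which matches the other two pairings after the appropriate restrictions and inclusions. Theorem \ref{BaseCupProdLES} then produces the commuting ladder \eqref{LESRelCohCup1}, with the sign $(-1)^k$ appearing in the connecting map of the dual sequence.

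Next I would identify the middle column. Since $\famS$ is finite, $\Hom(\ZGS,M)=\bigoplus_i \coind^{S_i}_G(M)$ and the Shapiro lemma gives $H^\bullet(G,\Hom(\ZGS,M))\cong\bigoplus_i H^\bullet(S_i,M)$. The pairing $\fkB_2$ is block-diagonal under this decomposition: the coev map evaluated on a basis element $xS_k$ uses only the $k$-th components of its inputs, so the summands indexed by $i\neq j$ contribute zero. On each diagonal block $\fkB_2$ factors as $\mathrm{incl}_i^*\circ\mathrm{coev}$, where $\mathrm{incl}_i^*\colon \Hom(\Zpiof{G/S_i},C\otimes A)\to\Hom(\Delta,C\otimes A)$ is restriction. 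Proposition \ref{ShapiroAndCupProd}, applied with $M=\Z[\pi]$ and $S=S_i$, then identifies the resulting cup product with the ordinary one on $H^\bullet(S_i,C)\otimes H^\bullet(S_i,A)$ and produces the adjoint coclass $\bdy_i e$ as the composition of the Shapiro isomorphism with $\mathrm{incl}_i^*$ and $e$. Summing over $i$ presents the middle vertical arrow as $\bigoplus_i \Upsilon_{\bdy_i e}$, as claimed.

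For the naturality statement, the plan is to combine Proposition \ref{GrpPairNatural} (naturality of the relative cohomology long exact sequence) with Proposition \ref{GrpPairCupProd1} (naturality of cup products under group homomorphisms). The map of pairs $(\phi,f)$ produces a morphism from the short exact sequence $0\to\Delta_{H,\mathcal{T}}\to\Zpiof{H/\mathcal{T}}\to\Z[\pi]\to 0$ to its $(G,\famS)$-counterpart; combined with the module maps $C\to C'$ and $A\to A'$ this intertwines the three pairings $\fkB_i$ with their $(H,\mathcal{T})$-versions, so by Proposition \ref{GrpPairCupProd1} the cup product squares commute across the pair map. The assumed factorisation of $e$ through $e_H$ matches the coclasses, and hence the adjoint cup products are compatible. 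I expect the main obstacle to be organisational---keeping track of the intertwining diagrams for all three pairings simultaneously and confirming that the Shapiro identifications in the middle column commute under the pair map---but no step requires more than the material already assembled.
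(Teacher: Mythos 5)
Your proposal is correct and follows the paper's argument essentially verbatim: Theorem \ref{BaseCupProdLES} applied to the two coefficient sequences with the pairings $\fkB_1,\fkB_2,\fkB_3$ of Section \ref{SubSecRelCohomSeq}, identification of the middle column via Proposition \ref{ShapiroAndCupProd} with $M=\Z[\pi]$, and naturality from Propositions \ref{GrpPairNatural} and \ref{GrpPairCupProd1}. One small correction to your justification of splitness: the sequence $0\to\Delta\to\ZGS\to\Z[\pi]\to 0$ splits over $\Z[\pi]$ because the quotient $\Z[\pi]$ is free (hence projective), not because the kernel $\Delta$ is projective---projectivity of a submodule does not by itself split a short exact sequence, though the conclusion you need is of course still true.
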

The final naturality statement follows from fact that these sequences are induced from applying functors to coefficient sequences, together with the naturality statements in Propositions \ref{GrpPairNatural} and \ref{GrpPairCupProd1}.

We also note that putting this result together together with the commuting square \eqref{CupProductSquare} one acquires a commuting pentagon for each $i$
\begin{equation}\label{CupProdPentagon}
\begin{tikzcd}[column sep = small]
H^r(S_i, C)\otimes H^s(G,A)\ar{rr} \ar{d} && H^r(S_i, C)\otimes H^s(S_i, A) \ar{d}{\smile_{S_i}} \\ 
H^{r+1}(G,\famS; C)\otimes H^s(G,A) \ar{rd}{\smile_G} && H^{r+s}(S_i, C\otimes A) \ar{ld}\\
& H^{r+s+1}(G,\famS; C\otimes A) &
\end{tikzcd}
\end{equation}

\section{Actions on pro-$\pi$ trees}\label{SecTrees}
\subsection{Trees and graphs of groups}
Let $\cal C$ be an variety of finite groups closed under taking isomorphisms, subgroups, quotients and extensions. Let $\pi(\cal C)$ be the set of primes which divide the order of some finite groups in $\cal C$. A {\em pro-$\cal C$ group} is a profinite group which is an inverse limit of groups in $\cal C$. 

The full development of the theory of profinite graphs and trees is well beyond the scope of this paper. The material here is mostly to be found in \cite{ZM89}. The full theory of profinite trees may be found in \cite{Ribes17}, or distributed around various papers in the literature, mainly by Gildenhuys, Ribes, Zalesskii and Mel'nikov. The theory for pro-$p$ groups is given in \cite{RZ00p}. We will adopt the following definition.
\begin{defn}
An {\em abstract graph} $T$ is a set with a distinguished subset $V(T)$ and two retractions $d_0,d_1\colon T\to V(T)$. Elements of $V(T)$ are called vertices, and elements of $E(T)=T\smallsetminus V(T)$ are called edges. Note that a graph comes with an orientation on each edge.

If an abstract graph is in addition a profinite space (that is, an inverse limit of finite discrete topological spaces), $V(T)$ and $E(T)$ are closed and $d_0,d_1$ are in addition continuous, then $T$ is called a {\em profinite graph}. 
 A morphism of profinite graphs $T$, $T'$ is a continuous function $f\colon T\to T'$ such that $d_i f = fd_i$ for each $i$. An action of a profinite group on a graph is a continuous action by graph morphisms. 
 For a set of primes $\pi$, a profinite graph is a {\em pro-$\pi$ tree} if the chain complex 
\begin{equation}\label{TreeDef}
 0\to \Zpiof{E(T)}\stackrel{d_1-d_0}{\longrightarrow} \Zpiof{V(T)}\stackrel{\epsilon}{\longrightarrow}\Z[\pi]\to 0 
\end{equation}
is exact, where $\epsilon$ is the augmentation.
\end{defn}
\begin{rmk}
The most general definition of a profinite graph does not require that $E(T)$ is closed. Our restricted definition simplifies the exposition, but does not materially alter the results we will state. Moreover the cases that usually arise in applications have $E(T)$ closed. Therefore we will make this simplification. To develop the theory with $E(T)$ not closed one must work with pointed profinite spaces and the free modules over them; specifically in various places $E(T)$ must be replaced with the pointed profinite space $(T/V(T), V(T)/V(T))$.
\end{rmk}

The theory of profinite graphs of groups can be defined over general profinite graphs; we shall only consider finite graphs here as this considerably simplifies the theory and is sufficient for our needs. 
\begin{defn}
A {\em finite graph of pro-${\cal C}$ groups} ${\cal G}=(Y, G_\bullet)$ consists of a connected finite graph $Y$, a pro-${\cal C}$ group $G_y$ for each $y\in Y$, and (continuous) monomorphisms $\bdy_i\colon G_y\to G_{d_i(y)}$ for $i=0,1$ which are the identity when $y\in V(Y)$.
\end{defn}
\begin{defn}
Given a finite graph of pro-${\cal C}$ groups $(Y, G_\bullet)$, choose a maximal subtree $Y_0$ of $Y$. A {\em pro-${\cal C}$ fundamental group} of the graph of groups with respect to $Y_0$ consists of a pro-$\pi$ group $\Delta$, and a map
\[ \phi\colon \coprod_{y\in Y} G_y \amalg \coprod_{e\in E(Y)} \overline{\gp{t_e}} \to H \]
such that    
\[ \phi(t_e) = 1 \text{ for all }e\in E(Y_0)\] and 
\[\phi( t_e^{-1} \bdy_0(g) t_e ) = \phi(\bdy_1(g)) \text{ for all } e\in E(Y), g\in G_e\]
and with $(H,\phi)$ universal with these properties, within the category of pro-${\cal C}$ groups. The pro-${\cal C}$ group $H$ will be denoted $\Pi_1(\cal G)$ or $\Pi_1(Y,G_\bullet)$. Here $\coprod$ denotes the free profinite product; see \cite{RZ00}, Chapter 9.
\end{defn}
The group so defined exists and is independent of the maximal subtree $Y_0$ (see Section 3 of \cite{ZM89}, Section 6.2 of \cite{Ribes17}). Note that in the category of discrete groups this universal property is precisely the same as the classical definition of $\pi_1\cal G$ as group with a certain presentation. Also notice that free products are a special case of a graph of groups in which all edge groups are trivial. 

We use the notation $G=G_1\amalg_L G_2$ for a pro-${\cal C}$ amalgamated free product, i.e.\ the fundamental pro-${\cal C}$ group of a graph of groups with two vertex groups $G_1$ and $G_2$ and one edge group $L$, which is a common subgroup of the two vertex groups. We by convention orient the edge from $G_1$ to $G_2$. 

If we have a pro-${\cal C}$ group $G_1$ and two subgroups $L$ and $L'$ which are isomorphic via an isomorphism $\tau$ then we denote by $G_1\amalg_{L,\tau}$ the pro-${\cal C}$ HNN extension; that is, the fundamental pro-${\cal C}$ group of the graph of groups with vertex group $G_1$, edge group $L$ and monomorphisms \[\bdy_0=\id\colon L\to L\subseteq G_1\text{ and }\bdy_1=\tau\colon L\to L'\subseteq G_1\]
\begin{rmk}
The above notion of fundamental group does of course depend on the variety $\cal C$. For instance a pro-$p$ amalgamated free product of pro-$p$ groups is very different from the profinite amalgamated free product of those same groups viewed as profinite groups. One could introduce $\pi$ into the notation, but this would clutter it rather. All the theorems in this paper will make clear which is meant, although it is likely that the context would be sufficient to tell which category is in use.
\end{rmk}
In the classical Bass-Serre theory, a graph of discrete groups $(Y,G_\bullet)$ gives rise to a fundamental group $\pi_1(Y,G_\bullet)$ and an action on a certain tree $T$ whose vertices are cosets of the images $\phi(G_v)$ of the vertex groups in $\pi_1(Y,G_\bullet)$ and whose edge groups are cosets of the edge groups. Putting a suitable topology and graph structure on the corresponding objects in the profinite world and proving that the result is a profinite tree, is rather more involved than the classical theory; however the conclusion is much the same. We collate the various results into the following theorem. 
\begin{theorem}[Proposition 3.8 of \cite{ZM89}, Section 6.3 of \cite{Ribes17}]
Let ${\cal G}=(Y,G_\bullet)$ be a finite graph of pro-${\cal C}$ groups. Let $\Pi = \Pi_1(\cal G)$ and set $\Pi(y) = {\rm im}(G_y\to \Pi)$. Then there exists an (essentially unique) pro-$\pi({\cal C})$ tree $T(\cal G)$, called the {\em standard tree} of $\cal G$, on which $\Pi$ acts with the following properties. 
\begin{itemize}
\item The quotient graph $\Pi\backslash T({\cal G})$ is isomorphic to $Y$.
\item The stabiliser of a point $t\in T(\cal G)$ is a conjugate of $\Pi(\zeta(t))$ in $\Pi$, where $\zeta\colon T({\cal G})\to Y$ is the quotient map.
\end{itemize}
\end{theorem}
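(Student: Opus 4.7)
The plan is to construct the tree $T(\mathcal{G})$ by mimicking the classical Bass--Serre construction, taking care of the profinite topology, and then verify the tree property by reducing to the well-understood cases of amalgamated products and HNN extensions.

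First, I would write down $T(\mathcal{G})$ explicitly. For each $y\in V(Y)$ the quotient $\Pi/\Pi(y)$ is a profinite space, and similarly for $e\in E(Y)$ the space $\Pi/\Pi(e)$ is profinite. Set
\[V(T(\mathcal{G})) = \bigsqcup_{y\in V(Y)} \Pi/\Pi(y), \qquad E(T(\mathcal{G})) = \bigsqcup_{e\in E(Y)} \Pi/\Pi(e),\]
which are profinite spaces since $Y$ is finite. The incidence maps $d_0,d_1\colon E(T(\mathcal{G}))\to V(T(\mathcal{G}))$ are induced on each component $\Pi/\Pi(e)$ by the inclusions $\Pi(e)\hookrightarrow \Pi(d_i(e))$ coming from the edge monomorphisms $\bdy_i$ (suitably twisted by the generators $t_e$ for edges outside a chosen maximal subtree, so that the maps are well-defined on cosets). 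The left-regular action of $\Pi$ on itself descends to a continuous action on $T(\mathcal{G})$ by graph automorphisms, and the quotient is manifestly $Y$, with point stabilisers conjugate to the $\Pi(\zeta(t))$ by construction.

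The main obstacle is verifying that this profinite graph is actually a pro-$\pi(\mathcal{C})$ tree, i.e.\ that the augmented chain complex
\[0 \to \Z[\pi(\mathcal{C})][\![E(T(\mathcal{G}))]\!] \xrightarrow{d_1-d_0} \Z[\pi(\mathcal{C})][\![V(T(\mathcal{G}))]\!] \xrightarrow{\epsilon} \Z[\pi(\mathcal{C})] \to 0\]
is exact. I would proceed by induction on the number of edges of $Y$. When $|E(Y)|=0$ the group $\Pi$ is a single vertex group and $T(\mathcal{G})$ is a single point. For the inductive step, choose an edge $e\in E(Y)$; deleting $e$ leaves either a graph of groups $\mathcal{G}'$ whose fundamental group $\Pi'$ sits inside $\Pi$ as the first factor of an amalgamated free product $\Pi = \Pi'\amalg_{\Pi(e)} \Pi(d_1(e))$ (if $e$ is a bridge after deletion) or the base of an HNN extension $\Pi = \Pi'\amalg_{\Pi(e),\tau}$ (if deletion disconnects nothing). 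In both cases, the known structure of pro-$\mathcal{C}$ amalgams and HNN extensions gives a short exact sequence of $\Pi$-modules
\[0 \to \Z[\pi][\![\Pi/\Pi(e)]\!] \to \Z[\pi][\![\Pi/\Pi(d_0(e))]\!]\oplus\Z[\pi][\![\Pi/\Pi(d_1(e))]\!] \to \Z[\pi][\![\Pi/\Pi']\!] \to 0\]
(or its HNN analogue), using that $\Z[\pi(\mathcal{C})][\![\Pi]\!]$ is free as a right module over each $\Z[\pi(\mathcal{C})][\![\Pi(e)]\!]$. Splicing this with the inductive hypothesis applied to $\mathcal{G}'$ (after inducing up from $\Pi'$ to $\Pi$) gives the exactness for $\mathcal{G}$.

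The essential uniqueness then follows from a standard argument: any pro-$\pi(\mathcal{C})$ tree on which $\Pi$ acts with quotient $Y$ and stabilisers conjugate to the $\Pi(y)$ must have its vertex (resp.\ edge) fibres over $y\in V(Y)$ (resp.\ $e\in E(Y)$) equivariantly homeomorphic to $\Pi/\Pi(y)$ (resp.\ $\Pi/\Pi(e)$), and the incidence maps are then pinned down by continuity and $\Pi$-equivariance. The hardest technical point, to which I expect to devote most of the work, is the base cases of amalgam and HNN extension: one must verify the short exact sequence above, which ultimately rests on the pro-$\mathcal{C}$ normal form theorems for these constructions (the content of Chapters 5--6 of \cite{Ribes17}). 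Once those are in hand, the rest of the proof is a relatively formal bookkeeping exercise in profinite graph theory.
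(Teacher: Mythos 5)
First, a point of calibration: the paper does not prove this statement at all --- it is quoted verbatim from Proposition 3.8 of \cite{ZM89} and Section 6.3 of \cite{Ribes17}, and the surrounding text says explicitly that the results are being ``collated'' from the literature. So you are reconstructing a proof rather than matching one. Your reconstruction does follow the standard line: the coset-space model $V(T)=\bigsqcup_{y}\Pi/\Pi(y)$, $E(T)=\bigsqcup_{e}\Pi/\Pi(e)$ with incidence maps twisted by the stable letters is exactly the standard tree of the references, the quotient and stabiliser assertions are immediate from that description, and the real content is the exactness of the augmented chain complex, which does ultimately rest on the normal-form theory for pro-$\mathcal{C}$ amalgams and HNN extensions. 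You have correctly located where the work is.

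Two points in the inductive step need repair, one cosmetic and one substantive. (a) Deleting a bridge $e$ of $Y$ leaves \emph{two} components, so the decomposition is $\Pi=\Pi_1(\mathcal{G}_1)\amalg_{\Pi(e)}\Pi_1(\mathcal{G}_2)$ over the two complementary subgraphs of groups, not $\Pi'\amalg_{\Pi(e)}\Pi(d_1(e))$; this is harmless if you arrange to peel off a pendant edge of a maximal subtree at each stage, but it should be said. (b) More seriously, the splicing argument requires that the subgraph fundamental group $\Pi'$ \emph{inject} into $\Pi$ and that the resulting one-edge decomposition be \emph{proper}: only then is $\Zpiof{\Pi}$ free over $\Zpiof{\Pi'}$, so that inducing the chain complex of $T(\mathcal{G}')$ up to $\Pi$ preserves exactness and the three-term sequence for the amalgam or HNN extension is available. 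Properness of pro-$\mathcal{C}$ amalgams is genuinely not automatic (the paper itself warns of this in the same section), and in the literature the injectivity of subgraph fundamental groups is usually \emph{deduced from} the action on the standard tree, so an induction that assumes it risks circularity. The fix is either to impose properness at each stage --- replacing vertex and edge groups by their images in $\Pi$ before descending, and checking this changes neither $\Pi$ nor the $\Pi(y)$ --- or to follow \cite{Ribes17} and prove exactness by writing $\Pi$ as an inverse limit of fundamental groups of finite quotient graphs of finite groups, where classical Bass--Serre theory applies and exactness passes to the inverse limit. Your uniqueness sketch is fine once one notes that the incidence maps are determined only up to the twisting that the phrase ``essentially unique'' is there to absorb.
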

In fact $T$ satisfies a stronger property, of being ${\cal C}(\pi)$-simply connected. Conversely (see Section 6.6 of \cite{Ribes17}) an action of a profinite group on a ${\cal C}(\pi)$-simply connected profinite tree with quotient a finite graph gives rise to a decomposition as a finite graph of profinite groups. In particular open subgroups of fundamental groups of graphs of groups, which act on the standard tree, are themselves fundamental groups of graphs of groups formed in a way closely analogous to the discrete theory. However no analogous results hold when the quotient graph is infinite. 

In the classical theory one tacitly identifies each $G_y$ with its image in the fundamental group $\pi_1(Y,G_\bullet)$ of a graph of groups. In general in the world of profinite groups the maps $\phi_y\colon G_y\to \Pi_1(\cal G)$ may not be injective, even for simple cases such as amalgamated free products. We call a graph of groups {\em proper} if all the maps $\phi_y$ are in fact injections. 

\subsection{Relative homology sequence of an action on a tree}
Let $G$ be a profinite group and let $\famS=\{S_x\mid x\in X\}$ be a family of subgroups of $G$ continuously indexed by a profinite space $X$, and let $\pi$ be a set of primes. Suppose $G$ acts on a pro-$\pi$ tree $T$ on the right. Let $\eta\colon T\to T/G$ be the quotient map. Note that by Proposition \ref{DirSumsAndFreeMods} we have natural identifications 
\[\Zpiof{V(T)} = \bigboxplus_{\bar v\in V(T)/G} \Zpiof{\eta^{-1}(\bar v)}, \quad \Zpiof{E(T)} = \bigboxplus_{\bar e\in E(T)/G} \Zpiof{\eta^{-1}(\bar e)}\]
Hence applying the functor $\Tor^G_\bullet(-, \Delta_{G,\famS}\hotimes M)$ to the short exact sequence \eqref{TreeDef}, where $M\in\Gmod[{\pi}]{C}$, and applying Proposition \ref{DirectSumsAndTor} gives a long exact sequence
\begin{multline*} 
\cdots\to \bigboxplus_{\bar e\in E(T)/G}\Tor^G_n(\Zpiof{\eta^{-1}(\bar e)}, \Delta_{G,\famS}\hotimes M) \to \bigboxplus_{\bar v\in V(T)/G}\Tor^G_n(\Zpiof{\eta^{-1}(\bar v)}, \Delta_{G,\famS}\hotimes M) \\ \to \Tor^G_n(\Z[\pi], \Delta_{G,\famS}\hotimes M)\to\bigboxplus_{\bar e\in E(T)/G}\Tor^G_{n-1}(\Zpiof{\eta^{-1}(\bar e)}, \Delta_{G,\famS}\hotimes M)\to \cdots
\end{multline*} 
Now for each $\bar v\in V(T)/G$, given a choice of lift to $v\in V(T)$ the map $g\mapsto vg$ gives an identification 
\[\Zpiof{\eta^{-1}(\bar v)}]\iso \Zpiof{G_v\lqt G} \]
where $G_v$ is the stabiliser of $v$; hence using Proposition \ref{Shapiro2} and Proposition \ref{SubgpPairDelta} we have identifications
\begin{equation}\Tor^G_n(\Zpiof{\eta^{-1}(\bar v)}, \Delta_{G,\famS}\hotimes M) \iso \Tor^{G_v}_n(\Z[\pi], \Delta_{G_v,\famS^{G_v}}\hotimes M) = H_{n+1}(G_v, \famS^{G_v}; M) \label{TreeIdents}\end{equation} 
Similarly for $\bar e\in E(T)/G$. This immediately yields:
\begin{prop}\label{TreeActionHD}
Let $G$ be a profinite group acting on a pro-$\pi$ tree from the right, and let $\famS=\{S_x\mid x\in X\}$ be a family of subgroups of $G$ continuously indexed by a profinite space $X$. Let $G_t$ denote the stabiliser of $t\in T$. Then 
\[\hd_p(G,\famS)\leq \max_{v\in V(T), e\in E(T)}\big\{\hd_p(G_v, \famS^{G_v}), \hd_p(G_e, \famS^{G_e})+1 \big\}\]
for every $p\in\pi$.
\end{prop}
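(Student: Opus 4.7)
The setup already established just before the statement---namely, the long exact sequence obtained by applying $\Tor^G_\bullet(-, \Delta_{G,\famS}\hotimes M)$ to the defining short exact sequence \eqref{TreeDef} of the tree $T$, together with the identifications \eqref{TreeIdents} of its terms---does essentially all the work. Setting $N$ equal to the claimed upper bound, my plan is simply to fix $M\in\Gmod[p]{C}$ and $k>N$ and read off $H_k(G,\famS;M)=0$ from the long exact sequence.

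First I would rewrite the long exact sequence, after applying \eqref{TreeIdents} in each degree, in the form
\[
\bigboxplus_{\bar v\in V(T)/G} H_k(G_v,\famS^{G_v};M) \longrightarrow H_k(G,\famS;M) \longrightarrow \bigboxplus_{\bar e\in E(T)/G} H_{k-1}(G_e,\famS^{G_e};M).
\]
Conjugate subgroups of $G$ have canonically isomorphic relative homology (Proposition \ref{DeltaUpToConjugacy}), so the invariants $\hd_p(G_v,\famS^{G_v})$ and $\hd_p(G_e,\famS^{G_e})$ depend only on the $G$-orbit of $v$ or $e$; this makes $N$ well-defined and independent of the choice of lifts used in the identifications \eqref{TreeIdents}.

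Next, for $k>N$ the vanishing definitions of $\hd_p$ for vertex and edge stabilisers force every fibre of each sheaf appearing in the above profinite direct sums to be zero: at each $v$ we have $k>\hd_p(G_v,\famS^{G_v})$ and at each $e$ we have $k-1>\hd_p(G_e,\famS^{G_e})$. I would then invoke the general fact (a direct consequence of the universal property of $\bigboxplus$) that a sheaf of compact modules with all fibres zero has zero profinite direct sum: the zero module together with the zero sheaf morphism satisfies the required universal property. Exactness then sandwiches $H_k(G,\famS;M)$ between two zeros and forces it to vanish; since $M$ and $k>N$ were arbitrary, this gives $\hd_p(G,\famS)\leq N$.

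I do not foresee any serious obstacle, because all the substantive effort---the construction of the tree-action long exact sequence and the Shapiro-type identifications of its terms via Propositions \ref{Shapiro2} and \ref{SubgpPairDelta}---is already carried out in the paragraph preceding the statement. The only bookkeeping point worth emphasising is the verification that vanishing fibres give a vanishing profinite direct sum, which is handled by the sheaf-theoretic machinery relegated to the appendix.
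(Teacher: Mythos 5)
Your proposal is correct and is essentially the paper's own proof: both arguments take the long exact sequence from the tree, observe that for $k>N$ every fibre of the vertex and edge sheaves vanishes by the definition of $\hd_p$ applied via the identifications \eqref{TreeIdents}, invoke the fact that a profinite direct sum with all fibres zero is zero (Lemma \ref{LemSheafOfTrivs}), and conclude by exactness. The only cosmetic caveat is that your displayed ``rewritten'' sequence $\bigboxplus_{\bar v} H_k(G_v,\famS^{G_v};M)\to\cdots$ is not in general a well-defined sheaf without a continuous section of $T\to T/G$ (a point the paper makes explicitly right after the proposition), but this is harmless here precisely because one only needs the fibrewise vanishing of the original Tor-sheaf, and a sheaf of zero modules causes no difficulty.
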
 
\begin{proof}
If the right hand side is finite, denote it by $n$. If it is infinite we have nothing to prove. Then for any $M\in\Gmod[p]{C}$ and any $k> n$ we have, for every $v\in V(T)$ and $e\in E(T)$:
\[H_{k}(G_v, \famS^{G_v}; M)=0, \quad H_{k-1}(G_e, \famS^{G_e}; M)=0 \]
Since an arbitrary profinite sum of zero modules is zero by Lemma \ref{LemSheafOfTrivs}, the long exact sequence above immediately forces $H_k(G,\famS; M)=0$.
\end{proof}
The reader may be wondering why we have not simply applied the identifications \eqref{TreeIdents} to the long exact sequence and written for example $\bigboxplus H_n(G_v, \famS^{G_v})$. The answer is that the latter groups do not form a particularly well-defined sheaf in the greatest generality. More precisely the identifications in \eqref{TreeIdents} required a choice of lift $v\in V(T)$, so that the subgroups $G_v$ may not be continuously indexed by $V(T)/G$. This was not an issue for the above proposition since there is no difficulty manipulating a sheaf all of whose fibres are the zero module. 

If one has a continuous section $\sigma\colon T/G\to T$ of $\eta$ then one can indeed make the required identifications in a continuous manner and recover the expected long exact sequence. 
\begin{multline*}
\cdots\to \bigboxplus_{\bar e\in E(T)/G}  H_{n+1}(G_{\sigma({\bar e})}, \famS^{G_{\sigma({\bar e})}}; M) \to \bigboxplus_{\bar v\in V(T)/G}  H_{n+1}(G_{\sigma({\bar e})}, \famS^{G_{\sigma({\bar v})}}; M) \\ \to H_{n+1}(G, \famS; M)\to\bigboxplus_{\bar e\in E(T)/G}  H_{n}(G_{\sigma({\bar e})}, \famS^{G_{\sigma({\bar e})}}; M)\to \cdots
\end{multline*}
In particular if $T/G$ is finite such a section exists.
\subsection{Mayer-Vietoris sequences and excision}
Let $\cal C$ be an variety of finite groups closed under taking isomorphisms, subgroups, quotients and extensions. Let $\pi=\pi(\cal C)$ be the set of primes which divide the order of some finite groups in $\cal C$.

In this section we will record several long exact sequences associated with injective graphs of profinite groups. Given the set-up in the previous sections, these derivations are generally similar those in Section 3 of \cite{BE77} and we shall not reproduce them all here. An exception is Theorem \ref{RibesDirectSumThm} which is a theorem (though not proof) related to Proposition 2.3 of \cite{Ribes69} and does not appear in \cite{BE77}.  

In all cases the plan is much the same. In the case of a proper pro-${\cal C}$ amalgamated free product $G=G_1\amalg_L G_2$ one starts from the short exact sequence
\[
0\to \Zpiof{G/L} \xrightarrow{(-\res, \res)} \Zpiof{G/G_1}\oplus \Zpiof{G/G_2} \to \Z[\pi] \to 0 
\] 
arising from the last section (using a left action on the tree this time). Having decided upon a theorem to prove, one finds a suitable short exact sequence of \Z[\pi]-free modules, applies an appropriate Ext or Tor functor, and uses Shapiro isomorphisms to translate between the cohomology of the various groups involved. 

Suppose we have a proper pro-${\cal C}$ HNN extension $G=G_1\amalg_{L,\tau}$ where $L$ is a subgroup of $G$ and $\tau\colon L\to L'$ is an isomorphism to another subgroup $L'$ of $G$. In this case one takes as a starting point the sequence   
\[
0\to \Zpiof{G/L} \xrightarrow{\res\tau_\ast-\res} \Zpiof{G/G_1} \to \Z[\pi] \to 0
\] 
In these exact sequences `$\res$' denotes maps like 
\[\Zpiof{G/L}\to \Zpiof{G/G_1}, \quad gL\mapsto gG_1\]
and $\tau_\ast$ denotes the map
\[\Zpiof{G/L}\to \Zpiof{G/L'}, \quad gL\mapsto t^{-1}gtL \]
where $t$ is some stable letter for the HNN extension. One can check that when this last map is translated via a Shapiro isomorphism into a map on the (co)homologies of $L$ and $L'$ it does in fact agree with the map induced functorially from $\tau$.
\begin{theorem}[Theorem 3.2 of \cite{BE77}]\label{FirstMV}
Let $G=G_1\amalg_L G_2$ be a proper pro-${\cal C}$ amalgamated free product. Let $\famS_i$ be a family of subgroups of $G_i$ continuously indexed by $X_i$ for each $i$, where $X_i$ may possibly be empty. Let $\famS$ be the family of subgroups $\famS_1\sqcup \famS_2$ be the family of subgroups of $G$ continuously indexed by $X_1\sqcup X_2$. Then there is a natural long exact sequence
\[\cdots H^{k-1}(L)\to H^k(G,\famS) \xrightarrow{(\res, \res)} H^k(G_1, \famS_1)\oplus H^k(G_2, \famS_2) \xrightarrow{(-\res)\oplus (\res)} H^k(L)\cdots \]
with coefficients in an arbitrary $A\in\Gmod[\pi]{D}$. Here relative cohomology with respect to an empty family should be interpreted as absolute cohomology. Similarly for homology.
\end{theorem}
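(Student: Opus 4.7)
The plan is to construct, via a snake-lemma argument, a short exact sequence in $\Gmod[\pi]{C}$
\[
0 \to \ind^{G_1}_G(\Delta_1) \oplus \ind^{G_2}_G(\Delta_2) \to \Delta_{G, \famS} \xrightarrow{\alpha} \Zpiof{G/L} \to 0, \qquad (\star)
\]
(where $\Delta_i = \Delta_{G_i, \famS_i}$) and then apply $\Ext^\bullet_G(-, A)$ together with Shapiro-type identifications to read off the cohomology long exact sequence. To construct $(\star)$, observe that $\Zpiof{G/\famS}$ splits as $\Zpiof{G/\famS_1} \oplus \Zpiof{G/\famS_2}$ with each summand equal to $\ind^{G_i}_G(\Zpiof{G_i/\famS_i})$. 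Since induction is exact by Proposition \ref{IndAndFPProjs}, summing the two induced defining short exact sequences for the $\Delta_i$ produces
\[
0 \to \ind^{G_1}_G(\Delta_1) \oplus \ind^{G_2}_G(\Delta_2) \to \Zpiof{G/\famS} \xrightarrow{\pi} \Zpiof{G/G_1}\oplus\Zpiof{G/G_2} \to 0.
\]
The augmentation $\Zpiof{G/\famS}\to \Z[\pi]$ factors through $\pi$ via the tree augmentation, so $\pi$ restricted to $\Delta_{G,\famS}$ lands in the kernel of the tree augmentation, namely $\Zpiof{G/L}$, defining $\alpha$. Placing this data over the tree short exact sequence gives a commuting diagram with middle vertical $\pi$ (surjective, with kernel $\ind^{G_1}_G(\Delta_1)\oplus\ind^{G_2}_G(\Delta_2)$) and right vertical the identity on $\Z[\pi]$; the snake lemma then yields $\ker\alpha = \ind^{G_1}_G(\Delta_1)\oplus\ind^{G_2}_G(\Delta_2)$ together with surjectivity of $\alpha$, hence $(\star)$.

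Applying $\Ext^\bullet_G(-, A)$ to $(\star)$ and identifying $\Ext^{k-1}_G(\Zpiof{G/L}, A) = H^{k-1}(L, A)$ by Proposition \ref{Shapiro2}, $\Ext^{k-1}_G(\Delta_{G,\famS}, A) = H^k(G, \famS; A)$ by Proposition \ref{relequalsext}, and $\Ext^{k-1}_G(\ind^{G_i}_G(\Delta_i), A) = \Ext^{k-1}_{G_i}(\Delta_i, A) = H^k(G_i, \famS_i; A)$ by Shapiro (applied with arbitrary coefficient module, as in the proof of Proposition \ref{Shapiro2}) together with Proposition \ref{relequalsext}, produces the claimed cohomology long exact sequence. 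The middle map is the restriction because, under the induction-restriction adjunction, the inclusion $\ind^{G_i}_G(\Delta_i) \hookrightarrow \Delta_{G, \famS}$ corresponds to the natural map $\Delta_i \to \res^G_{G_i}(\Delta_{G, \famS})$ arising from the map of pairs $(G_i, \famS_i) \to (G, \famS)$, which by Proposition \ref{GrpPairNatural} induces precisely the restriction on relative cohomology. The homology statement is obtained either by applying $\Tor^G_\bullet(-, M)$ to $(\star)$ or by Pontrjagin duality.

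The main obstacle is identifying the connecting homomorphism $H^k(G_i, \famS_i) \to H^k(L)$ with the signed restrictions stated in the theorem. The strategy is to place $(\star)$ alongside the tree short exact sequence inside the snake-lemma diagram; naturality of connecting homomorphisms then reduces the question to the absolute Mayer--Vietoris sequence, where the direct map $H^k(G_1)\oplus H^k(G_2) \to H^k(L)$ arising from the tree inclusion $gL\mapsto gG_1 - gG_2$ is readily computed as $(-\res, \res)$ via Shapiro. Tracking this through the snake-lemma chase transports the same signed restrictions to the relative connecting homomorphism.
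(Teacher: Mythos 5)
Your proposal is correct and follows essentially the same route as the paper: the short exact sequence $(\star)$ you build by the snake lemma is exactly the top row of the $3\times 3$ diagram the paper uses (middle row the sum of induced defining sequences for the $\Delta_i$, right column the tree sequence), after which both arguments apply $\Ext^\bullet_G(-,A)$ and Shapiro. The only point you leave untreated is the degenerate case where one or both $X_i$ are empty (there $\Zpiof{G/\famS_i}=0$ and your map $\pi$ is no longer surjective, so the argument needs modifying); the paper likewise only writes out the non-empty case and defers the rest to Bieri--Eckmann.
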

\begin{proof}
In the case when $\famS_1$ and $\famS_2$ are non-empty this derives from the commuting diagram of short exact sequences 
\begin{equation*}\begin{tikzcd}[cramped]
\ind^{G_1}_G(\Delta_{G_1, \famS_1})\oplus \ind^{G_2}_G(\Delta_{G_2, \famS_2})\ar[equal]{d}\ar[hookrightarrow]{r} & \Delta_{G,\famS} \ar[hookrightarrow]{d}\ar[twoheadrightarrow]{r} & \Zpiof{G/L} \ar[hookrightarrow]{d}\\
\Zpiof{G}\hotimes[G_1]\Delta_{G_1, \famS_1}\oplus \Zpiof{G}\hotimes[G_2]\Delta_{G_2, \famS_2}\ar[hookrightarrow]{r} & \Zpiof{G/\famS_1} \oplus \Zpiof{G/\famS_2} \ar[twoheadrightarrow]{r}\ar[twoheadrightarrow]{d} & \Zpiof{G/G_1} \oplus \Zpiof{G/G_2}\ar[twoheadrightarrow]{d}\\
& \Z[\pi] \ar[equal]{r} & \Z[\pi]
\end{tikzcd} \end{equation*} 
For the other cases see \cite{BE77}.
\end{proof}
\begin{theorem}[Theorem 3.3 of \cite{BE77}]\label{FirstMVHNN}
Let $(G_1, \famS)$ be a profinite group pair with $G_1$ a pro-${\cal C}$ group and $\famS$ possibly empty. Let $L, L'$ be subgroups of $G_1$ isomorphic via an isomorphism $\tau$. Suppose $G=G_1\amalg_{L,\tau}$ is a proper pro-${\cal C}$ HNN extension. Then one has a natural long exact sequence
\[\cdots H^{k-1}(L)\to H^k(G,\famS) \stackrel{\res}{\longrightarrow} H^k(G_1, \famS_1) \xrightarrow{(-\res)\oplus (\tau^\ast\circ\res')} H^k(L)\cdots  \]
with respect to any $A\in \Gmod[\pi]{D}$. Here $\res'$ is the restriction map $H^k(G_1,\famS_1)\to H^k(L')$. Similarly for homology.
\end{theorem}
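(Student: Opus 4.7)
The plan is to mimic the proof of Theorem \ref{FirstMV}, but starting from the two-term short exact sequence appropriate to an HNN extension. First I would note that for a proper pro-$\cal C$ HNN extension $G=G_1\amalg_{L,\tau}$, the standard pro-$\pi$ tree has a single $G$-orbit of vertices (with stabiliser conjugate to $G_1$) and a single $G$-orbit of edges (with stabiliser conjugate to $L$), so the chain complex \eqref{TreeDef} specialises to
\[0\to \Zpiof{G/L}\xrightarrow{\res\tau_\ast-\res}\Zpiof{G/G_1}\to\Z[\pi]\to 0.\]
Here the map on the left sends $gL\mapsto t^{-1}gtG_1-gG_1$ for some fixed stable letter $t$, and we shall keep track of the correction by $\tau_\ast$ later.

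When $\famS$ is non-empty, I would next assemble the commuting diagram of short exact sequences
\[\begin{tikzcd}[cramped]
\ind^{G_1}_G(\Delta_{G_1,\famS})\ar[equal]{d}\ar[hookrightarrow]{r} & \Delta_{G,\famS}\ar[hookrightarrow]{d}\ar[twoheadrightarrow]{r} & \Zpiof{G/L}\ar[hookrightarrow]{d}\\
\Zpiof{G}\hotimes[G_1]\Delta_{G_1,\famS}\ar[hookrightarrow]{r} & \Zpiof{G/\famS}\ar[twoheadrightarrow]{r}\ar[twoheadrightarrow]{d} & \Zpiof{G/G_1}\ar[twoheadrightarrow]{d}\\
& \Z[\pi]\ar[equal]{r} & \Z[\pi]
\end{tikzcd}\]
exactly as in the proof of Theorem \ref{FirstMV}, where the middle column is the defining sequence for $\Delta_{G,\famS}$, the right-hand column is the HNN sequence displayed above, and the rows are built from the observation that every coset $gS_x\in G/\famS$ lying over $gG_1$ gives a coset in $G_1/\famS$ after translation. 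Applying $\Ext_G^\bullet(-,A)$ to the top row then yields a long exact sequence; Proposition \ref{Shapiro2} identifies $\Ext_G^\bullet(\Zpiof{G/L},A)$ with $H^\bullet(L,A)$, the relative Shapiro Lemma together with Proposition \ref{relequalsext} identifies $\Ext_G^\bullet(\ind_G^{G_1}(\Delta_{G_1,\famS}),A)$ with $H^{\bullet+1}(G_1,\famS;A)$, and by definition $\Ext_G^\bullet(\Delta_{G,\famS},A)=H^{\bullet+1}(G,\famS;A)$. In the case when $\famS$ is empty, one applies $\Ext_G^\bullet(-,A)$ directly to the HNN short exact sequence.

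The genuine technical step is identifying the connecting map $H^k(G_1,\famS)\to H^k(L)$ with $\res-\tau^\ast\res'$, i.e.\ verifying that the $\tau_\ast$ appearing in the short exact sequence translates, under the Shapiro isomorphism, into precomposition with the isomorphism $\tau\colon L\to L'$ on cohomology. This reduces, upon choosing a projective resolution of $\Z[\pi]$ over $\Z[\pi][\![G]\!]$ and tracing Shapiro's identification $\Hom_L(P_\bullet,A)\iso\Hom_G(P_\bullet,\coind_G^L(A))$, to the direct cochain-level calculation that the endomorphism of $\Zpiof{G/L}$ given by $gL\mapsto t^{-1}gtL$ corresponds to restriction to $L'$ followed by the coefficient map $\tau^\ast$; I would verify this on the $\mathfrak{F}_\pi$-level and pass to the limit using Proposition \ref{FptoPp1}. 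Naturality of the whole sequence will follow automatically from the naturality of the constructions in Section \ref{SecUsefulIds}, and the homology version is obtained either dually via Pontrjagin duality or by repeating the argument with $\Tor_\bullet^G(-,\Delta\hotimes M)$ in place of $\Ext_G^\bullet(-,A)$.
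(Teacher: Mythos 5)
Your proposal is correct and follows exactly the route the paper indicates for this theorem: the paper itself omits the proof, referring instead to the general plan laid out at the start of the subsection (start from the HNN short exact sequence $0\to\Zpiof{G/L}\to\Zpiof{G/G_1}\to\Z[\pi]\to 0$, build the analogue of the three-by-three diagram used for Theorem \ref{FirstMV}, apply $\Ext^\bullet_G(-,A)$, and translate via Shapiro isomorphisms), together with the cochain-level check that $\tau_\ast$ corresponds to $\tau^\ast\circ\res'$. Your write-up fills in precisely those steps, so it matches the intended argument.
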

\begin{theorem}[``Excision'', Proposition 3.4 of \cite{BE77}]\label{Excision}
(a) Let $G=G_1\amalg_L G_2$ be a proper pro-${\cal C}$ amalgamated free product. Let $\famS_1$ be a family of subgroups of $G_1$ continuously indexed by $X_1$ which may possibly be empty. Then the map of pairs $(G_1, \famS_1\sqcup L)\to (G, \famS_1\sqcup G_2)$ induces isomorphisms
\[H_\bullet(G_1, \famS_1\sqcup L; -)\iso H_\bullet(G, \famS_1\sqcup G_2; -), \quad H^\bullet(G, \famS_1\sqcup G_2; -)\iso H^\bullet(G_1, \famS_1\sqcup L; -) \]

(b) Let $G_1$ be a pro-${\cal C}$ group, and let $\famS_1$ be a family of subgroups of $G_1$ continuously indexed by $X_1$ which may possibly be empty. Let $L, L'$ be subgroups of $G_1$ isomorphic via an isomorphism $\tau$. Suppose $G=G_1\amalg_{L,\tau}$ is a proper pro-${\cal C}$ HNN extension. Then there are natural isomorphisms
\[H_\bullet(G_1, \famS_1\sqcup L\sqcup L'; -)\iso H_\bullet(G, \famS_1\sqcup L; -), \quad H^\bullet(G, \famS_1\sqcup L; -)\iso H^\bullet(G_1, \famS_1\sqcup L\sqcup L'; -)  \]
induced by the obvious maps of pairs.
\end{theorem}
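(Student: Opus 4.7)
My plan is to deduce both parts from an isomorphism of $G$-modules
\[
\alpha\colon \ind^{G_1}_G(\Delta_{G_1, \mathcal{T}}) \xrightarrow{\iso} \Delta_{G, \famS},
\]
where $(\mathcal{T},\famS)=(\famS_1\sqcup\{L\},\,\famS_1\sqcup\{G_2\})$ for part (a) and $(\mathcal{T},\famS)=(\famS_1\sqcup\{L,L'\},\,\famS_1\sqcup\{L\})$ for part (b). Granted such an $\alpha$, Proposition \ref{relequalsext} combined with the Eckmann--Shapiro-type identity $\Ext^\bullet_G(\ind^{G_1}_G(-),A)\iso \Ext^\bullet_{G_1}(-,\res^G_{G_1}(A))$---valid since $\ind^{G_1}_G$ is exact and sends projectives to projectives by Proposition \ref{IndAndFPProjs}---yields the cohomology statement, and Pontrjagin duality then supplies the homology statement.

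To construct $\alpha$ for part (a), I would fit source and target into the rows of
\[
\begin{tikzcd}[column sep=small]
0 \ar{r} & \ind^{G_1}_G(\Delta_{G_1, \famS_1}) \ar{r}\ar[equals]{d} & \ind^{G_1}_G(\Delta_{G_1, \mathcal{T}}) \ar{r}\ar{d}{\alpha} & \Zpiof{G/L} \ar{r}\ar[equals]{d} & 0 \\
0 \ar{r} & \ind^{G_1}_G(\Delta_{G_1, \famS_1}) \ar{r} & \Delta_{G, \famS} \ar{r} & \Zpiof{G/L} \ar{r} & 0,
\end{tikzcd}
\]
whose top row is obtained by applying the exact functor $\ind^{G_1}_G$ to the top row of diagram \eqref{InductionOnPairs}, and whose bottom row is the top row of the diagram used in the proof of Theorem \ref{FirstMV}, specialised to $\famS_2=\{G_2\}$ so that $\Delta_{G_2,\famS_2}=0$. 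Provided the diagram commutes, the 5-lemma forces $\alpha$ to be an isomorphism. Part (b) runs in parallel, with common cokernel $\Zpiof{G/L'}\iso\Zpiof{G/L}$ identified via the HNN stable letter in accordance with Proposition \ref{DeltaUpToConjugacy}; here the bottom row is produced by the analogue of the proof of Theorem \ref{FirstMVHNN}, starting from the HNN tree sequence $0\to\Zpiof{G/L}\to\Zpiof{G/G_1}\to\Z[\pi]\to 0$ in place of the amalgamated one.

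The principal obstacle is verifying commutativity of the right-hand square in each sandwich. The surjection $\Delta_{G,\famS}\twoheadrightarrow \Zpiof{G/L}$ is defined only indirectly via the tree short exact sequence: one first embeds $\Delta_{G,\famS}$ into $\Zpiof{G/\famS_1}\oplus\Zpiof{G/G_2}$, quotients to $\Zpiof{G/G_1}\oplus\Zpiof{G/G_2}$, and identifies the image as lying in the copy of $\Zpiof{G/L}$ cut out by $(-\res,\res)$. Establishing commutation should reduce to chasing a basic tensor $g\otimes\delta$ through both routes and invoking the explicit form of the tree sequence; the left-hand square then commutes for the tautological reason that it agrees with the natural map $\ind^{G_1}_G(\Delta_{G_1,\famS_1})\to\Delta_{G,\famS}$ coming from the pair inclusion $(G_1,\famS_1)\hookrightarrow(G,\famS)$.
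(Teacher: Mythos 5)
Your proposal is correct and is essentially the argument the paper intends: the paper does not write out a proof (it defers to \cite{BE77} and the recipe of finding a suitable short exact sequence of modules and applying Shapiro isomorphisms), but it later refers to exactly your module isomorphism $\Delta_{G,\famS_1\sqcup G_2}\iso \ind^{G_1}_G(\Delta_{G_1,\famS_1\sqcup L})$ as ``the excision isomorphism'', and your five-lemma sandwich between $\ind^{G_1}_G(\Delta_{G_1,\famS_1})$ and $\Zpiof{G/L}$, with $\alpha$ the adjoint of the map induced by the inclusion of pairs, is the natural way to produce it. One small caveat: when $\famS_1=\emptyset$ (a case the statement explicitly permits) the outer terms of your sandwich are not defined, since $\Delta_{G_1,\emptyset}$ and diagram \eqref{InductionOnPairs} require non-empty families; there one instead compares $0\to\ind^{G_1}_G(\Delta_{G_1,L})\to\Zpiof{G/L}\to\Zpiof{G/G_1}\to 0$ with $0\to\Delta_{G,G_2}\to\Zpiof{G/G_2}\to\Z[\pi]\to 0$ directly via the tree sequence, which is the degenerate form of the same argument.
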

\begin{theorem}\label{RibesDirectSumThm}
Let $G=G_1\amalg_L G_2$ be a proper pro-${\cal C}$ amalgamated free product, and $\famS_i$ a family of subgroups of $G_i$ continuously indexed by a possibly empty profinite set $X_i$ for each $i$. Consider the family of subgroups $\famS = \famS_1\sqcup \{L\}\sqcup\famS_2$  of $G$ continuously indexed by $X_1\sqcup \{\ast\}\sqcup X_2$. Then there are natural isomorphisms
\[H^\ast(G,\famS_1\sqcup L\sqcup \famS_2; A)\iso H^\ast(G_1,\famS_1\sqcup L; A)\oplus H^\ast(G_2,\famS_2\sqcup L; A) \] 
induced by the maps of pairs $(G_i, \famS_i\sqcup L)\to (G,\famS\sqcup L)$.
\end{theorem}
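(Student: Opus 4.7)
I propose the isomorphism is the restriction map $R = (r_1, r_2)$, where $r_i\colon H^\ast(G, \mathcal{F}; A) \to H^\ast(G_i, \famS_i \sqcup L; A)$ is induced by the map of pairs $(G_i, \famS_i \sqcup L) \hookrightarrow (G, \mathcal{F})$ via Proposition \ref{GrpPairNatural} (writing $\mathcal{F} = \famS_1 \sqcup L \sqcup \famS_2$). The plan is to show $R$ is an isomorphism by comparing two long exact sequences via the 5-lemma.

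First apply Theorem \ref{FirstMV} (Mayer--Vietoris) to the pairs $(G_1, \famS_1 \sqcup L)$ and $(G_2, \famS_2)$, whose combined family is precisely $\mathcal{F}$, obtaining
\[\cdots \to H^{k-1}(L) \xrightarrow{\delta_{\rm MV}} H^k(G, \mathcal{F}) \xrightarrow{\Psi} H^k(G_1, \famS_1 \sqcup L) \oplus H^k(G_2, \famS_2) \xrightarrow{(\rho_1, \rho_2)} H^k(L) \to \cdots\]
I claim $\rho_1 = 0$: by Proposition \ref{GrpPairNatural} applied to the pair inclusion $(L, \emptyset) \hookrightarrow (G_1, \famS_1 \sqcup L)$, the map $\rho_1$ factors as $H^k(G_1, \famS_1 \sqcup L) \to H^k(G_1) \to H^k(L)$. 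But the composition $H^k(G_1, \famS_1 \sqcup L) \to H^k(G_1) \to H^k(\famS_1 \sqcup L)$ already vanishes by exactness of the LES of the pair $(G_1, \famS_1 \sqcup L)$, and $H^k(L)$ is a summand of $H^k(\famS_1 \sqcup L)$.

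Next apply the LES of pairs arising from diagram \eqref{InductionOnPairs} to $(G_2, \famS_2) \subset (G_2, \famS_2 \sqcup L)$:
\[\cdots \to H^{k-1}(L) \xrightarrow{\delta_{\rm pair}} H^k(G_2, \famS_2 \sqcup L) \xrightarrow{\varphi} H^k(G_2, \famS_2) \xrightarrow{\rho_2} H^k(L) \to \cdots\]
whose final map coincides with the $\rho_2$ above. Taking its direct sum with the identity LES on $H^\bullet(G_1, \famS_1 \sqcup L)$ and placing it beneath the MV sequence gives the ladder
\[\begin{tikzcd}[column sep=small]
H^{k-1}(L) \ar{r}{\delta_{\rm MV}} \ar[equal]{d} & H^k(G, \mathcal{F}) \ar{r}{\Psi} \ar{d}{R} & H^k(G_1, \famS_1 \sqcup L) \oplus H^k(G_2, \famS_2) \ar[equal]{d} \ar{r}{(0, \rho_2)} & H^k(L) \ar[equal]{d} \\
H^{k-1}(L) \ar{r}{(0, \delta_{\rm pair})} & H^k(G_1, \famS_1 \sqcup L) \oplus H^k(G_2, \famS_2 \sqcup L) \ar{r}{\id \oplus \varphi} & H^k(G_1, \famS_1 \sqcup L) \oplus H^k(G_2, \famS_2) \ar{r}{(0, \rho_2)} & H^k(L)
\end{tikzcd}\]
The right-hand square commutes because the pair inclusion $(G_2, \famS_2) \hookrightarrow (G, \mathcal{F})$ factors through $(G_2, \famS_2 \sqcup L)$, so $\Psi = (\id \oplus \varphi) \circ R$ by functoriality of restriction. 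For the left square, $r_1 \circ \delta_{\rm MV} = 0$ is immediate from $\Psi \circ \delta_{\rm MV} = 0$, and $r_2 \circ \delta_{\rm MV} = \delta_{\rm pair}$ follows from naturality of connecting homomorphisms applied to an explicit morphism from the SES used in the proof of Theorem \ref{FirstMV} to the SES $0 \to \Delta_{G_2, \famS_2} \to \Delta_{G_2, \famS_2 \sqcup L} \to \Zpiof{G_2/L} \to 0$ defining the pair sequence. The 5-lemma then yields that $R$ is an isomorphism.

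The main obstacle is the verification of $r_2 \circ \delta_{\rm MV} = \delta_{\rm pair}$: this requires explicitly constructing the morphism of short exact sequences of $G$-modules intertwining the two connecting maps, and tracking sign conventions through the Shapiro identifications translating between $G$-cohomology of induced modules and $G_2$-cohomology. This is a technical diagram chase rather than a conceptual difficulty.
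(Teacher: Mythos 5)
Your argument is correct in outline but takes a genuinely different route from the paper. The paper's proof is a single module-level computation: a $3\times 3$ diagram of $G$-modules with exact rows and columns which forces an isomorphism of $G$-modules $\bigoplus_{i}\ind^{G_i}_G(\Delta_{G_i,\famS_i\sqcup L})\iso\Delta_{G,\famS_1\sqcup L\sqcup\famS_2}$, after which the cohomological statement is immediate from the Shapiro lemma. You instead work entirely in cohomology, splicing the Mayer--Vietoris sequence of Theorem \ref{FirstMV} (applied to the families $\famS_1\sqcup L$ on $G_1$ and $\famS_2$ on $G_2$) against the direct sum of the pairs sequence for $(G_2;\famS_2,L)$ with a trivial sequence, and invoking the 5-lemma; your observation that $\rho_1=0$, because it factors through two consecutive maps of the long exact sequence of $(G_1,\famS_1\sqcup L)$ followed by projection onto the $L$-summand of $H^k(\famS_1\sqcup L)$, is the right key point. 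Three caveats. First, besides the square $r_2\circ\delta_{\rm MV}=\delta_{\rm pair}$ that you flag, you silently identify the map $H^k(G_2,\famS_2)\to H^k(L)$ of the Mayer--Vietoris sequence with the corresponding map of the pairs sequence; this is a second compatibility of exactly the same kind, and like the first it holds only up to sign (harmless for the 5-lemma, since one may negate a vertical map, but it should be stated). Second, your proof yields strictly less than the paper's: the later applications of this theorem---the pairing $\fkB_2$ in Theorem \ref{MVwithExcision} and the FP$_\infty$ bookkeeping in Section \ref{SecGraphsOfPDn}---use the underlying isomorphism of $G$-modules, which a 5-lemma argument in cohomology does not produce. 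Third, when $\famS_2$ is empty the sequence you quote from diagram \eqref{InductionOnPairs} must be replaced by the basic sequence \eqref{RelCohLES} for the pair $(G_2,\{L\})$, as that proposition assumes non-empty indexing sets.
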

\begin{proof}
Consider the commutative diagram below.
\[\begin{tikzcd}[]
   & \Zpiof{G/L} \ar[equal]{r}\ar[hookrightarrow]{d}{(-1,1)} & \Zpiof{G/L}\ar[hookrightarrow]{d} \\
  \bigoplus_{i=1,2}\Zpiof{G}\hotimes[G_i]\Delta_{G_i, \famS_i\sqcup L} \ar[hookrightarrow]{r}\ar{d} & \Zpiof{G/(\famS_1\sqcup \{L\}\sqcup \{L\}\sqcup\famS_2)}  \ar[twoheadrightarrow]{r}\ar[twoheadrightarrow]{d}{1\oplus 1} & \Zpiof{G/G_1}\oplus\Zpiof{G/G_2}\ar[twoheadrightarrow]{d} \\
  \Delta_{G, \famS_1\sqcup L\sqcup\famS_2} \ar[hookrightarrow]{r} & \Zpiof{G/(\famS_1\sqcup \{L\}\sqcup\famS_2)}  \ar[twoheadrightarrow]{r} & \Z[\pi] 
\end{tikzcd}\]
where the middle column arises from the short exact sequence
\[\begin{tikzcd}[]
  0\ar{r} & \Zpiof{G/L}\ar{r}{(-1,1)} &\Zpiof{G/L}\oplus\Zpiof{G/L}\ar{r}{1\oplus 1} & \Zpiof{G/L}\ar{r} & 0
 \end{tikzcd}\]
 by taking a direct sum with \Zpiof{G/\famS_1\sqcup\famS_2}. The middle row is the result of applying the exact functors $\Zpiof{G}\hotimes[G_i]-$ to the definitions of the $\Delta_{G_i, \famS_i\sqcup L}$. The bottom row and final column are already known to be exact. From this it follows that the leftmost vertical map is an isomorphism, whence the theorem. Note also that this isomorphism, via a Shapiro isomorphism, agrees with the inclusion map defined in Section \ref{SecBasicDefs}.
\end{proof}
\begin{rmk}
Using the theorems above there are several maps one can define from $H^{n-1}(L)$ to $H^n(G,\famS)$. One has the map from Theorem \ref{FirstMV}; one has the map induced by
\[\Delta_{G,\famS_1\sqcup \famS_2}\to \Delta_{\famS_1\sqcup G_2} \iso \ind^{G_1}_G(\Delta_{G_1,\famS_1\sqcup L}) \to \Zpiof{G/L} \]
where the second map is the inverse of the excision isomorphism (Theorem \ref{Excision}); and the map
\[\Delta_{G,\famS_1\sqcup \famS_2} \to \Delta_{G,\famS_1\sqcup L\sqcup \famS_2}\iso \bigoplus_{i=1,2}\ind^{G_i}_G\Delta_{G_i, \famS_i\sqcup L} \to \ind^{G_1}_G(\Delta_{G_1,\famS_1\sqcup L}) \to   \Zpiof{G/L}\]
where the second map is the inverse of the isomorphism in Theorem \ref{RibesDirectSumThm}. One may check by a simple diagram chase that these three maps agree. Denote this map by $\bdy_0$. Similarly for HNN extensions. 
\end{rmk}

The following result appears as Theorems 3.5 and 3.7 of \cite{BE77}. We will give a different proof.
\begin{theorem}\label{MVwithExcision}
Let $G=G_1\amalg_L G_2$ be a proper pro-${\cal C}$ amalgamated free product. Let $\famS_i$ be a family of subgroups of $G_i$ continuously indexed by $X_i$ for each $i$, where $X_i$ may possibly be empty. Let $\famS$ be the family of subgroups $\famS_1\sqcup \famS_2$ be the family of subgroups of $G$ continuously indexed by $X_1\sqcup X_2$. Then there is a natural long exact sequence
\[\cdots\to H^{k-1}(L)\to H^k(G_1, \famS_1\sqcup L)\oplus H^k(G_2, \famS_2\sqcup L) \stackrel{}{\longrightarrow}   H^k(G,\famS)\stackrel{}{\longrightarrow} H^k(L)\to\cdots \]
with coefficients in an arbitrary $A\in\Gmod[\pi]{D}$. Here relative cohomology with respect to an empty family should be interpreted as absolute cohomology. Similarly for homology.

Furthermore this sequence is natural with respect to cup products in the following sense. Let $C, A, B\in \Gmod[\pi]{D}$ and let $\fkB\colon C\otimes A\to B$ be a pairing. Choose a coclass $e\colon H^n(G, \famS; B)\to I_\pi$. Then the following diagram
\begin{equation*}\label{MVCupProdLES}
\begin{tikzcd}[cramped, column sep =small]
\cdots H^{k-1}(L,C)\ar{r}\ar{d} & H^k(G_1, \famS_1\sqcup L;C)\oplus H^k(G_2, \famS_2\sqcup L;C) \ar{r}\ar{d}  & H^k(G,\famS;C)\ar{r}\ar{d} & H^k(L;C)\ar{d}\cdots \\
\cdots H^{n-k}(L, A)^\ast\ar{r} & H^{n-k}(G_1, A)^\ast\oplus H^{n-k}(G_2, A)^\ast\ar{r} & H^{n-k}(G, A)^\ast\ar{r} & H^{n-k-1}(L; A)^\ast\cdots
\end{tikzcd}
\end{equation*}
commutes up to sign---the sign being a $(-1)^{k-1}$ in the third square if $\famS\neq \emptyset$ or in the second square if $\famS=\emptyset$. Here the first vertical map is the cup product map with respect to $\fkB$ and the coclass
\[\bdy e\colon H^{n-1}(L, B)\xrightarrow{-\bdy_0} H^n(G, \famS; B)\to I_\pi  \]
The second vertical map is the sum of the cup products with respect to coclasses
\[H^n(G_i, \famS_i\sqcup L;C)\to H^n(G, \famS; B)\to I_\pi\]
induced from $e$. The third vertical map is the usual cup product with respect to $e$.
\end{theorem}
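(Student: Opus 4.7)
The plan is to derive the Mayer-Vietoris sequence by combining the ``enlarge the family'' long exact sequence with the module decomposition proved in Theorem \ref{RibesDirectSumThm}, and then to establish cup-product naturality by treating the middle square (a composition of maps of pairs) and the two connecting squares (handled via Theorem \ref{BaseCupProdLES}) separately.

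For existence, I apply the long exact sequence derived from diagram \eqref{InductionOnPairs} with the pair $(G, \famS)$ and added subgroup $L$ to obtain
\[\cdots \to H^{k-1}(L; A) \to H^k(G, \famS \sqcup L; A) \to H^k(G, \famS; A) \to H^k(L; A) \to \cdots\]
The proof of Theorem \ref{RibesDirectSumThm} provides an explicit $G$-module isomorphism $\bigoplus_{i=1,2} \ind^{G_i}_G \Delta_{G_i, \famS_i \sqcup L} \iso \Delta_{G, \famS \sqcup L}$, which via Shapiro identifies the middle term with $H^k(G_1, \famS_1 \sqcup L; A) \oplus H^k(G_2, \famS_2 \sqcup L; A)$. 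Combining this isomorphism with the natural short exact sequence $0 \to \Delta_{G, \famS} \to \Delta_{G, \famS \sqcup L} \to \Zpiof{G/L} \to 0$ gives a single short exact sequence
\[0 \to \Delta_{G, \famS} \to \bigoplus_{i=1,2} \ind^{G_i}_G \Delta_{G_i, \famS_i \sqcup L} \to \Zpiof{G/L} \to 0\]
from which the entire Mayer-Vietoris sequence arises on applying $\Ext^\bullet_G(-, A)$ and Shapiro's lemma.

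For cup-product naturality I treat the three squares separately. The middle horizontal map factors as
\[H^k(G_1, \famS_1 \sqcup L; C) \oplus H^k(G_2, \famS_2 \sqcup L; C) \iso H^k(G, \famS \sqcup L; C) \to H^k(G, \famS; C),\]
with the first arrow realised by the inclusions of pairs $(G_i, \famS_i \sqcup L) \to (G, \famS \sqcup L)$ (per the final remark in the proof of Theorem \ref{RibesDirectSumThm}) and the second by $(G, \famS) \to (G, \famS \sqcup L)$; commutativity of the corresponding cup-product square then follows from two applications of the naturality statement at the end of Proposition \ref{LESRelCohCup2}. For the remaining two squares, the top horizontal arrows are the connecting homomorphisms of the long exact sequence associated to the short exact sequence above, while the bottom arrows are Pontrjagin duals of those in the absolute Mayer-Vietoris sequence for $G = G_1 \amalg_L G_2$, which itself comes from applying $H^\bullet(G, -)$ and Shapiro's lemma to the short exact sequence
\[0 \to A \to \coind^{G_1}_G A \oplus \coind^{G_2}_G A \to \coind^L_G A \to 0.\]
Applying Theorem \ref{BaseCupProdLES} to these two coefficient sequences, with compatible pairings as described below, yields the commuting diagram up to sign; the $(-1)^{k-1}$ appears precisely in the square where both the top and bottom horizontals are connecting maps, which is the third square when $\famS \neq \emptyset$ and the second when $\famS = \emptyset$.

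The principal technical obstacle is constructing the compatible pairings $\fkB_i \colon C_i \otimes A_i \to B$ for a common target module $B$ such that, after the Shapiro identifications of Proposition \ref{ShapiroAndCupProd} and the Ribes isomorphism, the resulting adjoint cup products recover precisely $\Upsilon_{\bdy e}$, $\Upsilon_{e_1} \oplus \Upsilon_{e_2}$, and $\Upsilon_e$ together with the coclasses specified in the statement. These pairings are built from $\fkB \colon C \otimes A \to B$ and evaluation/coevaluation maps on $\Hom(-,-)$, in direct analogy with the constructions of Section \ref{SubSecRelCohomSeq}; the requisite compatibility conditions reduce to cochain-level diagram chases akin to the pentagon \eqref{CupProdPentagon}.
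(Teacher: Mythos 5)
Your proposal is correct and follows essentially the same route as the paper: the sequence is obtained by applying $\Ext^\bullet_G(-,A)$ to $0\to\Delta_{G,\famS}\to\Delta_{G,\famS\sqcup L}\to\Zpiof{G/L}\to 0$ and invoking Theorem \ref{RibesDirectSumThm} for the middle term, and the cup-product diagram comes from Theorem \ref{BaseCupProdLES} applied to that sequence together with the tree sequence $0\to\Zpiof{G/L}\to\Zpiof{G/G_1}\oplus\Zpiof{G/G_2}\to\Z[\pi]\to 0$, with pairings built from coevaluation and identified via Proposition \ref{ShapiroAndCupProd}. Your separate treatment of the middle square via naturality of maps of pairs is harmless but redundant, since the single application of Theorem \ref{BaseCupProdLES} already yields all three squares once the compatible pairings are in hand.
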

\begin{proof}
In the case when $\famS$ is not empty the long exact sequence is derived, via Theorem \ref{RibesDirectSumThm}, from the top row of the commuting diagram of exact sequences below.
\[\begin{tikzcd}
\Delta_{G, \famS_1\sqcup\famS_2}\ar[hookrightarrow]{r}\ar[hookrightarrow]{d} & \Delta_{G, \famS_1\sqcup L\sqcup\famS_2} \ar[twoheadrightarrow]{r}\ar[hookrightarrow]{d} & \Zpiof{G/L}\ar[equal]{d} \\
\Zpiof{G/\famS_1}\oplus\Zpiof{G/\famS_2} \ar[hookrightarrow]{r}\ar[twoheadrightarrow]{d} & 
\Zpiof{G/(\famS_1\sqcup \{L\}\sqcup\famS_2)} \ar[twoheadrightarrow]{r}\ar[twoheadrightarrow]{d} &\Zpiof{G/L} \\
\Z[\pi]\ar[equal]{r} & \Z[\pi] & 
\end{tikzcd}\]
When $\famS=\emptyset$ the long exact sequence is derived from the long exact sequence for the pair $(G, \{L\})$ via Theorem \ref{RibesDirectSumThm}.

Next we derive naturality properties with respect to the cup product. Suppose first that $\famS\neq\emptyset$. Let $C, A, B\in \Gmod[\pi]{D}$ and let $\fkB\colon C\otimes A\to B$ be a pairing. Choose a coclass $e\colon H^n(G, \famS; B)\to I_\pi$.  We have (\Z[\pi]-split) exact sequences 
\[\begin{tikzcd}0\ar{r} & \Delta_{G, \famS_1\sqcup\famS_2}\ar{r}& \Delta_{G, \famS_1\sqcup L\sqcup\famS_2} \ar{r} & \Zpiof{G/L} \ar{r} & 0\end{tikzcd}\]
and 
\[0\to \Zpiof{G/L} \to \Zpiof{G/G_1}\oplus \Zpiof{G/G_2} \to \Z[\pi] \to 0 \] 
We may apply $\Hom(-,C)$ and $\Hom(-,A)$ and define the following pairings on the groups involved. 
\[\fkB_1\colon \Hom(\Zpiof{G/L}, C)\otimes\Hom(\Zpiof{G/L}, A) \to \Hom(\Delta_{G, \famS}, B) \]
is the pairing
\begin{eqnarray*}
{\rm coev}\colon \Hom(\Zpiof{G/L},C)\otimes \Hom(\Zpiof{G/L}, A) &\to &  \Hom(\Zpiof{G/L}, C\otimes A)\\
f\otimes g &\mapsto & (xL \mapsto f(xL)\otimes g(xL)) 
\end{eqnarray*}
followed by the natural map to $\Hom(\Delta_{G, \famS}, B)$ induced by $-\bdy_0$. 
By Proposition \ref{ShapiroAndCupProd}, via the Shapiro isomorphisms the cup product map induced on cohomology is simply the usual adjoint cup product on $H^\bullet(L)$ with co-class $\bdy e=e\circ(-\bdy_0)\colon H^{n-1}(L, B)\to I_\pi$ induced from $e$. As noted in the theorem statement this is the same as a map appearing in Theorem \ref{FirstMV}.

Next we have the pairing
\begin{multline*}\fkB_2\colon \Hom(\Delta_{G,\famS_1\sqcup L\sqcup \famS_2},C)\otimes \Hom(\Zpiof{G/G_1} \oplus\Zpiof{G/G_2}, A)\iso \\ \Hom(\ind^{G_1}_G(\Delta_{G_1,\famS_1\sqcup L}) \oplus \ind^{G_2}_G(\Delta_{G_2,\famS_2\sqcup L}),C)\otimes \Hom(\Zpiof{G/G_1}\oplus\Zpiof{G/G_2}, A) \\ \xrightarrow{\rm coev} \Hom(\Zpiof{G/G_1}\oplus\Zpiof{G/G_2}, C\otimes A)\to \Hom(\Delta_{G,\famS}, B)
\end{multline*}
where the first isomorphism is the inverse of the excision isomorphism in Theorem \ref{RibesDirectSumThm} and the second map is the direct sum of two coevaluation maps as defined in Proposition \ref{ShapiroAndCupProd}. Again via Proposition \ref{ShapiroAndCupProd} these maps agree with the standard cup product maps on the pairs $(G_i, \famS_i)$, with coclasses $H^n(G_i, \famS_i; B)\to I_\pi$ induced from the coclass $e$ via a map of pairs and the excision isomorphisms.

Finally,
\[\fkB_3\colon \Hom(\Delta_{G,\famS}, C)\otimes A \to\Hom(\Delta_{G,\famS}, B) \]
is the standard cup product pairing.
 
If one has sufficient tenacity one may check that these pairings are compatible with the maps in the short exact sequence. We can now apply Theorem \ref{cupprodLES} (noting that both short exact sequences terminate in a free module so split as \Z[\pi]-modules) to obtain the long exact sequence  of cup product maps as in the statement of the theorem, which commutes except for a sign $(-1)^{k-1}$ in the third square. 

In the case when $\famS=\emptyset$ the long exact sequence in the theorem is derived from the long exact sequence in cohomology for the group pair $(G, L)$ via Theorem \ref{RibesDirectSumThm}, and the cup product diagram is a translation of Proposition \ref{LESRelCohCup2}. This time however the diagram commutes up to a sign $(-1)^{k-1}$ in the second square, as the `connecting homomorphism' in this case is the second map.  
\end{proof}
Of course there is also a version for HNN extensions.
\begin{theorem}[Theorems 3.6 and 3.8 of \cite{BE77}]\label{HNNMVwithExcision}
Let $(G_1, \famS)$ be a profinite group pair with $G_1$ a pro-${\cal C}$ group and $\famS$ possibly empty. Let $L, L'$ be subgroups of $G_1$ isomorphic via an isomorphism $\tau$. Suppose $G=G_1\amalg_{L,\tau}$ is a proper pro-${\cal C}$ HNN extension. Then there is a natural long exact sequence
\[\cdots \to H^{k-1}(L)\to H^k(G_1, \famS\sqcup L\sqcup L') \stackrel{}{\longrightarrow}   H^k(G,\famS)\stackrel{}{\longrightarrow} H^k(L) \to\cdots \]
with coefficients in an arbitrary $A\in\Gmod[\pi]{D}$. Here relative cohomology with respect to an empty family should be interpreted as absolute cohomology. Similarly for homology.

Furthermore this sequence is natural with respect to cup products in the following sense. Let $C, A, B\in \Gmod[\pi]{D}$ and let $\fkB\colon C\otimes A\to B$ be a pairing. Choose a coclass $e\colon H^n(G, \famS; B)\to I_\pi$. Then the following diagram
\begin{equation*}
\begin{tikzcd}[cramped, column sep =small]
\cdots \ar{r}&H^{k-1}(L,C)\ar{r}\ar{d} & H^k(G_1, \famS\sqcup L\sqcup L';C) \ar{r}\ar{d}  & H^k(G,\famS;C)\ar{r}\ar{d} & H^k(L;C)\ar{d}\ar{r}&\cdots \\
\cdots \ar{r}&H^{n-k}(L, A)^\ast\ar{r} & H^{n-k}(G_1, A)^\ast\ar{r} & H^{n-k}(G, A)^\ast\ar{r} & H^{n-k-1}(L; A)^\ast\ar{r}&\cdots
\end{tikzcd}
\end{equation*}
commutes up to sign---the sign being a $(-1)^{k-1}$ in the third square if $\famS\neq \emptyset$ or in the second square if $\famS=\emptyset$. Here the first vertical map is the cup product map with respect to $\fkB$ and the coclass
\[\bdy e=e\circ(-\bdy_0)\colon H^{n-1}(L, B)\to H^n(G, \famS; B)\to I_\pi  \]
 The second vertical map is the cup product with coclass
\[H^n(G_1, \famS\sqcup L\sqcup L';C)\to H^n(G, \famS; B)\to I_\pi\]
induced from $e$. The third vertical map is the usual cup product with respect to $e$.
\end{theorem}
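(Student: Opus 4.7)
The plan is to mimic the proof of Theorem \ref{MVwithExcision}, replacing the amalgamated-product direct-sum decomposition (Theorem \ref{RibesDirectSumThm}) with the HNN excision isomorphism (Theorem \ref{Excision}(b)). When $\famS\neq\emptyset$, I start from the \Z[\pi]-split short exact sequence
\[0\to\Delta_{G,\famS}\to\Delta_{G,\famS\sqcup L}\to\Zpiof{G/L}\to 0\]
obtained by specialising diagram \eqref{InductionOnPairs} to $\famS_1=\famS,\famS_2=\{L\}$. Applying $\Ext^\bullet_G(-,A)$ and using Proposition \ref{relequalsext}, the absolute Shapiro lemma, and the excision isomorphism $H^\bullet(G,\famS\sqcup L;A)\iso H^\bullet(G_1,\famS\sqcup L\sqcup L';A)$ to identify the middle term yields the claimed long exact sequence. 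When $\famS=\emptyset$, one uses instead the ordinary pair sequence for $(G,L)$ post-composed with excision $H^\bullet(G,L;A)\iso H^\bullet(G_1,L\sqcup L';A)$.

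For cup-product naturality I follow the same three-pairing strategy as in Theorem \ref{MVwithExcision}. Alongside the above exact sequence I use the \Z[\pi]-split HNN sequence $0\to\Zpiof{G/L}\to\Zpiof{G/G_1}\to\Z[\pi]\to 0$. I then define $\fkB_1$ on $\Hom(\Zpiof{G/L},C)\otimes\Hom(\Zpiof{G/L},A)$ as coevaluation on $G/L$ followed by the map $-\bdy_0\colon\Hom(\Zpiof{G/L},C\otimes A)\to\Hom(\Delta_{G,\famS},B)$; by Proposition \ref{ShapiroAndCupProd} this pulls back under Shapiro to the ordinary cup product on $H^\bullet(L)$ with coclass $\bdy e=e\circ(-\bdy_0)$. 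The pairing $\fkB_2$ is built by identifying $\Delta_{G,\famS\sqcup L}$ with $\ind^{G_1}_G(\Delta_{G_1,\famS\sqcup L\sqcup L'})$ via the excision isomorphism, applying coevaluation against $\Hom(\Zpiof{G/G_1},A)$, and composing with the natural map to $\Hom(\Delta_{G,\famS},B)$; Proposition \ref{ShapiroAndCupProd} again identifies this with the standard cup product on $(G_1,\famS\sqcup L\sqcup L')$ with the induced coclass. Finally, $\fkB_3$ is simply the usual relative cup product on $\Hom(\Delta_{G,\famS},C)\otimes A$.

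After a direct diagram check of compatibility of the $\fkB_i$ with the coefficient maps in the two short exact sequences, Theorem \ref{BaseCupProdLES} produces the commuting diagram of long exact sequences together with the $(-1)^{k-1}$ sign in the third square, exactly as in Theorem \ref{MVwithExcision}. In the case $\famS=\emptyset$ the relevant long exact sequence reduces to Proposition \ref{LESRelCohCup2} applied to $(G,L)$ and then translated via excision; here the connecting map occupies the second square rather than the third, which is why the sign migrates accordingly.

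The main obstacle is verifying that the HNN excision isomorphism intertwines the cup product on $(G,\famS\sqcup L)$ with that on $(G_1,\famS\sqcup L\sqcup L')$ at the chain level, so that the Shapiro-identified pairing $\fkB_2$ really does produce the standard cup product on $(G_1,\famS\sqcup L\sqcup L')$ with the expected coclass; the rest is a bookkeeping exercise, and the requisite signs emerge from the double-complex conventions of Section \ref{SecDblComplexes} just as before.
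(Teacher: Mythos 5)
Your proposal is correct and is essentially the argument the paper intends: the paper states this theorem without proof as the HNN analogue of Theorem \ref{MVwithExcision}, and your substitution of the excision isomorphism $\Delta_{G,\famS\sqcup L}\iso\ind^{G_1}_G(\Delta_{G_1,\famS\sqcup L\sqcup L'})$ for the direct-sum decomposition of Theorem \ref{RibesDirectSumThm}, together with the HNN sequence $0\to\Zpiof{G/L}\to\Zpiof{G/G_1}\to\Z[\pi]\to 0$ and the three pairings $\fkB_1,\fkB_2,\fkB_3$ fed into Theorem \ref{BaseCupProdLES}, is exactly the right adaptation. The point you flag as the main obstacle is indeed discharged by Proposition \ref{ShapiroAndCupProd} just as in the amalgamated case.
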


\section{Poincar\'e Duality Pairs}\label{SecFPPairs}
Let $p$ be a prime.
\subsection{Definitions and basic properties}
Recall that a $G$-module $M\in\Gmod[p]{C}$ is {\em of type \pFP[\infty]} if there exists a projective resolution of $M$ by finitely generated $\Zpof{G}$-modules. In this case both $\Tor^G_\bullet(M^\perp, -)$ and $\Ext^\bullet_G(M,-)$ are functors which take finite modules to finite modules. 
\begin{defn}
Let $G$ be a profinite group and $\famS$ a family of subgroups of $G$ continuously indexed over a set $X$. Then $G$ is {\em of type \pFP[\infty]} if \Z[p] is a module of type FP$_\infty$, and the pair $(G,\famS)$ is {\em of type \pFP[\infty]} if the module $\Delta_{G,\famS}$ is of type FP$_\infty$. 
\end{defn}
When the pair $(G,\famS)$ is of type \pFP[\infty] we may define \Gmod[p]{P}-functors
\[{\bf H}_\bullet(G,\famS; -) = \bTor^G_\bullet(\Delta^\perp, -), \quad  {\bf H}^\bullet(G,\famS; -) = \bExt_G^\bullet(\Delta, -)\]
These functors of course extend the functors we have been working with.
\begin{prop}\label{FPDelta}
Let $(G,\famS)$ be a profinite group pair and consider the following statements.
\begin{enumerate}[(1)]
\item $(G,\famS)$ is of type \pFP[\infty].
\item $\famS$ is a finite collection of subgroups and for each $S_x\in\famS$ the module $\Zpiof{G/S_x}$ is of type FP$_\infty$.
\item $G$ is of type \pFP[\infty].
\end{enumerate}
If (1) holds then (2) and (3) are equivalent. Note that if $G$ has property FIM then the second condition is equivalent to:
\begin{enumerate}
\item[(2')] $\famS$ is a finite collection of subgroups each of type \pFP[\infty].
\end{enumerate}
\end{prop}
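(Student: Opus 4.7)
The structural heart of the argument is the defining short exact sequence
\[ 0 \to \Delta \to \Zpof{G/\famS} \to \Z[p] \to 0 \]
together with Propositions \ref{DirSumsAndFPn} and \ref{SESandFPn}. Once one knows that $\famS$ is a finite family, the finite direct sum decomposition $\Zpof{G/\famS}=\bigoplus_{x\in X}\Zpof{G/S_x}$ immediately gives (by \ref{DirSumsAndFPn}) that the middle term is of type FP$_\infty$ if and only if each summand $\Zpof{G/S_x}$ is; and Proposition \ref{SESandFPn} applied to the sequence above (with the outer term $\Delta$ being of type FP$_\infty$ by hypothesis) then shows that the middle term is of type FP$_\infty$ if and only if $\Z[p]$ is. Chaining these two equivalences yields (2) $\Leftrightarrow$ (3) under (1), which is the main content of the proposition.

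The one nontrivial step, and therefore the crux of the proof, is to check that hypothesis (1) already forces $\famS$ to be a finite family, so that the reduction above is legitimate. For this I would take coinvariants $H_0(G,-)=\Z[p]\hotimes[G]-$ of the defining sequence. Since $\Delta$ is finitely generated as a $G$-module (being FP$_\infty$), $H_0(G,\Delta)$ is finitely generated as a $\Z[p]$-module. On the other hand, the commutation of the tensor product with profinite direct sums, together with the absolute Shapiro lemma $\Z[p]\hotimes[G]\Zpof{G/S_x}\iso H_0(S_x,\Z[p])=\Z[p]$, identifies $H_0(G,\Zpof{G/\famS})$ with $\bigboxplus_X\Z[p]$ (trivial $G$-action). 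The right-exact tail
\[ H_0(G,\Delta) \to \bigboxplus_X\Z[p] \to \Z[p] \to 0 \]
then exhibits the augmentation kernel $K:=\ker(\bigboxplus_X\Z[p]\to\Z[p])$ as a quotient of $H_0(G,\Delta)$, hence as a finitely generated $\Z[p]$-module. But $K$ is a $\Z[p]$-direct summand of $\bigboxplus_X\Z[p]$ (split by any choice of base point $x_0\in X$), so if $X$ were infinite the quotient $\bigboxplus_X\F_p$ of $K\oplus\Z[p]$ would have to be a finite $\F_p$-vector space, which is absurd since $\bigboxplus_X\F_p=\varprojlim \F_p^{X_n}$ for finite quotients $X_n$ of unbounded cardinality. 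Hence $X$ must be finite. This is the step I expect to require the most care, principally in invoking the sheaf-theoretic machinery of Section \ref{SecModuleCats} to identify the relevant coinvariants.

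Finally, for the remark that (2) is equivalent to the cleaner statement (2') when $G$ has property FIM, one observes that $\Zpof{G/S_x}=\ind^{S_x}_G(\Z[p])$ and applies Proposition \ref{IndAndFPMods2}: under FIM for the pair $(G,S_x)$, the induced module is of type FP$_\infty$ as a $G$-module if and only if $\Z[p]$ is of type FP$_\infty$ as an $S_x$-module, i.e.\ if and only if $S_x$ is of type $p$-FP$_\infty$. Ranging over the (finitely many) $x\in X$ gives the stated equivalence.
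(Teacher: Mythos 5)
Your proof is correct and follows essentially the same route as the paper's: the same short exact sequence $0\to\Delta\to\Zpof{G/\famS}\to\Z[p]\to 0$ combined with Propositions \ref{DirSumsAndFPn} and \ref{SESandFPn}, plus Proposition \ref{IndAndFPMods2} for the FIM remark. The only cosmetic difference is in how finiteness of $X$ is extracted: the paper observes that $\Zpof{G/\famS}$ must be finitely generated and cites Proposition \ref{InfGenSheaves}, whereas you deduce it from finite generation of $\Delta$ via coinvariants and re-prove the relevant special case of that proposition by hand; both are valid.
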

\begin{proof}
We will consider the short exact sequence of modules 
\[0\to \Delta_{G,\famS}\to \Zpof{G/\famS} \to \Z[p]\to 0 \]
and note that the three conditions of the proposition are the conditions that the three modules in this sequence have type FP$_\infty$. This is tautologous except for $\Zpof{G/\famS}$. If (2) holds then by Proposition \ref{DirSumsAndFPn} $\Zpof{G/\famS}$ has type FP$_\infty$. On the other hand, if $\Zpof{G/\famS}$ is finitely generated then $\famS$ is a finite family by Proposition \ref{InfGenSheaves} and each summand is of type FP$_\infty$ by Proposition \ref{DirSumsAndFPn}. 

The result now follows from Proposition \ref{SESandFPn}.
\end{proof}

Now when ${\rm cd}_p(G)= n>0$ and $G$ is of type \pFP[\infty], then $H^n(G,-)^\ast$ is a representable functor. More precisely there exists a (non-zero) module $I_p(G)\in \Gmod[p]{D}$ and a map $e\colon H^n(G,I_p(G))\to I_p$ such that for any $M\in \Gmod[p]{F}$ the assignment 
\[ \Big(f\colon M\to I_p(G)\Big) \longmapsto \left(H^n(G,M)\stackrel{f}{\longrightarrow} H^n(G, I_p(G)) \stackrel{e}{\longrightarrow} I_p \right) \]
gives a natural isomorphism of \Gmod[p]{F}-functors
\[\Hom_G(-, I_p(G))\iso H^n(G, -)^\ast \]

The pair $(I_p(G), i)$ is unique up to unique isomorphism. See \cite{Serre13}, Section 3.5, Lemma 6 and Proposition I.17. 

Precisely the same arguments show that when $\Delta_{G, \famS}$ is of type \pFP[\infty]\ and $\cd_p(G,\famS)= n>0$ then there exists a unique {\em dualising module} $I_p(G,\famS)$ and a map $e\colon H^n(G,\famS; I_p(G,\famS))\to I_p$ which again gives an isomorphism of \Gmod[p]{F}-functors
\begin{equation}\label{DualisingModule} 
\Hom_G(-, I_p(G,\famS))\iso H^n(G,\famS; -)^\ast 
\end{equation}
Note that by uniqueness of the dualising module if $I_p(G,\famS)\neq 0$ then $\cd_p(G,\famS)=n$. We will also have cause to consider the {\em compact dualising module} $D_p(G,\famS):= I_p(G,\famS)^\ast$. 

Since $(G,\famS)$ is of type \pFP[\infty], the right hand side is a restriction of a continuous \Gmod[p]{P}-functor. The left hand side commutes with inverse limits in the first variable, so we in fact have isomorphisms of \Gmod[p]{C}-functors
\[\Hom_G(M, I_p(G,\famS))\iso {\bf H}^n(G,\famS; M)^\ast \]
for $M\in\Gmod[p]{C}$.

In particular one has the following of isomorphisms.
\begin{eqnarray*}
{\bf H}^n(G,\famS; \Zpof{G})^\ast &\iso & \Hom_G(\Zpof{G}, I_p(G,\famS))\\
&\iso & \Hom(\Z[p], I_p(G,\famS))= I_p(G,\famS)
\end{eqnarray*}
One may check that these are isomorphisms of $G$-modules when the first two modules are given the actions deriving from the right action of $G$ on $\Zpof{G}$. Hence we have an identification
\begin{equation}\label{HnZpG}
{\bf H}^n(G,\famS; \Zpof{G}) = D_p(G,\famS)
\end{equation}
Finally if $D_p(G,\famS)$ is a finitely generated $G$-module both sides of \eqref{DualisingModule} are co-continuous \Gmod[\pi]{P}-functors agreeing on \Gmod[\pi]{F}\ so this is in fact an isomorphism of \Gmod[\pi]{P}-functors. 

\begin{defn}
The pair $(G,\famS)$ is a {\em duality pair of dimension $n$ at the prime $p$} (or more briefly a {\em D$^n$ pair at $p$}) if $(G, \famS)$ is of type \pFP[\infty]\ and $\cd_p(G,\famS)\leq n$, and 
\[{\bf H}^k(G,\famS; \Zpof{G})= \begin{cases} D_p(G,\famS)\neq 0 & k=n
\\ 0 &  k\neq n
\end{cases}\]
where the compact dualising module $D_p(G,\famS)$ is isomorphic as a \Z[p]-module to a finitely generated free \Z[p]-module. If in addition $D_p(G,\famS)\iso\Z[p]$ as a \Z[p]-module then we say $(G,\famS)$ is a {\em Poincar\'e duality pair of dimension $n$ at the prime $p$}, or more briefly a {\em PD$^n$ pair at $p$}. We refer to the map $i$ from the definition of the dualising module as the {\em fundamental coclass} of the pair---this is the dual notion to the fundamental class in the top-dimensional homology.

For a PD$^n$ pair at the prime $p$ the {\em orientation character} of $(G,\famS)$ is the homomorphism $\chi\colon G\to\Aut(\Z[p])$ given by the action on $D_p(G,\famS)$ and (any) identification $D_p(G,\famS)\iso\Z[p]$. We say $(G,\famS)$ is {\em orientable} if $\im(\chi)$ is trivial, and {\em virtually orientable} if $\im(\chi)$ is finite. 
\end{defn}
The obvious analogous definitions for absolute cohomology give the definition of a profinite duality (or Poincar\'e duality) group.
\begin{prop}\label{FPforPDn}
If $(G,\famS)$ is a PD$^n$ pair at the prime $p$ then $G$ is of type \pFP[\infty]. Hence $\famS$ is a finite collection of subgroups and for each $S\in\famS$ the module $\Zpiof{G/S}$ is of type FP$_\infty$. If in addition $G$ has property FIM then each $S\in S$ has type \pFP[\infty].
\end{prop}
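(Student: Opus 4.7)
The plan is to reduce the entire proposition to the single claim that $G$ is of type $p$-FP$_\infty$. Granted this, Proposition \ref{FPDelta} immediately yields that $\famS$ is finite with each $\Zpiof{G/S}$ of type FP$_\infty$, and Proposition \ref{IndAndFPMods2} applied to $\Z[p]\in\mathfrak{C}_p(S)$ (for which $\Zpof{G/S}=\ind^S_G(\Z[p])$) then upgrades this, under property FIM, to the statement that each $S\in\famS$ has type $p$-FP$_\infty$.

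To show $G$ is of type $p$-FP$_\infty$ I will construct an explicit finite projective resolution of the trivial module $\Z[p]$ by finitely generated projective \Zpof{G}-modules. Since $(G,\famS)$ is of type $p$-FP$_\infty$ and $\cd_p(G,\famS)\leq n$, a dimension-shift/Schanuel argument (in the spirit of Proposition \ref{Schanuel}, using that $\bExt^k_G(\Delta_{G,\famS},-)$ vanishes for $k\geq n$) produces a resolution
\[0\to P_{n-1}\to\cdots\to P_0\to\Delta_{G,\famS}\to 0\]
by finitely generated projective \Zpof{G}-modules. Applying the contravariant functor $\bHom_G(-,\Zpof{G})$ then yields a cochain complex of finitely generated projective right \Zpof{G}-modules whose degree-$k$ cohomology is
\[\bExt^k_G(\Delta_{G,\famS},\Zpof{G})={\bf H}^{k+1}(G,\famS;\Zpof{G}).\]
By the PD$^n$ hypothesis this vanishes except in degree $n-1$, where it equals $D_p(G,\famS)$. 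Hence the augmented sequence
\[0\to\bHom_G(P_0,\Zpof{G})\to\cdots\to\bHom_G(P_{n-1},\Zpof{G})\to D_p(G,\famS)\to 0\]
is a finite resolution of the compact dualising module $D_p(G,\famS)$ by finitely generated projective right \Zpof{G}-modules.

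To pass from a resolution of $D_p(G,\famS)$ to one of $\Z[p]$ with trivial $G$-action I will use the PD$^n$ assumption that $D_p(G,\famS)\iso\Z[p]$ as $\Z[p]$-modules. Let $L=\bHom_{\Z[p]}(D_p(G,\famS),\Z[p])$ endowed with the induced right $G$-action; this is again a rank one free $\Z[p]$-module, and the functor $-\hotimes[{\Z[p]}]L$ (with diagonal $G$-action) is an auto-equivalence of the category of right \Zpof{G}-modules with inverse $-\hotimes[{\Z[p]}]D_p(G,\famS)$. In particular it is exact and preserves finitely generated projectives. Tensoring the above resolution and noting that $D_p(G,\famS)\hotimes[{\Z[p]}]L\iso\Z[p]$ with trivial $G$-action produces the desired finite projective resolution of $\Z[p]$, completing the proof that $G$ has type $p$-FP$_\infty$.

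The main obstacle is careful bookkeeping with left versus right $G$-actions, and ensuring that the vanishing of $\bExt^k_G(\Delta_{G,\famS},-)$ for $k\geq n$ on discrete coefficients does in fact produce a termination of some finitely-generated projective resolution (so that the opening Schanuel step is legitimate in the profinite setting). The remaining substantive step of translating a finite resolution of $\Delta_{G,\famS}$ into one of the dualising module via $\bHom_G(-,\Zpof{G})$ is the direct profinite analogue of the classical Bieri--Eckmann computation for PD$^n$ pairs and goes through smoothly.
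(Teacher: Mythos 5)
Your argument is essentially identical to the paper's: truncate a finitely generated projective resolution of $\Delta_{G,\famS}$ at length $n$, apply $\bHom_G(-,\Zpof{G})$ to obtain a finite projective resolution of $D_p(G,\famS)$ concentrated correctly by the PD$^n$ hypothesis, untwist the orientation action by tensoring with the inverse character (your $L$ is exactly the paper's $\Z[p](\bar\chi)$), and then invoke Proposition \ref{FPDelta} for the statements about $\famS$. The one step you flag as needing care --- that the $n$-th syzygy of $\Delta_{G,\famS}$ is projective, so the resolution genuinely terminates --- is resolved in the paper by citing Proposition 3.1 of \cite{brumer66}.
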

\begin{proof}
Take a resolution $P_\bullet$ of $\Delta$ by finitely generated projective $G$-modules. By \cite{brumer66}, Proposition 3.1, the kernel of $P_{n-1}\to P_{n-2}$ is actually projective (and is finitely generated, being the image of $P_n$) so we may truncate $P_\bullet$ and assume that $P_k=0$ for $k>n$.  Now apply the functor $\bHom_G(-,\Zpof{G})$. One may readily see that $\bHom_G(\Zpof{G}, \Zpof{G})=\Zpof{G}$ so that $\bHom_G(-, \Zpof{G})$ preserves the property of being finitely generated free, hence the property of being finitely generated projective. The complex $\bHom(P_{n-1-\bullet}, \Zpof{G})$ is a complex of finitely generated projective modules. Furthermore the homology of this complex is
\begin{equation*}
H_k(\bHom(P_{n-1-\bullet}, \Zpof{G}))={\bf H}^{n-k}(G, \famS; \Zpof{G}) =\begin{cases}
D_p(G,\famS) & \text{if $k=0$} \\ 0 & \text{if $k\neq 0$}
\end{cases} 
\end{equation*}
So this is a resolution of $D_p(G,\famS)$ by finitely generated free $G$-modules---that is, of \Z[p] with some $G$-action.  If $\chi\colon G\to \Aut(\Z[p])=\Z[p]^{\!\times}$ is the $G$-action, then let $\bar\chi(g) = \chi(g)^{-1}$ be the inverse $G$-action---which is well defined as $\Aut(\Z[p])$ is abelian. For a homomorphism $\rho\colon G\to\Aut\Z[p]$ let $\Z[p](\chi)$ denote \Z[p] with $G$-action given by $\rho$. Then $\Z[p](\chi)\hotimes[] \Z[p](\bar{\chi})$ (with diagonal action) is isomorphic to the trivial module $\Z[p]$. Finally note that, via the isomorphism
\[\Zpof{G}\hotimes[] \Z[p](\bar{\chi})\to\Zpof{G},\quad g\otimes 1\mapsto \bar\chi(g)^{-1}\cdot g \]
we see that $-\hotimes[] \Z[p](\bar{\chi})$ is an exact functor taking finitely generated projectives to finitely generated projectives. Hence $\bHom(P_{n-1-\bullet}, \Zpof{G})\hotimes\Z[p](\bar{\chi})$ is the required resolution of \Z[p] by finitely generated projective modules.

The statement about $\famS$ follows from Proposition \ref{FPDelta}.
\end{proof}

In the next two propositions there are maps of \Gmod[\pi]{P}-functors induced by the cup product. In Section \ref{SecCupProds} we defined cup product maps on the category \Gmod[\pi]{D}. These are extended to the maps in the theorem using Propositions \ref{FptoPp1} and \ref{FptoPp2}. 
\begin{theorem}[cf Theorem 4.4.3 of \cite{SW00}]\label{PDisduality1}
Suppose that $G$ has type \pFP$_\infty$. Let $J\in \Gmod[p]{D}$ be a non-zero module such that $J^\ast$ is finitely generated as a \Z[p]-module, and let \[j\colon H^m(G,J) \to I_p\] be a map. Then the following are equivalent:
\begin{enumerate}[(1)]
\item $G$ is a D$^n$ group at the prime $p$ with dualising module $J$ and fundamental coclass $j$.
\item For every $k$ there is an isomorphism of $\Gmod[p]{P}$-functors (equivalently of \Gmod[p]{F}-functors)
\[\Upsilon \colon {\bf H}^k(G,C) \to {\bf H}^{n-k}(G, \bHom(C, J))^\ast  \] 
induced by the cup product maps on \Gmod[\pi]{D} with coclass 
\[ {\bf H}^n(G, C\bhotimes[] \bHom(C,J))\stackrel{\rm ev}{\longrightarrow} {\bf H}^n(G, J) \stackrel{j}{\longrightarrow} I_p \]
\end{enumerate} 
\end{theorem}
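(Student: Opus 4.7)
The proof splits along the two implications, with (2) $\Rightarrow$ (1) being the easier direction.

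For (2) $\Rightarrow$ (1), I would specialize the duality isomorphism to $C = \Zpof{G}$. Proposition \ref{IndandCoinsasDiags} (applied with $S = 1$) identifies $\bHom(\Zpof{G}, J)$ with the coinduced module $\coind^1_G(J)$, so the absolute Shapiro lemma (Proposition \ref{Shapiro1}) gives ${\bf H}^{n-k}(G, \bHom(\Zpof{G}, J)) \iso {\bf H}^{n-k}(1, J)$, which vanishes for $k \neq n$ and equals $J$ for $k = n$. Pontrjagin dualizing, ${\bf H}^k(G, \Zpof{G})$ vanishes outside $k = n$ and agrees with $J^*$ there. Together with the hypothesis that $J^*$ is finitely generated over \Z[p] (and the \Z[p]-freeness inherited from the fact that $J^*$ arises as the top cohomology of a complex of finitely generated free \Z[p]-modules, just as in the proof of Proposition \ref{FPforPDn}), this confirms that $G$ is a D$^n$ group with compact dualising module $D_p(G) = J^*$. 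That $j$ is the fundamental coclass then follows from unfolding $\Upsilon$ at $C = \Z[p]$ and $k = n$, where it is manifestly the representing map of the dualising module on the category $\Gmod[p]{F}$.

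For the converse (1) $\Rightarrow$ (2), the D$^n$ hypothesis guarantees that $J^* = D_p(G)$ is finitely generated and free over \Z[p]; hence, by Pontrjagin duality, $\bHom(-, J)$ is exact as a functor on $\Gmod[p]{P}$. It follows that both
\[T^k(C) = {\bf H}^k(G, C) \qquad \text{and} \qquad S^k(C) = {\bf H}^{n-k}(G, \bHom(C, J))^*\]
are continuous cohomological $\Gmod[p]{P}$-functors in $C$, and the cup product with the coclass $j \circ \mathrm{ev}$ defines a natural transformation $\Upsilon \colon T^\bullet \to S^\bullet$ which commutes with the connecting homomorphisms of short exact coefficient sequences by Proposition \ref{leswithwedge}. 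By Proposition \ref{FptoPp1}, it suffices to exhibit the isomorphism on $\Gmod[p]{F}$. Both sides vanish above degree $n$ (the LHS by the D$^n$ hypothesis and since $\cd_p(G) = n$, the RHS trivially since ${\bf H}^{n-k} = 0$ for $k < 0$), and in the top degree $k = n$ the transformation $\Upsilon$ reduces, for finite $C$, to the representing isomorphism $\Hom_G(C, J) \iso {\bf H}^n(G, C)^*$ supplied by the dualising module. A dimension-shifting argument, embedding each finite $C$ in a cofinitely generated injective $E$ of $\Gmod[p]{D}$ and applying the five lemma between the long exact sequences of $T^\bullet$ and $S^\bullet$, propagates the isomorphism downward from $k = n$ to all $0 \leq k \leq n$.

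The principal obstacle is arranging the dimension shift so that both long exact sequences really do allow the induction to proceed, which in turn requires suitable effaceability of $S^\bullet$. The cleanest way to address this is to re-express the right-hand side through Pontrjagin duality as $S^k(C) \iso {\bf H}_{n-k}(G, D_p(G) \hotimes C)$, the homology being taken with diagonal action; with $D_p(G)$ a finitely generated free \Z[p]-module, this form is visibly a continuous effaceable homological functor on $\Gmod[p]{P}$. One may then invoke Proposition \ref{FptoPp2} directly (after reindexing) to upgrade the natural transformation to a natural isomorphism in all degrees, completing the proof of (1) $\Rightarrow$ (2).
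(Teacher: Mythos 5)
Your overall strategy matches the paper's: the statement is not proved directly there (the author proves the relative analogue, Theorem \ref{PDisduality2}, and declares the present one ``very similar''), and like the paper you establish (1)$\Rightarrow$(2) by observing that $\Upsilon$ is a morphism of connected sequences of continuous functors which in the extreme degree reduces to the representing isomorphism of the dualising module, and then invoke the uniqueness machinery of Propositions \ref{FptoPp1} and \ref{FptoPp2}; for (2)$\Rightarrow$(1) you specialise to $C=\Zpof{G}$ and apply the Shapiro lemma, exactly as in the paper. Your device of rewriting the target functor as ${\bf H}_{n-k}(G, D_p(G)\hotimes[] C)$ to make effaceability visible is a legitimate (and arguably cleaner) variant of the paper's dualisation of $\Upsilon$.

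There is, however, a genuine gap in your (2)$\Rightarrow$(1): the $\Z[p]$-freeness of $J^\ast$. You claim this is ``inherited from the fact that $J^\ast$ arises as the top cohomology of a complex of finitely generated free $\Z[p]$-modules, just as in the proof of Proposition \ref{FPforPDn}''. This fails twice over. First, the modules $\bHom_G(P_k,\Zpof{G})$ are finitely generated free over $\Zpof{G}$, not over $\Z[p]$ (unless $G$ is finite). Second, the top cohomology of a truncated complex is a cokernel, and cokernels of maps of $p$-torsion-free modules need not be torsion-free; moreover Proposition \ref{FPforPDn} does not \emph{prove} freeness of the dualising module---it takes it as part of the PD$^n$ hypothesis. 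Since your hypothesis on $J$ is only that $J^\ast$ is finitely generated, while the definition of a D$^n$ group demands that $D_p(G)$ be finitely generated \emph{free}, this step cannot be waved through. The correct argument is the one in the proof of Theorem \ref{PDisduality2}: apply the duality isomorphism with $C=\F_p\hotimes[]\Zpof{G}$ to get ${\bf H}^{n-1}(G,\F_p\hotimes[]\Zpof{G})\iso H^{1}(G,\coind^1_G(\Hom(\F_p,J)))^\ast=0$, feed this into the long exact sequence of $0\to\Zpof{G}\xrightarrow{\;p\;}\Zpof{G}\to\F_p\hotimes[]\Zpof{G}\to 0$ to conclude that multiplication by $p$ is injective on ${\bf H}^n(G,\Zpof{G})=J^\ast$, and then invoke Proposition \ref{TorFreeImpliesFree}. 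With that substitution (and an explicit remark that the duality isomorphisms force $\cd_p(G)\leq n$), your proof is complete.
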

We will not prove this proposition, as the proof is very similar to the proof of the next proposition.
\begin{theorem}\label{PDisduality2}
Suppose that $(G,\famS)$ has type \pFP$_\infty$ and $G$ has type \pFP[\infty]. Let $J\in \Gmod[p]{D}$ be a non-zero module such that $J^\ast$ is finitely generated as a \Z[p]-module, and let \[j\colon H^m(G,\famS; J) \to I_p\] be a map. Then the following are equivalent:
\begin{enumerate}[(1)]
\item $(G,\famS)$ is a D$^n$ pair at the prime $p$ with dualising module $J$ and fundamental coclass $j$.
\item For every $k$ the cup product map
\[\Upsilon \colon {\bf H}^k(G, \famS;C) \to {\bf H}^{n-k}(G, \bHom(C, J))^\ast  \] 
induced by the coclass
\[{\bf H}^n(G, \famS; C\bhotimes[] \bHom(C,J))\stackrel{\rm ev}{\longrightarrow} {\bf H}^n(G,\famS; J) \stackrel{j}{\longrightarrow} I_p \]
is an isomorphism of connected sequences of $\Gmod[p]{P}$-functors, or equivalently of \Gmod[p]{F}-functors. 
\end{enumerate} 
\end{theorem}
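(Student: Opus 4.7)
The strategy is to verify the cup-product isomorphism at $C = \Zpof{G}$, where both sides can be computed explicitly via the Shapiro lemma, and then propagate to arbitrary $C$ by dimension shifting. The two directions use opposite reductions.

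For $(2) \Rightarrow (1)$, I would evaluate $\Upsilon$ at $C = \Zpof{G}$. By Proposition \ref{IndandCoinsasDiags}, $\bHom(\Zpof{G}, J)$ with its diagonal $G$-action is exactly the coinduced module $\coind_G^{\{1\}}(J)$, so the Shapiro lemma (Proposition \ref{Shapiro1}) yields ${\bf H}^{n-k}(G, \bHom(\Zpof{G}, J)) \iso H^{n-k}(\{1\}, J)$, which equals $J$ when $k = n$ and vanishes otherwise. The hypothesis that $\Upsilon$ is an iso therefore forces ${\bf H}^k(G, \famS; \Zpof{G}) \iso J^\ast$ when $k = n$ and zero otherwise; since $J^\ast$ is finitely generated free over $\Z[p]$ by assumption, this exhibits $(G, \famS)$ as a $D^n$ pair with compact dualising module $D_p(G, \famS) = J^\ast$, hence with $I_p(G, \famS) = J$. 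Setting $k = n$ and letting $C$ range over $\Gmod[p]{F}$, $\Upsilon$ becomes the pairing ${\bf H}^n(G, \famS; C) \iso \bHom_G(C, J)^\ast$; unwinding it at $C = J$ and $\beta = \id_J$ matches the defining iso \eqref{DualisingModule} exactly when $j$ is the fundamental coclass.

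For $(1) \Rightarrow (2)$, the type $p$-FP$_\infty$ hypothesis together with Brumer's truncation (Proposition \ref{Schanuel} and the argument in Proposition \ref{FPforPDn}) yield a finitely generated projective resolution $P_\bullet \to \Delta$ of length $n - 1$. Applying $\bHom_G(-, \Zpof{G})$ to $P_\bullet$ and reindexing homologically produces a finite projective resolution of the compact dualising module $D = J^\ast$ (by the identification \eqref{HnZpG} and the $D^n$ hypothesis). Consequently
\[
{\bf H}^k(G, \famS; C) \iso \bTor^G_{n-k}(D, C)
\]
naturally in $C \in \Gmod[p]{P}$. Since $J^\ast$ is finitely generated free, hence flat, over $\Z[p]$, the identity $D \hotimes_G N \iso \Z[p] \hotimes_G (J^\ast \hotimes N)$ (co-invariants of the diagonal versus right-module actions) upgrades this further to $H_{n-k}(G, J^\ast \hotimes C)$. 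On the other side, Proposition \ref{TensorHom} gives $\bHom(C, J) \iso (C \hotimes J^\ast)^\ast$ as $G$-modules, and Pontrjagin duality for absolute cohomology rewrites ${\bf H}^{n-k}(G, \bHom(C, J))^\ast$ as $H_{n-k}(G, C \hotimes J^\ast)$. The two sides are therefore canonically isomorphic.

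The principal obstacle is to verify that this composite canonical isomorphism actually coincides with the cup product $\Upsilon$. By Propositions \ref{FptoPp1}--\ref{FptoPp2} it suffices to check on $\Gmod[p]{F}$; compatibility of $\Upsilon$ with the long exact sequences arising from short exact sequences of coefficient modules (Proposition \ref{leswithwedge}), together with the analogous naturality of the canonical iso, reduce the comparison via the five-lemma and dimension shifting to a single base degree. The natural choice is $k = n$, where $\Upsilon$ is the adjoint of the evaluation-then-$j$ pairing ${\bf H}^n(G, \famS; C) \otimes \bHom_G(C, J) \to {\bf H}^n(G, \famS; J) \to I_p$, and this tautologically matches the canonical iso once the cup product is unwound using the double-complex description of Section \ref{SecDblComplexes}; this last matching, though intricate, is formal.
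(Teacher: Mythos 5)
Your direction $(2)\Rightarrow(1)$ follows the paper's computation at $C=\Zpof{G}$ almost exactly, but it contains one genuine gap: you assert that $J^\ast$ is ``finitely generated \emph{free} over $\Z[p]$ by assumption'', whereas the hypothesis of the theorem only makes $J^\ast$ finitely generated. Freeness is part of what must be \emph{proved} in this direction, since the definition of a D$^n$ pair requires the compact dualising module to be free as a $\Z[p]$-module; without it a torsion candidate such as $J=\Z/p$ is not excluded and the equivalence would fail. The paper closes this by computing
\[{\bf H}^{n-1}(G,\famS;\F_p\hotimes\Zpof{G})\iso H^1(G,\coind^1_G(\F_p\otimes J))=0,\]
feeding the coefficient sequence $0\to\Zpof{G}\xrightarrow{\;p\;}\Zpof{G}\to\F_p\hotimes\Zpof{G}\to0$ into the cup-product isomorphism together with \eqref{HnZpG} to conclude that multiplication by $p$ is injective on $J^\ast$, and then invoking Proposition \ref{TorFreeImpliesFree}. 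You need to add this step (you also do not record that the isomorphisms force $\cd_p(G,\famS)\le n$, though that is immediate).

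Your direction $(1)\Rightarrow(2)$ takes a genuinely different route from the paper. You dualise a finite projective resolution of $\Delta$ to identify ${\bf H}^k(G,\famS;-)$ with $\bTor^G_{n-k}(J^\ast,-)$ and then with $H_{n-k}(G,J^\ast\hotimes -)$, in the style of Bieri--Eckmann and of Theorem 4.4.3 of \cite{SW00}; the cost is that you must then prove that this canonical isomorphism coincides with $\Upsilon$, which you defer as ``formal''. The paper avoids constructing a comparison isomorphism altogether: it observes that both sides of $\Upsilon$ are coeffaceable cohomological $\Gmod[p]{P}$-functors of $C$, that $\Upsilon$ is a morphism of connected sequences (via Proposition \ref{leswithwedge}), and that in the extreme degree it is exactly the dual of the defining isomorphism $\Hom_G(-,J)\iso H^n(G,\famS;-)^\ast$ of the dualising module; Proposition \ref{FptoPp2} then forces every $\Upsilon$ to be an isomorphism. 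Your route buys the explicit description of relative cohomology in terms of the compact dualising module, which is of independent interest, but the $\delta$-functor argument is shorter and discharges precisely the comparison you leave open.
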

\begin{rmk}
Recall from Section \ref{SecModuleCats} that when $J^\ast$ is finitely generated as a \Z[p]-module, $\Hom(C, J)$ is finite for $C\in\Gmod[p]{F}$ and it makes sense to speak of $\bHom(-, J)$ as a co-continuous functor in \Gmod[p]{P}.
\end{rmk}
\begin{proof}
The fact that isomorphisms of \Gmod[p]{F}-functors are equivalent to isomorphisms of \Gmod[p]{P}-functors follows from Propositions \ref{FptoPp1} and \ref{FptoPp2}.

 First suppose that (1) holds. By Propositions \ref{FptoPp1}, \ref{FptoPp2} and  \ref{leswithwedge} the map $\Upsilon$ is a morphism of connected $\Gmod[p]{P}$-functors. For $C\in\Gmod[\pi]{F}$ note that \[\Hom_G(C, J) = \Ext_G^0(C , J) = \Ext_G^0(\Z[p], \Hom(C, J))\] where the last identification holds since $J^\ast$ is free over \Z[p] by assumption. Furthermore by  looking at the definition of cup product we see that under this identification the map 
\[\Hom_G(C, J)\to H^m(G,\famS; C)^\ast \] 
agrees with the cup product map 
\[ H^0(G, \Hom(C,J)) \to H^m(G,\famS; C)^\ast\] 
Then the map $\Upsilon$ of connected sequences of cohomological coeffaceable \Gmod[p]{P}-functors is an isomorphism on finite modules at degree zero. Therefore by Propositions \ref{FptoPp1} and \ref{FptoPp2} the functor $\Upsilon$ is an isomorphism for all $k$.

If (2) is true, then the $k=n$ case of the cup product isomorphism gives the defining property of the dualising module so $J$ is indeed the dualising module for the pair $(G,\famS)$. Now notice that
\begin{eqnarray*}
{\bf H}^{n-1}(G,\famS; \F_p\hotimes[] \Zpof{G}) & \iso & H^1(G, \Hom(\F_p\hotimes[] \Zpof{G}, J))\\
&\iso & H^1(G, \Hom(\Zpof{G}, \Hom(\F_p, J)))\\
&=& H^1(G, \coind^1_G(\F_p\otimes J)) = 0
\end{eqnarray*}
whence consideration of the short exact sequence
\[0\to \Zpof{G}\stackrel{p}{\longrightarrow} \Zpof{G}\to \F_p\hotimes[]\Zpof{G} \to 0  \]
and the isomorphism \eqref{HnZpG} shows that the map $J^\ast\to J^\ast$ given by multiplication by $p$ is an injection. Proposition \ref{TorFreeImpliesFree} implies that $J^\ast$ is actually a free \Z[p]-module. Furthermore we have
\[{\bf H}^{k}(G, \famS; \Zpof{G}) \iso {\bf H}^{n-k}(G, \Hom(\Zpof{G}, J)) = H^{n-k}(G, \coind^1_G(J)) = 0\]
for $k\neq n$. Furthermore the cup product isomorphisms immediately show $\cd_p(G,\famS)\leq n$. So $(G,\famS)$ is a D$^n$ pair as required.
\end{proof}

\begin{prop}\label{PDisduality3}
Suppose that $(G,\famS)$ has type \pFP$_\infty$ and $G$ has type \pFP[\infty]. Let $J\in \Gmod[p]{D}$ be a $G$-module isomorphic to $I_p$ as an abelian group and let \[j\colon H^m(G,\famS; J) \to I_p\] be a map. Then the following are equivalent:
\begin{enumerate}[(1)]
\item $(G,\famS)$ is a PD$^n$ pair at the prime $p$ with dualising module $J$ and fundamental coclass $j$.
\item For every $k$ the cup product map
\[\Upsilon \colon {\bf H}^k(G,C) \to {\bf H}^{m-k}(G, \famS; \bHom(C, J))^\ast  \] 
induced by the coclass
\[{\bf H}^n(G, \famS; C\bhotimes[] \bHom(C,J))\stackrel{\rm ev}{\longrightarrow} {\bf H}^n(G,\famS; J) \stackrel{j}{\longrightarrow} I_p \]
is an isomorphism of connected sequences of $\Gmod[p]{P}$-functors.   
\end{enumerate} 
\end{prop}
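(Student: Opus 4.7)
The plan is to deduce this from Theorem \ref{PDisduality2} by Pontryagin duality and graded commutativity of the cup product. A preliminary observation: since $J\cong I_p$ as abelian groups we have $J^\ast\cong \Z[p]$ as a $\Z[p]$-module, so $\bHom(-,J)$ is a co-continuous contravariant self-equivalence of $\Gmod[p]{P}$ with $\bHom(\bHom(C,J),J)\cong C$ naturally in $C$. In particular the requirement $D_p(G,\famS)\cong \Z[p]$ that distinguishes a Poincar\'e duality pair from a general duality pair is automatic from the hypothesis on $J$, so (1) becomes equivalent to the statement that $(G,\famS)$ is a $\mathrm{D}^n$ pair with dualising module $J$ and fundamental coclass $j$.

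The key step is to identify the cup product map of (2) with the Pontryagin dual of the cup product map of Theorem \ref{PDisduality2} applied with coefficients $\bHom(C,J)$. Substituting this coefficient in Theorem \ref{PDisduality2} and using the self-duality $\bHom(\bHom(C,J),J)\cong C$, one obtains the map
\[\Upsilon^{(2)}_{n-k}\colon {\bf H}^{n-k}(G,\famS;\bHom(C,J))\to {\bf H}^{k}(G,C)^\ast\]
adjoint to the pairing $\xi\otimes\zeta\mapsto j(\xi\smile\zeta)$, where the coefficient pairing is the evaluation $\bHom(C,J)\otimes C\to J$. The cup product map of (2) is adjoint to the analogous pairing with the two factors swapped, namely $C\otimes\bHom(C,J)\to J$. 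Graded commutativity of cup products identifies these two pairings up to the sign $(-1)^{k(n-k)}$ composed with the tensor swap, and the swap is compatible with the evaluations into $J$. Consequently
\[\Upsilon^{(3)}_k=\pm\,(\Upsilon^{(2)}_{n-k})^\ast\]
under the Pontryagin identification ${\bf H}^k(G,C)\cong {\bf H}^k(G,C)^{\ast\ast}$.

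Since Pontryagin duality is an exact self-equivalence on $\Gmod[p]{P}$ preserving the structure of connected sequences of continuous functors (as does $\bHom(-,J)$ by the opening observation), $\Upsilon^{(3)}_k$ is an isomorphism of $\Gmod[p]{P}$-functors for every $k$ if and only if $\Upsilon^{(2)}_{k'}$ is an isomorphism for every $k'$, the coefficient $\bHom(C,J)$ ranging over all of $\Gmod[p]{P}$ as $C$ does. By Theorem \ref{PDisduality2} this latter condition is equivalent to $(G,\famS)$ being a $\mathrm{D}^n$ pair with dualising module $J$ and fundamental coclass $j$. Together with the opening remark that the Poincar\'e condition is automatic under the hypothesis on $J$, this proves (1)$\Leftrightarrow$(2).

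The main obstacle is sign and compatibility bookkeeping. One must verify the graded commutativity of cup products at the cochain level in the profinite setting (a direct check using a chosen diagonal approximation on ${\rm tot}(P_\bullet\bhotimes P_\bullet)$), confirm that the resulting sign is compatible with the connecting-homomorphism sign conventions in Proposition \ref{leswithwedge} that make $\Upsilon^{(2)}$ and $\Upsilon^{(3)}$ into morphisms of connected sequences, and check that the natural isomorphism $\bHom(\bHom(C,J),J)\cong C$ intertwines the two evaluation pairings into $J$. These are formal manipulations once the setup is in place, but they require care.
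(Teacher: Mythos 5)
Your proposal is correct and follows essentially the same route as the paper: the paper also deduces the result from Theorem \ref{PDisduality2} via the ``Pontrjagin duality with $G$-action'' isomorphism $C\iso\bHom(\bHom(C,J),J)$, substituting $\bHom(C,J)$ for the coefficient module and dualising. The only difference is that you make explicit the swap-of-factors/graded-commutativity and sign bookkeeping that the paper leaves implicit; this is a reasonable thing to flag, and your identification of the two adjoint pairings is the content of that implicit step.
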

\begin{proof}
By Proposition \ref{FPforPDn}, $G$ also has type \pFP[\infty]. Since $J$ is isomorphic to $I_p$ as an abelian group, then a form of `Pontrjagin duality with $G$-action'---that is, 
\[C\iso \bHom(\bHom(C,J),J) \]
as $G$-modules, where Hom groups have diagonal actions---implies that this condition is equivalent to (2) of Proposition \ref{PDisduality2}.
\end{proof}

\begin{clly}
If $(G,\famS)$ is a PD$^n$ pair at the prime $p$ then
\[n=\cd_p(G,\famS)=\cd_p(G)+1 \]
\end{clly}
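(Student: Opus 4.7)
The plan is to establish the two equalities separately. That $\cd_p(G,\famS)=n$ is immediate from the definition: the inequality $\cd_p(G,\famS)\leq n$ is part of being a PD$^n$ pair, while $D_p(G,\famS)\neq 0$ is equivalent, via the representability property \eqref{DualisingModule}, to the top relative cohomology functor being non-zero, forcing $\cd_p(G,\famS)\geq n$. So the substance of the corollary is showing that $\cd_p(G)=n-1$.

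For the upper bound $\cd_p(G)\leq n-1$, I would feed the cup product duality isomorphism of Theorem \ref{PDisduality3} at $k=n$: for every $A\in\Gmod[p]{D}$ one has
\[ H^n(G,A) \iso {\bf H}^0(G,\famS;\bHom(A,J))^\ast. \]
But by the dimension shift built into the very definition of relative cohomology, $H^0(G,\famS;-)=H^{-1}(G;\Hom(\Delta,-))=0$, and likewise for the bold functor. Hence $H^n(G,A)=0$ for every discrete coefficient module, giving $\cd_p(G)\leq n-1$.

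For the lower bound $\cd_p(G)\geq n-1$, I would couple the relative long exact sequence \eqref{RelCohLES} with the standard profinite fact that cohomological dimension does not increase on passage to a closed subgroup (Proposition 14 of Chapter I of \cite{Serre13}). Since $\cd_p(G,\famS)=n$, choose $A\in\Gmod[p]{D}$ with $H^n(G,\famS;A)\neq 0$. The segment
\[ \cdots \to H^{n-1}(\famS,A) \to H^n(G,\famS;A) \to H^n(G,A) \to \cdots \]
together with the already-established vanishing $H^n(G,A)=0$ shows that $H^{n-1}(\famS,A)$ surjects onto $H^n(G,\famS;A)$, hence is non-zero. By Proposition \ref{FPforPDn} the family $\famS$ is a finite collection, so $H^{n-1}(\famS,A)=\bigoplus_x H^{n-1}(S_x,A)$, and at least one summand $H^{n-1}(S_x,A)$ is non-zero. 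Thus $\cd_p(S_x)\geq n-1$, and Serre's subgroup inequality $\cd_p(S_x)\leq \cd_p(G)$ yields $\cd_p(G)\geq n-1$, completing the proof.

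The main obstacle is really only a careful bookkeeping of the off-by-one shift between $H^k(G,\famS;-)$ and $\Ext_G^{k-1}(\Delta,-)$; once that is handled, both bounds fall out of machinery already in place, namely Theorem \ref{PDisduality3} for the upper bound and the long exact sequence plus Serre's subgroup inequality for the lower bound.
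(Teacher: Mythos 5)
Your proof is correct, and the substantive step---killing $H^n(G,A)$ for all $A\in\Gmod[p]{D}$ by evaluating the duality isomorphism of Proposition \ref{PDisduality3} at $k=n$ and using ${\bf H}^0(G,\famS;-)=\bExt_G^{-1}(\Delta,-)=0$---is exactly the argument the paper has in mind when it says the bound $\cd_p(G)\le n-1$ ``follows easily from the duality in the last proposition.'' The only divergence is your treatment of the lower bound $\cd_p(G)\ge n-1$, where you route through the long exact sequence \eqref{RelCohLES}, the finiteness of $\famS$ from Proposition \ref{FPforPDn}, and Serre's subgroup inequality $\cd_p(S_x)\le\cd_p(G)$. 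That chain is valid (and as a by-product shows some peripheral subgroup has $\cd_p\ge n-1$, consistent with Proposition \ref{PeripheralsOfPDPairs}), but it is unnecessary: the dimension shift in the definition already gives it in one line, since $0\neq H^n(G,\famS;A)=H^{n-1}(G,\Hom(\Delta,A))$ exhibits a discrete module with non-vanishing $(n-1)$-st cohomology of $G$. This is presumably why the paper regards everything except the upper bound as trivial.
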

\begin{proof}
The only non-trivial part is that $\cd_p(G)\leq n-1$, which follows easily from the duality in the last proposition.
\end{proof}

\begin{prop}\label{PeripheralsOfPDPairs}
Suppose that $(G,\famS)$ is a PD$^n$ pair at the prime $p$ with dualising module $J$ and fundamental coclass $j$. Assume that either $S$ is of type \pFP[\infty] for all $S_x\in\famS$ or that $G$ has property FIM. Then each subgroup $S_x\in\famS$ is a PD$^{n-1}$ group at the prime $p$ with dualising module $\res^G_{S_x}(J)$ and fundamental coclass given by 
\[\bdy j \colon H^{n-1}(S_x, J)\to H^{n-1}(\famS, J) \to H^n(G,\famS; J)\stackrel{j}{\longrightarrow} I_p\] 
\end{prop}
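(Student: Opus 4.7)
The plan is to apply Theorem \ref{PDisduality1} to each $S_x$ after establishing the required cup product isomorphisms via a five-lemma argument on the cup product ladder of Proposition \ref{LESRelCohCup2}.

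First I would establish that each $S_x$ has type \pFP[\infty]. By Proposition \ref{FPforPDn}, $\famS$ is a finite family and each $\Zpof{G/S_x} = \ind_G^{S_x}(\Z[p])$ has type FP$_\infty$ as a $G$-module. Under the FIM hypothesis, Proposition \ref{IndAndFPMods2} then forces $\Z[p]$ to have type FP$_\infty$ over $S_x$; the alternative hypothesis provides this directly. Observe also that $\res^G_{S_x}(J) \iso I_p$ as an abelian group, since as the dualising module of a PD$^n$ pair one has $J^\ast \iso \Z[p]$, hence $J \iso I_p$.

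Second, for $C \in \Gmod[p]{F}$, let $A = \bHom(C, J)$ and take the pairing $\fkB\colon C \otimes A \to J$ to be evaluation. Applying Proposition \ref{LESRelCohCup2} with coclass $j$ yields a commuting ladder of long exact sequences. In this ladder, the vertical cup product $H^k(G, C) \to H^{n-k}(G, \famS; A)^\ast$ is the isomorphism of Proposition \ref{PDisduality3} (since $(G,\famS)$ is PD$^n$), and the vertical map $H^{k+1}(G, \famS; C) \to H^{n-k-1}(G, A)^\ast$ is the isomorphism of Proposition \ref{PDisduality2} (since PD$^n$ implies D$^n$); the same holds after shifting $k$ by $\pm 1$. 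The five-lemma then shows that the middle vertical map
\[ H^k(\famS, C) \longrightarrow H^{n-1-k}(\famS, A)^\ast \]
is an isomorphism for every $k$ and every $C \in \Gmod[p]{F}$.

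Third, since $\famS$ is finite, Shapiro's lemma gives $H^\bullet(\famS, -) = \bigoplus_x H^\bullet(S_x, -)$, and Proposition \ref{ShapiroAndCupProd} ensures that the cup product on $\famS$ splits correspondingly, with coclass on each $S_x$ summand equal to $\bdy j$ as in the statement. Consequently the cup product
\[ H^k(S_x, C) \longrightarrow H^{n-1-k}(S_x, \bHom(C, \res^G_{S_x}(J)))^\ast \]
is an isomorphism for every $k$ and every finite $C$. Theorem \ref{PDisduality1} then identifies $S_x$ as a D$^{n-1}$ group at $p$ with dualising module $\res^G_{S_x}(J)$ and fundamental coclass $\bdy j$. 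Because $\res^G_{S_x}(J)^\ast \iso J^\ast \iso \Z[p]$ as abelian groups, $S_x$ is in fact a PD$^{n-1}$ group. The most delicate point is matching coclasses through the Shapiro isomorphism and the family decomposition, but this bookkeeping is essentially encoded in Proposition \ref{ShapiroAndCupProd} together with the naturality clause of Proposition \ref{LESRelCohCup2}.
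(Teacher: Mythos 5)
Your proposal is correct and follows essentially the same route as the paper, whose proof is precisely a citation of Theorems \ref{PDisduality1}, \ref{PDisduality2} and Proposition \ref{PDisduality3} together with Proposition \ref{LESRelCohCup2} and the Five Lemma, with the FP$_\infty$ hypotheses invoked (via Proposition \ref{FPforPDn} in the FIM case) to ensure each $S_x$ has type $p$-FP$_\infty$. You have simply filled in the details of that argument, including the coclass bookkeeping through Proposition \ref{ShapiroAndCupProd}, correctly.
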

\begin{proof}
This follows immediately from Theorems \ref{PDisduality1} and \ref{PDisduality2} and Proposition \ref{PDisduality3} combined with Proposition \ref{LESRelCohCup2} and the Five Lemma. The assumptions in the second sentence of the statement are needed to guarantee that each $S_x$ has type \pFP[\infty]---when $G$ has property FIM this follows from Proposition \ref{FPforPDn}.
\end{proof}
\begin{prop}\label{FISubOfPD}
Let $(G,\famS)$ be a pair of type \pFP[\infty]\ and $\cd_p(G,\famS)=n$. Then for any open subgroup $U$ of $G$, the pair $(U,\famS^U)$ is of type \pFP[\infty]\ and
\[I_p(U,\famS^U) = \res^G_U I_p(G,\famS) \] 
\end{prop}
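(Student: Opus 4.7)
The plan is to verify that $\res^G_U I_p(G,\famS)$ satisfies the defining property of the dualising module for the pair $(U,\famS^U)$, and then invoke the uniqueness clause of the representability statement at \eqref{DualisingModule}.

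First I would dispose of the FP$_\infty$ and dimension bookkeeping. Corollary \ref{FPnAndFISubGps} says that $\Delta_{U,\famS^U}$ has type FP$_\infty$ as a $U$-module precisely because $\Delta_{G,\famS}$ has type FP$_\infty$ as a $G$-module; this is the ``$(U,\famS^U)$ is of type \pFP[\infty]'' half of the statement. Moreover, the cohomological dimension proposition in the previous subsection gives $\cd_p(U,\famS^U)\leq \cd_p(G,\famS)=n$, so the cohomology in degree $n$ is the top-dimensional (potentially nonzero) cohomology for $(U,\famS^U)$ as well, and the dualising module $I_p(U,\famS^U)$ is a bona fide object representing $H^n(U,\famS^U;-)^\ast$ on \Gmod[p]{F}$(U)$ (once we know this representable functor is nonzero, which will drop out below).

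Next, for any finite $M\in\Gmod[p]{F}(U)$, the relative Shapiro lemma gives a natural isomorphism
\[H^n(U,\famS^U;M)\iso H^n(G,\famS;\coind^U_G(M)),\]
and since $U$ is open in $G$, Proposition \ref{IndEqualsCoind} identifies $\coind^U_G(M)$ with $\ind^U_G(M)$, which is a finite $p$-primary $G$-module. The defining property of $I_p(G,\famS)$ applied to this finite module gives
\[H^n(G,\famS;\ind^U_G(M))^\ast\iso \Hom_G\bigl(\ind^U_G(M),\,I_p(G,\famS)\bigr),\]
and the standard Frobenius adjunction (which one verifies directly at the finite level from the definition of $\ind$ via $\Zpof{G}\hotimes[U]M$) reads
\[\Hom_G\bigl(\ind^U_G(M),\,I_p(G,\famS)\bigr)\iso \Hom_U\bigl(M,\,\res^G_U I_p(G,\famS)\bigr).\]
Concatenating these natural isomorphisms gives, for every $M\in\Gmod[p]{F}(U)$,
\[H^n(U,\famS^U;M)^\ast\iso \Hom_U\bigl(M,\,\res^G_U I_p(G,\famS)\bigr),\]
which is exactly the representability property characterising the dualising module of $(U,\famS^U)$. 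By uniqueness of $(I_p(U,\famS^U),\text{coclass})$ up to unique isomorphism, one concludes $I_p(U,\famS^U)=\res^G_U I_p(G,\famS)$, with the fundamental coclass of the subgroup pair being the image of that of $(G,\famS)$ under the restriction$\circ$Shapiro chain. Nonvanishing of $\res^G_U I_p(G,\famS)$ (the underlying abelian group is unchanged) simultaneously confirms $\cd_p(U,\famS^U)=n$.

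The main obstacle is the middle step: one must be a little careful that the Frobenius-type adjunction is genuinely compatible with the cup-product/Ext incarnation of the dualising property, i.e.\ that the natural isomorphism furnished by Shapiro composed with the adjunction really is induced by a map of coclasses and not merely an abstract abelian-group isomorphism. All the required naturality is assembled in Propositions \ref{GrpPairNatural}, \ref{IndEqualsCoind}, \ref{HomIndAndCoind} and the relative Shapiro lemma, so this is bookkeeping rather than a genuinely new argument; but it is the only place where one has to pay attention to the $G$-action coming from the right regular representation and the action on $\res^G_U I_p(G,\famS)$ agreeing.
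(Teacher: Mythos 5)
Your proof is correct and follows essentially the same route as the paper's: Corollary \ref{FPnAndFISubGps} for the \pFP[\infty] claim, then the chain (relative Shapiro) $\to$ (representability of $H^n(G,\famS;-)^\ast$) $\to$ ($\ind^U_G=\coind^U_G$ for open $U$) $\to$ (Frobenius adjunction via Corollary \ref{HomIndAndCoind} and Proposition \ref{preShapiro}), concluding by uniqueness of the dualising module. The only differences are cosmetic (the order of the middle two steps), and your observation that nonvanishing of $\res^G_U I_p(G,\famS)$ forces $\cd_p(U,\famS^U)=n$ makes explicit a point the paper leaves implicit.
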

\begin{proof}
The first statement is Corollary \ref{FPnAndFISubGps}. Now let $M\in \mathfrak{F}_p(U)$. Recall that by Proposition \ref{IndEqualsCoind}, since $U$ is open in $G$ the induced and coinduced modules on $M$ agree. Therefore we have, using the relative Shapiro Lemma, 
\begin{eqnarray*}
H^n(U,\famS^U; M)^\ast & \iso & H^n(G,\famS; \coind^U_G(M))^\ast\\
& \iso & \Hom_G(\coind^U_G(M), I_p(G,\famS))\\
& \iso & \Hom_G(\ind^U_G(M), I_p(G,\famS))\\
& \iso & \Hom_U(M, I_p(G,\famS) )
\end{eqnarray*}
where the last isomorphism follows from Corollary \ref{HomIndAndCoind} and Proposition \ref{preShapiro}. Hence the result. 
\end{proof}
\begin{prop}
Let $(G,\famS)$ be a profinite group pair with $G$ a $p$-torsion-free profinite group and let $U$ be an open subgroup of $G$. Then $(G,\famS)$ is a PD$^n$ pair at the prime $p$ if and only if $(U,\famS^U)$ is a PD$^n$ pair at the prime $p$, and their dualising modules agree as $U$-modules.
\end{prop}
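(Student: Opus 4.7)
The proof reduces to one $G$-module identity and one application of Serre's theorem. For $U$ open in $G$ there is a natural $G$-module isomorphism
\[\bind^U_G(\Zpof{U})\iso \Zpof{G}, \quad x\otimes u\mapsto xu,\]
with inverse $g\mapsto g\otimes 1$, well-defined since $x\otimes u = xu\otimes 1$ in the relative tensor product. Using $\bind=\bcoind$ on open subgroups (Proposition \ref{IndEqualsCoind}) and the equivalence of type \pFP[\infty]\ for $(G,\famS)$ and $(U,\famS^U)$ (Corollary \ref{FPnAndFISubGps}), the bold version of the relative Shapiro lemma delivers, whenever either pair has type \pFP[\infty], an isomorphism
\[{\bf H}^k(U,\famS^U;\Zpof{U})\iso {\bf H}^k(G,\famS;\Zpof{G}) \qquad (\ast)\]
for every $k\geq 0$.

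For the forward implication, $(G,\famS)$ being PD$^n$ at $p$ propagates type \pFP[\infty]\ to $(U,\famS^U)$ via Corollary \ref{FPnAndFISubGps}, and $\cd_p(U,\famS^U)\leq n$ follows from subgroup monotonicity of relative cohomological dimension. The isomorphism $(\ast)$ transfers the concentration of ${\bf H}^\bullet(\cdot\,;\Zpof{\cdot})$ in degree $n$ and the identification of the compact dualising module with $\Z[p]$. Proposition \ref{FISubOfPD} then supplies $I_p(U,\famS^U) = \res^G_U I_p(G,\famS)$, so the dualising modules agree as $U$-modules.

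For the converse, Corollary \ref{FPnAndFISubGps} again supplies type \pFP[\infty]\ for $(G,\famS)$, and $(\ast)$ provides the required behaviour of ${\bf H}^\bullet(G,\famS;\Zpof{G})$. The substantive step is the bound $\cd_p(G,\famS)\leq n$ for all discrete $p$-primary coefficient modules, which does not follow formally from $(\ast)$. Here the hypothesis that $G$ is $p$-torsion-free enters: the corollary following Proposition \ref{PDisduality3} yields $\cd_p(U) = n-1$; Serre's theorem on open subgroups of $p$-torsion-free profinite groups (see \cite{Serre13}) gives $\cd_p(G) = \cd_p(U) = n-1$; and the standing inequality $\cd_p(G,\famS)\leq \cd_p(G)+1$ forces $\cd_p(G,\famS)\leq n$. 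Together with $\cd_p(G,\famS)\geq \cd_p(U,\famS^U) = n$ from subgroup monotonicity this gives $\cd_p(G,\famS) = n$, and Proposition \ref{FISubOfPD} identifies the dualising modules.

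The single real obstacle is this cohomological dimension bound in the converse direction; it is the unique place where $p$-torsion-freeness is required, and is precisely the ingredient that fails without Serre's theorem. Every other step is a formal consequence of $(\ast)$, the inheritance of finiteness from Corollary \ref{FPnAndFISubGps}, and Proposition \ref{FISubOfPD}.
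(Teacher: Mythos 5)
Your proof is correct and follows essentially the same route as the paper's: the same identification $\ind^U_G(\Zpof{U})\iso\Zpof{G}$ combined with $\ind=\bcoind$ and the relative Shapiro lemma to transfer ${\bf H}^\bullet(\,\cdot\,;\Zpof{\cdot})$, the same use of Corollary \ref{FPnAndFISubGps} for type \pFP[\infty], and the same chain $\cd_p(G,\famS)\geq\cd_p(U,\famS^U)=\cd_p(U)+1=\cd_p(G)+1\geq\cd_p(G,\famS)$ via Serre's theorem for the converse. You correctly isolate the cohomological-dimension bound as the only place where $p$-torsion-freeness is needed, exactly as in the paper.
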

\begin{proof}
The fact that $(G,\famS)$ has type \pFP[\infty] if and only if $(U,\famS^U)$ has type \pFP[\infty] is Proposition \ref{FPnAndFISubGps}. Next we prove that cohomological dimension $n$ if and only if $(U,\famS^U)$ does. Given Proposition \ref{PDisduality2}, if $(U,\famS^U)$ is a PD$^n$ pair at the prime $p$ then 
\[\cd_p(G,\famS)\geq \cd_p(U,\famS^U)=\cd_p(U)+1= \cd_p(G)+1 \geq\cd_p(G,\famS)  \]
where the equality $\cd_p(U)=\cd_p(G)$ derives from Serre's Theorem (Proposition $14'$ of \cite{Serre13}). If $(G,\famS)$ is a PD$^n$ pair at the prime $p$ then $\cd_p(U,\famS^U)=n$ by Lemma \ref{CorisSurj}. 

Finally we have isomorphisms
\[{\bf H}^k(G,\famS; \Zpof{G}) = {\bf H}^k(G,\famS; \ind^U_G(\Zpof{U})) = {\bf H}^k(G,\famS; \bcoind^U_G(\Zpof{U}))={\bf H}^k(U,\famS^U; \Zpof{U}) \]
using Proposition \ref{IndEqualsCoind} and the relative Shapiro Lemma. This concludes the proof.
\end{proof}

\begin{lem}\label{CorisSurj}
If $(G,\famS)$ is a profinite group pair of cohomological dimension $n$ and $U$ is an open subgroup of $G$ then for any $A\in\Gmod[p]{D}$ the corestriction map
\[\cor^U_G\colon H^n(U,\famS^U; A)\to H^n(G,\famS; A)\]
is a surjection. In particular $\cd_p(U,\famS^U)=\cd_p(G,\famS)$.
\end{lem}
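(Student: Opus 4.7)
The plan is to exhibit $\cor^U_G$ as the connecting piece of a long exact sequence whose next term vanishes by the cohomological dimension hypothesis. Concretely, the definition of corestriction factors $\cor^U_G$ (via the relative Shapiro lemma) through the trace map
\[\tau\colon \coind^U_G(A)=\Hom_U(\Zpof{G},A)\longrightarrow A,\qquad f\mapsto \sum_i g_i f(g_i^{-1}),\]
where $\{g_i\}$ is a system of coset representatives for $G/U$. The first step is to check that $\tau$ is surjective. Given $a\in A$, define $f\in\Hom_U(\Zpof{G},A)$ on $G$ by $f(u)=u\cdot a$ for $u\in U$ and $f(g)=0$ for $g$ outside $U$, extended continuously and linearly (which is possible because $U$ is open and the cosets are clopen). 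Taking $g_1=1$ as the representative of the trivial coset, exactly one $g_i^{-1}$ lies in $U$ (namely $g_1^{-1}=1$), and all other terms vanish, giving $\tau(f)=a$.

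Next I would form the short exact sequence of discrete $G$-modules
\[0\longrightarrow K\longrightarrow \coind^U_G(A)\stackrel{\tau}{\longrightarrow} A\longrightarrow 0\]
and apply the long exact sequence for $H^\bullet(G,\famS;-)$. The hypothesis $\cd_p(G,\famS)=n$ gives $H^{n+1}(G,\famS;K)=0$, so the induced map $H^n(G,\famS;\coind^U_G(A))\to H^n(G,\famS;A)$ is surjective. The relative Shapiro lemma identifies the left-hand group with $H^n(U,\famS^U;A)$, and by the very definition of corestriction this surjection is exactly $\cor^U_G$, proving the main assertion.

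For the consequence $\cd_p(U,\famS^U)=\cd_p(G,\famS)$, the inequality $\cd_p(U,\famS^U)\leq\cd_p(G,\famS)$ is already established in Section 2.3 for arbitrary closed subgroups (as an immediate consequence of the relative Shapiro lemma). Conversely, if $\cd_p(G,\famS)=n$ then some $A\in\Gmod[p]{D}$ satisfies $H^n(G,\famS;A)\neq 0$; the surjectivity just proved forces $H^n(U,\famS^U;A)\neq 0$, whence $\cd_p(U,\famS^U)\geq n$.

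The only mildly delicate point is the explicit construction of $f$ witnessing surjectivity of $\tau$, where one has to ensure continuity and $U$-equivariance of $f$ and correctly keep track of the coset representative conventions on the two sides of $\coind^U_G$; everything else is a direct application of the long exact sequence and of constructions already set up in the paper.
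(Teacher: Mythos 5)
Your proof is correct and follows essentially the same route as the paper: realise $\cor^U_G$ as the map induced on $H^n(G,\famS;-)$ by the trace $\Hom_U(\Zpof{G},A)\to A$, fit that trace into a short exact sequence of coefficient modules, and kill the obstruction term $H^{n+1}(G,\famS;\ker)$ using $\cd_p(G,\famS)=n$. The only difference is that you explicitly verify surjectivity of the trace map (correctly), whereas the paper simply asserts it; the deduction of $\cd_p(U,\famS^U)=\cd_p(G,\famS)$ is likewise identical.
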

\begin{proof}
Consider the map 
\[\Sigma\colon\Hom_{U}(\Zpof{G}, A)\to A\]
which defines the corestriction, which is a surjection. The short exact sequence 
\[0\to \ker(\Sigma)\to \Hom_{U}(\Zpof{G}, A)\to A\to 0  \]
gives an exact sequence
\[ H^{n}(U, \famS^{U}; A)\stackrel{\cor}{\longrightarrow} H^{n}(G, \famS; A) \to H^{n+1}(G, \famS; \ker(\Sigma)) = 0\]
so that the corestriction map is a surjection. This shows that $\cd_p(G,\famS)\leq\cd_p(U,\famS^U)$; the other inequality follows from the relative Shapiro Lemma.
\end{proof}

\begin{prop}\label{InfIndexOfPD}
Suppose that $(G,\famS)$ is a PD$^n$ pair at $p$ and that $H$ is a closed subgroup of $G$ with $p^\infty \mid [G:H]$ (see Section I.1.3 of \cite{Serre13} for the definition of this index). Then 
\[\cd_p(H,\famS^H)<\cd_p(G,\famS) \]
\end{prop}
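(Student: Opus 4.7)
The plan is to reduce the statement to showing that $H^n(K,\famS^K;\F_p)=0$ for every closed subgroup $K$ of $G$ with $p^\infty\mid[G:K]$. Granting this for every closed $K\subseteq H$ (each of which still satisfies $p^\infty\mid[G:K]$), Proposition \ref{cdintermsofFp} applied to the pair $(H,\famS^H)$ then immediately yields $\cd_p(H,\famS^H)\leq n-1<n=\cd_p(G,\famS)$.

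First I would replace $K$ by a subgroup on which the orientation character vanishes mod $p$. Let $\chi\colon G\to\Aut(\Z[p])$ be the orientation character of $(G,\famS)$, and set $V=\ker(\chi\bmod p)$. Since $\chi\bmod p$ lands in $\F_p^\times$, the index $[G:V]$ divides $p-1$ and is in particular coprime to $p$ (with $V=G$ when $p=2$). Then $K\cap V$ is open in $K$ of index coprime to $p$ and still satisfies $p^\infty\mid[G:K\cap V]$. By Proposition \ref{corres}, on the $\F_p$-vector space $H^n(K,\famS^K;\F_p)$ the composite $\cor\circ\res$ is multiplication by the unit $[K:K\cap V]$, so restriction to $K\cap V$ is injective. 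Hence we may assume $K\subseteq V$ from the outset.

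Next, write $K=\bigcap_i U_i$ as an intersection of a descending chain of open subgroups of $G$; replacing each $U_i$ by $U_i\cap V$ (still open, still descending to $K$ since $K\subseteq V$) we may assume every $U_i$ lies in $V$. By Proposition \ref{IntersectionsOfSubGps},
\[H^n(K,\famS^K;\F_p)\iso\varinjlim_i H^n(U_i,\famS^{U_i};\F_p).\]
For each $i$, Proposition \ref{FISubOfPD} gives $I_p(U_i,\famS^{U_i})=\res^G_{U_i}I_p(G,\famS)$, and the PD$^n$ hypothesis says $D_p(G,\famS)\cong\Z[p]$, hence $I_p(G,\famS)\cong I_p$ as an abelian group, with $G$-action through the character dual to $\chi$. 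Applying the defining property of the dualising module with $M=\F_p$ yields
\[H^n(U_i,\famS^{U_i};\F_p)^\ast\iso\Hom_{U_i}(\F_p,\res^G_{U_i}I_p(G,\famS))\iso\F_p,\]
the second isomorphism following because the $p$-torsion subgroup of $I_p$ is $\F_p$ and $U_i\subseteq V$ acts trivially on it.

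Finally, for each $i$ Proposition \ref{corres} makes $\cor^{U_{i+1}}_{U_i}\circ\res^{U_i}_{U_{i+1}}$ equal to multiplication by $[U_i:U_{i+1}]$ on $H^n(U_i,\famS^{U_i};\F_p)\cong\F_p$. Since $\cor^{U_{i+1}}_{U_i}\colon\F_p\to\F_p$ is surjective by Lemma \ref{CorisSurj}, it is an isomorphism, so $\res^{U_i}_{U_{i+1}}$ itself is multiplication by $[U_i:U_{i+1}]\bmod p$. The hypothesis $p^\infty\mid[G:K]$ means $v_p([G:U_i])\to\infty$; summing the telescoping identity $v_p([G:U_{i+1}])=v_p([G:U_i])+v_p([U_i:U_{i+1}])$ forces $v_p([U_i:U_{i+1}])\geq 1$ for infinitely many $i$, and after passing to a cofinal subsequence we may assume $p\mid[U_i:U_{i+1}]$ for every $i$. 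Then every transition map in the direct system is zero, so the direct limit vanishes. The main obstacle is the dualising-module analysis of paragraph three --- once one knows that $H^n(U_i,\famS^{U_i};\F_p)$ is one-dimensional over $\F_p$ (using both $D_p(G,\famS)\cong\Z[p]$ and the reduction $U_i\subseteq V$), the corestriction-restriction argument bootstraps the classical Serre-style argument to the relative setting.
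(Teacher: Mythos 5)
Your proof is correct and follows essentially the same route as the paper's: reduce via Proposition \ref{cdintermsofFp} to killing $H^n(-,\famS^-;\F_p)$ for closed subgroups of infinite $p$-index, express this as a direct limit over open subgroups via Proposition \ref{IntersectionsOfSubGps}, and show the transition maps vanish because $\cor\circ\res$ is multiplication by an index divisible by $p$ while $\cor$ is an isomorphism by Lemma \ref{CorisSurj} and Proposition \ref{FISubOfPD}. The only difference is your preliminary passage to $V=\ker(\chi\bmod p)$ to force $H^n(U_i,\famS^{U_i};\F_p)\cong\F_p$ exactly; the paper skips this by observing that each such group is a subgroup of $\F_p$, so it is either zero (and the restriction trivially vanishes) or equal to $\F_p$, after which the same corestriction argument applies.
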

\begin{proof}
Let $n=\cd_p(G,\famS)$. By Proposition \ref{cdintermsofFp} it suffices to show that for any $H$ with $p^\infty \mid [G:H]$ we have $H^{n}(H,\famS^H; \F_p)=0$. Since $p^\infty \mid [G:H]$ we may find a descending chain $\{U_i\}_{i\geq 0}$ of open subgroups of $G$ with $p\mid [U_i, U_{i+1}]$ for all $i$. Since by Proposition \ref{IntersectionsOfSubGps} we have
\[H^{n}(H,\famS^H; \F_p) \iso \varinjlim H^{n}(U_i, \famS^{U_i}; \F_p) \]
it suffices to show that each restriction map
\[ H^{n}(U_i, \famS^{U_i}; \F_p) \to H^{n}(U_{i+1}, \famS^{U_{i+1}}; \F_p)\]
vanishes for each $i$. Since for all open subgroups $U$ of $G$ we have that $(U,\famS^U)$ are PD$^n$ pairs at the prime $p$ by Proposition \ref{FISubOfPD}, both the domain and codomain of the corestriction are isomorphic to subgroups of $\Hom(\F_p, \Q_p/\Z[p])\iso\F_p$, hence either one of them is zero (so that the restriction map vanishes) or both are isomorphic to $\F_p$. In this last case consider the corestriction map $\cor^{U_{i+1}}_{U_i}$ in the other direction, which is a surjection by Lemma \ref{CorisSurj}. Hence the corestriction is an isomorphism. 

Finally recall that the map 
\[\cor^{U_{i+1}}_{U_i}\circ \res^{U_{i}}_{U_{i+1}} \colon H^{n}(U_i, \famS^{U_i})\to H^{n}(U_i, \famS^{U_i})\]
is multiplication by $[U_i:U_{i+1}]$. Here since $p\mid [U_i:U_{i+1}]$ this map is zero. Since the corestriction is an isomorphism, the restriction map is zero as required.
\end{proof}
\begin{clly}\label{FixedVertex}
Let $(G,\famS)$ be a pro-$\pi$ group pair which is a PD$^n$ pair at every prime $p\in \pi$. Suppose that $G$ acts on a pro-$\pi$ tree $T$. Suppose that for every edge $e$ of $T$ we have $\cd_p(G_e, \famS^{G_e})<n-1$ for all $p\in\pi$, where $G_e$ denotes the stabiliser of $e$. Then $G$ fixes a vertex of $T$.
\end{clly}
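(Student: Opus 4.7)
The proof is by contradiction: assume $G$ has no fixed vertex on $T$, so every vertex stabiliser $G_v$ is a proper closed subgroup of $G$. The strategy is to run the long exact sequence used to prove Proposition \ref{TreeActionHD} for each prime $p \in \pi$, then combine the resulting dimensional information with Proposition \ref{InfIndexOfPD} and the PD$^n$ structure to produce the contradiction.

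Fix $p \in \pi$ and apply $\Tor^G_\bullet(-, \Delta \hotimes \F_p)$ to the defining exact sequence of the pro-$\pi$ tree. After the Shapiro identifications of Proposition \ref{Shapiro2} and the identification $\Delta_{G,\famS}|_{G_v} \iso \Delta_{G_v, \famS^{G_v}}$ of Proposition \ref{SubgpPairDelta}, this gives a long exact sequence containing the fragment
\[ \bigboxplus_{\bar e \in E(T)/G} H_n(G_e, \famS^{G_e}; \F_p) \to \bigboxplus_{\bar v \in V(T)/G} H_n(G_v, \famS^{G_v}; \F_p) \to H_n(G, \famS; \F_p) \to \bigboxplus_{\bar e} H_{n-1}(G_e, \famS^{G_e}; \F_p). \]
The hypothesis $\cd_p(G_e, \famS^{G_e}) < n-1$ makes both edge-stabiliser terms shown vanish, yielding an isomorphism
\[ \bigboxplus_{\bar v \in V(T)/G} H_n(G_v, \famS^{G_v}; \F_p) \;\stackrel{\iso}{\longrightarrow}\; H_n(G, \famS; \F_p). \]
By the PD$^n$ hypothesis at $p$ together with Pontrjagin duality, $H_n(G, \famS; \F_p) \iso H^n(G, \famS; \F_p)^\ast \neq 0$, so Lemma \ref{LemSheafOfTrivs} produces a vertex $v$ with $H_n(G_v, \famS^{G_v}; \F_p) \neq 0$, and hence $\cd_p(G_v, \famS^{G_v}) = n$. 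Proposition \ref{InfIndexOfPD} then forces $p^\infty \nmid [G:G_v]$.

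The main obstacle is upgrading this prime-by-prime information into the assertion that some $G_v$ equals $G$. My plan is to run the argument above simultaneously for every $p \in \pi$ and to exploit compactness of the profinite quotient $V(T)/G$ (together with the naturality of the isomorphism in the coefficient module, combined with the reduction statements of Proposition \ref{cdintermsofFp}) to locate a single vertex $v$ whose stabiliser achieves $\cd_p(G_v, \famS^{G_v}) = n$ for \emph{every} $p \in \pi$. For such a common $v$, Proposition \ref{InfIndexOfPD} applied at each prime shows that $[G:G_v]$ has finite $p$-part for every $p \in \pi$, so $[G:G_v]$ is a finite integer and $G_v$ is open in $G$. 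Finally, Proposition \ref{FISubOfPD} upgrades $(G_v, \famS^{G_v})$ to a PD$^n$ pair with the restricted dualising module; applying the Mayer-Vietoris sequences of Theorems \ref{MVwithExcision} and \ref{HNNMVwithExcision} to the finite graph-of-groups decomposition of $G_v$ arising from its action on $T$ (whose quotient is finite because $G_v$ is open) then produces a splitting of $G_v$ with edge groups of too small dimension, contradicting Proposition \ref{PeripheralsOfPDPairs} applied to the induced PD$^n$ structure on $G_v$.
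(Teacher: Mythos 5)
Your opening move --- the long exact sequence of the tree action with the edge terms killed by hypothesis, combined with $\cd_p(G,\famS)=n$ and Proposition \ref{InfIndexOfPD} --- is sound as far as it goes, but it only yields, for each prime $p$ separately, \emph{some} vertex $v_p$ with $p^\infty\nmid[G:G_{v_p}]$. The two steps you then need both fail as described. First, no argument is given that a single vertex works for every $p$: the set of $\bar v\in V(T)/G$ with $\cd_p(G_v,\famS^{G_v})=n$ is not obviously closed (or open), so compactness of $V(T)/G$ does not produce a common vertex, and Proposition \ref{cdintermsofFp} concerns the subgroups of one fixed pair rather than a family of vertex stabilisers. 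Second, even granting an open stabiliser $G_v$, your endgame does not close: $G_v$ manifestly \emph{fixes} the vertex $v$, so its action on $T$ carries no contradiction; the quotient $T/G_v$ need not be finite (nothing forces $T/G$ to be finite, and openness of $G_v$ only gives finite fibres of $T/G_v\to T/G$); and Theorems \ref{MVwithExcision} and \ref{HNNMVwithExcision} apply to proper finite graphs of groups, which an arbitrary action on a pro-$\pi$ tree does not supply. In any case ``$G$ has an open vertex stabiliser'' is strictly weaker than ``$G$ fixes a vertex'', so even a repaired version of this route would stop short of the conclusion.

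The missing ingredient is structural, not cohomological. The paper first passes to a minimal invariant subtree and factors out the kernel of the action, and then invokes Theorem 4.2.10 of \cite{Ribes17}: if $G$ acts irreducibly and faithfully on a pro-$\pi$ tree without fixing a vertex, then either $G$ contains a nonabelian free pro-$p$ subgroup acting freely, or $G$ has a normal subgroup isomorphic to $\Z[p]$ acting freely, for some $p\in\pi$. Either way one obtains a \emph{single} prime $p$ with $p^\infty\mid[G:G_x]$ for \emph{every} $x\in T$; Proposition \ref{InfIndexOfPD} then gives $\cd_p(G_v,\famS^{G_v})\leq n-1$ for all vertices, and together with the hypothesis on edge stabilisers Proposition \ref{TreeActionHD} forces $\hd_p(G,\famS)\leq n-1$, contradicting $\cd_p(G,\famS)=n$. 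So the quantifiers run the other way from your attempt: one prime, all vertices. (A further small caution: for a pro-$\pi$ group that is not pro-$p$, the trivial module $\F_p$ need not detect $\cd_p(G,\famS)=n$, since $G$ may act nontrivially on the $p$-torsion of the dualising module; one should test against an arbitrary finite $p$-primary module rather than $\F_p$.)
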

\begin{proof}
By Proposition 2.4.12 of \cite{Ribes17} one may pass to a minimal invariant subtree of $T$ so that $G$ acts irreducibly. We aim to prove that there exists $p\in\pi$ such that $p^\infty\mid [G:G_x]$ for all $x\in T$, for then the result now follows from Propositions \ref{TreeActionHD} and \ref{InfIndexOfPD}.

By factoring out the kernel of the action one may assume that $G$ acts faithfully on $T$. Then by Theorem 4.2.10 of \cite{Ribes17} if $G$ does not fix a vertex then either $G$ admits a non-abelian free pro-$p$ subgroup acting freely on $T$---which forces $p^\infty\mid [G:G_x]$ for all $x\in T$---or for some $p\in\pi$, there is an abelian normal subgroup of $G$ isomorphic to $\Z[p]$. By Lemma 4.2.6(c) of \cite{Ribes17} this subgroup acts freely so again $p^\infty\mid [G:G_x]$ for all $x\in T$ as required.
\end{proof}

Finally we conclude with the result that for pro-$p$ groups the property of being a PD$^n$ pair is determined by the behaviour of the cohomology with coefficients in $\F_p$. Recall that for pro-$p$ group pairs having type \pFP[\infty] is equivalent to having finite cohomology by Proposition \ref{FPnForVirtProP}.
\begin{theorem}
Let $(G,\famS)$ be a pro-$p$ group pair with $\cd_p(G,\famS)=n$. Then the following are equivalent. 
\begin{enumerate}[(1)]
\item $(G,\famS)$ is a PD$^n$ pair at the prime $p$.
\item $H^k(G,\famS; \F_p)$ is finite for all $k$, $\dim_{\F_p}(H^n(G,\famS;\F_p)) =1$ and the pairing 
\[H^k(G,\famS;\F_p) \otimes H^{n-k}(G, \F_p)\to H^n(G,\famS;\F_p) \]
induced by the cup product (with respect to the multiplication pairing $\F_p\otimes\F_p\to \F_p$) is non-degenerate for all $1\leq k\leq n$.
\end{enumerate} 
\end{theorem}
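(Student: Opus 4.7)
\emph{Plan.} The forward direction $(1)\Rightarrow (2)$ is an application of Theorem \ref{PDisduality2} to $C=\F_p$ with $J=I_p(G,\famS)$. Because $G$ is pro-$p$ and $\mathrm{Aut}(\F_p)=\F_p^{\times}$ has order coprime to $p$, the $G$-action on the $p$-torsion $J[p]$ is trivial, so $\bHom(\F_p,J)\cong\F_p$ as $G$-modules. Theorem \ref{PDisduality2} yields the cup product isomorphism $\Upsilon\colon H^k(G,\famS;\F_p)\xrightarrow{\sim} H^{n-k}(G;\F_p)^{\ast}$, which is the adjoint of the pairing in (2); bijectivity between finite-dimensional $\F_p$-vector spaces forces the underlying bilinear pairing to be non-degenerate. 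Finiteness of $H^k(G,\famS;\F_p)$ and $\dim_{\F_p}H^n(G,\famS;\F_p)=1$ follow from Propositions \ref{FPforPDn} and \ref{FPnForVirtProP} together with $\Hom_G(\F_p,J)=J[p]\cong\F_p$.

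\emph{For $(2)\Rightarrow (1)$, begin with finiteness.} Finiteness of $\Ext^k_G(\Delta,\F_p)\cong H^{k+1}(G,\famS;\F_p)$ for all $k$ together with Proposition \ref{FPnForVirtProP} gives $(G,\famS)$ of type \pFP[\infty]. Since $H^0(G,\famS;\F_p)=0$ by the dimension shift, the long exact sequence
\[H^0(G;\F_p)\to H^0(\famS;\F_p)\to H^1(G,\famS;\F_p)\]
forces $H^0(\famS;\F_p)$ finite, which in turn forces $\famS$ to be a finite family. The cup product duality of (2) then gives $H^j(G;\F_p)$ finite for $0\le j\le n-1$. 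To control $H^{\ge n}(G;\F_p)$ and conclude that $G$ is of type \pFP[\infty], use the commuting pentagon \eqref{CupProdPentagon} with $C=A=\F_p$ and the long exact sequence to transfer the non-degenerate pairing of (2) to each peripheral subgroup $S_x$: one extracts from the pentagon that each $S_x$ satisfies the hypothesis of (2) in the absolute case, with $n$ replaced by $n-1$. By the classical pro-$p$ Poincaré duality criterion of Serre (\cite{Serre13}), each $S_x$ is therefore a pro-$p$ PD$^{n-1}$ group, so $\cd_p(S_x)=n-1$ and $H^k(\famS;\F_p)=0$ for $k\ge n$. The LES then forces $H^k(G;\F_p)$ to vanish for $k>n$ and to be finite at $k=n$, so $G$ has type \pFP[\infty].

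\emph{Next, identify the dualising module.} With both type \pFP[\infty]\ conditions now established and $\cd_p(G,\famS)=n$, the dualising module $J=I_p(G,\famS)$ exists and represents $H^n(G,\famS;-)^{\ast}$ on $\Gmod[p]{F}$. Hypothesis (2) gives $\Hom_G(\F_p,J)\cong H^n(G,\famS;\F_p)^{\ast}\cong\F_p$, and triviality of the pro-$p$ action on $\F_p$-spaces yields $J[p]\cong\F_p$. Applying the representability identity to the sequences
\[0\to\Z/p^k\xrightarrow{p}\Z/p^{k+1}\to\F_p\to 0\]
together with the right-exactness of $H^n(G,\famS;-)$ (equivalently, $H^{n+1}(G,\famS;-)=0$) shows $J$ is $p$-divisible; combined with $J[p]\cong\F_p$, this forces $J\cong\Q_p/\Z[p]=I_p$ as an abelian group, so $D_p(G,\famS)=J^{\ast}\cong\Z[p]$ is free of rank one. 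Finally, we invoke Theorem \ref{PDisduality2}: by Propositions \ref{FptoPp1} and \ref{FptoPp2} it suffices to verify the cup product map $\Upsilon\colon H^k(G,\famS;C)\to H^{n-k}(G;\bHom(C,J))^{\ast}$ is an isomorphism for every $C\in\Gmod[p]{F}$. The case $C=\F_p$ is precisely (2) translated through $\bHom(\F_p,J)\cong\F_p$; the general case follows by dévissage along composition series with $\F_p$-quotients, combining Proposition \ref{leswithwedge} (applicable since $J^{\ast}\cong\Z[p]$ is finitely generated free) with the Five Lemma. This establishes that $(G,\famS)$ is a PD$^n$ pair at $p$.

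\emph{The main obstacle} is the middle paragraph: the requirement that each peripheral subgroup $S_x$ be PD$^{n-1}$ (needed to control the high-degree cohomology of $G$ and hence to apply Theorem \ref{PDisduality2}) is not visibly encoded in (2) and must be teased out by a careful diagram chase with the pentagon \eqref{CupProdPentagon}. The identification of the dualising module via divisibility—extracted purely from $\F_p$-coefficient information—also requires some delicate dévissage.
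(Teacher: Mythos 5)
Your forward direction matches the paper's. The converse, however, has a genuine gap at exactly the point you flag as ``the main obstacle''. You route the argument through Theorem \ref{PDisduality2}, whose hypotheses include that $G$ itself has type \pFP[\infty]; to secure this you claim the pentagon \eqref{CupProdPentagon} lets you ``extract'' from (2) that each peripheral subgroup satisfies Serre's PD$^{n-1}$ criterion. That chase is not carried out, and it does not work as described: for $0\neq\zeta\in H^r(S_x,\F_p)$ with $\delta\zeta=0$ in $H^{r+1}(G,\famS;\F_p)$ the pentagon produces no dual partner, because the only coclass available on $H^{n-1}(S_x,\F_p)$ factors through $\delta$. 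More fundamentally, condition (2) gives no control on $H^j(G;\F_p)$ for $j\geq n$: non-degeneracy of the pairing against the finite groups $H^k(G,\famS;\F_p)$ only bounds $H^j(G;\F_p)$ for $j\leq n-1$, and the long exact sequence merely identifies $H^j(G;\F_p)\cong H^j(\famS;\F_p)$ for $j\geq n+1$ without bounding either. In the paper, FP$_\infty$-ness of $G$ and Poincar\'e duality for the $S_x$ are \emph{consequences} of (1) (Propositions \ref{FPforPDn} and \ref{PeripheralsOfPDPairs}), available only after the pair is known to be PD$^n$; trying to establish them first is circular.

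The repair is to notice that the definition of a PD$^n$ pair does not mention $G$ being of type \pFP[\infty]: one only needs $(G,\famS)$ of type \pFP[\infty] (which you have), $\cd_p(G,\famS)\leq n$ (given), and the computation of ${\bf H}^k(G,\famS;\Zpof{G})$. The paper therefore bypasses Theorem \ref{PDisduality2} in this direction. It restricts to the subcategory of modules killed by $p$, where d\'evissage from $C=\F_p$ (Proposition \ref{leswithwedge}, plus the fact that $\F_p$ is the unique simple module) gives ${\bf H}^k(G,\famS;C)\cong H^{n-k}(G,C^\ast)^\ast$; evaluating at $C=\Fpof{G}$ yields $\F_p$ in degree $n$ and $0$ elsewhere, and feeding $0\to\Zpof{G}\xrightarrow{p}\Zpof{G}\to\Fpof{G}\to 0$ into \eqref{HnZpG} shows $D_p(G,\famS)$ is $p$-torsion-free of rank one (Proposition \ref{TorFreeImpliesFree}) and ${\bf H}^k(G,\famS;\Zpof{G})=0$ for $k\neq n$. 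Your final paragraph is close to this---your divisibility argument for $J$ is the Pontrjagin dual of the paper's torsion-freeness argument for $D_p$---but as written it still terminates in an appeal to Theorem \ref{PDisduality2} and so inherits the unproven hypothesis from the middle paragraph.
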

\begin{proof}
Assume first that $(G,\famS)$ is a PD$^n$ pair. Then 
\[ H^n(G,\famS; \F_p)\iso \Hom_G(\F_p, I_p(G,\famS))^\ast=\F_p\]
noting that since $G$ is a pro-$p$ group it must act trivially on the submodule $\gp{p^{-1}}$ of $I_p$ under any $G$-action, so the natural map $\F_p\to I_p(G,\famS)$ is $G$-linear. The non-degeneracy follows from Theorem \ref{PDisduality2}, noting that the multiplication pairing coincides with the evaluation pairing $\F_p\otimes \Hom(\F_p, I_p(G,\famS))\to I_p(G,\famS)$ under any identification of $\F_p$ with the submodule of $I_p(G,\famS)$ killed by $p$. 

Now assume that $G$ satisfies (2). Let $\F_p\Gmod[p]{C}$ be the subcategory of modules annihilated by $p$. Consider, for $C\in\F_p\Gmod[p]{C}$, the map 
\[\Upsilon_k\colon {\bf H}^k(G,\famS; C)\to H^{n-k}(G, C^\ast)^\ast  \]
induced by the cup product with respect to the evaluation pairing $C\otimes C^\ast\to \F_p$ and a choice of identification coclass $H^n(G,\famS; \F_p)\to \F_p$. Note that since $C$ is in $\F_p\Gmod[p]{C}$ the evaluation map on $C\otimes C^\ast$ does indeed have image in $\F_p$. By Proposition \ref{leswithwedge} this map gives a map of connected sequences of continuous $\F_p\Gmod[p]{F}$-functors. The unique simple $G$-module in $\F_p\Gmod[p]{F}$ is $C=\F_p$ (\cite{RZ00}, Lemma 7.1.5). Using the long exact sequence and induction starting from $\F_p$ we find that $\Upsilon_k$ is an isomorphism for all $C\in \F_p\Gmod[p]{F}$. Both sides commute with inverse limits, so $\Upsilon_k$ is an isomorphism of $\F_p\Gmod[p]{C}$-functors.

In particular we have
\begin{equation*}{\bf H}^{n-k}(G,\famS; \Fpof{G})\iso H^k(G, \Fpof{G}{}^\ast)^\ast =H_k(G, \Fpof{G})=H_k(G, \ind^1_G(\F_p))=\begin{cases} \F_p & \text{if $k=0$}\\ 0& \text{if $k\neq 0$} \end{cases} 
\end{equation*}
Then considering the long exact sequence associated to the short exact sequence
\[0\to \Zpof{G}\stackrel{p}{\to} \Zpof{G}\to  \Fpof{G}\to 0 \]
and recalling that by \eqref{HnZpG} we have ${\bf H}^n(G,\famS; \Zpof{G})=D_p(G,\famS)$, we find that the sequence
\[0\to D_p(G,\famS)\stackrel{p}{\to} D_p(G,\famS)\to \F_p\to 0 \]
is exact. That is, $D_p(G,\famS)$ is $p$-torsion-free and hence a free abelian pro-$p$ group by Proposition \ref{TorFreeImpliesFree}, and is also of rank 1. So $I_p(G,\famS)$ is isomorphic to $I_p$ as an abelian group. Furthermore the rest of this long exact sequence shows that for every $k\neq n$ the map
\[{\bf H}^k(G,\famS; \Zpof{G}) \stackrel{p}{\to} {\bf H}^k(G,\famS; \Zpof{G})\]
is an isomorphism. For a compact $p$-primary module multiplication by $p$ is only an isomorphism when the module is trivial. This concludes the proof.
\end{proof}

\subsection{Graphs of PD$^n$ pairs}\label{SecGraphsOfPDn}
Let $\cal C$ be an variety of finite groups closed under taking isomorphisms, subgroups, quotients and extensions. Let $\pi(\cal C)$ be the set of primes which divide the order of some finite groups in $\cal C$. 
\begin{theorem}
Let $G=G_1\amalg_L G_2$ be a proper pro-$\cal C$ amalgamated free product. Assume that $L$ does not equal either $G_1$ or $G_2$. Let $\famS_i$ be a (possibly empty) finite family of subgroups of $G_i$. Let $\famS$ be the family of subgroups $\famS_1\sqcup \famS_2$ of $G$, which is continuously indexed by $X_1\sqcup X_2$. Then the following hold.
\begin{enumerate}[(1)]
\item If each pair $(G_i, \famS_i\sqcup L)$ is a PD$^n$ pair at the prime $p$, so is $(G,\famS)$.
\item Suppose $(G,\famS)$ is a PD$^n$ pair at the prime $p$ and $L$ is a PD$^{n-1}$ group at the prime $p$. Assume either that $G_i$ and $(G_i, \famS_i\sqcup L)$ have type \pFP[\infty] for each $i$ or that $G$ has property FIM. Then each pair $(G_i, \famS_i\sqcup L)$ is a PD$^n$ pair at the prime $p$.
\end{enumerate}
Moreover the dualising modules are all isomorphic to appropriate restrictions of $I_p(G,\famS)$. In particular $(G,\famS)$ is orientable if and only if both $(G_i, \famS_i\sqcup L)$ are orientable.
\end{theorem}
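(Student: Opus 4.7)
The approach is to invoke the cup-product Mayer-Vietoris sequence of Theorem \ref{MVwithExcision} together with the duality characterisations of Propositions \ref{PDisduality1}, \ref{PDisduality2} and \ref{PDisduality3}, and then conclude via a Five Lemma argument. Broadly, one knows the outer four cup-product arrows of the Mayer-Vietoris diagram are isomorphisms by hypothesis, and one deduces that the remaining fifth arrow is an isomorphism, which is exactly the cup-product duality characterising the missing PD pair.

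For direction (1), I would first verify that $(G, \famS)$ is of type \pFP[\infty]. Combining the short exact sequence
\[0 \to \Delta_{G,\famS} \to \Delta_{G,\famS \sqcup L} \to \Zpof{G/L} \to 0,\]
the decomposition $\Delta_{G,\famS \sqcup L} \iso \ind^{G_1}_G(\Delta_{G_1,\famS_1 \sqcup L}) \oplus \ind^{G_2}_G(\Delta_{G_2,\famS_2 \sqcup L})$ furnished by Theorem \ref{RibesDirectSumThm}, and Propositions \ref{IndAndFPProjs} and \ref{SESandFPn}, this reduces to type \pFP[\infty] for the $\Delta_{G_i,\famS_i \sqcup L}$ and for $\Zpof{G/L} = \ind^L_G(\Z[p])$. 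The former is given by the PD$^n$ assumption, and the latter follows from Proposition \ref{PeripheralsOfPDPairs}, which also shows $L$ is a PD$^{n-1}$ group at $p$ with dualising module $\res^{G_i}_L I_p(G_i, \famS_i \sqcup L)$ for either $i$. The bound $\cd_p(G,\famS) \leq n$ is then immediate from the Mayer-Vietoris sequence.

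Next, construct the dualising $G$-module $J$ and its fundamental coclass. The two orientation characters $\chi_i \colon G_i \to \Z[p]^\times$ both restrict on $L$ to the orientation character of $L$, so by the universal property of the amalgamated free product they assemble into a continuous character $\chi \colon G \to \Z[p]^\times$, giving a $G$-module $J \iso I_p$ whose restrictions to $G_1$, $G_2$ and $L$ are precisely the three dualising modules. The fundamental coclass $j \colon H^n(G, \famS; J) \to I_p$ is obtained from the pair of fundamental coclasses $(j_1, -j_2)$ on the $G_i$-pieces: the Mayer-Vietoris surjection
\[H^n(G_1, \famS_1 \sqcup L; J) \oplus H^n(G_2, \famS_2 \sqcup L; J) \twoheadrightarrow H^n(G, \famS; J)\]
(with $H^n(L,J)=0$ because $\cd_p(L)=n-1$) has kernel equal to the image from $H^{n-1}(L, J)$, on which $(j_1, -j_2)$ vanishes because $j_i \circ \bdy_0$ is the fundamental coclass of $L$ for both $i$ by Proposition \ref{PeripheralsOfPDPairs}. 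Applying the cup-product Mayer-Vietoris diagram of Theorem \ref{MVwithExcision} with $C \in \Gmod[p]{D}$, $A = \bHom(C, J)$, $B = J$ and coclass $e = j$, the cup-product arrows on the $(G_i, \famS_i \sqcup L)$ and $L$ columns become isomorphisms by the PD hypotheses together with Propositions \ref{PDisduality2} and \ref{PDisduality1}; the Five Lemma then forces the $(G, \famS)$ column to be an isomorphism, and Proposition \ref{PDisduality3} concludes that $(G, \famS)$ is PD$^n$ at $p$ with dualising module $J$. Orientability is equivalent to triviality of $\chi$, which is equivalent to triviality of each $\chi_i$.

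Direction (2) proceeds in mirror image with $J = I_p(G, \famS)$: the finiteness hypotheses (FIM or pointwise \pFP[\infty]) combined with Proposition \ref{FPDelta} and the SES argument above yield type \pFP[\infty] for each $(G_i, \famS_i \sqcup L)$; one then runs the same cup-product Mayer-Vietoris diagram, now with the $(G, \famS)$ and $L$ arrows as the known isomorphisms, and the Five Lemma forces the arrows on the pieces. The main technical obstacle in both directions is the sign bookkeeping of Theorem \ref{MVwithExcision}: one must verify that the induced coclasses on the summands genuinely are (up to a consistent sign choice) the fundamental coclasses of the relevant PD pairs, and that the shifted coclass on $H^{n-1}(L, J)$ agrees with the fundamental coclass of $L$. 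This is a careful diagram chase using Proposition \ref{PeripheralsOfPDPairs} and the explicit sign conventions in Theorem \ref{MVwithExcision}, but requires no new conceptual ingredient.
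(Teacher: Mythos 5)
Your direction (1) is essentially the proof in the paper: the same assembly of the dualising module $J$ from the two orientation characters via the universal property of the amalgam, the same coclass $e_2-e_1$ vanishing on the image of $H^{n-1}(L)$, and the same Five Lemma applied to the cup-product Mayer--Vietoris diagram of Theorem \ref{MVwithExcision}. That half is fine.

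Direction (2) has two genuine gaps, and they account for most of the work in this half of the theorem. First, you declare the $L$-columns of the Mayer--Vietoris diagram to be ``known isomorphisms'', but the hypothesis only says that $L$ is \emph{some} PD$^{n-1}$ group at $p$; to run the diagram with $J=I_p(G,\famS)$ you must first show that $\res^G_L(J)$ is the dualising module of $L$ and that the induced coclass $\bdy e = e\circ(-\bdy_0)$ is a fundamental coclass for it. This is not sign bookkeeping: one must rule out the possibility that $\bdy_0\colon H^{n-1}(L,J)\to H^n(G,\famS;J)$ factors through multiplication by $p$, and doing so ultimately shows that otherwise the augmentation sequence $0\to\Fpof{G/L}\to\Fpof{G/G_1}\oplus\Fpof{G/G_2}\to\F_p\to 0$ would split, contradicting Propositions \ref{NoFpSplittings} and \ref{VxGrpsAreInfIndex}. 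This is precisely where the standing hypothesis $L\neq G_1,G_2$ enters, and nothing in your sketch supplies it. Second, even with the diagram in place, the Five Lemma only yields the duality isomorphism
\[{\bf H}^k(G_i,\famS_i\sqcup L;\res^G_{G_i}(C))\iso {\bf H}^{n-k}(G_i,\bHom(\res^G_{G_i}(C),J))^\ast\]
for coefficients that are \emph{restrictions of $G$-modules}. That is not the criterion of Theorem \ref{PDisduality2}, which quantifies over all $G_i$-modules, nor does it directly compute ${\bf H}^k(G_i,\famS_i\sqcup L;\Zpof{G_i})$, since $\Zpof{G_i}$ is not a restricted $G$-module. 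One needs the further step of exhibiting $\Zpof{G_i}$ as a $G_i$-linear retract of $\res^G_{G_i}(\Zpof{G})$ via a continuous section of $G\to G_i\lqt G$, and transporting the isomorphism along the retraction to verify the defining property of a PD$^n$ pair. Your proposal stops one step short of the conclusion at both points.
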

\begin{proof}
We first note that in each case all necessary groups and group pairs have type \pFP$_\infty$. In case (1) the conditions we know that $G_1$ and $G_2$ and are of type \pFP$_\infty$ by Proposition \ref{FPforPDn}, and furthermore that $\Zpof{G_i/L}$ has type \pFP[\infty] as a $G_i$-module. The short exact sequence 
\[ 0\to \Zpof{G/L}\to \Zpof{G/G_1}\oplus\Zpof{G/G_2}\to\Z[p]\to 0\] 
combined with Propositions \ref{SESandFPn} and \ref{IndAndFPMods2} now forces $G$ to have type \pFP$_\infty$---note that $\Zpof{G/L}=\ind^{G_1}_G(\Zpof{G_1/L})$. Furthermore applying Propositions \ref{FPforPDn} and \ref{IndAndFPMods2} to the short exact sequence
\begin{equation*}\begin{tikzcd}
0\ar{r}&\ind^{G_1}_G(\Delta_{G_1, \famS_1})\oplus \ind^{G_2}_G(\Delta_{G_2, \famS_2}) \ar[]{r} & \Delta_{G,\famS} \ar[]{r} & \Zpiof{G/L} \ar{r}&0
\end{tikzcd} \end{equation*} 
appearing in Theorem \ref{FirstMV} shows that $(G,\famS)$ also has type \pFP$_\infty$.

In the case (2) we must show that each pair $(G_i, \famS_i\sqcup L)$  and group $G_i$ have type \pFP[\infty], assuming that $G$ has property FIM. We know that $G$ and $L$ and all the finitely many groups in $\famS$ have type \pFP$_\infty$. The short exact sequence 
\begin{equation*}\begin{tikzcd}
0\ar{r}& \Delta_{G, \famS}\ar[]{r}& \Delta_{G, \famS\sqcup L} \ar[]{r} & \Zpiof{G/L}\ar{r} &0
\end{tikzcd} \end{equation*}
from equation \ref{InductionOnPairs} together with Propositions \ref{SESandFPn} and \ref{IndAndFPMods2} forces the pair $(G,\famS\sqcup L)$ to be of type \pFP[\infty]. The isomorphism 
\[  \bigoplus_{i=1,2}\Zpiof{G}\hotimes[G_i]\Delta_{G_i, \famS_i\sqcup L} \iso \Delta_{G, \famS_1\sqcup L\sqcup\famS_2} \]
from Theorem \ref{RibesDirectSumThm} and Proposition \ref{DirSumsAndFPn} show that, given property FIM, the group pairs $(G_i,\famS_i)$ have type \pFP$_\infty$. It now follows that $G_1$ and $G_2$ also have type \pFP[\infty] by Proposition \ref{FPDelta}.

We now move on to the main part of the theorem. First suppose the conditions of (1) hold. By Proposition \ref{PeripheralsOfPDPairs} the group $L$ is a PD$^{n-1}$ group whose dualising module $I_p(L)$ is isomorphic to $\res^{G_i}_L(I_p(G_i, \famS_i\sqcup L))$ for both $i=1,2$, hence the actions $G_i\to \Aut(\Q_p/\Z[p])$ agree on $L$ and by the universal property of amalgamated free products there is a natural $G$-module $J$ with underlying abelian group $\Q_p/\Z[p]$ so that $\res^G_{G_i}(J)=I_p(G_i,\famS_i\sqcup L)$ for each $i$.

Since any fundamental coclass $e_i$ of $(G_i, \famS_i\sqcup L)$ restricts to a fundamental coclass of $L$, choose coclasses $e_i$ which restrict to the same fundamental coclass of $L$. There is also an induced coclass $e\colon H^n(G,\famS; J)\to I_p$ given by 
\[e_2-e_1\colon H^n(G_1, \famS_1; I_p(G_1, \famS_1\sqcup L))\oplus H^n(G_1, \famS_1; I_p(G_2, \famS_2\sqcup L)\to I_p \]
Note that $e$ vanishes on $H^{n-1}(L)$ since the $e_i$ both restrict to a fundamental coclass on $H^{n-1}(L, I_p(L))$. Therefore by the long exact sequence in Theorem \ref{MVwithExcision} we do indeed have a well-defined coclass on $H^n(G,\famS; J)$.

We may now use Theorems \ref{MVwithExcision} and \ref{PDisduality2} and  the Five Lemma applied to the sign-commutative diagram of long exact sequences \eqref{MVCupProdLES} with $A=\Hom(C,J)$ and coclass $e$ to prove that $(G,\famS)$ is a PD$^n$ pair at the prime $p$ with dualising module $J$ as required.

Next assume the conditions of part (2) and assume also that the dualising module for $L$ is in fact $\res^G_L(I_p(G,\famS))$. We will prove this later. Let $I_p(G,\famS)=J$. It follows from the Five Lemma and \eqref{MVCupProdLES} that for each $i$ and $k$ and for any $C\in\Gmod[p]{P}$ the cup product induces an isomorphism
\[\Upsilon \colon {\bf H}^k(G_i, \famS_i;\res^G_{G_i}(C)) \to {\bf H}^{m-k}(G_i, \bHom(\res^G_{G_i}(C), J))^\ast  \]   

Let $G_i\lqt G\to G$ be a continuous section of the quotient map. This yields an identification of $G_i$-modules $\Zpof{G}=\Zpof{G_i}[\![G_i\lqt G]\!]$ and a $G_i$ linear epimorphism:
$q\colon \Zpof{G}=\Zpof{G_i}[\![G_i\lqt G]\!] \to \Zpof{G_i}$
sending each element $G_ig$ of $G_i\lqt G$ to 1. Furthermore we have the natural $G_i$-linear map
\[i\colon \Zpof{G_i} \to \Zpof{G}\]
such that $qi=\id_M$. Now the commuting diagram
\[\begin{tikzcd}
{\bf H}^k(G_i,\famS_i; \Zpof{G_i}) \ar[shift left]{r}{i}\ar{d}{\Upsilon} & {\bf H}^k(G_i,\famS_i; \res^G_{G_i}(\Zpof{G})) \ar[shift left]{l}{q}\ar{d}{\iso}\\
{\bf H}^{n-k}(G_i, \Hom(\Zpof{G_i}, J))^\ast \ar[shift left]{r}{i} & {\bf H}^{n-k}(G_i,\Hom(\res^G_{G_i}(\Zpof{G}), J))^\ast \ar[shift left]{l}{q}
\end{tikzcd} \]
combined with the fact that applying various functors to $q$ and $i$ preserves the property that $qi$ is an identity map shows that the cup product map on the left is in fact an isomorphism. Then the natural map
\[{\bf H}^k(G_i,\famS_i;\Zpof{G_i})\iso {\bf H}^{n-k}(G_i, \coind^1_{G_i}(J))^\ast=\begin{cases} 0 &\text{ if $k\neq n$}\\ \res^G_{G_i}(D_p(G,\famS)) & \text{if $k=n$} \end{cases}\]
is an isomorphism as required. This is an isomorphism of $G_i$-modules by Proposition \ref{ShapiroWithAction}. 

We now prove that $I_p(L)=\res^G_L(I_p(G,\famS))$. Consider the map 
\[\bdy_0\colon H^{n-1}(L, I_p(G,\famS))\to H^n(G,\famS; I_p(G,\famS)) \]
from the long exact sequence in Theorem \ref{FirstMV}. By Poincar\'e duality for $L$ and $(G,\famS)$ both sides are isomorphic to (subgroups of) $\Q_p/\Z[p]$ as abelian groups. Therefore the map $\bdy$ is either an isomorphism or factors through the multiplication-by-$p$ map on $H^{n-1}(L, I_p(G,\famS))$. If $\bdy$ is an isomorphism then since the left hand side is isomorphic to $\Hom_L(I_p(L), I_p(G,\famS))$ and the right hand side is isomorphic to $\Hom_G(I_p(G, \famS), I_p(G,\famS))\iso I_p$ there exists a non zero $L$-linear map between $I_p(L)$ and $I_p(G,\famS)$. Since these are both isomorphic to $\Q_p/\Z[p]$, one may show that there must be an $L$-module isomorphism between the two modules and indeed $I_p(L)=\res^G_L(I_p(G,\famS))$.

We now turn our attention to the other case and show that it is impossible. The fundamental coclass $e$ of $(G,\famS)$ yields a coclass $\bdy e\colon H^{n-1}(L, I_p(G,\famS))\to I_p$ which factors through multiplication by $p$. If $C$ is a finite $G$-module killed by $p$ then the cup product map
\[\Upsilon_{\bdy e, {\rm ev}}\colon H^k(L,C)\otimes H^{n-1-k}(L, \Hom(C, I_p(G,\famS)))\to H^{n-1}(L, I_p(G,\famS))\to I_p \] 
(given by the evaluation pairing and the coclass $\bdy e$) is a map of modules killed by $p$ which factors through multiplication by $p$. That is, it is zero. This cup product is exactly the right-hand map appearing in the following piece of the map of long exact sequences \eqref{MVCupProdLES}:
\[\begin{tikzcd}
H^{n-1}(G,\famS; C)\ar{r}\ar{d}{\iso} & H^{n-1}(L, C) \ar{d}{0}\\
H^1(G, \Hom(C,I_p(G,S))^\ast \ar{r} & H^0(L, \Hom(C, I_p(G,\famS)))^\ast
\end{tikzcd}\]
Therefore the bottom map vanishes. Since $I_p(G,\famS)$ is isomorphic to $I_p$ as an abelian group, by Pontrjagin-duality-with-action the coefficient group $\Hom(C,I_p(G,S))$ of the bottom row ranges over all finite $G$-modules killed by $p$. Therefore, considering the rest of the long exact sequence on the bottom row and using Shapiro isomorphisms, for all finite modules $A$ killed by $p$ the sequence 
\[ \Hom_G(\Z[p], A)\to \Hom_G(\Zpof{G/G_1}\oplus \Zpof{G/G_2}, A) \to \Hom_G(\Zpof{G/L}, A)\to 0  \]
is exact. Taking inverse limits, which is a right-exact functor, and noting that for instance each $\Zpof{G/G_i}$ is finitely generated we obtain an exact sequence
\[ \bHom_G(\Z[p], M)\to \bHom_G(\Zpof{G/G_1}\oplus \Zpof{G/G_2}, M) \to \bHom_G(\Zpof{G/L}, M)\to 0  \]
for compact $G$-modules $M$ which are killed by $p$. In particular the natural quotient map from $\Zpof{G/L}\to \Fpof{G/L}$ extends to a $G$-linear map $\Zpof{G/G_1}\oplus \Zpof{G/G_2}\to \Fpof{G/L}$. That is, the sequence
\[0\to \Fpof{G/L}\to \Fpof{G/G_1}\oplus \Fpof{G/G_2}\to \F_p\to 0 \]
splits. Therefore since $\F_p$ is a trivial module there must be a $G$-invariant point of $\Fpof{G/G_i}$ for each $i$, which is non-zero for at least one $i$. This is impossible by Propositions \ref{NoFpSplittings} and \ref{VxGrpsAreInfIndex}. 
\end{proof}
In the classical case one would simply assert that unless $L=G_{1-i}$, the group $G_i$ has infinite index in $G=G_1\amalg_L G_2$ and therefore the existence of a $G$-invariant point of $\Fpof{G/G_i}$ is absurd. In our case we must work a little harder---for instance, absurd as it is, one may show that $\F_5[\![\Z[3]]\!]$ has a \Z[3]-invariant point. This analysis does not really belong here and we leave it to \ref{AppGofGs}.

Similarly to the above one has the analogous result for HNN extensions, relying upon Theorem \ref{HNNMVwithExcision}.
\begin{theorem}
Let $(G_1, \famS)$ be a profinite group pair with $G_1$ a pro-$\cal C$ group and $\famS$ a possibly empty finite family of subgroups of $G_1$. Let $L, L'$ be subgroups of $G_1$ isomorphic via an isomorphism $\tau$. Suppose $G=G_1\amalg_{L,\tau}$ is a proper pro-$\cal C$ HNN extension. Then the following hold.
\begin{enumerate}[(1)]
\item If $(G_1, \famS_1\sqcup L\sqcup L')$ is a PD$^n$ pair at the prime $p$, so is $(G,\famS)$.
\item Suppose $(G,\famS)$ is a PD$^n$ pair at the prime $p$ and $L$ is a PD$^{n-1}$ group at the prime $p$. Assume either that $G_1$ and $(G_1, \famS\sqcup L\sqcup L')$ have type \pFP[\infty] or that $G$ has property FIM. Then $(G_1, \famS_1\sqcup L\sqcup L')$ is a PD$^n$ pair at the prime $p$.
\end{enumerate}
Moreover the dualising modules are all isomorphic to appropriate restrictions of $I_p(G,\famS)$. In particular $(G,\famS)$ is orientable if and only if $(G, \famS\sqcup L\sqcup L')$ is orientable.
\end{theorem}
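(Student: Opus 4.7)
The plan is to mimic the amalgamated free product argument almost verbatim, with Theorem \ref{HNNMVwithExcision} replacing Theorem \ref{MVwithExcision} throughout, and the relevant short exact sequences of modules from Section \ref{SecTrees} replaced by their HNN analogues. First I would dispose of the FP$_\infty$ conditions. In case (1), starting from the hypothesis that $(G_1,\famS\sqcup L\sqcup L')$ has type \pFP[\infty] (so by Proposition \ref{FPforPDn} $G_1$ and each subgroup in $\famS\sqcup L\sqcup L'$ does too), I would feed the short exact sequences appearing in Theorems \ref{FirstMVHNN} and \ref{Excision} for HNN extensions into Propositions \ref{SESandFPn} and \ref{IndAndFPMods2} to conclude that $G$ and $(G,\famS)$ are of type \pFP[\infty]. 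In case (2), assuming property FIM, the analogous short exact sequence from \eqref{InductionOnPairs} for $\famS\subseteq \famS\sqcup L\sqcup L'$ combined with the HNN excision isomorphism $\Delta_{G,\famS\sqcup L}\iso \ind^{G_1}_G\Delta_{G_1,\famS\sqcup L\sqcup L'}$ transfers FP$_\infty$-ness from $(G,\famS)$ back to $(G_1,\famS\sqcup L\sqcup L')$.

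For the main statement in (1), I would first invoke Proposition \ref{PeripheralsOfPDPairs} to see that $L$ (and $L'$) are PD$^{n-1}$ groups at $p$, with dualising modules equal to the restrictions of $I_p(G_1,\famS\sqcup L\sqcup L')$ along either inclusion. The crucial point is that the two resulting $L$-module structures on $\Q_p/\Z[p]$ coming from the inclusions $L\hookrightarrow G_1$ and $L\xrightarrow{\tau}L'\hookrightarrow G_1$ must agree up to (any) isomorphism with $I_p(L)$, so the universal property of the HNN extension extends them canonically to a $G$-module $J$ with underlying group $\Q_p/\Z[p]$ and $\res^G_{G_1}(J)=I_p(G_1,\famS\sqcup L\sqcup L')$. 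Choosing a fundamental coclass $e_1$ for $(G_1,\famS\sqcup L\sqcup L')$ such that the two restrictions to $H^{n-1}(L,I_p(L))$ agree, I obtain a coclass on $H^n(G,\famS;J)$ via the HNN Mayer--Vietoris sequence (it is well defined exactly because $e_1$ has the same restriction along both embeddings of $L$). Five-Lemma applied to the sign-commutative diagram of cup products in Theorem \ref{HNNMVwithExcision}, together with Theorem \ref{PDisduality2} applied to $G_1$ and $L$, then yields the required cup product isomorphisms certifying that $(G,\famS)$ is PD$^n$ at $p$ with dualising module $J$.

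For (2), the easy half is again formal: assuming for the moment that $I_p(L)=\res^G_L(I_p(G,\famS))$, the same Five Lemma argument on the HNN cup product diagram shows $(G_1,\famS\sqcup L\sqcup L')$ is PD$^n$ at $p$ with dualising module $\res^G_{G_1}(I_p(G,\famS))$, after the little trick (used in the amalgamated case) of splitting $\Zpof{G}$ as $G_1$-module via a continuous section $G_1\lqt G\to G$ and noting $q\circ i=\id$. The main obstacle I anticipate is precisely the same one as in the amalgamated case: showing that $\res^G_L(I_p(G,\famS))$ is indeed the dualising module of $L$. I would carry this out in two steps. Examining the map $\bdy_0\colon H^{n-1}(L,I_p(G,\famS))\to H^n(G,\famS;I_p(G,\famS))$ from the HNN long exact sequence (Theorem \ref{FirstMVHNN}), both sides are subgroups of $\Q_p/\Z[p]$, so $\bdy_0$ is either an isomorphism---in which case tracing it through the identifications $\Hom_L(I_p(L),I_p(G,\famS))$ and $\Hom_G(I_p(G,\famS),I_p(G,\famS))$ yields a nonzero $L$-linear map from $I_p(L)$ to $\res^G_L I_p(G,\famS)$, forcing the two to be $L$-isomorphic---or else it factors through multiplication by $p$.

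To rule out the second alternative I would reproduce the argument from the amalgamated case: if $\bdy_0$ factors through multiplication by $p$, then for every finite $G$-module $C$ killed by $p$ the adjoint cup product $H^k(L,C)\otimes H^{n-1-k}(L,\Hom(C,I_p(G,\famS)))\to I_p$ vanishes, which in turn forces the map $H^1(G,\Hom(C,I_p(G,\famS)))^\ast\to H^0(L,\Hom(C,I_p(G,\famS)))^\ast$ appearing in the HNN diagram \eqref{MVCupProdLES} to be zero. Passing to inverse limits as in the amalgamated case and running through the right-exactness argument, this would imply that the natural sequence $0\to\Fpof{G/L}\to\Fpof{G/G_1}\to\F_p\to 0$ (the HNN analogue) splits, producing a nonzero $G$-invariant vector in $\Fpof{G/G_1}$. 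By Propositions \ref{NoFpSplittings} and \ref{VxGrpsAreInfIndex} in the appendix this is impossible, contradiction. With the dualising module identified, the Five-Lemma step from the previous paragraph finishes the proof, and the remark on orientability is then immediate from the fact that orientability amounts to triviality of the action on a fixed copy of $\Z[p]$ and all three dualising modules are restrictions of the same $G$-module.
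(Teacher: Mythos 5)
Your proposal is exactly the argument the paper intends: for this theorem the paper gives no proof at all beyond the phrase ``Similarly to the above one has the analogous result for HNN extensions, relying upon Theorem \ref{HNNMVwithExcision}'', and every step you describe (the FP$_\infty$ bookkeeping via the HNN short exact sequences, the Five Lemma on the sign-commutative cup-product ladder, the $q\circ i=\id$ trick, and the mod-$p$ contradiction via Propositions \ref{NoFpSplittings} and \ref{VxGrpsAreInfIndex}) is the faithful HNN translation of the amalgamated-product proof.

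One step in part (1) is not quite right as written, and it is the only place where the HNN case genuinely differs from the amalgam case. The extension of the $G_1$-action on $I_p(G_1,\famS\sqcup L\sqcup L')$ to a $G$-action is \emph{not} canonical: since $\Aut(\Q_p/\Z[p])$ is abelian, the compatibility condition on $L$ is exactly that the two restricted orientation characters agree (which holds, as you say), but the stable letter may then act by \emph{any} unit of $\Z[p]$, so there is a $\Z[p]^{\!\times}$-torsor of extensions $J$. Moreover you cannot repair this by ``choosing a fundamental coclass $e_1$ such that the two restrictions to $H^{n-1}(L)$ agree'': the two restrictions of $e_1$ along the two embeddings of $L$ differ by a fixed unit $u$, and rescaling $e_1$ rescales both restrictions by the same factor, so it never changes $u$. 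The degree of freedom has to be spent on $J$ itself: changing the action of the stable letter by $v$ rescales the second boundary map (the one involving $\tau^\ast$ and the $t$-identification of $\res_L J$ with $\res_{L'}J$) by $v$, so there is a unique normalisation of the $t$-action for which $e_1$ kills the image of $H^{n-1}(L,J)$ and hence descends to a coclass $e$ on $H^n(G,\famS;J)$. With that correction the Five Lemma argument goes through exactly as you describe; the rest of the proposal, including part (2) and the identification of $I_p(L)$ with $\res^G_L I_p(G,\famS)$, is sound.
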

By a standard induction one deduces a theorem valid for finite graphs of pro-$\cal C$ groups with more than one edge. By a `reduced' graph of groups below we mean that any edge whose edge group coincides with the adjacent vertex groups is a loop. By collapsing any non-loops with this property one may always make a graph of groups reduced.
\begin{theorem}
Let ${\cal G}=(Y, G_\bullet)$ be a reduced proper finite graph of pro-$\cal C$ groups. For each $y\in V(Y)$ let $\famS_y$ be a possibly empty finite family of subgroups of $G_y$. Let ${\cal E}_y$ denote the set of subgroups of $G_y$ which are images of the edge groups $\bdy_i(G_e)$ for adjacent edges with $d_i(e)=y$. Finally let $G$ denote the fundamental pro-$\cal C$ group of $\cal G$ and let $\famS=\bigsqcup_{y\in V(Y)}\famS_y$. Then the following hold.
\begin{enumerate}[(1)]
\item If each pair $(G_y, \famS_y\sqcup {\cal E}_y)$ is a PD$^n$ pair at the prime $p$, so is $(G,\famS)$.
\item Suppose $(G,\famS)$ is a PD$^n$ pair at the prime $p$ and $G_e$ is a PD$^{n-1}$ group at the prime $p$ for each $e\in E(Y)$. Assume either that $G_y$ and $(G_y, \famS_y\sqcup {\cal E}_y)$ have type \pFP[\infty] for all $y\in V(Y)$ or that $G$ has property FIM. Then each pair $(G_y, \famS_y\sqcup {\cal E}_y)$ is a PD$^n$ pair at the prime $p$.
\end{enumerate}
Moreover the dualising modules are all isomorphic to appropriate restrictions of $I_p(G,\famS)$. In particular $(G,\famS)$ is orientable if and only if $(G_y, \famS_y\sqcup {\cal E}_y)$ is orientable for all $y\in V(Y)$.
\end{theorem}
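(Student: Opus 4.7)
The plan is to prove the theorem by induction on $|E(Y)|$, simultaneously for parts (1) and (2). The base case $|E(Y)|=0$ is a tautology: $Y$ reduces to a single vertex $v$ with ${\cal E}_v=\emptyset$, and the claim becomes $(G_v,\famS_v)$ is PD$^n$ iff $(G_v,\famS_v)$ is PD$^n$. For the inductive step, choose an edge $e\in E(Y)$ and split into two geometric cases.

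If $e$ is not a loop and is a bridge, deleting $e$ splits $Y$ into connected subgraphs $Y_1, Y_2$ with $d_i(e)\in V(Y_i)$, and $G=\Pi_1(Y_1,G_\bullet)\amalg_{G_e}\Pi_1(Y_2,G_\bullet)$ is a proper pro-$\cal C$ amalgamated product (properness is inherited from $\cal G$). Equip each $Y_j$ with the inherited graph-of-groups data and enhance the peripheral structure at $d_j(e)$ by adding $G_e$. The essential bookkeeping is the identity
\[\famS_y^{Y_j}\cup {\cal E}_y^{Y_j} = \famS_y \cup {\cal E}_y\]
at every $y\in V(Y_j)$, which holds because removing $e$ subtracts one copy of $G_e$ from ${\cal E}_{d_j(e)}$ but we reinsert it as a new peripheral subgroup. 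Combining the inductive hypothesis applied to $Y_1$ and $Y_2$ with the previous amalgamation theorem then yields the desired equivalence for $(G,\famS)$. If instead $e$ is a loop or a non-separating non-loop edge, then $Y\setminus e$ is connected, $G$ is a proper pro-$\cal C$ HNN extension of $\Pi_1(Y\setminus e, G_\bullet)$ with associated subgroups $L=\bdy_0(G_e)$ and $L'=\bdy_1(G_e)$, and the analogous identity of peripheral structures holds. Applying the HNN-version theorem together with the inductive hypothesis finishes the inductive step in this case.

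The identification of dualising modules propagates through the induction because at each stage the amalgamation or HNN theorem identifies $I_p$ of the larger pair with the appropriate restriction of $I_p$ of the smaller pair(s). Chaining these identifications gives that every vertex pair $(G_y,\famS_y\cup{\cal E}_y)$ has dualising module isomorphic to $\res^G_{G_y} I_p(G,\famS)$, from which the orientability statement follows.

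The main obstacle is the propagation of the type-\pFP[\infty]\ and FIM hypotheses through the induction. For part (1) this is automatic, since any PD$^n$ pair is of type \pFP[\infty]\ by Proposition \ref{FPforPDn}, so the inductive hypothesis provides FP$_\infty$ on each intermediate fundamental group pair as needed. For part (2), the hypothesis gives either FP$_\infty$ on each $G_y$ and $(G_y,\famS_y\cup{\cal E}_y)$ or property FIM on $G$; in the first case the short exact sequences exhibiting the amalgamation/HNN decomposition (as in equation \eqref{InductionOnPairs} and Theorem \ref{RibesDirectSumThm}), combined with Propositions \ref{SESandFPn}, \ref{DirSumsAndFPn} and \ref{IndAndFPMods2}, yield the FP$_\infty$ conditions for the intermediate fundamental groups and their pairs; in the second case FIM must be shown to descend to the subgroup $\Pi_1(Y_j, G_\bullet)\leq G$, which requires a separate argument using the action on the standard tree $T(\cal G)$ to identify modules induced from the subgroup in terms of modules over $G$. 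Both variants of the hypothesis are exactly what is needed to feed the amalgamation and HNN theorems at each step of the induction.
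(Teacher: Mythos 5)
Your proposal is correct and is precisely the ``standard induction'' that the paper invokes without writing out: induct on $|E(Y)|$, peeling off a bridge (amalgamated product case) or a loop/non-separating edge (HNN case), transferring the edge group into the peripheral families of its endpoints so that $\famS_y^{Y_j}\sqcup{\cal E}_y^{Y_j}=\famS_y\sqcup{\cal E}_y$ at every vertex, and feeding the two preceding theorems (with reducedness supplying the hypothesis $L\neq G_i$, and the type-\pFP[\infty] bookkeeping exactly as you describe). One simplification: the descent of property FIM to the closed subgroup $\Pi_1(Y_j,G_\bullet)\leq G$ needs no tree action --- transitivity of induction, $\ind^S_G=\ind^{\Pi_1(Y_j,G_\bullet)}_G\circ\ind^S_{\Pi_1(Y_j,G_\bullet)}$, combined with Proposition \ref{IndAndFPMods1} reduces FIM for $(\Pi_1(Y_j,G_\bullet),S)$ directly to FIM for $(G,S)$.
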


\section{Relative goodness for discrete groups}\label{SecGoodness}
\subsection{Definition and basic properties}
Let $\Gamma$ be a discrete group and $G$ a profinite group. Let $\phi\colon \Gamma\to G$ be a group homomorphism. Let $M$ be a $\Gamma$-module and $N\in\Gmod[\pi]{C}$, and let $f\colon M\to N$ be a group homomorphism compatible with $\phi$ in the natural way, i.e.\ $ f(\gamma m)=\phi(\gamma)f(m)$ for $\gamma\in\Gamma, m\in M$.  
Let $A$ be a $\Gamma$-module and $B\in \Gmod[\pi]{D}$ and let $g\colon B\to A$ be a group homomorphism compatible with $\phi$ in the sense that $\gamma g(n)=g(\phi(\gamma)n)$ for $\gamma\in\Gamma, n\in N$. 
Take a projective resolution $P_\bullet$ of $M$ by $\Gamma$-modules and a projective resolution $Q_\bullet$ of $N$ in $\Gmod[\pi]{C}$. Viewing $Q_\bullet$ simply as a complex of $\Gamma$-modules via $\phi$, the map $f\colon M\to N$ lifts to a chain map $f_\bullet\colon P_\bullet\to Q_\bullet$, unique up to chain homotopy. Any continuous $G$-module map $Q_n\to B$ gives a $\Gamma$-module map $P_n\to A$ via $f_n$ and $g$, yielding a chain map
\[\Hom_{\ZG{}}(Q_\bullet, B) \to \Hom_{\Z \Gamma}(P_\bullet, A) \]
and thus, passing to cohomology, a map
\[\Ext^\bullet_{\ZG{}}(N, B)\to \Ext^\bullet_{\Z[\Gamma]}(M, A)  \]
which as usual is independent of the choices of $P_\bullet$ and $Q_\bullet$ and is natural with respect to maps of modules and maps of short exact sequences.

Similarly let $M_1$ be a right $\Gamma$-module and $M_2$ is a left $\Gamma$-module, and $N_1\in \Gmod[\pi]{C}^\perp$, $N_2\in \Gmod[\pi]{D}$. Then given homomorphisms $M_1\to N_1$, $M_2\to N_2$ which are compatible with $\phi$ there are canonical morphisms
\[\Tor^\Gamma_\bullet(M_1, M_2) \to \Tor^G_\bullet(N_1, N_2) \]

In our particular case of study, let $\Gamma$ be a discrete group and $\Sigma$ a finite family of subgroups of $\Gamma$. Let $(G,\famS)$ be a profinite group pair and assume $\phi$ is a map of group pairs (in the same sense as in Section \ref{SecBasicDefs}). There is then a commuting diagram of short exact sequences
\[\begin{tikzcd}
0\ar{r}&  \Delta_{\Gamma,\Sigma}\ar{r}\ar{d} & \Z[][\Gamma/\Sigma]\ar{r}\ar{d} & \Z\ar{r}\ar{d}  & 0\\
0\ar{r}& \Delta_{G,\famS}\ar{r} & \ZGS\ar{r} & \Z[\pi]\ar{r}& 0
\end{tikzcd}\]
where the top row is the defining exact sequence for relative homology of discrete groups, as in \cite{BE77}, Section 1. Let $A\in \Gmod[\pi]{D}$ and take (continuous) module morphisms from the above diagram to $A$---regarding $A$ as a $\Gamma$-module via $\phi$. Combining this with the canonical map $\Z\to \Z[\pi]$ and applying the above construction gives a commuting diagram of long exact sequences
\[\begin{tikzcd}
\cdots\ar{r} & H^k(G, A)\ar{r}\ar{d} & H^k(\famS, A)\ar{r}\ar{d} & H^k(G,\famS; A)\ar{r}\ar{d} & \cdots \\
\cdots\ar{r} & H^k(\Gamma, A)\ar{r} & H^k(\Sigma, A)\ar{r} & H^k(\Gamma,\Sigma; A)\ar{r} & \cdots 
\end{tikzcd}\]
where the bottom row is the relative cohomology sequence for discrete groups.
\begin{defn}
Let $\Gamma$ be a discrete group and $\Sigma$ be a finite family of subgroups of $\Gamma$. Let $G=\widehat\Gamma_{(\pi)}$ be the pro-$\pi$ completion relative to some set $\pi$ of primes, and let $i\colon \Gamma\to G$ be the canonical map. Let \famS\ be the finite family of subgroups $\overline{i(S)}$ where $S\in\Sigma$, where the bar denotes closure in $G$. We call the pair $(G,\famS)$ the {\em pro-$\pi$ completion} of the pair $(\Gamma,\Sigma)$. Note that the groups $\overline{i(S)}$ may not be the pro-$\pi$ completions of the discrete groups $S\in\Sigma$.

We say that $(\Gamma, \Sigma)$ is {\em (cohomologically) $\pi$-good} if for every $A\in \Gmod[\pi]{F}$ the map
\[H^\bullet(G,\famS; A)\to H^\bullet(\Gamma, \Sigma; A) \]
is an isomorphism. We also recall that $\Gamma$ is (cohomologically) $\pi$-good if for every $A\in \Gmod[\pi]{F}$ the map
\[H^\bullet(G, A)\to H^\bullet(\Gamma, A) \]
is an isomorphism.
\end{defn}
When $\pi$ is the set of all primes, any finite $\Gamma$-module is a $G$-module in a natural way and the definitions could be (and usually are) phrased in terms of `all finite $\Gamma$-modules'. When $\pi$ is not the set of all primes, not every finite $\pi$-primary $\Gamma$ module $M$ need be a $G$-module: one also requires that the image of the map $ \Gamma\to \Aut(M)$ is a $\pi$-group.

The definition is stated in terms of cohomology. Suppose $(\Gamma, \Sigma)$ is of type FP$_\infty$, in the sense that $\Delta_{\Gamma, \Sigma}$ has a resolution by finitely generated projectives $P_\bullet$. Then for any $M\in \Gmod[\pi]{F}$, we have
\[H_n(\Gamma, \Sigma; M)^\ast= H_n(P_\bullet^\perp\otimes_\Gamma M)^\ast= H^n((P_\bullet^\perp\otimes_\Gamma M)^\ast)=H^n(\Hom_\Gamma(P_\bullet, M^\ast))= H^n(\Gamma, \Sigma; M^\ast)  \]
 noting that $(-)^\ast = \Hom(-, I_\pi)$ is exact when applied to the sequences of finite $\pi$-modules $P_\bullet\otimes_\Gamma M$. Thus Pontrjagin duality holds for the discrete group pair. Since it holds for the profinite group pair as well we have the following proposition.
\begin{prop}\label{GoodHomol}
If $(\Gamma, \Sigma)$ is a $\pi$-good pair of type FP$_\infty$ and $(G,\famS)$ is its pro-$\pi$ completion then for every $M\in\Gmod[\pi]{F}$ the map of pairs $i\colon (\Gamma, \Sigma) \to (G,\famS)$ induces isomorphisms
\[H_\bullet(\Gamma, \Sigma; M) \iso H_\bullet(G,\famS; M) \] 
\end{prop}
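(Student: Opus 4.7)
The plan is to reduce the homology statement to the known cohomology statement via Pontrjagin duality, making essential use of the $\mathrm{FP}_\infty$ hypothesis (needed so that Pontrjagin duality is available on the discrete side, as observed in the paragraph just preceding the proposition).

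First I would observe that for $M \in \Gmod[\pi]{F}$, the Pontrjagin dual $M^\ast = \Hom(M, I_\pi)$ is again a finite $\pi$-primary module, and the $G$-action on $M^\ast$ still factors through a finite $\pi$-group quotient since the action on $M$ does. Thus $M^\ast \in \Gmod[\pi]{F}$, and moreover $M^{\ast\ast} \cong M$ naturally. This is the module to which I will apply $\pi$-goodness.

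Next I would assemble the naturality square
\[
\begin{tikzcd}
H_n(\Gamma,\Sigma;M) \ar{r}{i_\ast}\ar{d}{\cong} & H_n(G,\famS;M) \ar{d}{\cong} \\
H^n(\Gamma,\Sigma;M^\ast)^\ast & H^n(G,\famS;M^\ast)^\ast \ar{l}{(i^\ast)^\ast}
\end{tikzcd}
\]
whose vertical maps are the Pontrjagin duality isomorphisms: the right-hand one is the isomorphism stated in the excerpt, and the left-hand one is its discrete analogue established in the paragraph before the proposition (which required $(\Gamma,\Sigma)$ to be of type $\mathrm{FP}_\infty$ so that one can choose a resolution $P_\bullet$ of $\Delta_{\Gamma,\Sigma}$ by finitely generated projectives and identify $\Hom_\Gamma(P_\bullet,M^\ast)$ with $(P_\bullet^\perp\otimes_\Gamma M)^\ast$). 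Commutativity is proved at the chain level: choose a $\Gamma$-projective resolution $P_\bullet$ of $\Delta_{\Gamma,\Sigma}$ finitely generated in each degree, a $G$-projective resolution $Q_\bullet$ of $\Delta_{G,\famS}$, and a compatible chain map $f_\bullet\colon P_\bullet\to Q_\bullet$ lifting the natural map $\Delta_{\Gamma,\Sigma}\to\Delta_{G,\famS}$. Then both the homology map $i_\ast$ and the cohomology map $i^\ast$ are induced by $f_\bullet$, and both Pontrjagin duality isomorphisms arise from the identifications $\Hom_\Gamma(P_n, M^\ast)\cong(P_n^\perp\otimes_\Gamma M)^\ast$ and $\Hom_G(Q_n, M^\ast)\cong(Q_n^\perp\hotimes[G] M)^\ast$, which are themselves natural in chain maps. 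The square then commutes by direct inspection at the chain level.

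Finally, by $\pi$-goodness applied to $M^\ast$, the map
\[
i^\ast\colon H^n(G,\famS;M^\ast)\longrightarrow H^n(\Gamma,\Sigma;M^\ast)
\]
is an isomorphism, so its Pontrjagin dual $(i^\ast)^\ast$ (the bottom arrow of the square) is an isomorphism of finite abelian groups. Commutativity of the square then forces $i_\ast$ to be an isomorphism, as required.

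The main obstacle I anticipate is verifying the commutativity of the naturality square cleanly; the content is essentially bookkeeping about compatibility of Pontrjagin duality with a chain map between resolutions over different rings, but one has to be careful that the maps $P_\bullet\to Q_\bullet$ chosen to compute $i_\ast$ and $i^\ast$ really can be taken to be the same, and that the right-module-versus-left-module swap (using $(-)^\perp$) behaves functorially. Once that is set up, the deduction is immediate.
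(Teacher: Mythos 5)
Your proposal is correct and is essentially the paper's own argument: the paper establishes Pontrjagin duality for the discrete pair in the paragraph immediately preceding the proposition (using the FP$_\infty$ hypothesis exactly as you do), already has it for the profinite pair, and deduces the homology isomorphism from the cohomology isomorphism supplied by $\pi$-goodness applied to $M^\ast$. The naturality of the duality square that you verify at the chain level is left implicit in the paper, but your treatment of it is the right one.
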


A basic proposition that derives immediately from the above diagram of exact sequences and the 5-Lemma is the following.
\begin{prop}\label{LESGoodness}
Let $\Gamma$ be a discrete group and $\Sigma$ be a finite family of subgroups of $\Gamma$. Let $(G,\famS)$ be the pro-$\pi$ completion of $(\Gamma, \Sigma)$ and assume that for every $S\in\Sigma$ the natural map $\widehat{S}_{(\pi)}\to \overline{i(S)}$ is an isomorphism, where $\widehat{S}_{(\pi)}$ is the pro-$\pi$ completion. Assume further that each $S\in\Sigma$ is $\pi$-good. Then $\Gamma$ is $\pi$-good if and only if the pair $(\Gamma, \Sigma)$ is $\pi$-good. 
\end{prop}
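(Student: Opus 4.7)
The plan is to invoke the commuting ladder of long exact sequences already noted immediately before the statement, together with the 5-Lemma applied inductively on $k$.

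First, I would fix $A\in\Gmod[\pi]{F}$ and write down the commutative diagram
\[\begin{tikzcd}[cramped, column sep = small]
\cdots\ar{r} & H^k(G, A)\ar{r}\ar{d}{\alpha_k} & H^k(\famS, A)\ar{r}\ar{d}{\beta_k} & H^{k+1}(G,\famS; A)\ar{r}\ar{d}{\gamma_{k+1}} & H^{k+1}(G,A) \ar{d}{\alpha_{k+1}} \ar{r}&\cdots \\
\cdots\ar{r} & H^k(\Gamma, A)\ar{r} & H^k(\Sigma, A)\ar{r} & H^{k+1}(\Gamma,\Sigma; A)\ar{r} & H^{k+1}(\Gamma,A)\ar{r}&\cdots
\end{tikzcd}\]
induced by the map of pairs $i\colon(\Gamma,\Sigma)\to(G,\famS)$.

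The central observation is that the middle map $\beta_k$ is always an isomorphism under the given hypotheses. Since $\famS$ and $\Sigma$ are finite families, $H^k(\famS,A) = \bigoplus_{S\in\Sigma} H^k(\overline{i(S)}, A)$ and $H^k(\Sigma, A) = \bigoplus_{S\in\Sigma} H^k(S, A)$, so it suffices to check the claim for each summand. The assumed isomorphism $\widehat{S}_{(\pi)}\to \overline{i(S)}$ identifies $H^k(\overline{i(S)},A)$ with $H^k(\widehat{S}_{(\pi)}, A)$, and the $\pi$-goodness of $S$ then identifies this with $H^k(S,A)$. One checks that under these identifications the comparison map $\beta_k$ coincides with the composition of these isomorphisms, so $\beta_k$ is itself an isomorphism for every $k$.

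With $\beta_k$ an isomorphism for all $k$, the diagram places us in exactly the situation of the 5-Lemma: if all $\alpha_k$ are isomorphisms (i.e.\ $\Gamma$ is $\pi$-good) then the 5-Lemma applied to four consecutive squares shows each $\gamma_{k+1}$ is an isomorphism, which is the statement that $(\Gamma,\Sigma)$ is $\pi$-good; conversely if each $\gamma_{k+1}$ is an isomorphism then the same 5-Lemma argument (reading the diagram from right to left, or simply by symmetry) forces each $\alpha_k$ to be an isomorphism, giving $\pi$-goodness of $\Gamma$.

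There is essentially no technical obstacle: the only point requiring mild care is verifying that $\beta_k$ really is the composition of the two isomorphisms mentioned above, which is a naturality check that follows directly from the construction of the comparison map $H^\bullet(G,-)\to H^\bullet(\Gamma,-)$ given earlier in the section, combined with naturality of the Shapiro isomorphism identifying $H^\bullet(G,\coind^{\overline{i(S)}}_G(A))$ with $H^\bullet(\overline{i(S)}, A)$ (and its discrete analogue). Once that is recorded, the proposition is a one-line application of the 5-Lemma.
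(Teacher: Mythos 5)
Your proof is correct and is exactly the paper's argument: the paper derives the proposition "immediately from the above diagram of exact sequences and the 5-Lemma," and your write-up simply makes explicit the (correct) verification that the middle vertical maps $\beta_k$ are isomorphisms under the stated hypotheses before applying the 5-Lemma in both directions.
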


We now proceed towards the expected result that the pro-$\pi$ completion of a PD$^n$ pair of discrete groups is a PD$^n$ pair at the prime $p$ where $p\in\pi$, under certain conditions on the pro-$\pi$ topology on $\Gamma$. 
\begin{defn}
Let $\Gamma$ be a discrete group and $G$ its profinite completion. For a $\Gamma$-module $P$, let $\widehat{P}$ be the $G$-module defined by
\[\widehat{P}_{(\pi)} = \varprojlim _{K, m} \Z/m\otimes_K P \]
where $K$ runs over the finite index normal subgroups of $\Gamma$ with $\Gamma/K$ a $\pi$-group and $m$ runs over those integers divisible only by primes from $\pi$. Here each module in the inverse limit acquires the $\Gamma/K$-action (and hence $G$-action) given by the left action on $P$. This is well-defined since $K$ is normal in $G$. By passing to a subsequence one may assume that this limit is indexed over a totally ordered set of pairs $(K,m)$. 
\end{defn}
\begin{lem}
The map $P\to \widehat{P}_{(\pi)}$ is an additive functor from the category of finitely generated $G$-modules to \Gmod[\pi]{C} taking finitely generated projective modules to finitely generated projective modules.
\end{lem}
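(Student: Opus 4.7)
The plan is to proceed in three stages, establishing functoriality/additivity, computing the value on the free module $\Z\Gamma$, and then deducing the projective claim by a summand argument.

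First, functoriality and additivity. A $\Gamma$-module homomorphism $f\colon P\to P'$ is in particular $K$-equivariant for every $K\sbgp\Gamma$, so it induces $\Gamma/K$-linear maps $\Z/m\otimes_K f\colon \Z/m\otimes_K P\to\Z/m\otimes_K P'$ that are compatible with the structure maps of the inverse system in the indices $(K,m)$. Passing to the limit produces a continuous $G$-linear map $\widehat f_{(\pi)}\colon \widehat P_{(\pi)}\to \widehat{P'}_{(\pi)}$, and standard diagram arguments show this assignment is functorial and additive. Compatibility with finite direct sums follows because tensor products commute with direct sums and inverse limits commute with finite direct sums.

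Second, the key computation: there is a natural isomorphism $\widehat{\Z\Gamma}_{(\pi)}\iso \Zpof{G}$ of $G$-modules. For $K$ a finite-index normal subgroup with $\Gamma/K$ a $\pi$-group, the coinvariants $\Z/m\otimes_K \Z\Gamma$ are obtained from $\Z\Gamma/m\Z\Gamma$ by imposing $k\cdot\gamma=\gamma$ for $k\in K$, $\gamma\in\Gamma$; this identifies basis vectors within each left coset $K\gamma$ and yields $(\Z/m)[K\lqt\Gamma]$. Normality of $K$ ensures that the left multiplication action of $\Gamma$ descends to $\Gamma/K$, and $K\lqt\Gamma=\Gamma/K$ as $\Gamma$-sets. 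Taking the inverse limit over the cofinal system of pairs $(K,m)$ subject to the stated restrictions produces exactly the defining inverse system for $\Zpof{G}$, and the identifications are natural and $G$-equivariant.

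Third, the projective claim. Since the functor is additive, $\widehat{(\Z\Gamma)^n}_{(\pi)}\iso \Zpof{G}^n$ for every $n$. A finitely generated projective $\Gamma$-module $P$ is a direct summand of some $(\Z\Gamma)^n$; write $(\Z\Gamma)^n=P\oplus Q$. Applying $\widehat{(-)}_{(\pi)}$ and using additivity gives $\Zpof{G}^n\iso \widehat P_{(\pi)}\oplus \widehat Q_{(\pi)}$, so $\widehat P_{(\pi)}$ is a direct summand of a finitely generated free \Zpof{G}-module, hence a finitely generated projective object of $\Gmod[\pi]{C}$.

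The only step requiring care is the second, and even there the content is essentially the definition of $\Zpof{G}$ once one checks that normality of $K$ makes the left $\Gamma$-action pass to the quotient—so there is no serious obstacle; the remaining items are formal.
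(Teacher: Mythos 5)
Your proof is correct and follows essentially the same route as the paper: establish additivity by passing inverse limits of additive functors through, compute $\widehat{\Z\Gamma}_{(\pi)}\iso \Zpof{G}$ to handle finitely generated free modules, and then invoke the characterization of finitely generated projectives as summands of finitely generated frees in both categories. The only difference is cosmetic: where the paper simply cites the identity $\Zpiof{G}=\varprojlim_{K,m}\Z/m\otimes_K\Z[\Gamma]$ from \cite{RZ00}, you unpack it by hand (including the observation that normality of $K$ is what allows the left $\Gamma$-action to descend and lets one identify $K\lqt\Gamma$ with $\Gamma/K$).
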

\begin{proof}
The additivity statement follows immediately from the fact that an inverse limit of additive functors to \Gmod[\pi]{F} is an additive functor to \Gmod[\pi]{C}. By definition we have
\[\Zpiof{G} = \varprojlim_{K, m} \Z/m \otimes_K \Z{} [\Gamma]  \]
(see Section 5.3 of \cite{RZ00} for information on complete group rings). So our functor takes finitely generated free modules to finitely generated free modules. Since in both categories the finitely generated projectives are precisely the direct summands of the finitely generated free modules we are done.
\end{proof}
\begin{prop}
Suppose that the pair $(\Gamma, \Sigma)$ is $\pi$-good, and let $(G,\famS)$ be its pro-$\pi$ completion. Suppose that $P_\bullet{}_{(\pi)}$ is a resolution of $\Delta_{\Gamma, \Sigma}$ by finitely generated projective $\Gamma$-modules. Then the complex $\widehat{P}_\bullet{}_{(\pi)}$ is a resolution of $\Delta_{G,\famS}$ by finitely generated projective $\Zpiof{G}$-modules, where $(G,\famS)$ is the pro-$\pi$ completion of $(\Gamma, \Sigma)$. 
\end{prop}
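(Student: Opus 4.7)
The plan has three parts. First, each $\widehat{P_i}_{(\pi)}$ is a finitely generated projective $\Zpiof{G}$-module by the previous lemma applied termwise, and the differentials transport from $P_\bullet$ by functoriality of $\widehat{(-)}_{(\pi)}$. Next, I would construct the augmentation $\widehat{P_0}_{(\pi)} \to \Delta_{G,\famS}$ by completing the composition $P_0 \twoheadrightarrow \Delta_{\Gamma,\Sigma} \hookrightarrow \Z[][\Gamma/\Sigma]$ to a map $\widehat{P_0}_{(\pi)} \to \Zpiof{G/\famS}$; since the further composition to $\Z[\pi]$ vanishes, the image lies in $\Delta_{G,\famS}$.

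The central cohomological identity comes from $\pi$-goodness. For any $A \in \Gmod[\pi]{F}$, the natural isomorphism $\Hom_\Gamma(P_i, A) \iso \Hom_G(\widehat{P_i}_{(\pi)}, A)$---immediate for finitely generated free modules (where both sides equal $A^n$) and extending to projective summands---yields
\[ H^k(\Hom_G(\widehat{P}_\bullet{}_{(\pi)}, A)) \iso H^k(\Hom_\Gamma(P_\bullet, A)) = \Ext^k_\Gamma(\Delta_{\Gamma, \Sigma}, A) = H^{k+1}(\Gamma, \Sigma; A), \]
and $\pi$-goodness identifies this with $H^{k+1}(G, \famS; A) = \Ext^k_G(\Delta_{G,\famS}, A)$.

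The third and hardest step is to upgrade this cohomological equality to an actual resolution. For this I would take a genuine projective resolution $Q_\bullet \to \Delta_{G,\famS}$ in $\Gmod[\pi]{C}$ and use projectivity of the $\widehat{P_i}_{(\pi)}$ to lift the augmentation to a chain map $\phi\colon \widehat{P}_\bullet{}_{(\pi)} \to Q_\bullet$ covering the identity on $\Delta_{G,\famS}$. Applying $\Hom_G(-,A)$ for $A \in \Gmod[\pi]{F}$ and using the above identifications, the induced map $\phi^\ast\colon \Ext^\ast_G(\Delta_{G,\famS}, A) \to \Ext^\ast_G(\Delta_{G,\famS}, A)$ coincides with the identity under the two (\emph{a priori} distinct) identifications of $H^\ast(\Hom_G(\widehat{P}_\bullet{}_{(\pi)}, A))$ with $\Ext^\ast_G(\Delta_{G,\famS}, A)$---one via goodness, the other via $\phi$. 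Consequently the mapping cone of $\phi$ has vanishing $H^\ast(\Hom_G(-,A))$ for every finite $\pi$-primary $A$, and a hypercohomology spectral sequence argument (passing to the lowest non-vanishing homology of the cone and using that a non-zero finitely generated pro-$\pi$ module admits a non-zero map to some finite module) forces the cone to be acyclic. Hence $\phi$ is a quasi-isomorphism, giving $H_0(\widehat{P}_\bullet{}_{(\pi)}) \iso \Delta_{G,\famS}$ and $H_i(\widehat{P}_\bullet{}_{(\pi)}) = 0$ for $i \geq 1$. The main technical obstacles are verifying that the two identifications of the abutment really coincide (a naturality check traced through each step of the cohomological chain above) and the final implication from vanishing $\Hom$-cohomology to acyclicity, which requires care in identifying which edge maps in the hypercohomology spectral sequence can fail to be surjective.
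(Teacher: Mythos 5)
Your argument is correct in outline, but it is a genuinely different route from the one the paper takes. The paper works entirely in homology: it writes $H_k(\widehat{P}_\bullet{}_{(\pi)})=H_k(\varprojlim \Z/m\otimes_K P_\bullet)$, commutes homology with the inverse limit (Mittag--Leffler is automatic since all terms are finite), identifies each term as $H_k(K,\Sigma^K;\Z/m)$, transfers to $H_k(\overline K,\famS^{\overline K};\Z/m)$ by \emph{homological} goodness of the finite-index pairs (Propositions \ref{GoodHomol} and \ref{GoodFISubgps}), and then lets the $\overline K$ shrink to the trivial group via Proposition \ref{IntersectionsOfSubGps}, landing on $\Delta_{G,\famS}$ in degree $0$ and $0$ elsewhere --- no comparison chain map and no dual detection lemma are needed. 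Your route instead uses only cohomological goodness of $(\Gamma,\Sigma)$ itself (a slightly lighter input: you never need goodness of the subgroups $K$), at the price of two extra ingredients: the naturality check for $\phi^\ast$, and the lemma that a bounded-below complex of projectives in \Gmod[\pi]{C} with vanishing $\Hom_G(-,A)$-cohomology for all $A\in\Gmod[\pi]{F}$ is acyclic. Both are fine. For the first, note that the composite $P_\bullet\to\widehat{P}_\bullet{}_{(\pi)}\xrightarrow{\phi}Q_\bullet$ is itself a lift of $\Delta_{\Gamma,\Sigma}\to\Delta_{G,\famS}$, so the goodness isomorphism \emph{is} computed by $\phi^\ast$ followed by your adjunction isomorphism of complexes; there is nothing further to trace. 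For the second, a cleaner argument than the hypercohomology spectral sequence is Pontrjagin duality plus exactness of inverse limits of compact modules: the hypothesis dualises to $H_\ast(C_\bullet/JC_\bullet)=0$ for every open (two-sided) ideal $J$ of $\Zpiof{G}$ with finite quotient, and $H_\ast(C_\bullet)=\varprojlim_J H_\ast(C_\bullet/JC_\bullet)=0$. Finally, your augmentation step tacitly uses $\widehat{\Z[\Gamma/\Sigma]}_{(\pi)}\iso\Zpiof{G/\famS}$; this is true (it reduces to $\varprojlim_K \Z/m[K\backslash\Gamma/\Sigma]\iso\Zpiof{G/\famS}$, using that $\famS$ consists of closures of the images of $\Sigma$) but deserves a line of justification.
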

\begin{proof}
We compute the homology of the chain complex $\widehat{P}_\bullet$ to verify that it is a resolution of $\Delta_{G,\famS}$.
We have
\begin{align*}
H_k(\widehat{P}_\bullet{}_{(\pi)}) & = H_k(\varprojlim \Z/m\otimes_K P_\bullet{}) && \\
& = \varprojlim H_k(\Z/m\otimes_K P_\bullet) && \text{by the Mittag-Leffler condition}\\
& = \varprojlim \Tor^K_k(\Z/m, \Delta_{K, \Sigma^K}) && \\
& = \varprojlim H_k(K, \Sigma^K; \Z/m) && \text{by Proposition \ref{relequalsext}}\\
& = \varprojlim H_k(\overline{K}, \famS^{\overline{K}}; \Z/m) && \text{by Proposition \ref{GoodHomol}}\\
& = H_k(1, \famS^1; \Z[\pi])&& \text{by Propostion \ref{IntersectionsOfSubGps}, since $\bigcap \overline{K} =1 $} \\
& = \Tor^1_k(\Z[\pi],\res^G_1(\Delta_{G,\famS})) && \\
& =\begin{cases} 0 & \text{if $k\neq 0$}\\ \Delta_{G,\famS} &\text{if $k=0$} \end{cases}
\end{align*}
where we have used several results from Section \ref{SecSubgpPairs} applied to discrete groups as they work just as well in the discrete case. We have further used the fact that, by definition of the pro-$\pi$ completion, the intersection of all the subgroups $\overline{K}\subseteq G$ (that is, of all the open subgroups) is trivial. For the Mittag-Leffler condition see Section 3.5 of \cite{Weibel95}. This is in fact a chain of isomorphisms of (right) $G$-modules when $\Delta$ and $\widehat{P}_\bullet{}_{(\pi)}$ are given the canonical right action dual to their left $G$-actions and the Tor groups have the right $G$-action induced as in Proposition \ref{ShapiroWithAction}.   
\end{proof}
\begin{clly}
Suppose that the pair $(\Gamma, \Sigma)$ is $\pi$-good, and let $(G,\famS)$ be its pro-$\pi$ completion. If $(\Gamma, \Sigma)$ has type FP$_\infty$ then $(G, \famS)$ has type \pFP[\infty] for all $p\in\pi$.
\end{clly}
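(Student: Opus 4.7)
The plan is to combine the preceding proposition, which does all the heavy lifting, with the change-of-primes machinery from Proposition \ref{DiffPrimes}. The corollary is really just an unpacking of what the previous result says.

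First I would invoke the hypothesis that $(\Gamma, \Sigma)$ has type FP$_\infty$, which by definition (for discrete pairs, the obvious analogue of the profinite notion) yields a resolution $P_\bullet \to \Delta_{\Gamma, \Sigma}$ by finitely generated projective $\Z[\Gamma]$-modules. The preceding proposition then applies verbatim: the complex $\widehat{P}_\bullet{}_{(\pi)}$, obtained by applying the pro-$\pi$ completion functor degreewise, is a resolution of $\Delta_{G,\famS}$ by finitely generated projective $\Zpiof{G}$-modules. Consequently $\Delta_{G,\famS}$ is of type FP$_\infty$ in $\Gmod[\pi]{C}$.

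Next I would pass from $\pi$ to a single prime $p \in \pi$. Applying the $p$-primary component functor $(-)(p)$ to the resolution $\widehat{P}_\bullet{}_{(\pi)} \to \Delta_{G,\famS}$ yields a complex of $\Z[p][\![G]\!]$-modules augmented over $\Delta_{G,\famS}(p)$. By the remark preceding Proposition \ref{DiffPrimes}, taking $p$-primary components is an exact functor from $\Gmod[\pi]{C}$ to $\Gmod[p]{C}$, so the resulting complex is still a resolution. By Proposition \ref{DiffPrimes} each module $\widehat{P}_n{}_{(\pi)}(p)$ is finitely generated and projective in $\Gmod[p]{C}$. Hence $\Delta_{G,\famS}(p)$, which is exactly $\Delta_{G,\famS}$ viewed as an object of $\Gmod[p]{C}$, has a projective resolution by finitely generated $\Z[p][\![G]\!]$-modules, and the pair $(G,\famS)$ is of type \pFP[\infty].

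There is essentially no obstacle here: the substantive content — identifying the homology of $\widehat{P}_\bullet{}_{(\pi)}$ with $\Delta_{G,\famS}$ via the goodness hypothesis and the Mittag-Leffler argument — has already been carried out in the preceding proposition. All that remains is to observe that finite generation and projectivity both survive the $\pi \to p$ truncation, which is what Proposition \ref{DiffPrimes} is designed to guarantee.
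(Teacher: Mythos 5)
Your proof is correct and follows essentially the same route as the paper: invoke the preceding proposition to get a finitely generated projective resolution of $\Delta_{G,\famS}$ over $\Zpiof{G}$, then apply the exact $p$-primary component functor and Proposition \ref{DiffPrimes} to land in $\Gmod[p]{C}$. (The paper's own wording slips and refers to $\Z[\pi]$ where it means $\Delta_{G,\famS}$; your version states the argument as intended.)
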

\begin{proof}
The previous proposition shows that $\Z[\pi]$ is of type FP$_\infty$ in \Gmod[\pi]{C}. If we take a projective resolution of $\Z[\pi]$ which is finitely generated in each dimension then taking $p$-primary components (which is an exact functor) gives a resolution of $\Z[p]$ in \Gmod[p]{C} by finitely generated projectives by Proposition \ref{DiffPrimes}.
\end{proof}
\begin{clly}
Suppose $(\Gamma, \Sigma)$ is a $p$-good pair of discrete groups with $H^k(\Gamma, \Sigma; F)$ finite for all $p$-primary $\Gamma$-modules $F$ and all $k$. If $(G,\famS)$ is its pro-$p$ completion then $(G,\famS)$ has type \pFP[\infty].
\end{clly}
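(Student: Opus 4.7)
The plan is to apply the virtually pro-$p$ criterion for FP$_n$, namely Proposition \ref{FPnForVirtProP}, to the $G$-module $\Delta = \Delta_{G,\famS} \in \Gmod[p]{C}$. Since $G$ is the pro-$p$ completion of $\Gamma$, it is a pro-$p$ group and hence certainly virtually pro-$p$, so the criterion applies: $\Delta$ is of type FP$_\infty$ if and only if $\Ext^k_G(\Delta, F)$ is finite for every simple $F \in \Gmod[p]{F}$ and every $k \geq 0$.

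First I would translate this condition into a condition on relative cohomology. By Proposition \ref{relequalsext} there is a natural isomorphism
\[ \Ext^k_G(\Delta, F) \iso H^{k+1}(G, \famS; F), \]
so it suffices to verify that $H^{k+1}(G, \famS; F)$ is finite for every simple $F \in \Gmod[p]{F}$ and every $k \geq 0$. By Lemma 7.1.5 of \cite{RZ00}, the only simple object of $\Gmod[p]{F}$ is $\F_p$; moreover any $G$-action on $\F_p$ factors through $\Aut(\F_p) = \F_p^\times$, a group of order coprime to $p$, so since $G$ is pro-$p$ the action must be trivial. In particular $\F_p$ (with trivial action) is also a perfectly good $p$-primary $\Gamma$-module via $i\colon \Gamma \to G$.

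Next I would invoke $p$-goodness. Since $(\Gamma, \Sigma)$ is $p$-good, the canonical comparison map gives a natural isomorphism
\[ H^{k+1}(G, \famS; \F_p) \iso H^{k+1}(\Gamma, \Sigma; \F_p), \]
and the right-hand side is finite by the standing hypothesis that $H^k(\Gamma, \Sigma; F)$ is finite for all $p$-primary $\Gamma$-modules $F$ and all $k$. Combining this with the previous paragraph produces the finiteness of $\Ext^k_G(\Delta, F)$ for all $k$ and all simple $F$, so Proposition \ref{FPnForVirtProP} concludes that $\Delta$ has type FP$_\infty$ in $\Gmod[p]{C}$; that is, $(G, \famS)$ is of type $p$-FP$_\infty$.

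Since every step is a direct appeal to results already established in the paper (the Ext--relative-cohomology identification, the pro-$p$ criterion for FP$_n$, and the definition of $p$-goodness), I do not expect any genuine obstacle; the only thing one needs to verify carefully is the observation that a simple module in $\Gmod[p]{F}$ over a pro-$p$ group must be the trivial $\F_p$, so that $p$-goodness can legitimately be applied to it.
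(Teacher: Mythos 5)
Your proof is correct and is exactly the argument the paper intends: its own proof reads ``This follows immediately from Proposition \ref{FPnForVirtProP},'' and your write-up simply supplies the details (the identification $\Ext^k_G(\Delta,F)\iso H^{k+1}(G,\famS;F)$, the observation that the only simple module over a pro-$p$ group is the trivial $\F_p$, and the appeal to $p$-goodness to transfer finiteness from the discrete pair).
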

\begin{proof}
This follows immediately from Proposition \ref{FPnForVirtProP}.
\end{proof}
\begin{theorem}\label{GoodPDnPairs}
Suppose that the pair $(\Gamma, \Sigma)$ is of type FP$_\infty$  and let $(G,\famS)$ be its pro-$\pi$ completion. Suppose that $\Gamma$ and $(\Gamma, \Sigma)$ are $\pi$-good. Assume that $(\Gamma, \Sigma)$ is orientable if $2\notin \pi$. If $(\Gamma, \Sigma)$ is a PD$^n$ pair then $(G,\famS)$ is a PD$^n$ pair at the prime $p$ for every $p\in \pi$.
\end{theorem}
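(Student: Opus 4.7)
The strategy is to verify condition (2) of Proposition \ref{PDisduality3}. The corollary immediately preceding the theorem already gives that $(G,\famS)$ has type \pFP[\infty] at every $p\in\pi$; running the analogous argument for $\Gamma$ itself (using $\pi$-goodness of $\Gamma$ and its finiteness properties, which follow from the hypotheses on $(\Gamma,\Sigma)$ via the long exact sequence) shows $G$ has type \pFP[\infty] as well. It thus suffices to exhibit a dualising $G$-module $J$ and a coclass $j_G\colon H^n(G,\famS;J)\to I_p$ such that the adjoint cup product
\[\Upsilon^k\colon {\bf H}^k(G;C)\to {\bf H}^{n-k}(G,\famS;\bHom(C,J))^\ast\]
is an isomorphism of $\Gmod[p]{P}$-functors in each degree~$k$.

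To construct $J$, let $\chi\colon\Gamma\to\{\pm 1\}\subseteq\Aut(\Z)$ be the orientation character of the discrete pair. By hypothesis $\chi$ is trivial whenever $2\notin\pi$, so in every case the kernel of $\chi$ has index $1$ or $2$ and is open in the pro-$\pi$ topology on $\Gamma$; hence $\chi$ extends continuously to a character $\bar\chi\colon G\to\{\pm 1\}\subseteq\Aut(\Z[p])$ for each $p\in\pi$. Let $J\in\Gmod[p]{D}$ be the abelian group $I_p$ equipped with the $G$-action $\bar\chi$, and write $J=\varinjlim_k J_k$ where $J_k$ is the $p^k$-torsion subgroup. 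Each $J_k$ is a finite $\pi$-primary $G$-module, and since cohomology commutes with direct limits, $\pi$-goodness of $(\Gamma,\Sigma)$ yields
\[H^n(G,\famS;J)=\varinjlim_k H^n(G,\famS;J_k)\iso \varinjlim_k H^n(\Gamma,\Sigma;J_k)=H^n(\Gamma,\Sigma;J).\]
Classical Bieri--Eckmann duality for $(\Gamma,\Sigma)$, combined with the fact that $J|_\Gamma\otimes_\Z \Z_\chi\iso I_p$ canonically as a trivial $\Gamma$-module (since $\chi^2=1$), identifies $H^n(\Gamma,\Sigma;J)$ with $I_p$ and so supplies the coclass $j_G$.

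Now fix $C\in\Gmod[p]{F}$. Naturality of cup products under the map of pairs $i\colon(\Gamma,\Sigma)\to(G,\famS)$ (Proposition \ref{GrpPairCupProd1}) identifies $\Upsilon^k$, via the goodness isomorphisms applied to $C$ and to the finite $\pi$-primary module $\bHom(C,J)$, with the corresponding adjoint cup product $\Upsilon_\Gamma^k$ for the discrete pair. Since $(\Gamma,\Sigma)$ is a discrete PD$^n$ pair, $\Upsilon_\Gamma^k$ is an isomorphism: this cup product formulation is the Pontrjagin dual of the classical cap product duality of \cite{BE77} and follows from the hypotheses by the d\'evissage argument used in Theorem \ref{PDisduality2}. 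Hence $\Upsilon^k$ is an isomorphism on all of $\Gmod[p]{F}$, and Propositions \ref{FptoPp1} and \ref{FptoPp2} extend this to an isomorphism of connected sequences of continuous $\Gmod[p]{P}$-functors. Proposition \ref{PDisduality3} then gives the conclusion. The main technical obstacles are (i) ensuring that $\chi$ extends continuously to $G$, which is exactly what the orientability-when-$2\notin\pi$ hypothesis guarantees, and (ii) showing that goodness-compatible resolutions intertwine the discrete and profinite cup products on the nose; this last point is where naturality of cup products with respect to group pair maps does the essential work.
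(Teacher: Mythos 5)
Your strategy is genuinely different from the paper's. The paper never invokes the cup-product criterion at all: it verifies the definition of a PD$^n$ pair directly, writing $\Zpof{G}$ as $\varprojlim \Z/p^m[G/U]$ and computing ${\bf H}^\ast(G,\famS;\Zpof{G})$ term by term via goodness of $(\Gamma,\Sigma)$ and of its finite-index subpairs (Proposition \ref{GoodFISubgps}), Shapiro isomorphisms, and the classical \emph{homological} form of Bieri--Eckmann duality for $\Gamma\cap U$, arriving at concentration in degree $n$ with value $\widetilde{\Z}_p$. Your construction of the dualising module $J$ by extending the orientation character (and the role of the orientability hypothesis when $2\notin\pi$) matches the paper exactly, and your reduction of the finiteness conditions to the preceding corollary is fine.

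The gap is at the central step: you assert that the discrete pair $(\Gamma,\Sigma)$ satisfies duality in the \emph{adjoint cup product} form with finite coefficients, justifying this as ``the Pontrjagin dual of the classical cap product duality'' together with ``the d\'evissage argument used in Theorem \ref{PDisduality2}''. Neither half is available off the shelf. Bieri--Eckmann state duality as a cap product with a fundamental class; to convert this into the statement that $\Upsilon^k_\Gamma$ (the specific map built from the cup product and the coclass $j_\Gamma$) is an isomorphism, you must prove that capping with the fundamental class is adjoint to cupping with the fundamental coclass, which is a sign- and convention-sensitive cochain computation not performed anywhere in the paper or in \cite{BE77}. Alternatively, to run the d\'evissage you would need a discrete analogue of Theorem \ref{PDisduality2} --- in particular the identification of $\Upsilon^0_\Gamma$ with the representing isomorphism $\Hom_\Gamma(-,\widetilde I_p)\iso H^n(\Gamma,\Sigma;-)^\ast$ and a $\delta$-functor uniqueness argument in the discrete category --- none of which is established by the results you cite (Propositions \ref{FptoPp1} and \ref{FptoPp2} concern continuous functors on profinite module categories only). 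A secondary, more minor point: Proposition \ref{GrpPairCupProd1} is stated for homomorphisms of profinite groups, so the compatibility of the discrete and profinite cup products across $i\colon\Gamma\to G$ needs the explicit cochain-level comparison of resolutions from the start of Section \ref{SecGoodness}, not just a citation. The paper's route avoids all of this by never leaving the world of (co)homology groups with finite coefficients, which is exactly where goodness gives isomorphisms for free.
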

\begin{proof}
First note that by the previous proposition $(G,\famS)$ is of type \pFP[\infty] and by goodness $\cd_p(G,\famS)\leq \cd(\Gamma, \Sigma)=n$. Let $\widetilde \Z$ be the dualising module of $(\Gamma, \Sigma)$ (in the sense of Definition 4.1 of \cite{BE77}). Either the action on $\widetilde \Z$ is trivial or it factors through a map from $\Gamma$ to $\Z/2\Z$. Hence by the assumption on orientability the action of $\Gamma$ on $\widetilde \Z$ factors through $G$, so we may take the pro-$p$ completion to acquire a $G$-module $\widetilde{\Z[p]}$. The Pontrjagin dual of $\widetilde{\Z[p]}$ is a module $\widetilde I_p$ isomorphic to $I_p$ as an abelian group.

 For a finite module $M$ let $\widetilde M=\widetilde\Z\otimes M$ with the diagonal action---that is, the abelian group $M$ with an appropriately twisted action. Note that $\widetilde\Z\otimes\widetilde\Z=\Z$.
  
We have the following chain of isomorphisms induced by $\pi$-goodness and Poincar\'e duality for the pair $(\Gamma, \Sigma)$ and its finite index normal subgroups $K$ with $\Gamma/K$ a $\pi$-group (goodness is inherited by such subgroups by Proposition \ref{GoodFISubgps} below). We also note that by the analogue of Proposition \ref{FPforPDn} for discrete groups, $\Gamma$ is of type FP$_\infty$ and we may also use homological goodness as in Proposition \ref{GoodHomol}. The inverse limits are indexed over pairs $(U,m)$ where $U$ is an open normal subgroup of $G$ with $G/U$ a $\pi$-group and $m$ is a $\pi$-number.
\begin{eqnarray*}
{\bf H}^\ast(G,\famS; \Zpof{G}) & = & \varprojlim H^\ast(G,\famS; \Z/p^m[G/U])\\
& \iso & \varprojlim H^\ast(\Gamma, \Sigma; \Z/p^m[G/U])\\
& \iso & \varprojlim H^\ast(\Gamma, \Sigma; \Z/p^m[\Gamma/\Gamma\cap U])\\
& \iso & \varprojlim H^\ast(\Gamma, \Sigma; \ind^{\Gamma\cap U}_\Gamma(\Z/p^m))\\
& \iso & \varprojlim H^\ast(\Gamma, \Sigma; \coind^{\Gamma\cap U}_\Gamma(\Z/p^m))\\
& \iso & \varprojlim H^\ast(\Gamma \cap U, \Sigma^{\Gamma\cap U}; \Z/p^m) \\
& \iso & \varprojlim H_{n-\ast}(\Gamma\cap U, \widetilde{\Z/p^m}) \\
& \iso & \varprojlim H_{n-\ast}(U, \widetilde{\Z/p^m})  \\
& \iso & H_{n-\ast}(1, \widetilde{\Z}_p) \\
& = & \begin{cases} 0 & \text{if $\ast\neq n$} \\ \widetilde{\Z}_p & \text{if $\ast= n$} \end{cases}
\end{eqnarray*}
so $(G,\famS)$ is indeed a PD$^n$ pair. These are all isomorphisms of $G$-modules where the actions in the first four lines are derived from the right action of $G$ on $\Zpof{G}$ or on $\Z/p^m[G/U]$ and the actions in the remaining lines are given by the conjugation action of $G$. See Proposition \ref{ShapiroWithAction}. The second to last line follows by the absolute version of Proposition \ref{IntersectionsOfSubGps}.
\end{proof}
The absolute version of Theorem \ref{GoodPDnPairs} was first proved by Pletch \cite{Pletch80b}.
\subsection{Further properties of relative goodness}
Here we state and give brief proofs of some properties of goodness which mirror those of absolute goodness.
\begin{prop}\label{GoodFISubgps}
Let $(\Gamma, \Sigma)$ be a $\pi$-good pair, and let $K$ be a finite index normal subgroup of $\Gamma$ such that $\Gamma/K$ is a $\pi$-group. Then $(K, \Sigma^K)$ is also $\pi$-good.
\end{prop}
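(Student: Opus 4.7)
The plan is to reduce $\pi$-goodness for $(K,\Sigma^K)$ to $\pi$-goodness for $(\Gamma,\Sigma)$ by applying the (relative) Shapiro lemma on both the discrete and profinite sides and comparing. Set $G=\widehat\Gamma_{(\pi)}$ and $U=\overline{i(K)}\leq G$. Since $[\Gamma:K]$ is a finite $\pi$-number, $U$ is an open subgroup of $G$ of that same index, and I would first observe that the pro-$\pi$ completion of $K$ is canonically $U$: for any finite $\pi$-index normal subgroup $L\lhd K$, the intersection $\bigcap_{g\in\Gamma/K}gLg^{-1}$ is a finite $\pi$-index normal subgroup of $\Gamma$ contained in $L$, so the restriction of the pro-$\pi$ topology on $\Gamma$ to $K$ is the pro-$\pi$ topology on $K$. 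Unravelling the construction of Section~\ref{SecSubgpPairs} then identifies $(U,\famS^U)$ with the pro-$\pi$ completion of $(K,\Sigma^K)$, so it suffices to show that for every $A\in\mathfrak{F}_\pi(U)$ the canonical map $H^\bullet(U,\famS^U;A)\to H^\bullet(K,\Sigma^K;A)$ is an isomorphism.

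Next I would build the comparison module. Form $\coind_\Gamma^K(A)=\Hom_{\Z K}(\Z\Gamma,A)$, which is finite as $[\Gamma:K]<\infty$. Fixing coset representatives $g_1,\dots,g_n$ for $\Gamma/K$, a direct calculation shows the kernel of the $\Gamma$-action on $\coind_\Gamma^K(A)$ contains $\bigcap_i g_iK_0g_i^{-1}$, where $K_0=\ker(K\to\Aut A)$; combined with the normal subgroup $K$ of $\pi$-index in $\Gamma$, this exhibits the action as factoring through a finite $\pi$-group quotient, making $\coind_\Gamma^K(A)$ a module in $\mathfrak{F}_\pi(G)$. By Proposition~\ref{IndEqualsCoind} and its (much easier) discrete analogue, $\coind$ coincides with $\ind$ on both sides, and the universal property of $\ind$ yields a canonical $G$-module identification
\[\coind_\Gamma^K(A)\iso\coind_G^U(A).\]

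Finally, applying the relative Shapiro lemma in the profinite setting (Section~\ref{SecSubgpPairs}) and in the discrete setting (Section~1 of \cite{BE77}), naturality in the group homomorphism $\Gamma\to G$ gives the commutative square
\[\begin{tikzcd}
H^\bullet(G,\famS;\coind_G^U(A)) \ar{r}{\cong} \ar{d} & H^\bullet(U,\famS^U;A) \ar{d} \\
H^\bullet(\Gamma,\Sigma;\coind_\Gamma^K(A)) \ar{r}{\cong} & H^\bullet(K,\Sigma^K;A)
\end{tikzcd}\]
whose vertical arrows are the canonical comparison maps. The left vertical map is an isomorphism by $\pi$-goodness of $(\Gamma,\Sigma)$ applied to the module $\coind_\Gamma^K(A)\in\mathfrak{F}_\pi(G)$; hence the right vertical map is an isomorphism as well, which is what is required. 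The main obstacle is the last commutative square: both Shapiro isomorphisms are constructed from (different) projective resolutions of $\Z$ and $\Z[\pi]$, so one must check that the canonical map between them is compatible with the chosen chain-level formulas for $\tau$ in Proposition~\ref{preShapiro}. This amounts to chain-level bookkeeping on a projective resolution of $\Z$ over $\Z\Gamma$, whose pro-$\pi$ completion is a projective resolution of $\Z[\pi]$ over $\Z[\pi][\![G]\!]$, and there is no serious content beyond checking that the relevant maps agree on generators.
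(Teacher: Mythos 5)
Your proposal is correct and follows essentially the same route as the paper: identify $(U,\famS^U)$ with the pro-$\pi$ completion of $(K,\Sigma^K)$, compare the profinite and discrete relative Shapiro isomorphisms in a commuting square, observe that the (co)induced coefficient module is a finite $\pi$-primary $G$-module, and invoke $\pi$-goodness of $(\Gamma,\Sigma)$. The only cosmetic difference is that the paper phrases the Shapiro step via induced modules $\Zpiof{G}\hotimes[U]M$ rather than coinduced ones, which coincide here since $U$ is open; your extra verifications (that $U$ is the pro-$\pi$ completion of $K$, and that the $\Gamma$-action on $\coind_\Gamma^K(A)$ factors through a finite $\pi$-quotient) are details the paper leaves implicit.
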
  
\begin{proof}
Let $(G,\famS)$ be the pro-$\pi$ completion of $(\Gamma, \Sigma)$ and let $U=\overline{K}$ be the closure of $K$ in $G$. Then $(U, \famS^U)$ is the pro-$\pi$ completion of $(K, \Sigma^K)$ and for any $M\in \mathfrak{F}_\pi(U)$ there is a commuting diagram
\[ \begin{tikzcd}
H^r(U,\famS^U; M)\ar[equal]{r} \ar{d} & H^r(G, \famS; \Zpiof{G}\hotimes[U] M)\ar{d}\\
H^r(K, \Sigma^K; M) \ar[equal]{r} & H^r(\Gamma, \Sigma; \Z{}[\Gamma]\otimes_K M)
\end{tikzcd}\]
Since $G/K$ is a finite $\pi$-group the coefficient modules in the right-hand column are isomorphic and lie in $\Gmod[\pi]{F}$, hence the right vertical map is an isomorphism by hypothesis and we are done. 
\end{proof}
\begin{defn}
Let $\Gamma$ be a discrete group and let $\Lambda\leq \Gamma$. Let $\pi$ be a set of primes. Then we say that $\Lambda$ is {\em $\pi$-separable} in $\Gamma$ if for every $g\in\Gamma\smallsetminus \Lambda$ there is a map $\phi$ from $\Gamma$ to a finite $\pi$-group such that $\phi(g)\notin \phi(\Lambda)$.

We say that {\em $\Gamma$ induces the full pro-$\pi$ topology on $\Lambda$} if for every finite index normal subgroup $U$ of $\Lambda$ with $\Lambda/U$ a $\pi$-group there is a finite index normal subgroup $V$ of $\Gamma$ with $\Gamma/V$ a $\pi$-group and with $V\cap \Lambda\leq U$.

We say that $\Lambda$ is {\em fully $\pi$-separable} in $\Gamma$ if it is $\pi$-separable in $\Gamma$ and $\Gamma$ induces the full pro-$\pi$ topology on $\Lambda$.
\end{defn}
As usual we omit the symbol $\pi$ when $\pi$ is the set of all primes. An immediate consequence of full $\pi$-separability of a subgroup $\Lambda\leq \Gamma$ is that the natural map from the pro-$\pi$ completion of $\Lambda$ to the pro-$\pi$ completion of $\Gamma$ is an isomorphism to its image.
\begin{prop}
Let $1\to N\to E \to \Gamma\to 1$ be an extension of groups, such that $N$ is finitely generated, $E$ induces the full pro-$\pi$ topology on $N$, and $H^n(N, M)$ is finite for any $n$ and any finite $\pi$-primary $N$-module $M$. Let $\Pi$ be a finite family of subgroups of $E$ each containing $N$ and let $\Sigma$ be the image of this family in $\Gamma$. Assume $(\Gamma, \Sigma)$ is a $\pi$-good pair and that $N$ is $\pi$-good. Then $(E, \Pi)$ is $\pi$-good.
\end{prop}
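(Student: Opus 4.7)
The plan is to compare Lyndon--Hochschild--Serre spectral sequences for the discrete pair $(E,\Pi)$ and its pro-$\pi$ completion. Write $\mathcal{E}=\widehat{E}_{(\pi)}$, $\mathcal{G}=\widehat{\Gamma}_{(\pi)}$, and $\mathcal{N}=\widehat{N}_{(\pi)}$. The hypothesis that $E$ induces the full pro-$\pi$ topology on $N$ ensures that the natural map $\mathcal{N}\to\mathcal{E}$ is an embedding onto a closed normal subgroup, with quotient $\mathcal{G}$; thus we have a short exact sequence $1\to\mathcal{N}\to\mathcal{E}\to\mathcal{G}\to 1$. Since each $\Pi_i$ contains $N$, the closures $\mathcal{T}_i$ of $\Pi_i$ in $\mathcal{E}$ contain $\mathcal{N}$, and $\mathcal{T}_i/\mathcal{N}$ equals the closure $\mathcal{S}_i$ of $\Sigma_i$ in $\mathcal{G}$. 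The pro-$\pi$ completion of $(E,\Pi)$ is therefore $(\mathcal{E},\mathcal{T})$, and the pro-$\pi$ completion of $(\Gamma,\Sigma)$ is $(\mathcal{G},\famS)$ with $\famS=\{\mathcal{S}_i\}$.

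Next I would invoke Proposition \ref{LHS2} to obtain the profinite spectral sequence
\[E_2^{rs}=H^r(\mathcal{G},\famS;H^s(\mathcal{N};A))\Rightarrow H^{r+s}(\mathcal{E},\mathcal{T};A)\]
for $A\in\Gmod[\pi]{F}$. I would then construct the analogous discrete relative LHS spectral sequence
\[{}'E_2^{rs}=H^r(\Gamma,\Sigma;H^s(N;A))\Rightarrow H^{r+s}(E,\Pi;A),\]
mirroring the proof of Proposition \ref{LHS2}: since each $\Pi_i$ contains $N$, the group $N$ acts trivially on $\Z[E/\Pi]$ and hence on the kernel $\Delta_{E,\Pi}$, and the natural bijections $E/\Pi_i\iso\Gamma/\Sigma_i$ identify $\Delta_{E,\Pi}$ with $\Delta_{\Gamma,\Sigma}$ as $\Gamma$-modules. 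Consequently $\Hom(\Delta_{E,\Pi},A)$ fits into the classical absolute LHS sequence of the extension $1\to N\to E\to\Gamma\to 1$ in a way that yields the relative $E_2$-page above. The morphism of group pairs $(E,\Pi)\to(\mathcal{E},\mathcal{T})$ induces a map $'E_\bullet^{\bullet\bullet}\to E_\bullet^{\bullet\bullet}$ going the correct way after applying the standard naturality.

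For the key identification on $E_2$-pages, fix $A\in\Gmod[\pi]{F}$. By $\pi$-goodness of $N$, the map $H^s(\mathcal{N};A)\to H^s(N;A)$ is an isomorphism of abelian groups, and both sides are finite by hypothesis. The continuous $\mathcal{G}$-action on $H^s(\mathcal{N};A)$ must factor through an open normal subgroup of $\mathcal{G}$, hence through a finite $\pi$-group quotient of $\mathcal{G}$; so $H^s(\mathcal{N};A)$ lies in $\Gmod[\pi]{F}(\mathcal{G})$, and its pullback along $\Gamma\to\mathcal{G}$ agrees with the natural $\Gamma$-module $H^s(N;A)$. Now $\pi$-goodness of the pair $(\Gamma,\Sigma)$, applied to the finite $\pi$-primary $\mathcal{G}$-module $H^s(\mathcal{N};A)$, gives isomorphisms
\[H^r(\mathcal{G},\famS;H^s(\mathcal{N};A))\iso H^r(\Gamma,\Sigma;H^s(N;A))\]
for every $r$ and $s$. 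Thus the map on $E_2$-pages is an isomorphism, and the comparison theorem for spectral sequences forces an isomorphism on abutments, yielding $\pi$-goodness of $(E,\Pi)$.

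The main obstacle is constructing the discrete relative LHS spectral sequence in sufficient generality and checking that it maps compatibly to the profinite one of Proposition \ref{LHS2}; one must verify that the two spectral sequences really arise from parallel double-complex arguments built from projective resolutions of $\Z$ and $\Z[\pi]$ over $\Gamma$ and $\mathcal{G}$ respectively (together with the modules $\Hom(\Delta_{E,\Pi},A)$ and $\Hom(\Delta_{\mathcal{E},\mathcal{T}},A)$), so that the morphism of pairs induces a genuine map of spectral sequences. Once this bookkeeping is done, the rest is a direct invocation of goodness in each variable.
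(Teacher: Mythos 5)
Your proposal is correct and follows essentially the same route as the paper: the paper likewise forms the short exact sequence of pro-$\pi$ completions, compares the relative Lyndon--Hochschild--Serre spectral sequence of Proposition \ref{LHS2} with its discrete analogue via the natural map induced by completion, and concludes from the goodness hypotheses that the map is an isomorphism on $E_2$-pages and hence on abutments. Your added check that $H^s(\widehat{N}_{(\pi)};A)$ is a finite $\pi$-primary $\widehat{\Gamma}_{(\pi)}$-module (so that goodness of the pair $(\Gamma,\Sigma)$ actually applies to it) is a detail the paper leaves implicit, and is a worthwhile addition.
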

\begin{proof}
The conditions of the theorem imply the existence of a short exact sequence of pro-$\pi$ completions 
\[1 \to \widehat{N}_\pi \to \widehat{E}_\pi \to \widehat{\Gamma}_\pi \to 1 \]
For any $M\in\mathfrak{C}_\pi(\widehat{E}_\pi)$ the natural maps of the discrete groups to their pro-$\pi$ completions induce a natural map from the relative Lyndon-Hochschild-Serre spectral sequence (Proposition \ref{LHS2}) to the analogous spectral sequence for the extension of discrete groups. By the goodness assumptions this map is an isomorphism 
\[H^r(\widehat{\Gamma}_\pi, \widehat{\Sigma}_\pi; H^s(\widehat{N}_\pi, M)) \to H^r(\Gamma, \Sigma; H^s(N, M)) \]
on the second page, hence gives an isomorphism
\[H^{r+s}(\widehat{E}_\pi, \widehat{\Pi}_\pi; M) \to H^{r+s}(E, \Pi; M) \] 
in the limit as required.
\end{proof}

We will need some terminology before moving on.
\begin{defn}
Let ${\cal G}= (X,\Gamma_\bullet)$ be a finite graph of finitely generated groups with fundamental group $\Gamma$. We say that ${\cal G}$ is {\em $\pi$-efficient} if $\Gamma$ is residually $\pi$-finite and each group $\Gamma_x$ is fully $\pi$-separable in $\Gamma$.
\end{defn}
One immediate consequence of $\pi$-efficiency is that the corresponding graph of pro-$\pi$ completions is proper and has pro-$\pi$ fundamental group equal to the pro-$\pi$ completion of $\Gamma$. 
\begin{prop}
Let $(X,\Gamma_\bullet)$ be a $\pi$-efficient finite graph of finitely generated groups with fundamental group $\Gamma$.  Let $\Sigma_x$ be a finite family of subgroups of $\Gamma_x$ for $x\in V(X)$ and let ${\cal E}_x$ be the set of edge groups incident to $\Gamma_x$. Let $\Sigma=\bigsqcup_{x\in V(X)}\Sigma_x$. Suppose that $(\Gamma_x, \Sigma_x\sqcup {\cal E}_x)$ is a $\pi$-good pair for each $x\in V(X)$ and that $G_e$ is $\pi$-good for each $e\in E(X)$. Then $(\Gamma, \Sigma)$ is $\pi$-good.
\end{prop}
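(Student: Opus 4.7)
The plan is to proceed by induction on the number of edges of $X$, using the Mayer--Vietoris sequences of Theorems \ref{MVwithExcision} and \ref{HNNMVwithExcision} (in both their discrete and profinite incarnations) together with the Five Lemma. The base case is a graph with no edges, i.e.\ a single vertex $x$, for which $\mathcal{E}_x = \emptyset$ and the result is immediate from the hypothesis that $(\Gamma_x, \Sigma_x)$ is $\pi$-good. For the inductive step, pick an edge $e$ of $X$. If $e$ is separating then collapsing $e$ decomposes $\Gamma$ as an amalgamated free product $\Gamma_1 *_{\Gamma_e} \Gamma_2$, where each $\Gamma_i$ is the fundamental group of a $\pi$-efficient finite graph of groups $X_i$ with strictly fewer edges; otherwise ($e$ a loop or non-separating) the decomposition yields an HNN extension $\Gamma'\amalg_{\Gamma_e, \tau}$ of the fundamental group of $X \smallsetminus e$.

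In either case, one first distributes the edge group $\Gamma_e$: add $\Gamma_e$ (resp.\ $\Gamma_e$ and $\Gamma_e'$) to the families $\Sigma_x$ at the endpoints of $e$ in the smaller graphs $X_i$ (resp.\ $X \smallsetminus e$). With this relabelling, the inductive hypothesis applies to these smaller $\pi$-efficient graphs of groups, since the new vertex pairs are precisely of the form $(\Gamma_x, \Sigma_x \sqcup \mathcal{E}_x)$ that are good by assumption. Thus $(\Gamma_1, \Sigma_1 \sqcup \{\Gamma_e\})$, $(\Gamma_2, \Sigma_2 \sqcup \{\Gamma_e\})$ (resp.\ $(\Gamma', \Sigma' \sqcup \{\Gamma_e, \Gamma_e'\})$) are $\pi$-good pairs, and $\Gamma_e$ is $\pi$-good by hypothesis.

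On the profinite side, $\pi$-efficiency guarantees that the corresponding pro-$\pi$ graph of groups is proper and has pro-$\pi$ fundamental group equal to $G = \widehat{\Gamma}_{(\pi)}$; indeed, each $\Gamma_x$ and $\Gamma_e$ is fully $\pi$-separable in $\Gamma$, so its pro-$\pi$ completion injects and embeds as a closed subgroup of $G$. Consequently the profinite Mayer--Vietoris sequence of Theorem \ref{MVwithExcision} (or \ref{HNNMVwithExcision}) applies to $(G, \famS)$, and by the naturality of the construction the map of pairs $(\Gamma, \Sigma) \to (G, \famS)$ induces a ladder of long exact sequences
\[\begin{tikzcd}[cramped, column sep=small]
\cdots \ar{r} & H^{k-1}(\overline{\Gamma_e}; A) \ar{r}\ar{d} & H^k(\widehat{\Gamma_1}_{(\pi)}, \dots)\oplus H^k(\widehat{\Gamma_2}_{(\pi)},\dots)\ar{r}\ar{d} & H^k(G,\famS;A) \ar{r}\ar{d} & \cdots \\
\cdots \ar{r} & H^{k-1}(\Gamma_e; A)\ar{r} & H^k(\Gamma_1, \dots)\oplus H^k(\Gamma_2, \dots)\ar{r} & H^k(\Gamma, \Sigma; A)\ar{r} & \cdots
\end{tikzcd}\]
for any $A\in \Gmod[\pi]{F}$ (and similarly for the HNN case). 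By the inductive hypothesis and the $\pi$-goodness of $\Gamma_e$, the outer four vertical arrows in each window of the ladder are isomorphisms, so by the Five Lemma the middle vertical arrow is also an isomorphism.

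The main obstacle I anticipate is purely bookkeeping: ensuring that at each inductive step the families $\Sigma_x \sqcup \mathcal{E}_x$ in the reduced graphs of groups still match the hypotheses of the theorem, and that the naturality square of the Mayer--Vietoris construction from Theorems \ref{MVwithExcision}/\ref{HNNMVwithExcision} really does commute on the nose with the discrete version (so that the Five Lemma applies). The $\pi$-efficiency assumption takes care of the properness and the agreement of profinite completions, which would otherwise be the essential technical obstruction.
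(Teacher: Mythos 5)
Your proposal is correct and is essentially the same argument the paper gives (the paper's proof is a two-sentence sketch: induction via the amalgam/HNN cases, mapping the profinite Mayer--Vietoris sequences of Theorems \ref{MVwithExcision} and \ref{HNNMVwithExcision} to their discrete counterparts, then applying the Five Lemma). You simply spell out the bookkeeping that the paper leaves implicit.
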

\begin{proof}
This may be proved by induction from the cases of amalgamated free products and HNN extensions. The conditions of the theorem give a map from the Mayer-Vietoris sequences from Theorems \ref{MVwithExcision} and \ref{HNNMVwithExcision} to the analagous sequences for discrete groups. The result then follows from the Five Lemma. 
\end{proof}
\subsection{Examples of good PD$^n$ pairs}
In this section we give the expected examples of good group pairs and profinite PD$^n$ pairs. First we give a sufficient condition for a group pair to have type FP$_\infty$ which the author could not immediately find in the literature.
\begin{prop}\label{DiscFPn}
Suppose that $X$ is a connected aspherical cell complex with finitely many cells in each dimension with fundamental group $\Gamma$, and suppose that $\{Y_i\}_{1\leq i\leq n}$ $(n\geq 1)$ is a collection of connected disjoint subcomplexes. Suppose further that each $Y_i$ is an aspherical complex with fundamental group $S_i$ and that the inclusions of complexes induce injective maps $S_i\to \Gamma$. Then if $\Sigma=\{S_i\}_{1\leq i\leq n}$, the group pair $(\Gamma, \Sigma)$ is of type FP$_\infty$.
\end{prop}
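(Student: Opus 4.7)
The plan is to construct an explicit finitely-generated free resolution of $\Delta_{\Gamma,\Sigma}$ using the cellular chain complex of the universal cover of $X$ relative to the preimages of the $Y_i$.

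First I would set $\tilde X$ to be the universal cover of $X$ and let $\tilde Y_i\subseteq \tilde X$ denote the preimage of $Y_i$. Because $X$ is aspherical, $\tilde X$ is contractible, so the cellular chain complex $C_\bullet(\tilde X)$ is a free $\Z\Gamma$-resolution of $\Z$; by the hypothesis on the cell structure, each $C_n(\tilde X)$ is a finitely generated free $\Z\Gamma$-module (one summand per $n$-cell of $X$). Because $S_i\hookrightarrow \Gamma$ is injective and $Y_i$ is aspherical, the connected components of $\tilde Y_i$ are indexed by $\Gamma/S_i$, and each component is a universal cover of $Y_i$, hence contractible. Consequently $C_\bullet(\tilde Y_i)\iso \Z[\Gamma]\otimes_{\Z S_i}C_\bullet(\widetilde{Y_i^{\phantom{u}}})$ is a free $\Z\Gamma$-resolution of $\Z[\Gamma/S_i]$, and it is finitely generated in each degree because $Y_i\subseteq X$ has only finitely many $n$-cells for each $n$.

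Next I would pass to the relative complex. Writing $\tilde Y=\bigsqcup_i\tilde Y_i$ (the $Y_i$ are disjoint so this really is a subcomplex), the short exact sequence of cellular chain complexes
\[0\to C_\bullet(\tilde Y)\to C_\bullet(\tilde X)\to C_\bullet(\tilde X,\tilde Y)\to 0\]
is termwise split as $\Z\Gamma$-modules, since the cells of $\tilde X$ split $\Gamma$-equivariantly into those inside $\tilde Y$ and those outside. In particular each $C_n(\tilde X,\tilde Y)$ is a finitely generated free $\Z\Gamma$-module. The associated long exact sequence, combined with $H_{>0}(\tilde X)=0$, $H_0(\tilde X)=\Z$, $H_{>0}(\tilde Y)=0$, and $H_0(\tilde Y)=\Z[\Gamma/\Sigma]$, collapses to give $H_n(\tilde X,\tilde Y)=0$ for $n\neq 1$ and
\[H_1(\tilde X,\tilde Y)=\ker\!\big(\Z[\Gamma/\Sigma]\to\Z\big)=\Delta_{\Gamma,\Sigma}.\]
Therefore the shifted complex $D_\bullet=C_{\bullet+1}(\tilde X,\tilde Y)$ is a resolution of $\Delta_{\Gamma,\Sigma}$ by finitely generated free $\Z\Gamma$-modules, which is exactly what it means for $(\Gamma,\Sigma)$ to be of type $\mathrm{FP}_\infty$.

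There is no real obstacle here beyond careful bookkeeping; the only point requiring some attention is verifying that $C_\bullet(\tilde Y)\hookrightarrow C_\bullet(\tilde X)$ is a split injection of $\Z\Gamma$-modules with free cokernel, which is precisely the reason for the hypothesis that the $Y_i$ are disjoint subcomplexes and that $S_i\to\Gamma$ is injective (so no two cells of $\tilde Y$ lie in the same $\Gamma$-orbit of a cell of $\tilde X$). Once this is in place the computation of $H_\bullet(\tilde X,\tilde Y)$ is standard.
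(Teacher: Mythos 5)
Your approach is the same as the paper's: pass to the universal cover, compare the cellular chain complex of $\tilde X$ with that of the preimage of $\bigsqcup Y_i$, and extract a finitely generated resolution of $\Delta_{\Gamma,\Sigma}$ from the relative complex. Everything up to and including the computation $H_1(\tilde X,\tilde Y)\iso\Delta_{\Gamma,\Sigma}$ and $H_n(\tilde X,\tilde Y)=0$ for $n\neq 1$ is correct.

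The gap is in the final step. A non-negatively graded complex of projectives whose homology is concentrated in degree $1$ does not, after shifting, become a resolution of that homology module unless its degree-zero term vanishes. If some vertex of $X$ lies outside every $Y_i$ then $C_0(\tilde X,\tilde Y)\neq 0$; there is then no natural augmentation $C_1(\tilde X,\tilde Y)\to\Delta_{\Gamma,\Sigma}$, since $\Delta_{\Gamma,\Sigma}=Z_1/B_1$ is a subquotient of $C_1(\tilde X,\tilde Y)$ rather than a quotient, and the zeroth homology of your shifted complex is $C_1/B_1$, which is an extension of $C_0(\tilde X,\tilde Y)$ by $\Delta_{\Gamma,\Sigma}$ and not $\Delta_{\Gamma,\Sigma}$ itself. (For instance, take $X=\sph{1}$ with two vertices and two edges and $Y_1$ a single vertex: the shifted complex is $\Z\Gamma^{\oplus 2}$ concentrated in degree $0$, while $\Delta_{\Gamma,\Sigma}\iso\Z\Gamma$.) The paper forestalls this with a preliminary reduction you have omitted: choose a maximal subforest of the $1$-skeleton of $X$ relative to the $Y_i$ and absorb its trees into the $Y_i$, so that every vertex of $X$ lies in some $Y_i$ and hence $C_0(\tilde X,\tilde Y)=0$; this changes nothing homotopy-theoretically. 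Alternatively, since $H_0(\tilde X,\tilde Y)=0$ the surjection $C_1(\tilde X,\tilde Y)\to C_0(\tilde X,\tilde Y)$ onto a free module splits, so $Z_1=\ker\big(C_1(\tilde X,\tilde Y)\to C_0(\tilde X,\tilde Y)\big)$ is a finitely generated projective and $\cdots\to C_2(\tilde X,\tilde Y)\to Z_1\to\Delta_{\Gamma,\Sigma}\to 0$ is the required resolution (finitely generated projectives suffice for FP$_\infty$). Either repair completes your argument.
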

\begin{rmk}
We have been somewhat sloppy where basepoints are concerned; however the basepoints are entirely irrelevant to the outcome as the module $\Delta_{\Gamma, \Sigma}$ is independent of the choices of the $S_i$ up to conjugacy by Proposition \ref{DeltaUpToConjugacy}.
\end{rmk}
\begin{proof}
We may assume that every vertex in $X$ lies in one of the $Y_i$ in the following manner. The 1-skeleton of $X$ is some connected finite graph and therefore admits a `maximal subforest relative to the $Y_i$'. That is, a collection of disjoint trees which together include all vertices of $X$ not in any $Y_i$ and each including exactly one vertex of some $Y_i$. If we enlarge each $Y_i$ by attaching those trees incident to it we do not change any of the homotopy-theoretic properties of the statement of the proposition. So we may proceed with this assumption.

For a cell complex $Z$ let $C_\bullet(Z)$ be its cellular chain complex. Consider the universal cover $\tilde X$ with projection map $p\colon \tilde X\to X$. The cellular chain complex $E_\bullet = C_\bullet(\tilde X)$ is a free resolution of $\Z$ by finitely generated free $\Z{} \Gamma$-modules. This chain complex has a subcomplex $D_\bullet= \bigoplus_{i=1}^n C_\bullet(p^{-1}(Y_i))$ deriving from the pre-images of the $Y_i$. This subcomplex is isomorphic as a complex of $\Gamma$-modules to
\[\bigoplus_{i=1}^n \Z \Gamma\otimes_{S_i} C_\bullet(\tilde Y_i)  \]
where $\tilde Y_i$ is the universal cover of $Y_i$. Now $D_\bullet$ is a free resolution of $\bigoplus_{i=1}^n \Z \Gamma\otimes_{S_i} \Z$ and the inclusion map $D_\bullet\to E_\bullet$ induces the augmentation map $\bigoplus_{i=1}^n \Z \Gamma\otimes \Z\to \Z$. Finally note that since $D_\bullet\to E_\bullet$ is induced by inclusions of subcomplexes, $E_\bullet / D_\bullet$ is a complex of free finitely generated modules. From the long exact sequence in homology deriving from the short exact sequence of chain complexes $D_\bullet \to E_\bullet\to E_\bullet/D_\bullet$ we find that $E_\bullet / D_\bullet$ is exact in degree at least 2 and that $H_1(E_\bullet/D_\bullet)\iso \Delta_{\Gamma, \Sigma}$. By the first part of the construction $E_0/D_0=0$. Thus $E_\bullet / D_\bullet$ provides a free resolution of $\Delta_{\Gamma, \Sigma}$ by finitely generated free modules as required.
\end{proof}
\begin{prop}
Let $X$ be a compact surface with boundary components $l_1, \ldots, l_n$ which is not a disc. Let $\Gamma=\pi_1 X$ and $\Sigma=\{i_\ast(\pi_1 l_i) \}_{i=1}^n$ where $i_\ast$ denotes an inclusion map. Let $\pi$ be a set of primes and assume that $X$ is orientable if $2\notin \pi$. Then the pair $(\Gamma, \Sigma)$ is $\pi$-good and the pro-$\pi$ completion of $(\Gamma, \Sigma)$ is a PD$^2$ pair at every prime $p\in \pi$. 
\end{prop}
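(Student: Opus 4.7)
The proof will combine three ingredients, each available from the machinery already developed: the pair is of type FP$_\infty$, it is a discrete PD$^2$ pair, and it is cohomologically $\pi$-good, whereupon Theorem \ref{GoodPDnPairs} yields the conclusion. I would proceed in four steps.

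First I would verify $(\Gamma,\Sigma)$ is of type FP$_\infty$. Since $X$ is a compact surface with non-empty boundary which is not a disc, $X$ is aspherical (it is neither $\sph{2}$ nor $\mathbb{RP}^2$ since it has boundary), so $X$ is a finite $K(\Gamma,1)$. Each boundary circle $l_i$ is an aspherical $K(\Z,1)$ subcomplex whose inclusion induces an injection $\Z\to\Gamma$ (it represents a non-trivial element of the free group $\Gamma$). Proposition \ref{DiscFPn} applies and gives FP$_\infty$.

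Next I would establish that $(\Gamma,\Sigma)$ is a discrete PD$^2$ pair in the sense of Bieri--Eckmann. This is the classical relative Poincar\'e--Lefschetz duality for the aspherical surface--with--boundary $(X,\bdy X)$: one obtains a fundamental class in $H_2(X,\bdy X;\widetilde{\Z})$ (with $\widetilde{\Z}$ either trivial or the orientation module) inducing duality isomorphisms, which at the level of group cohomology is precisely the PD$^2$ condition of \cite{BE77}, Section 4. The dualising module is trivial iff $X$ is orientable.

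Third, I would verify $\pi$-goodness. Because $\bdy X\neq\emptyset$ and $X$ is not a disc, $\Gamma$ is a non-trivial finitely generated free group, hence residually $\pi$-finite and $\pi$-good: its cohomological dimension is $1$, and the same holds for $\widehat{\Gamma}_{(\pi)}$, with the comparison maps $H^0$ and $H^1$ on finite $\pi$-primary modules being isomorphisms by a direct check. Each $S_i\iso\Z$ is $\pi$-good, being abelian. To invoke Proposition \ref{LESGoodness} I must also check that the natural map $\widehat{S_i}_{(\pi)}\to\overline{i(S_i)}\subseteq\widehat{\Gamma}_{(\pi)}$ is an isomorphism, i.e.\ that $\Gamma$ induces the full pro-$\pi$ topology on each cyclic subgroup $S_i$. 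This follows from a Marshall Hall style argument in the free group $\Gamma$: given any $\pi$-power index subgroup $pS_i\leq S_i$, one extends a free basis adapted to $S_i$ so as to produce a finite index normal subgroup of $\Gamma$ with $\pi$-group quotient whose intersection with $S_i$ lies in $pS_i$. Proposition \ref{LESGoodness} then promotes the goodness of $\Gamma$ and of each $S_i$ to goodness of the pair $(\Gamma,\Sigma)$.

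Finally, all hypotheses of Theorem \ref{GoodPDnPairs} are now in place: the pair has type FP$_\infty$, both $\Gamma$ and $(\Gamma,\Sigma)$ are $\pi$-good, the pair is a discrete PD$^2$ pair, and orientability in the case $2\notin\pi$ is precisely the stated assumption. The theorem yields that the pro-$\pi$ completion $(\widehat{\Gamma}_{(\pi)},\widehat{\Sigma}_{(\pi)})$ is a profinite PD$^2$ pair at every prime $p\in\pi$. The main obstacle, and the only non-formal point, is the verification that the induced pro-$\pi$ topology on each boundary cyclic subgroup matches its own pro-$\pi$ completion; this is exactly where the free-ness of $\Gamma$ does the essential work, and without it Proposition \ref{LESGoodness} could not be applied.
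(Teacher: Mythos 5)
Your overall route is exactly the paper's: show $\Gamma$ is $\pi$-good (it is free), show each boundary subgroup is $\pi$-good and fully $\pi$-separable so that Proposition \ref{LESGoodness} upgrades this to $\pi$-goodness of the pair, check the FP$_\infty$ and discrete-PD$^2$ hypotheses, and feed everything into Theorem \ref{GoodPDnPairs}. The extra detail you supply on FP$_\infty$ (via Proposition \ref{DiscFPn}) and on the classical Poincar\'e--Lefschetz duality for $(X,\bdy X)$ is correct and is exactly what the paper leaves implicit.

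The one step where your argument as written would fail is the verification of full $\pi$-separability of the boundary subgroups. You propose to ``extend a free basis adapted to $S_i$'', but a boundary word of a compact surface is in general \emph{not} a primitive element of the free group $\Gamma$: for the once-punctured torus the boundary class is $[a,b]$, which lies in the commutator subgroup and so is not part of any free basis, and for the M\"obius band it is $a^2$. So there is no basis adapted to $S_i$ to extend, and the Marshall Hall mechanism does not apply verbatim. What one actually needs is that the boundary word $w$ has the property that for every $p\in\pi$ and every $k$ there is a finite quotient of $\Gamma$ which is a $\pi$-group and in which the image of $w$ has order divisible by $p^k$ (equivalently, that the closure of $\gp{w}$ in $\widehat{\Gamma}_{(\pi)}$ is a copy of $\Z[\pi]$ meeting $\Gamma$ in exactly $\gp{w}$). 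This is true but requires an argument tailored to the element $w$ --- for $[a,b]$ one uses, say, quotients such as the mod-$p^k$ Heisenberg groups --- and it is precisely the content of the results of \cite{Wilkes16} that the paper cites at this point. With that sub-step repaired (or cited), the rest of your proof goes through and coincides with the paper's.
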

\begin{proof}
Note that $\Gamma$ is a free group and therefore $\pi$-good for every $\pi$ (see for example Exercise 2.6.2 of \cite{Serre13}). Furthermore boundary subgroups are fully $\pi$-separable for every $\pi$ (see for instance Propositions 3.1 and 3.2 of \cite{Wilkes16} where this is done for $\pi=\{p\}$; much the same arguments work for general $\pi$). Thus we may apply Proposition \ref{LESGoodness} and Theorem \ref{GoodPDnPairs} to find the result.
\end{proof}

One cannot quite give such a sweeping statement as this in dimension 3, since the pro-$\pi$ topology on a 3-manifold group may be very poorly behaved. We will therefore restrict our attention to the set of all primes---that is, we will consider the profinite completion. We must first establish separability conditions for boundary components, or more generally fundamental groups of embedded surfaces. These are not entirely new results; see the historical note at the end of the section.

\begin{theorem}\label{GofGsFullSep}
Let ${\cal G}= (X,\Gamma_\bullet)$ and ${\cal L}= (Y,\Lambda_\bullet)$ be finite graphs of finitely generated groups. Let $f\colon Y\to X$ be a map of graphs and for every $y\in Y$ let $\phi_y\colon \Lambda_y\to \Gamma_{f(y)}$ be an injective group homomorphism compatible with the boundary maps in $\cal L$ and $\cal G$. Suppose further that the induced map on the fundamental groups $\phi\colon\Lambda=\pi_1({\cal L})\to \pi_1({\cal G})=\Gamma$ is injective. 

If $\cal G$ is an efficient graph of groups and for each $y\in Y$ the subgroup $\Gamma_{f(y)}$ induces the full profinite topology on $\phi_y(\Lambda_y)$ then $\Gamma$ induces the full profinite topology on $\phi(\Lambda)$.
\end{theorem}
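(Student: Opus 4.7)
The plan is to prove that the natural continuous homomorphism $\iota\colon \widehat{\Lambda}\to \widehat\Gamma$ induced by $\phi$ is injective; combined with the residual finiteness of $\Lambda$ (inherited from $\Gamma$) this is equivalent to the statement that $\Gamma$ induces the full profinite topology on $\phi(\Lambda)$. The efficiency of $\mathcal{G}$ gives that the completed graph of groups $\widehat{\mathcal{G}}=(X,\widehat{\Gamma_\bullet})$ is proper with $\Pi_1 \widehat{\mathcal{G}}=\widehat\Gamma$, and $\widehat\Gamma$ acts on its standard profinite tree $T$ with vertex stabilisers the conjugates of the $\widehat{\Gamma_x}$. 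The hypothesis gives an injection $\widehat{\Lambda_y}\hookrightarrow\widehat{\Gamma_{f(y)}}\hookrightarrow\widehat\Gamma$ for each $y\in V(Y)$.

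As an immediate consequence, $\Lambda$ induces the full profinite topology on each vertex subgroup $\phi_y(\Lambda_y)\leq \phi(\Lambda)$: given $U\triangleleft_f \Lambda_y$, a subgroup $W\triangleleft_f\Gamma$ with $W\cap \phi_y(\Lambda_y)\subseteq \phi_y(U)$ yields $V=\phi^{-1}(W)\triangleleft_f \Lambda$ with $V\cap \Lambda_y\subseteq U$. Thus $\widehat{\Lambda_y}\hookrightarrow \widehat\Lambda$ is injective, and the completed graph of pro-finite groups $\widehat{\mathcal{L}}=(Y,\widehat{\Lambda_\bullet})$ has injective boundary maps since they factor through the injective completed boundary maps in $\widehat{\mathcal{G}}$. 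A universal-property argument---any continuous homomorphism from $\Lambda$ to a profinite group restricts to a continuous map on each $\Lambda_y$ in its full profinite topology, hence extends to $\widehat{\Lambda_y}$---then identifies $\widehat\Lambda\cong\Pi_1 \widehat{\mathcal{L}}$; this graph of pro-finite groups is proper because each composite $\widehat{\Lambda_y}\to\widehat\Lambda\to\widehat\Gamma$ is injective. In particular $\widehat\Lambda$ acts faithfully on its standard profinite tree $S=T(\widehat{\mathcal{L}})$.

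To conclude, the morphism of graphs of pro-finite groups $\widehat{\mathcal{L}}\to\widehat{\mathcal{G}}$ induces $\iota$ together with a $\widehat{\Lambda}$-equivariant simplicial map $S\to T$, where $\widehat\Lambda$ acts on $T$ through $\iota$. A non-trivial element of $\ker(\iota)$ would act trivially on $T$ but, by faithfulness of the action on $S$, non-trivially on $S$; it would therefore permute non-trivially some fibre of $S\to T$. Analysing how this action decomposes over the fibres and using that the vertex stabilisers of $S$ (conjugates of $\widehat{\Lambda_y}$) embed into the corresponding vertex stabilisers of $T$ (conjugates of $\widehat{\Gamma_{f(y)}}$) by hypothesis reduces the problem to a contradiction. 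The main obstacle is precisely this comparison of the two profinite Bass--Serre trees: even though $\iota$ is injective on vertex groups, the tree map $S\to T$ need not be injective because $f\colon Y\to X$ may identify vertices or edges. Handling this rigorously requires the standard dictionary of Section 6.3 of \cite{Ribes17} between proper graphs of pro-finite groups and their actions on the associated profinite tree.
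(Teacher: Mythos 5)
Your reformulation of the conclusion as the injectivity of the induced map $\iota\colon\widehat{\Lambda}\to\widehat{\Gamma}$ is correct, and your treatment of the vertex groups (transitivity of ``inducing the full topology'' through $\Gamma_{f(y)}$, hence injectivity of each $\widehat{\Lambda_y}\to\widehat{\Lambda}$ and properness of $\widehat{\cal L}$) is sound. But the argument stops exactly where the theorem's content lies. Two problems. First, a proper graph of profinite groups need not act faithfully on its standard tree: the kernel of the action is the intersection of all conjugates of the vertex stabilisers, which can be non-trivial (e.g.\ a common normal subgroup of the edge groups). What you can salvage is that a non-trivial $k\in\ker\iota$ acting trivially on $S$ would lie in some $\widehat{\Lambda_y}$, which injects into $\widehat{\Gamma}$, so $k$ must move something in $S$ while fixing $T$ pointwise. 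Second, and fatally, the promised ``analysis of the fibres of $S\to T$'' is not an application of the standard dictionary; it is the whole difficulty, and there is no reason it terminates in a contradiction from the stated hypotheses alone. The obstruction is the free part: the induced map $\pi_1(Y)\to\pi_1(X)$ of underlying graphs can have large kernel even when $\phi$ is injective, so injectivity of $\iota$ cannot be detected vertex group by vertex group, and elements of $\ker\iota$ really can permute fibres of $S\to T$ unless some separability input forces them not to.

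The paper's proof supplies precisely that input, and it never touches the profinite trees. Given $U\leq_f\Lambda$, it uses the vertex-level hypothesis plus efficiency to find $\widetilde{\Gamma}\leq_f\Gamma$ with $\widetilde{\Gamma}\cap\Lambda_y\leq U$ for all $y$; then $\widetilde{\Lambda}=\widetilde{\Gamma}\cap\Lambda$ decomposes over a finite cover $\widetilde{Y}$ of $Y$ with all vertex groups already inside $U$, so $\widetilde{\Lambda}\to\widetilde{\Lambda}/(U\cap\widetilde{\Lambda})$ factors through the free group $\pi_1(\widetilde{Y})$, which sits as a finitely generated subgroup of the free group $\pi_1(\widetilde{X})$. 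The finite-index subgroup separating the relevant kernel is then produced by Marshall Hall's theorem (subgroup separability of free groups). This use of M.~Hall's theorem on the underlying covering graphs is the essential ingredient missing from your proposal; without it, or an equivalent statement about the free parts, the tree-comparison step cannot be closed.
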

\begin{proof}
Consider a finite index normal subgroup $U$ of $\Lambda$. Since each $\Lambda_y$ is fully separable in $\Gamma_{f(y)}$ there are finite index subgroups $V_x$ of the $\Gamma_x$ such that $V_x\cap \Lambda_y\leq U\cap \Lambda_y$ for every $y$ with $f(y)=x$. Since the graph of groups $\cal G$ is efficient we may assume, by passing to deeper subgroups $V_x$ if necessary, that there is a finite index normal subgroup $\widetilde\Gamma$ of $\Gamma$ such that $V_x = \widetilde\Gamma\cap \Gamma_x$ for all $x\in X$. 

Now $\widetilde\Gamma$ is the fundamental group of a graph of groups $\widetilde{\cal G}=(\widetilde X, \widetilde\Gamma_\bullet)$ whose vertex and edge groups are representatives of the conjugacy classes of the $V_x$ in $\Gamma$ and where $\widetilde X$ is some finite cover of the graph $X$. Similarly $\widetilde\Lambda =\widetilde\Gamma \cap \Lambda$ is the fundamental group of a graph of groups $\widetilde{\cal L}=(\widetilde Y, \widetilde\Lambda_\bullet)$ whose vertex and edge groups are representatives of the conjugacy classes of the $V_x\cap \Lambda_y=\widetilde\Gamma\cap \Lambda_y$ in $\Lambda$ and $\widetilde Y$ is some finite cover of the graph $Y$. Note that if we find a finite index subgroup $W$ of $\widetilde\Gamma$ with $W\cap\widetilde\Lambda\leq U\cap \widetilde \Lambda$ then we are done; for then $W$ is finite index in $\Gamma$ and
\[W\cap\Lambda=(W\cap\widetilde\Gamma )\cap \Lambda=W\cap \widetilde\Lambda\leq U\cap\widetilde\Lambda\leq U \]
Now by construction $U\cap\widetilde{\Lambda}$ contains all the vertex groups of $\widetilde{\cal L}$ which are (conjugates in $\Lambda$ of) $V_x\cap\Lambda_y\leq U\cap \Lambda_y$. Therefore the quotient map $f\colon \widetilde\Lambda\to \widetilde{\Lambda}/(U\cap\widetilde{\Lambda})$ factors through the map to the graph fundamental group $\pi_1(\widetilde Y)$. 
\[\begin{tikzcd}
\widetilde\Lambda \ar[equal]{r}\ar[hookrightarrow]{d} & \pi_1({\cal L})\ar[twoheadrightarrow]{r}\ar[hookrightarrow]{d} & \pi_1(\tilde Y) \ar[twoheadrightarrow]{r} \ar[hookrightarrow]{d} & \widetilde\Lambda/U\cap\widetilde\Lambda \ar[hookrightarrow]{d} \\
\widetilde\Gamma \ar[equal]{r} & \pi_1({\cal G})\ar[twoheadrightarrow]{r} & \pi_1(\widetilde X) \ar[twoheadrightarrow]{r} & \pi_1(\widetilde X)/W'
\end{tikzcd} \]
Now $\pi_1 \widetilde Y$ is a finitely generated subgroup of the free group $\pi_1 \widetilde X$, which is subgroup separable by a theorem of Marshall Hall \cite{Hall49}. Therefore there is a finite index normal subgroup $W'$ of $\pi_1 \widetilde X$ with $W'\cap \pi_1(\widetilde Y)$ contained in the kernel of the map $\pi_1(\widetilde Y)\to \widetilde{\Lambda}/(U\cap\widetilde{\Lambda})$. The preimage of $W'$ in $\widetilde\Gamma$ is the required subgroup $W$.
\end{proof}

\begin{theorem}\label{EmbSurfFullSep}
Let $M$ be a closed 3-manifold and let $L$ be an embedded $\pi_1$-injective surface in $M$. Then $\pi_1 L$ is fully separable in $\pi_1 M$. 
\end{theorem}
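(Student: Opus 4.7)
The plan is to reduce the statement to Theorem \ref{GofGsFullSep} via the JSJ decomposition, and then handle the geometric pieces individually. By geometrisation we may assume $M$ is either geometric or has a non-trivial JSJ decomposition. In the latter case the JSJ gives $\pi_1 M$ the structure of a finite graph of groups $\mathcal{G}=(X, \pi_1 M_\bullet)$ whose edge groups are $\Z^2$ (the JSJ tori) and whose vertex groups are the fundamental groups of the pieces $M_v$, each of which is either hyperbolic with toral boundary or Seifert fibered. First I would isotope $L$ into \emph{normal form} with respect to the JSJ tori: after a homotopy $L$ meets each JSJ torus transversely in a (possibly empty) disjoint union of essential simple closed curves, so that each component $L_v$ of $L\cap M_v$ is a $\pi_1$-injective properly embedded subsurface of $M_v$. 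This endows $\pi_1 L$ with the structure of a finite graph of groups $\mathcal{L}=(Y,\pi_1 L_\bullet)$, with vertex groups the $\pi_1 L_v$ and edge groups infinite cyclic, admitting a natural compatible map of graphs of groups $\mathcal{L}\to\mathcal{G}$.

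Next I would verify the hypotheses of Theorem \ref{GofGsFullSep}. Efficiency of the JSJ graph of groups $\mathcal{G}$ is the theorem of Wilton--Zalesskii (building on Hamilton's separability results for tori and on work of Hempel), so this is on hand. The remaining condition is that for each $y\in Y$, the vertex group $\pi_1 M_{f(y)}$ induces the full profinite topology on the subgroup $\pi_1 L_y$. This is a purely geometric statement about the pieces: for hyperbolic $M_v$ it follows from LERF, which by Agol's virtual compact specialness theorem holds for every finite-volume hyperbolic 3-manifold group (giving, in particular, full separability of finitely generated geometrically finite subgroups such as the $\pi_1 L_y$). For Seifert fibered $M_v$ one reduces to the base orbifold: up to passing to a finite cover $\pi_1 M_v$ splits as a central $\Z$-extension of a Fuchsian group, and an essential subsurface $L_y$ is either vertical or horizontal; in both cases the image in the Fuchsian group is geometrically finite and hence separable (Scott), and full separability in $\pi_1 M_v$ follows by lifting through the central extension.

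The cases where $M$ is itself geometric are handled directly without the JSJ machinery. If $M$ is hyperbolic, Agol applies as above; if $M$ is Seifert fibered, the Scott argument applies; the remaining geometries (flat $\mathbb{E}^3$, $\mathbb{S}^3$, $\mathbb{S}^2\times\mathbb{R}$, Nil, $\mathbb{H}^2\times\mathbb{R}$, $\SLR$) are either Seifert or easily checked by hand, and the Sol geometry (which never appears as a JSJ piece) reduces to the separability of subgroups of the virtually polycyclic group $\pi_1 M$, which is classical. Applying Theorem \ref{GofGsFullSep} to $\mathcal{L}\to\mathcal{G}$ then yields full separability of $\pi_1 L$ in $\pi_1 M$.

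The main obstacle here is the geometric input in the second paragraph: everything rests on the deep fact that the vertex groups induce the full profinite topology on $\pi_1$-injective embedded subsurfaces, and in the hyperbolic case this is essentially the full strength of Agol's theorem. Efficiency of the JSJ decomposition is the other substantial ingredient, but it has been established in the literature and so may be invoked; the normal-form reduction and the graph-of-groups bookkeeping are routine once these two facts are available.
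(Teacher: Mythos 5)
Your argument for the irreducible case is essentially the paper's: put $L$ in normal form with respect to the JSJ tori, invoke efficiency of the JSJ graph of groups (Wilton--Zalesskii), use LERF of the Seifert fibered and hyperbolic pieces (Scott, resp.\ Agol et al.) to get full separability of the $\pi_1 L_y$ in the vertex groups, and feed everything into Theorem \ref{GofGsFullSep}. However, there are two genuine gaps. First, the opening reduction is wrong as stated: a general closed 3-manifold need not be prime, and a non-trivial connected sum is neither geometric nor does it carry a JSJ decomposition, so ``by geometrisation $M$ is geometric or has a non-trivial JSJ decomposition'' does not apply to the manifold in the statement. You need a preliminary Kneser--Milnor step: make $L$ transverse to the decomposing spheres, observe that $L$ is incompressible (Loop Theorem) so every intersection curve bounds a disc in $L$, surger $L$ off the spheres without changing $\pi_1 L$, and then use that the resulting free product decomposition of $\pi_1 M$ is efficient, so that full separability in a free factor promotes to full separability in $\pi_1 M$. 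This is exactly how the paper reduces to the irreducible case, and without it your argument only proves the theorem for irreducible $M$.

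Second, ``fully separable'' is separability \emph{plus} the induced-topology condition, and Theorem \ref{GofGsFullSep} only delivers the latter: its conclusion is that $\Gamma$ induces the full profinite topology on $\phi(\Lambda)$, not that $\phi(\Lambda)$ is closed in the profinite topology on $\Gamma$. Separability of $\pi_1 L$ in $\pi_1 M$ does not follow from your graph-of-groups bookkeeping --- $\pi_1 M$ need not be LERF (graph manifolds already give counterexamples), so you cannot get it for free from LERF of the pieces --- and must be quoted separately; it is the theorem of Przytycki and Wise \cite{PW14}, which is how the paper disposes of that half in one line. A minor further point: your parenthetical restriction to \emph{geometrically finite} subsurface subgroups in the hyperbolic case is unnecessary and slightly misleading (an embedded incompressible surface in a closed hyperbolic piece may be a virtual fibre); LERF handles all finitely generated subgroups uniformly, which is all you need.
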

\begin{proof}
Separability is a theorem of Przytycki and Wise \cite{PW14}. We consider the profinite topology.

First note that we are free to pass to a finite index cover of $M$ so we may assume that both $M$ and $L$ are orientable. Consider the spheres in the Kneser-Milnor decomposition of $M$ and perturb them to make them transverse to $L$. Since $L$ is incompressible by the Loop Theorem, any intersection curve of $L$ with a sphere bounds a disc in $L$; so by performing surgeries on $L$ we may find a surface disjoint from all the spheres carrying the same fundamental group as $L$. Since the profinite topology on a free product of residually finite groups is efficient, hence induces the full profinite topology on any free factor, we have now reduced to the case when $M$ is an irreducible 3-manifold. 

The JSJ tori of $M$ induce an efficient graph of groups decomposition of $\pi_1 M$ (\cite{WZ10}, Theorem A). After possibly performing a small isotopy of the tori, the intersections of the tori with $L$ are a finite collection of simple closed curves which split $L$ as a finite graph of groups with finitely generated vertex groups. The inclusion of $F$ into $M$ induces exactly such a map of graphs of groups as in Theorem \ref{GofGsFullSep}. Each piece of the JSJ decomposition is either Seifert fibred or cusped hyperbolic, hence the fundamental groups are subgroup separable. In the Seifert fibred case this is a theorem of Scott \cite{scott78}. The cusped hyperbolic case is part of the recent seminal advances in 3-manifold theory pioneered by Wise, Agol, Przytycki and many others. The reader is directed to \cite{AFW15}, Section 5.2 for a complete account and the appropriate citations. Thus we may apply Theorem \ref{GofGsFullSep} and this completes the theorem.
\end{proof}

\begin{theorem}\label{FullSepWBdy}
Let $M$ be a compact orientable 3-manifold with incompressible boundary and let $L$ be a properly embedded, incompressible and boundary incompressible surface in $M$. Then $\pi_1 L$ is fully separable in $\pi_1 M$. 
\end{theorem}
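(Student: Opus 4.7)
The plan is to reduce to the closed case handled by Theorem \ref{EmbSurfFullSep} via the doubling trick. Let $DM = M \cup_{\partial M} M$ be the double of $M$; this is a closed orientable 3-manifold. The obvious folding map $DM \to M$ is a retraction, so $\pi_1 M$ is a retract of $\pi_1 DM$ (and in particular embeds in $\pi_1 DM$, as also follows from the incompressibility of $\partial M$ and Bass--Serre theory applied to the graph-of-groups decomposition $\pi_1 DM = \pi_1 M *_{\pi_1 \partial M} \pi_1 M$, with multiple edges if $\partial M$ is disconnected). Similarly, doubling $L$ across its boundary produces a closed surface $DL \subset DM$, and the folding map $DL \to L$ exhibits $\pi_1 L$ as a retract of $\pi_1 DL$.

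The key geometric step is to verify that $DL$ is $\pi_1$-injective in $DM$. Boundary incompressibility of $L$ implies that $\partial L$ is incompressible in $\partial M$, so $\pi_1 \partial L \hookrightarrow \pi_1 \partial M$. Combined with incompressibility of $L$ in $M$, this gives compatible injections between the graph-of-groups structures on $\pi_1 DL = \pi_1 L *_{\pi_1 \partial L} \pi_1 L$ and $\pi_1 DM$; injectivity of the induced map $\pi_1 DL \to \pi_1 DM$ then follows from the normal-form theorem for amalgamated free products together with the standard identity $\pi_1 L \cap \pi_1 \partial M_i = \pi_1 \partial L_i$ inside $\pi_1 M$ for each boundary component, which holds precisely because $L$ is incompressible and boundary-incompressible. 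With $DL$ a $\pi_1$-injective embedded surface in the closed 3-manifold $DM$, Theorem \ref{EmbSurfFullSep} yields that $\pi_1 DL$ is fully separable in $\pi_1 DM$.

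It remains to transfer this back to $\pi_1 L \leq \pi_1 M$. Since retracts are automatically fully separable (pull a finite-index subgroup back by the retraction), $\pi_1 L$ is fully separable in $\pi_1 DL$; composing with the previous paragraph shows $\pi_1 L$ is fully separable in $\pi_1 DM$. Concretely, given a finite-index $U \leq \pi_1 L$, the retraction $\pi_1 DL \to \pi_1 L$ furnishes a finite-index $W \leq \pi_1 DL$ with $W \cap \pi_1 L = U$; Theorem \ref{EmbSurfFullSep} then supplies a finite-index $V \leq \pi_1 DM$ with $V \cap \pi_1 DL \leq W$. Since $\pi_1 L \leq \pi_1 M \leq \pi_1 DM$, the subgroup $V \cap \pi_1 M$ has finite index in $\pi_1 M$ and satisfies $(V \cap \pi_1 M) \cap \pi_1 L = V \cap \pi_1 L \leq U$, which is exactly full separability of $\pi_1 L$ in $\pi_1 M$. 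Separability itself follows similarly, or directly from the result of Przytycki--Wise applied to $DM$. The main delicate point is the geometric verification that $DL$ is incompressible in $DM$; once this is granted, the rest is formal manipulation of retracts and intersections of finite-index subgroups.
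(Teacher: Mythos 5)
Your proof is correct and follows essentially the same route as the paper: double $M$ and $L$, apply Theorem \ref{EmbSurfFullSep} to $DL\subset DM$, obtain a finite-index $W\leq \pi_1 DL$ meeting $\pi_1 L$ in $U$ via the folding retraction, and pull back through $V\cap\pi_1 M$. The only difference is that you spell out the verification that $DL$ is $\pi_1$-injective in $DM$ (and implicitly that $\pi_1 M\cap\pi_1 DL=\pi_1 L$), which the paper asserts without detail; this is added care rather than a different argument.
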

\begin{proof}
Let $DM$ be the double of $M$ along its boundary and let $DL$ be the double of $L$ along its boundary, canonically embedded in $DM$. If $L$ is a closed surface let $DL=L$. Since the boundary of $M$ is incompressible $\pi_1 M$ injects into $\pi_1 DM$. Then $DL$ satisfies all the conditions of Theorem \ref{EmbSurfFullSep} and so $\pi_1 DL$ is fully separable in $\pi_1 DM$. If $g\in\pi_1 M \smallsetminus \pi_1 L$ then $g\in\pi_1 DM \smallsetminus \pi_1 DL$ and a homomorphism from $\pi_1 DM$ to a finite group separating $g$ from $\pi_1 DL$ separates $g$ from $\pi_1 L$ in $\pi_1 M$. So $\pi_1 L$ is separable. 

Now let $U$ be a finite index subgroup of $\pi_1 L$. The preimage $DU$ of $U$ under the `folding map' $\pi_1 DL\to \pi_1 L$ is a finite index subgroup of $\pi_1 DL$ meeting $\pi_1 L$ precisely in $U$. Since $\pi_1 DL$ is fully separable there exists a finite index subgroup $V$ of $\pi_1 DM$ such that $V\cap \pi_1 DL\leq DU$. Then $V\cap\pi_1 M $ is a finite index subgroup of $\pi_1 M$ such that 
\[ (V\cap \pi_1 M)\cap \pi_1 L = V\cap \pi_1 DL\cap \pi_1 L \leq  DU\cap \pi_1 L= U \]
as required.
\end{proof}
\begin{clly}\label{BdySep}
Let $M$ be a compact 3-manifold with $\pi_1$-injective boundary and let $L$ be a boundary component of $M$. Then $\pi_1 L$ is fully separable in $\pi_1 M$. 
\end{clly}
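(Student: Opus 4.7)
The plan is to imitate the closed-surface branch of the proof of Theorem \ref{FullSepWBdy}, exploiting the observation that a boundary component is automatically a closed surface, so the doubling step requires no orientability of $M$. (In the proof of \ref{FullSepWBdy} the orientability hypothesis was only used to double the surface $L$ along its own boundary; here $\partial L = \emptyset$ and that step is vacuous.)

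First I would form the double $DM = M \cup_{\partial M} M$, which is a closed $3$-manifold since $M$ is compact. Each component of $\partial M$ is $\pi_1$-injective in $M$ by hypothesis, so the decomposition of $DM$ as the union of two copies of $M$ along $\partial M$ gives $\pi_1 DM$ the structure of a fundamental group of a graph of groups with two vertex groups, each equal to $\pi_1 M$, and with edge groups the fundamental groups of the components of $\partial M$. By Bass--Serre theory the vertex group $\pi_1 M$ embeds into $\pi_1 DM$. The boundary component $L$ sits as an embedded closed surface in the gluing locus of $DM$, and its fundamental group injects into $\pi_1 DM$ as the composition $\pi_1 L \hookrightarrow \pi_1 M \hookrightarrow \pi_1 DM$, the first map being the hypothesis.

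Next I would apply Theorem \ref{EmbSurfFullSep} to the closed $3$-manifold $DM$ and the embedded $\pi_1$-injective surface $L$ to conclude that $\pi_1 L$ is fully separable in $\pi_1 DM$. It is Theorem \ref{EmbSurfFullSep} that is doing all the real work here (via the JSJ/Kneser--Milnor machinery and the Przytycki--Wise separability results); there is no additional hard step in the corollary itself.

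Finally I would transfer full separability from $\pi_1 DM$ to its subgroup $\pi_1 M$. For ordinary separability, any $g \in \pi_1 M \smallsetminus \pi_1 L$ is also in $\pi_1 DM \smallsetminus \pi_1 L$, and any finite quotient of $\pi_1 DM$ separating $g$ from $\pi_1 L$ restricts to one on $\pi_1 M$. For the full profinite topology condition, given a finite-index subgroup $U \leq \pi_1 L$, full separability in $\pi_1 DM$ produces a finite-index $V \leq \pi_1 DM$ with $V \cap \pi_1 L \leq U$; then $V \cap \pi_1 M$ is a finite-index subgroup of $\pi_1 M$ satisfying
\[
(V \cap \pi_1 M) \cap \pi_1 L \;=\; V \cap \pi_1 L \;\leq\; U,
\]
which is exactly what is required. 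The only subtle point to verify carefully is the injectivity $\pi_1 M \hookrightarrow \pi_1 DM$ in the non-orientable case, but as noted this follows from $\pi_1$-injectivity of the boundary components and the standard Bass--Serre argument.
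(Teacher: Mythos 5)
Your proof is correct, but it routes the argument slightly differently from the paper. The paper's own proof of this corollary is a two-line reduction: pass to the orientable double cover of $M$ (so that $L$ becomes orientable, incompressible by the Loop Theorem, and vacuously boundary-incompressible) and then quote Theorem \ref{FullSepWBdy} directly, implicitly using that full separability descends from a finite-index cover to the original pair. You instead inline the doubling construction from the \emph{proof} of Theorem \ref{FullSepWBdy}: double $M$ along its whole boundary, observe that $\pi_1 M$ embeds in $\pi_1 DM$ by Bass--Serre theory for the graph of groups coming from van Kampen (legitimate, since the edge groups inject by the $\pi_1$-injectivity hypothesis), apply Theorem \ref{EmbSurfFullSep} to the closed surface $L\subseteq DM$, and pull separability and the induced topology back to the subgroup $\pi_1 M$. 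Your observation that no orientability reduction is needed is sound, because Theorem \ref{EmbSurfFullSep} is stated for arbitrary closed $3$-manifolds and because $\partial L=\emptyset$ makes the $DL$ step of Theorem \ref{FullSepWBdy} vacuous. What each approach buys: the paper's version is shorter on the page but leans on the unstated descent of full separability along finite covers and on checking the hypotheses of the more general Theorem \ref{FullSepWBdy}; yours is self-contained modulo Theorem \ref{EmbSurfFullSep} and avoids both the covering-space reduction and the boundary-incompressibility discussion entirely.
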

\begin{proof}
We are free to pass to a double cover so that $M$ is orientable. Then the boundary component $L$ is orientable, incompressible (by the Loop Theorem) and hence also boundary incompressible. Theorem \ref{FullSepWBdy} now applies and we are done.
\end{proof}
\begin{theorem}\label{relPD3}
Let $M$ be a compact aspherical 3-manifold with incompressible boundary components $\bdy M_1,\ldots \bdy M_r$. Let $\Gamma = \pi_1 M$, let $\Sigma= \{\pi_1 \bdy M_i\}_{1\leq i\leq r}$. Then $(\Gamma, \Sigma)$ is a good pair (with respect to the set of all primes) and its profinite completion is a PD$^3$ pair at every prime $p$.
\end{theorem}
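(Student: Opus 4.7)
The plan is to verify all the hypotheses of Theorem \ref{GoodPDnPairs} applied to the pair $(\Gamma,\Sigma)$ with $\pi$ the set of all primes; since $2\in\pi$, the orientability hypothesis is automatic. This requires four separate things: (i) $(\Gamma,\Sigma)$ is a discrete PD$^3$ pair of type FP$_\infty$; (ii) $\Gamma$ is good; (iii) $(\Gamma,\Sigma)$ is good as a pair; (iv) conclude via Theorem \ref{GoodPDnPairs}.

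For (i), asphericity plus incompressibility of the boundary implies that the boundary components are themselves aspherical surfaces (a compact aspherical $3$-manifold cannot have an essential $S^2$ in its boundary), so $M$ and each $\bdy M_i$ are finite aspherical CW-complexes with $\pi_1$-injective inclusions. Applying Proposition \ref{DiscFPn} to this data yields that $(\Gamma,\Sigma)$ is of type FP$_\infty$, and the same cellular argument shows $\Gamma$ itself is of type FP$_\infty$. Classical Poincar\'e--Lefschetz duality for the compact manifold pair $(M,\bdy M)$, transported to group cohomology via asphericity, gives that $(\Gamma,\Sigma)$ is a PD$^3$ pair in the sense of Bieri--Eckmann.

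For (ii), the goodness of $\Gamma$ is the main input. Using the Kneser--Milnor decomposition one reduces to the case of irreducible $3$-manifolds with incompressible boundary, since goodness is preserved under free products of residually finite good groups (this being the graph-of-groups goodness statement with trivial edge groups, and free products induce the full profinite topology on free factors). For an irreducible piece, the JSJ decomposition is efficient by a theorem of Wilton--Zalesskii (\cite{WZ10}), and the vertex groups are either Seifert fibered or cusped hyperbolic. Seifert fibered pieces are extensions of Fuchsian groups by $\Z$, hence good by the extension-goodness proposition, and cusped hyperbolic $3$-manifold groups are good by the combined work on virtual fibering (Agol, Wise) together with Cavendish's argument (or Grunewald--Jaikin-Zapirain--Zalesskii). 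Applying the graph-of-groups goodness proposition stated earlier then gives that $\Gamma$ is good.

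For (iii), by Corollary \ref{BdySep} each boundary subgroup $\pi_1\bdy M_i$ is fully separable in $\Gamma$, so the profinite completion $\widehat{\pi_1\bdy M_i}$ injects naturally into $\widehat\Gamma$ with image equal to $\overline{i(\pi_1\bdy M_i)}$. Each $\pi_1\bdy M_i$ is a surface group and hence good (surface groups are LERF, so their profinite topology agrees with that induced by subgroup separability, and goodness of surface groups is standard). All the hypotheses of Proposition \ref{LESGoodness} are met, so $(\Gamma,\Sigma)$ is good if and only if $\Gamma$ is good, which we have just established. Finally, all the hypotheses of Theorem \ref{GoodPDnPairs} are verified, and we conclude that the profinite completion of $(\Gamma,\Sigma)$ is a PD$^3$ pair at every prime $p$. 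The main obstacle is step (ii): the goodness of closed/cusped hyperbolic $3$-manifold groups is a deep recent result, but it is a black box we are permitted to quote, and assembling it with the efficient JSJ theorem via the graph-of-groups goodness machinery gives the conclusion without further difficulty.
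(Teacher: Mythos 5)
Your proposal is correct and follows essentially the same route as the paper: verify the hypotheses of Theorem \ref{GoodPDnPairs} by combining Proposition \ref{DiscFPn} (type FP$_\infty$ and the classical Poincar\'e--Lefschetz duality of the pair), Corollary \ref{BdySep} together with Proposition \ref{LESGoodness} to pass goodness from $\Gamma$ to the pair, and the known goodness of $3$-manifold groups. The only difference is cosmetic: you unpack the Kneser--Milnor/JSJ argument for goodness of $\Gamma$, which the paper simply cites as a black box from \cite{AFW15}.
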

\begin{proof}
Goodness follows immediately from Corollary \ref{BdySep} and Propositions \ref{LESGoodness} once we know that 3-manifold groups are good, which is well-known and may be found in work of various authors. See \cite{AFW15}, Section 5.2 for a full account.  That the profinite completion is a PD$^3$ pair at every prime now follows from Proposition \ref{DiscFPn} and Theorem \ref{GoodPDnPairs}.
\end{proof}

\begin{rmk}[Historical Note]
The full separability result above (Theorem \ref{FullSepWBdy}) has various precursors in the literature, although the result in its greatest strength seems to be new. The fact that boundary components are separable was first proved by Long and Niblo \cite{LN91}. In the case when the boundary is toroidal, full separability was established by Hamilton \cite{Ham01}. As cited in the proof, separability for embedded surface subgroups is a theorem of Przytycki and Wise \cite{PW14}.
\end{rmk}
\subsection{Decompositions of 3-manifold groups}
We conclude with a couple of simple applications of the theory of relative profinite duality groups, which parallel and extend the results of \cite{WZ17}.
\begin{theorem}
Let $M$ and $N$ be compact orientable 3-manifolds with incompressible boundary. Let the Kneser-Milnor decompositions of $M$ and $N$ be $M=M_1\# \cdots \# M_r\# F$ and $N=N_1\# \cdots \# N_s\# F'$ where each $M_i$ and $N_j$ is irreducible and $F_k$ and $F'_l$ are connect sums of $k$ and $l$ copies of $\ss{2}{1}$ respectively. Assume that there is an isomorphism $\Phi\colon \widehat{\pi_1 M}\iso  \widehat{\pi_1 N}$. Then $r=s$, $k=l$ and, up to reordering, each $\widehat{\pi_1 M_i}$ is isomorphic to (a conjugate of)  $\widehat{\pi_1 N_i}$ via $\Phi$.
\end{theorem}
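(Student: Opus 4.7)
The plan is to combine the profinite Bass-Serre machinery of Section \ref{SecTrees} with the PD$^3$-pair property of Theorem \ref{relPD3} and the fixed-vertex criterion of Corollary \ref{FixedVertex}. Both $\pi_1 M$ and $\pi_1 N$ are residually finite (Hempel), and the Kneser-Milnor free product decomposition exhibits them as fundamental groups of graphs of groups with trivial edge groups; each prime factor $\pi_1 M_i$ is fully separable in $\pi_1 M$ by the standard argument for free products of residually finite groups. Hence the corresponding graph of profinite groups is proper, and $\widehat{\pi_1 M}$ is identified with the free profinite product $\widehat{\pi_1 M_1}\amalg\cdots\amalg\widehat{\pi_1 M_r}\amalg\widehat{F_k}$ (with the $\widehat{F_k}$ factor further decomposable as a free profinite product of $k$ copies of $\widehat{\Z}$). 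This gives an action of $\widehat{\pi_1 M}$ on the standard profinite tree $T_M$ with trivial edge stabilizers and vertex stabilizers equal to the $\widehat{\pi_1 M}$-conjugates of the $\widehat{\pi_1 M_i}$ and of copies of $\widehat{\Z}$. The analogous picture holds for $N$ with tree $T_N$.

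Next, I would use $\Phi$ to regard $\widehat{\pi_1 M}$ as acting on $T_N$ and apply Corollary \ref{FixedVertex} to each factor. By Theorem \ref{relPD3}, $(\widehat{\pi_1 M_i},\widehat{\partial}_i)$ is a PD$^3$ pair at every prime, where $\widehat{\partial}_i$ denotes (the closure of) the boundary family. Since all edge stabilizers of $T_N$ are trivial, for each edge $e$ the stabilizer $(\widehat{\pi_1 M_i})_e$ is trivial and the restricted family reduces to the trivial pair, so $\cd_p((\widehat{\pi_1 M_i})_e,\widehat{\partial}_i^{(\widehat{\pi_1 M_i})_e})=0<2=n-1$ by Lemma \ref{LemReductionOfFamily}. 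Hence $\Phi(\widehat{\pi_1 M_i})$ fixes a vertex of $T_N$. Since the incompressibility of the boundary rules out solid-torus and ball summands, $\widehat{\pi_1 M_i}$ is non-procyclic, so the fixed vertex has stabilizer of the form $g\widehat{\pi_1 N_{j(i)}}g^{-1}$ for some $j(i)$ and $g\in\widehat{\pi_1 N}$, giving $\Phi(\widehat{\pi_1 M_i})\leq g\widehat{\pi_1 N_{j(i)}}g^{-1}$.

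Applying the same argument with $\Phi^{-1}$ to $\widehat{\pi_1 N_{j(i)}}$ acting on $T_M$ yields a reverse containment, producing a chain $\widehat{\pi_1 M_i}\leq h\widehat{\pi_1 M_{k}}h^{-1}$ inside $\widehat{\pi_1 M}$. Because edge stabilizers of $T_M$ are trivial, each side fixes a unique vertex of $T_M$, and the shared fixed vertex forces the corresponding $\widehat{\pi_1 M}$-orbits of canonical vertices to coincide; since distinct factors give distinct vertices in the quotient graph, this forces $k=i$ and the chain of inclusions to collapse to equality. Thus $\Phi(\widehat{\pi_1 M_i})=g\widehat{\pi_1 N_{j(i)}}g^{-1}$ and $i\mapsto j(i)$ is a bijection, giving $r=s$. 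Finally, the equality $k=l$ follows by comparing abelianizations: $\widehat{\pi_1 M}^{\mathrm{ab}}\cong\bigoplus_i \widehat{\pi_1 M_i}^{\mathrm{ab}}\oplus\widehat{\Z}^k$, and the matched factors contribute the same abelian summands on both sides, leaving $\widehat{\Z}^k\cong\widehat{\Z}^l$.

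The main obstacle I expect is the efficiency step: making the Kneser-Milnor decomposition yield a proper graph of profinite groups with the expected standard tree, so that edge stabilizers really are trivial and vertex stabilizers really are the profinite completions of the $\pi_1 M_i$ (rather than proper quotients thereof). Once this is secured, the PD$^3$ machinery and the fixed-vertex criterion drive the rest of the argument essentially mechanically.
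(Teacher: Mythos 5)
Your proposal is correct and follows essentially the same route as the paper: the efficient Kneser--Milnor splitting realises $\widehat{\pi_1 N}$ as a proper free profinite product acting on its standard tree with trivial edge stabilisers, Theorem \ref{relPD3} together with Corollary \ref{FixedVertex} forces each $\Phi(\widehat{\pi_1 M_i})$ to fix a vertex and hence lie in a conjugate of some $\widehat{\pi_1 N_j}$, and the symmetric argument with $\Phi^{-1}$ collapses the resulting chain of inclusions to give the bijection. The only divergences are cosmetic: the paper excludes the free factor by cohomological dimension rather than non-procyclicity, collapses the chain by the general fact that a closed subgroup of a profinite group cannot be conjugated into a proper subgroup of itself rather than by uniqueness of fixed vertices, and obtains $k=l$ by quotienting by the normal closures of the matched factors rather than by abelianising (your abelianisation step additionally needs cancellation of free summands for finitely generated profinite abelian groups, which holds but is worth stating).
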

\begin{rmk}
For the case of closed 3-manifolds this is Theorem 2.2 of \cite{WZ17}.
\end{rmk}
\begin{proof}
Consider the profinite tree $T$ dual to the splitting 
\[\widehat{\pi_1 N}=\widehat{\pi_1 N_i}\amalg \cdots \amalg\widehat{\pi_1 N_i}\amalg \widehat{\pi_1 F'} \]
of $\widehat{\pi_1 N}$ as an efficient graph of groups, and consider the action of each $\widehat{\pi_1 M_i}$ on $T$ via $\Phi$. If $\Sigma_i$ denotes the family of fundmental groups boundary components of $M$ lying in $M_i$ (the boundary components are incompressible, so the Kneser-Milnor decomposition only involves spheres which do not meet boundary components) then by Theorem \ref{relPD3} the profinite completion $(\widehat{\pi_1 M_i},\famS_i)$ of the pair $(\pi_1 M_i, \Sigma_i)$ is a PD$^3$ pair at every prime $p$. Therefore, since the edge stabilisers of the action are trivial, Corollary \ref{FixedVertex} implies that $\widehat{\pi_1 M_i}$ fixes some vertex of $T$, and hence is conjugate into some $\widehat{\pi_1 N_j}$ (it is not conjugate into $\widehat{F_s}$ by reason of cohomological dimension). By symmetry every $\widehat{\pi_1 N_j}$ is conjugate into some $\widehat{\pi_1 M_i}$. As subgroups of profinite groups cannot be conjugate into proper subgroups of themselves, every $\widehat{\pi_1 M_i}$ is isomorphic via $\Phi$ to a conjugate of some $\widehat{\pi_1 N_j}$. By considering the action on $T$ one may readily see that the different $\widehat{\pi_1 M_i}$ are not conjugate to each other. Therefore $r=s$ and after reordering we have that each $\widehat{\pi_1 M_i}$ is isomorphic to a conjugate of $\widehat{\pi_1 N_i}$. Finally taking the quotient by the normal subgroup generated by the $\widehat{\pi_1 M_i}$ and the $\widehat{\pi_1 N_i}$ gives an isomorphism $\widehat{\pi_1 F}\iso\widehat{\pi_1 F'}$ whence $k=l$ as different free groups are distinguished by their profinite completions.
\end{proof}
\begin{prop}
Let $M$ be a compact hyperbolic 3-manifold with empty or toroidal boundary. If $\widehat{\pi_1 M}$ acts on a profinite tree $T$ with abelian edge stabilisers then $\widehat{\pi_1 M}$ fixes a (unique) vertex.
\end{prop}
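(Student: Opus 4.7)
The plan is to apply Corollary \ref{FixedVertex} with $n=3$ and $\pi$ the set of all primes, to the action of $G:=\widehat{\pi_1 M}$ on the profinite tree $T$. Let $\famS$ denote the (possibly empty) family of profinite completions of the peripheral torus subgroups of $M$; by Theorem \ref{relPD3} the pair $(G,\famS)$ is a PD$^3$ pair at every prime $p$. The task is to verify that every edge stabiliser $G_e$ satisfies $\cd_p(G_e,\famS^{G_e})\leq 1$ for every prime $p$, whereupon Corollary \ref{FixedVertex} yields a fixed vertex.

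The classical geometric classification of abelian subgroups of $\pi_1 M$ asserts that each maximal abelian subgroup is either infinite cyclic (generated by a loxodromic isometry) or, in the cusped case, rank-two and conjugate into a peripheral torus. Combined with the full profinite separability of peripheral subgroups (Corollary \ref{BdySep}), the residual and LERF properties of $\pi_1 M$ from the Agol--Wise theory already invoked in the proof of Theorem \ref{EmbSurfFullSep}, and the malnormality of peripheral subgroups in $\pi_1 M$, one argues that any closed abelian subgroup $G_e$ of $G$ is either (a) procyclic, or (b) a closed subgroup of finite index in some conjugate $S^h$ of a peripheral $S\in\famS$; moreover in either case $G_e$ meets distinct conjugates of peripherals non-trivially in at most one $G$-conjugacy class.

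Consequently, in case (a) the family $\famS^{G_e}$ (indexed by $G_e\backslash G/\famS$) has at most one non-trivial member, itself a closed subgroup of the procyclic group $G_e$; Lemma \ref{LemReductionOfFamily} together with the bound $\cd_p(G_e)\leq 1$ then yields $\cd_p(G_e,\famS^{G_e})\leq 1$. In case (b), the double coset $G_e\cdot h\cdot S$ contributes the subgroup $G_e\cap S^h=G_e$ itself to $\famS^{G_e}$, while by (profinite) malnormality of peripherals all other contributions are trivial; Lemma \ref{LemReductionOfFamily} reduces the computation to the pair $(G_e,\{G_e\})$, whose $\Delta$-module vanishes and hence $\cd_p(G_e,\famS^{G_e})=0$. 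In either case the hypothesis of Corollary \ref{FixedVertex} is verified, and $G$ fixes a vertex of $T$.

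The main obstacle is the structural statement about closed abelian subgroups of $G$ and their intersection pattern with peripheral conjugates: this is a profinite enhancement of classical facts about hyperbolic 3-manifolds and rests on the deep subgroup separability theorems of Agol and Wise, together with the full separability results of Theorem \ref{FullSepWBdy}. Uniqueness of the fixed vertex is immediate: two distinct fixed vertices would force $G$ to fix the unique profinite geodesic between them, hence to fix some edge, compelling the non-abelian group $G$ to coincide with an abelian edge stabiliser --- impossible since $\pi_1 M$ is non-abelian and dense in $G$.
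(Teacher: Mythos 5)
Your overall strategy (reduce to Corollary \ref{FixedVertex} via Theorem \ref{relPD3}, then control $\cd_p(G_e,\famS^{G_e})$ for abelian edge stabilisers using malnormality of peripherals and Lemma \ref{LemReductionOfFamily}) is the right one and matches the paper, but there are two gaps, one of which is a genuine logical error in case (a).

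The concrete error: in case (a) you argue that $\famS^{G_e}$ has at most one non-trivial member $S_0$, ``itself a closed subgroup of the procyclic group $G_e$,'' and conclude from $\cd_p(G_e)\leq 1$ and Lemma \ref{LemReductionOfFamily} that $\cd_p(G_e,\famS^{G_e})\leq 1$. This implication is false unless you also know $S_0$ is trivial or equal to $G_e$. In general one only gets $\cd_p(G,\famS)\leq \cd_p(G)+1$, and the bound is attained: for $G_e=\Z[p]$ and $S_0=p\Z[p]$ the restriction $H^1(\Z[p],\F_p)\to H^1(p\Z[p],\F_p)$ is zero, so the long exact sequence gives $H^2(G_e,\{S_0\};\F_p)\iso\F_p\neq 0$ and $\cd_p(G_e,\{S_0\})=2$ --- exactly the case Corollary \ref{FixedVertex} cannot tolerate. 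What is missing is the observation that if $h\in G_e\cap \widehat{P}_i^{\gamma}$ is non-trivial then, since $G_e$ is abelian, $h=h^g\in\widehat{P}_i^{\gamma g}$ for every $g\in G_e$, whence malnormality of the peripheral subgroups (Lemma 4.5 of \cite{WZ14}) forces $g\in\widehat{P}_i^{\gamma}$ and hence $G_e\subseteq\widehat{P}_i^{\gamma}$; so the unique non-trivial member of $\famS^{G_e}$, if any, is all of $G_e$ and the pair reduces to $(G_e,\{G_e\})$ with vanishing relative cohomology in degrees $\geq 2$. This is the crux of the paper's proof and cannot be omitted.

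The second gap is your unproved structural claim that every closed abelian subgroup of $\widehat{\pi_1 M}$ is either procyclic or of finite index in a peripheral conjugate. Nothing in the separability or LERF machinery you cite yields this classification, and it is much stronger than what is needed. The paper avoids it entirely by arguing by contradiction: if $\cd_p(G_e,\famS^{G_e})\geq 2$ then either $\cd_p(G_e)\geq 2$, in which case Theorem 9.3 of \cite{WZ14} places $G_e$ inside a peripheral conjugate, or $\cd_p(G_e)=1$ and some member of $\famS^{G_e}$ is non-trivial, in which case the malnormality argument above again places $G_e$ inside a peripheral conjugate; either way Lemma \ref{LemReductionOfFamily} then gives $\cd_p(G_e,\famS^{G_e})<2$, a contradiction. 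Restructuring your proof along these lines removes both gaps. Your uniqueness argument is fine (and is the paper's, via Corollary 2.9 of \cite{ZM89}).
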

\begin{rmk}
The statement of this proposition is identical with Lemma 4.4 of \cite{WZ17}, where it is proved using Dehn filling techniques. In that paper it is suggested that a theory of profinite duality pairs would provide an alternative proof, and we include it here to illustrate that relative cohomology can indeed by used for this purpose.
\end{rmk}
\begin{proof}
Let $P_1,\ldots, P_r$ be representatives of the peripheral subgroups of $\pi_1 M$, let $G=\widehat{\pi_1 M}$ and let $\famS=\{\widehat{P}_1,\ldots, \widehat{P}_r\}$. Then by Theorem \ref{relPD3} the pair $(G,\famS)$ is a PD$^3$ pair at every prime. Suppose that $G$ acts on a profinite tree $T$ with abelian edge stabilisers. If $G$ fixes a vertex then it fixes a unique vertex since $G$ is non-abelian and edge stabilisers are abelian (Corollary 2.9 of \cite{ZM89}). So assume for a contradiction that $G$ does not fix any vertex. Then by Corollary \ref{FixedVertex} there is some edge $e\in E(T)$ such that $\cd_p(G_e, \famS^{G_e})\geq 2$.  

We will prove that $G_e$ is contained in some conjugate $\widehat{P}_i^\gamma$ of a peripheral subgroup. First we recall that for every $g\in G$ and $\widehat{P}_i\cap \widehat{P}_j^g\neq 1$ then $i=j$ and $g\in \widehat{P}_i$. This critical fact---the malnormality of the peripheral subgroups---is Lemma 4.5 of \cite{WZ14}.

Now consider the group pair $(G_e, \famS^{G_e})$. If $\cd_p(G_e)\geq 2$ (recalling that $G_e$ is abelian) then by Theorem 9.3 of \cite{WZ14} $G_e$ is conjugate into some peripheral subgroup. If on the other hand $\cd_p(G_e)=1$ then in order that $\cd_p(G_e, \famS^{G_e}) \geq 2$ we must have some non-trivial group in $\famS^{G_e}$. By definition of $\famS^{G_e}$ this means that there exists some $\gamma\in G$ and some $i$ such that $G_e\cap \widehat{P}_i^\gamma$ contains some non-trivial element $h$. Since $G_e$ is abelian, $h$ is therefore also an element of $\widehat{P}_i^{\gamma g}$ for every $g\in G_e$. By malnormality this forces $g\in \widehat{P}_i^\gamma$. So $G_e$ is indeed contained in the conjugate $\widehat{P}_i^\gamma$ of a peripheral subgroup.

Consider the family of subgroups 
\[\famS^{G_e} = \left\{ G_e \cap \sigma(y)\widehat{P}_i \sigma(y)^{-1} \mid x\in X, y\in G_e\lqt G/\widehat{P}_i, 1\leq i\leq r\right\} \]
where $\sigma\colon G_e\lqt G\to G$ is some continuous section. By malnormality, if the left hand side of
\[G_e \cap \sigma(y)\widehat{P}_j \sigma(y)^{-1} \leq \widehat{P}_i^\gamma \cap \sigma(y)\widehat{P}_j \sigma(y)^{-1}\]
is non-trivial for any $y$ and $j$ then $i=j$ and $\gamma\sigma(y)\in \widehat{P}_i$, and hence $G_e y \widehat{P}_j = G_e \gamma \widehat{P}_i$. So precisely one of the groups in the family $\famS^{G_e}$ is non-trivial, and equals $G_e$. Therefore by Lemma \ref{LemReductionOfFamily} we have 
\[ H_k(G_e, \famS^{G_e}; M) = H_k(G_e, \{G_e\}; M)=0\]
for all $k\geq 2$ and all $M\in\mathfrak{C}(G_e)$ and therefore $\cd_p(G_e,\famS^{G_e})<2$. This contradiction completes the proof.
\end{proof}

\setcounter{section}{0}
\setcounter{subsection}{0}
\setcounter{figure}{0}
\setcounter{table}{0}
\renewcommand\thefigure{\Alph{section}\arabic{figure}}
\renewcommand\thetable{\Alph{section}\arabic{table}}

\renewcommand\thesection{Appendix \Alph{section}}
\section{Profinite Direct Sums}\label{AppSheaves}
\renewcommand\thesection{\Alph{section}}

We have required several facts about profinite direct sums of modules. To the author's knowledge these have not yet appeared in these precise forms in published literature. It has seemed expedient to include them here so that the curious reader has access to the proofs. Analogous facts for free profinite products of groups appear in \cite{Ribes17}. One may adapt these to the case of modules, and this is what we do here. The author claims no particular credit for most of these results or proofs, which are, apart from Proposition \ref{DirSumsAndFreeMods} onwards, taken and adapted from material in Chapter 5 and Section 9.1 of \cite{Ribes17}. In this section all modules are compact.

First we recall the definition.
\begin{defn}
Let $R$ be a profinite ring. A {\em sheaf of $R$-modules} consists of a triple $({\cal M}, \mu, X)$ with the following properties.
\begin{itemize}
\item ${\cal M}$ and $X$ are profinite spaces and $\mu\colon{\cal M}\to X$ is a continuous surjection.
\item Each {\em `fibre'} ${\cal M}_x=\mu^{-1}(x)$ is endowed with the structure of a compact $R$-module such that the maps
\[R\times{\cal M} \to {\cal M}, \quad (r,m)\to r\cdot m \]
\[{\cal M}^{(2)}=\{(m,n)\in{\cal M}^2\mid \mu(m)=\mu(n)\} \to {\cal M}, \quad (m,n)\to m+n \]
are continuous.
\end{itemize} 

A {\em morphism of sheaves} $(\alpha, \bar\alpha)\colon ({\cal M}, \mu, X) \to ({\cal M'}, \mu', X') $ consists of continuous maps $\alpha:{\cal M}\to {\cal M'}$ and $\bar\alpha\colon X\to X'$ such that $\mu'\alpha = \bar\alpha\mu$ and such that the restriction of $\alpha$ to each fibre is a morphism of $R$-modules ${\cal M}_x\to{\cal M'}_{\bar\alpha(x)}$.
\end{defn}

We often contract `the sheaf $({\cal M}, \mu, X)$' to simply `the sheaf $\cal M$'. Regarding an $R$-module as a sheaf over the one-point space one may talk of a sheaf morphism from a sheaf to an $R$-module.

\begin{defn}
A {\em profinite direct sum} of a sheaf $\cal M$ consists of an $R$-module $\bigboxplus_X\cal M$ and a sheaf morphism $\omega\colon {\cal M}\to \bigboxplus_X \cal M$ (sometimes called the `canonical morphism') such that for any $R$-module $N$ and any sheaf morphism $\beta\colon{\cal M}\to N$ there is a unique morphism of $R$-modules $\tilde\beta\colon\bigboxplus_X{\cal M}\to N$ such that $\tilde\beta\omega=\beta$.
\end{defn}
Note that as any compact module is an inverse limit of finite modules it is sufficient to verify this universal property for finite $N$. 
\begin{prop}\label{SheavesConstr}
The profinite direct sum exists and is unique up to canonical isomorphism.
\end{prop}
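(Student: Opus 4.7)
Uniqueness is immediate from the standard abstract argument: given two candidates $(A,\omega_A)$ and $(B,\omega_B)$ for $\bigboxplus_X{\cal M}$, applying the universal property of each to the canonical morphism of the other produces a pair of mutually inverse $R$-module maps $A\rightleftarrows B$ making the relevant triangles commute, and these are the unique morphisms doing so. So the real work is existence, which I plan to establish by reducing to the case of sheaves over a finite base with finite fibres, where the profinite direct sum is just the classical direct sum, and then passing to an inverse limit.

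The finite case is transparent: if $X$ is finite discrete and each ${\cal M}_x$ is a finite $R$-module, then as a topological space ${\cal M}=\bigsqcup_{x\in X}{\cal M}_x$, and I would set $\bigboxplus_X{\cal M}=\bigoplus_{x\in X}{\cal M}_x$ (abstract direct sum of finite modules) with $\omega$ the inclusion of each summand. Since any sheaf morphism $\beta\colon{\cal M}\to N$ restricts to an $R$-module map on each fibre, the usual universal property of the abstract direct sum gives a unique extension $\tilde\beta$.

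For the general case, the plan is to exhibit $\cal M$ as the cofiltered inverse limit of a system of "finite sheaf quotients" $({\cal N}_i,\nu_i,Y_i)$, i.e.\ sheaf morphisms $({\cal M},\mu,X)\to({\cal N}_i,\nu_i,Y_i)$ where $Y_i$ is finite discrete, each fibre of ${\cal N}_i$ is a finite $R$-module, and the system is cofinal among all such quotients. To build these, I would start from an open equivalence relation on $X$ with finite quotient $Y_i$ (which exists since $X$ is profinite) and refine it, using the continuity of the addition map ${\cal M}^{(2)}\to{\cal M}$, scalar multiplication $R\times{\cal M}\to{\cal M}$, and the projection $\mu$, to obtain an open equivalence relation on $\cal M$ whose classes are compatible with the module operations on each fibre. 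Compactness of $\cal M$ and standard profinite arguments then ensure that $\cal M$ is the inverse limit of the resulting system. Having done this, I define
\[\bigboxplus_X{\cal M}=\varprojlim_i\bigboxplus_{Y_i}{\cal N}_i\]
with $\omega$ being the inverse limit of the canonical maps ${\cal M}\to{\cal N}_i\hookrightarrow\bigboxplus_{Y_i}{\cal N}_i$. To verify the universal property it suffices, as noted in the paper, to check it for finite $N$; but any sheaf morphism ${\cal M}\to N$ with $N$ a finite $R$-module factors through some finite sheaf quotient ${\cal N}_i$ (again using compactness of $\cal M$ and discreteness of $N$), and the finite case already handled then produces the required factorisation through $\bigboxplus_{Y_i}{\cal N}_i$.

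The main obstacle is the construction of a cofinal system of finite sheaf quotients of $\cal M$: one has to show that the "obvious" open equivalence relations on $\cal M$ coming from decompositions of $X$ can be refined to ones that genuinely respect the sheaf structure (so that addition and scalar multiplication descend to the finite quotients). This is a purely topological argument using the continuity of the sheaf operations, compactness of $\cal M$ and $R$, and the fact that a continuous map from a profinite space to a finite discrete space factors through any sufficiently fine open partition; but it is the only non-formal step in the proof.
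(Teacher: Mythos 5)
Your argument is correct in outline, but it takes a genuinely different route from the paper. The paper's proof is more direct: it forms the abstract algebraic direct sum $A=\bigoplus_{x\in X}{\cal M}_x$ with its natural map $f\colon{\cal M}\to A$, and completes $A$ with respect to the family $\cal U$ of finite-index submodules $U\leq A$ for which the composite ${\cal M}\to A\to A/U$ is continuous; the universal property for finite $N$ then falls out because any sheaf morphism $\beta\colon{\cal M}\to N$ induces an abstract map $g\colon A\to N$ with $\ker g\in{\cal U}$, and uniqueness of $\tilde\beta$ holds because $\omega({\cal M})$ topologically generates the completion. This sidesteps entirely the step you identify as the crux of your approach, namely presenting $\cal M$ as a cofiltered inverse limit of finite sheaves over finite base spaces. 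That step is achievable (it is essentially how free products and sums are treated in \cite{Ribes17}), but be aware that it is not merely a matter of refining clopen partitions of $X$: the pullback of any partition of $X$ has whole fibres as classes, so the substance lies in producing quotients \emph{within} the fibres that vary continuously, which is where arguments in the style of Lemma \ref{ShavesLem1} and Proposition \ref{WeakProjections} are needed, together with a patching argument over a finite clopen cover of $X$ (for instance by assembling finitely many sheaf morphisms to finite modules with a finite quotient of $X$ into a single finite sheaf quotient). What your route buys in exchange for this extra work is that Proposition \ref{ExtDirSumsAndLimits} and the presentation of Proposition \ref{DirSumsGiveLimits} become nearly automatic, since the compatibility of $\bigboxplus$ with inverse limits is built into the construction rather than proved afterwards.
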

\begin{proof}
Uniqueness follows immediately from the universal property. Now define 
\[ A= \bigoplus_{x\in X} {\cal M}_x\]
to be the abstract direct sum. Let $f\colon {\cal M}\to A$ be the natural function and let $\cal U$ be the set of all finite index submodules $U$ of $A$ such that ${\cal M}\to A\to A/U$ is continuous. Denote by $M$ the completion of  $A$ with respect to the topology $\cal U$. That is, $M$ is the inverse limit of the $A/U$ for $U\in\cal U$. Then $M$ is a compact $R$-module and $f$ extends to a continuous sheaf morphism  $\omega\colon {\cal M}\to M$. Note that since $f(\cal M)$ generates $A$ the image of $\omega$ topologically generates $M$.

Suppose that $N$ is a finite $R$-module and $\beta \colon{\cal M}\to N$ is a morphism. Then there is a natural map of abstract $R$-modules $g\colon A\to N$ such that $g\circ f=\beta$, which is continuous. Hence $\ker(g)\in\cal U$ so $g$ extends to a continuous map of $R$-modules $\tilde \beta\colon M\to N$ such that $\tilde \beta\omega=\beta$. This map is unique since the image of $\omega$ generates $M$. So $M$ satisfies the relevant universal property and is a profinite direct sum of $\cal M$.
\end{proof}
\begin{lem}\label{ShavesLem1}
Let $\cal M$ be a sheaf of $R$-modules and let $\nu\colon{\cal M}\to N$ be a continuous map to a finite $R$-module such that for some $x\in X$ the restriction $\nu_x\in {\cal M}_x\to N$ is a ring homomorphism. Then there exists a neighbourhood $Y$ of $x$ in $X$ such that $\nu_y$ is a ring homomorphism for all $y\in Y$. 
\end{lem}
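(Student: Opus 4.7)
The plan is as follows. (I interpret ``ring homomorphism'' to mean ``$R$-module homomorphism'', i.e.\ that $\nu_y$ is additive and $R$-linear, since the fibres of $\cal M$ are $R$-modules rather than rings.) I treat additivity and $R$-linearity separately; the two arguments are essentially the same compactness argument.

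First I would form the map
\[\alpha\colon{\cal M}^{(2)}\to N,\qquad \alpha(m,m')=\nu(m+m')-\nu(m)-\nu(m'),\]
which is continuous since the addition on $\cal M$ is continuous and $\nu$ and the subtraction on $N$ are continuous. The hypothesis that $\nu_x$ is additive says precisely that $\alpha$ vanishes on the fibre $(\cal M^{(2)})_x={\cal M}_x\times{\cal M}_x$. Similarly define
\[\beta\colon R\times{\cal M}\to N,\qquad \beta(r,m)=\nu(rm)-r\cdot\nu(m),\]
which is continuous because the $R$-actions on $\cal M$ and on $N$ are both continuous, and $\beta$ vanishes on $R\times{\cal M}_x$ because $\nu_x$ is $R$-linear.

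Now the key observation is that $\mathcal{M}^{(2)}$ is closed in the compact Hausdorff space $\mathcal{M}\times\mathcal{M}$ (as the equaliser of the two continuous maps $\mu\circ\mathrm{pr}_i\colon\mathcal{M}\times\mathcal{M}\to X$ into the Hausdorff space $X$), hence itself compact; and $R\times\mathcal{M}$ is compact because $R$ is profinite. Since $N$ is finite and discrete, $\{0\}\subseteq N$ is clopen, so $\alpha^{-1}(N\smallsetminus\{0\})$ and $\beta^{-1}(N\smallsetminus\{0\})$ are closed, hence compact. Their images under the continuous maps
\[\mathcal{M}^{(2)}\xrightarrow{\mathrm{pr}_1}\mathcal{M}\xrightarrow{\mu}X,\qquad R\times\mathcal{M}\xrightarrow{\mathrm{pr}_2}\mathcal{M}\xrightarrow{\mu}X\]
are compact subsets of the Hausdorff space $X$, hence closed. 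Call these closed sets $K_1$ and $K_2$ respectively. By construction, $y\notin K_1$ means that $\nu_y$ is additive, and $y\notin K_2$ means that $\nu_y$ is $R$-linear. Since $\nu_x$ is an $R$-module homomorphism, $x\notin K_1\cup K_2$, so $Y=X\smallsetminus(K_1\cup K_2)$ is an open neighbourhood of $x$ on which every fibre restriction $\nu_y$ is an $R$-module homomorphism.

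The only potential obstacle is verifying that $\mathcal{M}^{(2)}$ is compact (equivalently, closed in $\mathcal{M}\times\mathcal{M}$), and that the image-of-compact-is-closed step is legitimate; both are immediate from the Hausdorff condition on $X$ built into the definition of a profinite space, so no genuine difficulty arises.
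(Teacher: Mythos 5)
Your proof is correct and is essentially the paper's own argument: in both cases the failure locus of the homomorphism condition is a closed (hence compact) subset of a compact space, its image in $X$ is compact hence closed, and the desired neighbourhood is the complement. The only cosmetic difference is that the paper packages additivity and $R$-linearity into the single map $\eta(r,m_1,m_2)=\bigl(r\nu(m_1)+\nu(m_2),\,\nu(rm_1+m_2)\bigr)$, whereas you run the same compactness argument twice with the two maps $\alpha$ and $\beta$; your reading of ``ring homomorphism'' as ``$R$-module homomorphism'' is also the intended one.
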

\begin{proof}
	Consider the continuous maps $\eta\colon R\times {\cal M}^{(2)}\to N\times N$, $\rho\colon R\times{\cal M}^{(2)}\to X$ given by 
	\[\eta(r, m_1, m_2)= \left(r\nu(m_1)+\nu(m_2), \nu(rm_1 + m_2)\right), \quad \rho(r,m_1, m_2) = \mu(m_1)=\mu(m_2) \]
Since these maps are continuous and $N$ is finite, the preimage of the diagonal $DN = \{(n_1, n_2)\in N\times N\mid n_1=n_2 \}$ under $\eta$ is open. Now consider the subset $Y$ of $X$ consisting of those $y\in X$ such that $\rho^{-1}(y)\subseteq \eta^{-1}(DN)$---that is, those $y\in X$ for which $\nu$ is an $R$-module morphism when restricted to ${\cal M}_y$. We must show that $Y$ is open; but the complement of $Y$ is simply $\rho\left(R\times {\cal M}^{(2)}\smallsetminus\eta^{-1}(DN)\right)$ which is compact, hence closed as required. 
\end{proof}

\begin{prop}\label{WeakProjections}
Let $\cal M$ be a sheaf of $R$-modules, let $x\in X$ and let $W$ be a clopen neighbourhood of $x$. Then every continuous morphism $\sigma_x\colon{\cal M}_x\to N$ to a finite module $N$ can be extended to  a morphism of sheaves $\sigma\colon {\cal M}\to N$ such that $\sigma_y=0$ for all $y\notin W$.
\end{prop}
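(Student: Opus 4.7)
The plan is to extend $\sigma_x$ first to a continuous $N$-valued function on $\mathcal{M}$ (without yet caring about $R$-linearity on other fibres), then use Lemma \ref{ShavesLem1} to restrict to a clopen neighbourhood of $x$ where $R$-linearity on each fibre is automatically satisfied, and finally kill the map outside this neighbourhood.

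First I would use the fact that $\mathcal{M}$ is a profinite space, together with the finiteness of $N$, to lift the partition $\{\sigma_x^{-1}(n) : n \in N\}$ of $\mathcal{M}_x$ into a disjoint family of clopen subsets $\{U_n\}_{n \in N}$ of $\mathcal{M}$ with $U_n \cap \mathcal{M}_x = \sigma_x^{-1}(n)$. This is a standard fact: extend each $\sigma_x^{-1}(n)$ to \emph{some} clopen $U_n'$ in the profinite space $\mathcal{M}$ and then replace $U_n'$ by $U_n = U_n' \setminus \bigcup_{m \neq n} U_m'$; since the $\sigma_x^{-1}(n)$ are pairwise disjoint in $\mathcal{M}_x$, the trace of $U_n$ on $\mathcal{M}_x$ is unchanged.

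Set $U = \bigsqcup_{n \in N} U_n$. Since $U$ is clopen in $\mathcal{M}$ and contains the compact fibre $\mathcal{M}_x$, the image $\mu(\mathcal{M}\setminus U)$ is closed in $X$ and misses $x$, so there is a clopen neighbourhood $V \subseteq W$ of $x$ in $X$ with $\mu^{-1}(V) \subseteq U$. Define $\tau\colon \mathcal{M} \to N$ by $\tau(m) = n$ if $m \in U_n \cap \mu^{-1}(V)$ and $\tau(m) = 0$ otherwise. Each preimage $\tau^{-1}(n)$ is clopen (it is $U_n \cap \mu^{-1}(V)$ for $n \neq 0$, plus possibly the clopen complement for $n=0$), so $\tau$ is continuous, and by construction $\tau_x = \sigma_x$ is an $R$-module morphism.

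Now I would invoke Lemma \ref{ShavesLem1} applied to $\tau$ to produce a (clopen, after shrinking) neighbourhood $Y$ of $x$ in $X$, which I take to lie inside $V \subseteq W$, such that $\tau_y$ is an $R$-module morphism for every $y \in Y$. Define finally
\[
\sigma(m) = \begin{cases} \tau(m) & \text{if } \mu(m)\in Y,\\ 0 & \text{otherwise.}\end{cases}
\]
Since $Y$ is clopen in $X$ and $\mu$ is continuous, $\sigma$ is a continuous map $\mathcal{M} \to N$; $\sigma_y = \tau_y$ is a module morphism for $y \in Y$ and $\sigma_y = 0$ for $y \notin Y$, so it is a sheaf morphism. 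By construction $\sigma_x = \sigma_x$ as required, and $\sigma_y = 0$ whenever $y \notin Y \supseteq \{x\}$ with $Y \subseteq W$, so certainly $\sigma_y = 0$ for $y \notin W$.

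The only potentially tricky step is the initial disjointification of the clopen cover, but in the profinite setting this is standard; once that is in hand, the rest is essentially a direct application of Lemma \ref{ShavesLem1} to reduce from a merely continuous extension to a fibrewise-linear one.
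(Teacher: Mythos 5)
Your proof is correct and follows essentially the same route as the paper: lift the partition $\{\sigma_x^{-1}(n)\}$ to disjoint clopen subsets of $\mathcal{M}$, define a continuous set-theoretic extension, apply Lemma \ref{ShavesLem1} to find a clopen neighbourhood of $x$ on which it is fibrewise $R$-linear, and set it to zero elsewhere. The only quibble is your disjointification recipe $U_n = U_n'\setminus\bigcup_{m\neq n}U_m'$, which preserves the traces on $\mathcal{M}_x$ only if the initial $U_m'$ are chosen with $U_m'\cap\mathcal{M}_x=\sigma_x^{-1}(m)$ (arrangeable, since disjoint compact subsets of a profinite space are separated by disjoint clopens — which is all the paper invokes anyway).
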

\begin{proof}
	For every $n\in N$ the set $\sigma_x^{-1}(n)$ is compact. Since $\cal M$ is a profinite space there exists a clopen subset $U_n$ of $X$ for each $n$ such that the $U_n$ are disjoint for different $n$ and such that $\sigma_x^{-1}(n)\subseteq U_n$. Define a continuous map $\nu$ from $\cal M$ to $N$ by setting 
	\[\nu(m)= \begin{cases} n & \text{if $m\in U_n$} \\ 0 & \text{if $m\notin \bigcup_{n\in N} U_n$}     \end{cases} \] 	
	This agrees with the $R$-module morphism $\sigma_x$ on ${\cal M}_x$. Applying Lemma \ref{ShavesLem1} there is a clopen neighbourhood $Z$ of $x$ in $X$, which without loss of generality is contained in $W$, such that $\nu$ restricts to an $R$-module morphism on ${\cal M}_z$ for all $z\in Z$. Finally define $\sigma\colon{\cal M}\to N$ to agree with $\nu$ on $\mu^{-1}(Z)$ and by the zero map elsewhere. This is the required sheaf morphism.	
\end{proof}
	
\begin{prop}\label{PropsOfExtDirSum}
	Let $({\cal M}, \mu, X)$ be a sheaf of $R$-modules, let $M$ be the profinite direct sum of $\cal M$ and let $\omega\colon{\cal M}\to M$ be the canonical morphism. Then 
	\begin{itemize}
	\item $M$ is generated by the subgroups $M_x=\omega({\cal M}_x)$
	\item If $x\neq y$ then $M_x\cap M_y=0$
	\item $\omega$ maps ${\cal M}_x$ isomorphically onto $M_x$ for all $x\in X$
	\end{itemize}
\end{prop}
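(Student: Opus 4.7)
All three statements will be deduced from the construction in Proposition \ref{SheavesConstr} together with the key separation result of Proposition \ref{WeakProjections}. The first item is immediate: by construction, the image of $\omega$ topologically generates $M$, and the image of $\omega$ is precisely $\bigcup_{x\in X} M_x$, so the submodules $M_x$ topologically generate $M$.

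For the third statement, fix $x\in X$. The map $\omega|_{{\cal M}_x}\colon{\cal M}_x\to M_x$ is continuous and surjective by definition of $M_x$. To see injectivity, let $m\in{\cal M}_x$ be non-zero. Since ${\cal M}_x$ is a compact $R$-module, there is a continuous $R$-module morphism $\sigma_x\colon{\cal M}_x\to N$ to a finite $R$-module $N$ with $\sigma_x(m)\neq 0$. By Proposition \ref{WeakProjections} (taking $W=X$), $\sigma_x$ extends to a sheaf morphism $\sigma\colon{\cal M}\to N$, and the universal property of the profinite direct sum yields a continuous $R$-module morphism $\tilde\sigma\colon M\to N$ with $\tilde\sigma\omega=\sigma$. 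Then $\tilde\sigma(\omega(m))=\sigma_x(m)\neq 0$, so $\omega(m)\neq 0$. Since ${\cal M}_x$ is compact and $M_x\subseteq M$ is Hausdorff, the continuous bijection $\omega|_{{\cal M}_x}$ is a homeomorphism, hence an isomorphism of compact $R$-modules.

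For the second statement, suppose $x\neq y$ and let $m\in M_x\cap M_y$; write $m=\omega(a)=\omega(b)$ with $a\in{\cal M}_x$, $b\in{\cal M}_y$. Choose a clopen neighbourhood $W$ of $x$ in $X$ with $y\notin W$. I will show that $a=0$, and hence $m=0$. For an arbitrary continuous $R$-module morphism $\tau\colon{\cal M}_x\to N$ to a finite module, apply Proposition \ref{WeakProjections} to extend $\tau$ to a sheaf morphism $\sigma\colon{\cal M}\to N$ such that $\sigma_z=0$ for all $z\notin W$; in particular $\sigma_y=0$. The induced map $\tilde\sigma\colon M\to N$ satisfies
\[ \tau(a)=\sigma(a)=\tilde\sigma(\omega(a))=\tilde\sigma(\omega(b))=\sigma(b)=\sigma_y(b)=0. \]
Since this holds for every continuous morphism $\tau$ from ${\cal M}_x$ to a finite $R$-module and ${\cal M}_x$ is a compact $R$-module (hence residually finite in the module sense), we conclude $a=0$, so $m=\omega(a)=0$.

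The only non-routine step is the construction of the extensions $\sigma$ vanishing off a prescribed neighbourhood, but this has already been carried out in Proposition \ref{WeakProjections}; once that is in hand, the rest is a direct application of the universal property together with the fact that compact modules are separated by their continuous maps to finite quotients.
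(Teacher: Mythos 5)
Your proof is correct and follows essentially the same route as the paper's: the first item from the construction in Proposition \ref{SheavesConstr}, and the other two from Proposition \ref{WeakProjections} combined with the universal property and the fact that a compact module is separated by its continuous maps to finite quotients. The only cosmetic differences are the order of the second and third items and that, for disjointness, you show every finite quotient kills $a$ rather than exhibiting one quotient separating $m$ from $M_y$ as the paper does; these are the same argument.
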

\begin{proof}
	The first point holds by the explicit construction in Proposition \ref{SheavesConstr}. For the second, take $m\in M_x\smallsetminus \{0\}$ and let $\overline{m}$ be a preimage of $m$ in ${\cal M}_x$. There is then a finite module $N$ and a module morphism $\sigma_x\colon{\cal M}_x\to N$ such that the image of $\overline{m}$ is non-trivial. By Proposition \ref{WeakProjections} (taking $W$ to be any clopen neighbourhood of $x$ that does not include $y$) we may extend this to a sheaf morphism $\sigma\colon {\cal M}\to N$ vanishing on ${\cal M}_y$. By the universal property this induces a morphism $M\to N$ sending $M_y$ to 0 and $m$ to $\sigma_x(\overline{m})\neq 0$. This shows that $M_x\cap M_y=0$. A similar argument shows that for any $\overline{m}\in{\cal M}_x\smallsetminus\{ 0 \}$ there is a morphism $M\to N$ to a finite module sending $\omega(\overline{m})$ to a non zero element, which proves the final item.
\end{proof}

We now discuss the connections between profinite direct sums and inverse limits.
\begin{lem}
Let $({\cal M}_i, \mu_i, X_i)_{i\in I}$ be an inverse system of sheaves indexed over a directed poset $(I, \succcurlyeq)$ and let $({\cal M},\mu, X)$ be the inverse limit sheaf. Then every sheaf morphism $\beta\colon {\cal M} \to N$ to a finite module $N$ factors through one of the ${\cal M}_k$.
\end{lem}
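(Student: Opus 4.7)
The plan is to first reduce the problem to a question about when a continuous map from $\mathcal{M}_k$ to $N$ happens to be a sheaf morphism, and then use a compactness argument to pass to a large enough index where this holds.

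First I would carry out the standard reduction: since $N$ is finite and discrete, each preimage $\beta^{-1}(n)$ is clopen in $\mathcal{M}$, and any clopen subset of an inverse limit of profinite spaces is pulled back from some finite stage. As $N$ is finite, we may use directedness of $I$ to find a single index $k$ and a continuous map $\bar\beta\colon \mathcal{M}_k\to N$ with $\beta = \bar\beta\circ p_k$, where $p_k\colon \mathcal{M}\to\mathcal{M}_k$ is the canonical projection. For each $k'\succcurlyeq k$, set $\beta_{k'} = \bar\beta\circ p_{k,k'}\colon \mathcal{M}_{k'}\to N$, so that $\beta = \beta_{k'}\circ p_{k'}$ as continuous maps. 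The problem reduces to finding $k'$ for which $\beta_{k'}$ is a sheaf morphism.

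Next, for each $k'$ I would define the clopen ``failure set''
\[ F_{k'} = \{(r,m_1,m_2)\in R\times (\mathcal{M}_{k'})^{(2)} : \beta_{k'}(rm_1+m_2)\neq r\beta_{k'}(m_1)+\beta_{k'}(m_2)\}, \]
which is clopen because $N$ is finite and the addition and scalar multiplication of $\mathcal{M}_{k'}$ are continuous. A direct check using $\beta_{k''} = \beta_{k'}\circ p_{k',k''}$ shows that $F_{k''}$ is exactly the preimage of $F_{k'}$ under $\id\times p_{k',k''}^{(2)}$; hence the $F_{k'}$'s form an inverse system of compact spaces. Using that $\mathcal{M}^{(2)}=\varprojlim(\mathcal{M}_{k'})^{(2)}$ (fibre product commutes with inverse limits over $X=\varprojlim X_{k'}$), one identifies
\[\varprojlim_{k'\succcurlyeq k} F_{k'} = \{(r,m_1,m_2)\in R\times\mathcal{M}^{(2)} : \beta(rm_1+m_2)\neq r\beta(m_1)+\beta(m_2)\}.\]
Since $\beta$ is a sheaf morphism, this limit is empty.

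Finally I would invoke the standard compactness principle that an inverse limit of nonempty compact Hausdorff spaces is nonempty: since $\varprojlim F_{k'}=\emptyset$, some $F_{k'}$ must itself be empty. For this $k'$ the map $\beta_{k'}$ is $R$-linear on every fibre, hence is a sheaf morphism, and $\beta = \beta_{k'}\circ p_{k'}$ as required. The only point needing care is the identification $\varprojlim F_{k'}=F$ and confirming that the transition maps of this inverse system are well-defined, both of which follow directly from the factoring $\beta_{k''}=\beta_{k'}\circ p_{k',k''}$; there is no real obstacle beyond keeping the bookkeeping straight.
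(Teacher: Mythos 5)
Your proof is correct and follows essentially the same route as the paper: factor $\beta$ through some finite stage as a continuous map, then detect the failure of fibrewise $R$-linearity by a clopen/finite-image condition at each level and use compactness plus directedness to conclude it disappears at some stage. The only difference is cosmetic—you phrase the final step via the nonemptiness of an inverse limit of nonempty compact ``failure sets,'' whereas the paper tracks the images of the comparison map $\eta_i$ in the finite set $N\times N$ and lets the decreasing intersection stabilise; these are interchangeable packagings of the same compactness argument.
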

\begin{proof}
By assumption $\beta$ is a continuous map from the profinite space $\cal M$ to $N$. Since $\cal M$ is the inverse limit of the ${\cal M}_i$ as a topological space $\beta$ factors through a continuous function $\beta_{i_0}\colon {\cal M}_{i_0}\to N$ for some $i_0\in I$. Of course at this stage $\beta_{i_0}$ need not be a sheaf morphism. Set $I_0=\{i\in I\mid i\succcurlyeq i_0 \}$. For every $i\in I_0$ define $\beta_i\colon {\cal M}_i\to N$ to be $\beta_i=\beta_{i_0}\phi_{i i_0}$ where $\phi_{ii_0}$ is the transition map ${\cal M}_i\to {\cal M}_{i_0}$. We claim that for some $k$ the map $\beta_k$ is a sheaf morphism, which will give the required factorisation of $\beta$. Note that $\beta=\varprojlim_{i\geq i_0}\beta_i$.

Consider the map
\[ \eta\colon R\times {\cal M}^{(2)}\to N\times N,\quad \eta(r, m_1, m_2)= \left(r\beta(m_1)+\beta(m_2)\right)\]
and similar maps $\eta_i\colon R\times {\cal M}_i^{(2)}\to N\times N$ for each $i\in I_0$  with $\beta_i$ in place of $\beta$. It is readily seen that ${\cal M}^{(2)}=\varprojlim_{i\geq i_0} {\cal M}_i^{(2)}$, that $\eta=\varprojlim_{i\geq i_0} \eta_i$ and that $\eta({\cal M}^{(2)})=\bigcap_{i\geq i_0} \eta_i({\cal M}_i^{(2)})$. Now $N\times N$ is a finite set and $I_0$ is a directed poset so at some point   this intersection must stabilise and there exists $k\in I_0$ such that $\eta({\cal M}^{(2)})=\eta_k({\cal M}_k^{(2)})$. Now $\beta$ is a sheaf morphism so $\eta({\cal M}^{(2)})$ is contained in the diagonal $DN=\{(n_1, n_2)\in N\times N\mid n_1=n_2\}$. Hence the image of $\eta_k$ is also contained in this diagonal. This is precisely the statement that $\beta_k$ is a sheaf morphism and we are done.   
\end{proof}
\begin{prop}\label{ExtDirSumsAndLimits}
For an inverse system of sheaves $({\cal M}_i, \mu_i, X_i)$ indexed over a directed poset $(I, \succcurlyeq)$ we have
\[\bigboxplus_X \varprojlim_{i\in I} {\cal M}_i = \varprojlim_{i\in I} \bigboxplus_{X_i}{\cal M}_i\]
\end{prop}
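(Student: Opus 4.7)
The plan is to verify that $N:=\varprojlim_{i\in I}\bigboxplus_{X_i}{\cal M}_i$, together with a canonical sheaf morphism $\omega\colon\varprojlim_i{\cal M}_i\to N$ to be constructed, satisfies the universal property of the profinite direct sum of ${\cal M}=\varprojlim_i{\cal M}_i$; uniqueness of the profinite direct sum (Proposition \ref{SheavesConstr}) then yields the desired identification.

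First I would assemble the canonical morphism. For each $i$ let $\omega_i\colon{\cal M}_i\to\bigboxplus_{X_i}{\cal M}_i$ be the canonical sheaf morphism, and let $\phi_{ij}\colon{\cal M}_j\to{\cal M}_i$ be the transition maps of the inverse system. For $j\succcurlyeq i$ the composition $\omega_i\phi_{ij}\colon{\cal M}_j\to \bigboxplus_{X_i}{\cal M}_i$ is a sheaf morphism to an $R$-module, so by the universal property of $\bigboxplus_{X_j}{\cal M}_j$ it extends uniquely to a module map $\bigboxplus_{X_j}{\cal M}_j\to\bigboxplus_{X_i}{\cal M}_i$. These maps form an inverse system by uniqueness, and the limit assembles into a morphism $\omega\colon{\cal M}\to N$ (compatibility of its restriction to each fibre of $\mu\colon{\cal M}\to X$ with the $R$-action is inherited from the restrictions of the $\omega_i$).

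Next I would verify the universal property for finite target modules, which by a standard limit argument is enough. Let $\beta\colon{\cal M}\to F$ be a sheaf morphism to a finite $R$-module. By the lemma immediately preceding the proposition, $\beta$ factors as $\beta=\beta_k\psi_k$ for some $k\in I$, where $\psi_k\colon{\cal M}\to{\cal M}_k$ is the projection and $\beta_k\colon{\cal M}_k\to F$ is a sheaf morphism. The universal property of $\bigboxplus_{X_k}{\cal M}_k$ yields a unique $R$-module map $\tilde\beta_k\colon\bigboxplus_{X_k}{\cal M}_k\to F$ with $\tilde\beta_k\omega_k=\beta_k$. Composing with the projection $\pi_k\colon N\to\bigboxplus_{X_k}{\cal M}_k$ produces $\tilde\beta:=\tilde\beta_k\pi_k\colon N\to F$, which satisfies $\tilde\beta\omega=\beta$ by construction and a small diagram chase on the defining inverse system.

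The main point I would have to argue carefully is uniqueness of $\tilde\beta$. For this I would show that the image of $\omega$ topologically generates $N$: any element of $N$ is a compatible family $(n_i)$ with $n_i\in\bigboxplus_{X_i}{\cal M}_i$, and by Proposition \ref{PropsOfExtDirSum} each $n_i$ lies in the closed submodule generated by $\omega_i({\cal M}_i)$; compatibility under the transition maps, together with the fact that $N$ is the inverse limit, then forces the closed submodule generated by $\omega({\cal M})$ to equal $N$. Given this density statement, any two module maps $N\to F$ agreeing on the image of $\omega$ must coincide, establishing uniqueness and completing the verification of the universal property. The hard part will be this density/generation argument, since an element of the inverse limit need not come from a single ${\cal M}_i$; but it will follow once one notes that the kernel of any module map $N\to F$ to a finite module is open, so contains the kernels of almost all projections $\pi_i$, reducing the question to the corresponding generation statement in each $\bigboxplus_{X_i}{\cal M}_i$.
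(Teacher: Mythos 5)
Your proposal is correct and follows essentially the same route as the paper: form the inverse system of the $\bigboxplus_{X_i}{\cal M}_i$, use the preceding factorisation lemma to define $\tilde\beta$ through some level $k$, and deduce uniqueness from the fact that $\omega({\cal M})$ topologically generates the limit. Your closing argument via open kernels of maps to finite modules is a slightly more explicit justification of the generation step than the paper's one-line appeal to Proposition \ref{PropsOfExtDirSum}, but it is the same idea.
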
	
\begin{proof}
Let $M_i=\bigboxplus_{X_i}{\cal M}_i$ with canonical morphism $\omega_i\colon {\cal M}_i\to M_i$. If $i,j\in I$ with $i\succcurlyeq j$ let $\phi_{ij}\colon {\cal M}_i\to {\cal M}_j$ be the transition map in the inverse system and let $\psi_{ij}\colon M_i\to M_j$ be the canonical morphism induced by $\omega_j\phi_{i j}\colon {\cal M}_i\to M_j$. Then $\{M_i, \psi_{ij}, I\}$ is an inverse system of compact $R$-modules. Let $M$ be its inverse limit and let $\omega=\varprojlim \omega_i\colon {\cal M}\to M$, $\psi_i\colon M\to M_i$ be the natural maps. We shall prove that $M=\bigboxplus_X\cal M$ with universal morphism $\omega$.

Let $N$ be a finite $R$-module and let $\beta\colon {\cal M}\to N$ be a morphism. By the previous lemma, $\beta$ factors through some ${\cal M}_k$, and thereby induces a map $\tilde\beta_k\colon M_k\to N$. Define $\tilde\beta=\tilde\beta_k\psi_k$. This is a map $M\to N$ such that $\tilde \beta \omega=\beta$. It is the unique such map since $\omega_i({\cal M}_i)$ generates $M_i$ for each $i$ by Lemma \ref{PropsOfExtDirSum}, so $\omega(\cal M)$ generates $M$. Thus $M$ has the appropriate universal property and we are done.
\end{proof}

Let $M$ be an $R$-module and suppose ${\cal F}=\{M_x\}$ is a family of submodules continuously indexed by a profinite space $X$ (in the same sense as in Definition \ref{DefCtsIndex}). We say $M$ is an {\em internal direct sum} of $\cal F$ if:
\begin{itemize}
\item $M_x\cap M_y = 0$ if $x\neq y$
\item If $\beta\colon\bigcup_{x\in X} M_x\to N$ is a continuous function to an $R$-module $N$ which restricts to a morphism on each $M_x$ then there is a unique extension of $\beta$ to a morphism $M\to N$.
\end{itemize}
Recall that since $\cal F$ is continuously indexed by $X$ if and only if the triple $({\cal M}, \mu, X)$ is a sheaf of $R$-modules where \[{\cal M}= \left\{(m,x)\in M\times X\mid g\in M_x \right\}\] and $\mu$ is the restriction of the projection map---that is, if  and only if $\cal M$ is a closed subset of $M\times X$. 

\begin{prop} \label{InternalExternal}
The definitions of internal and external direct sums agree. More precisely, if $M$ is the internal direct product of the continuously indexed family ${\cal F}=\{M_x \}_{x\in X}$ then $M$ is the external direct sum of the sheaf \[{\cal M}= \left\{(x,m)\in X\times M\mid m\in M_x \right\}\] Conversely if $M$ is the external direct sum of a sheaf $\cal M$ with canonical morphism $\omega\colon {\cal M}\to M$ then $M$ is the internal direct sum of ${\cal F}=\{\omega({\cal M}_x)\}_{x\in X}$.
\end{prop}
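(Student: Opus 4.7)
The plan is to translate between the two universal properties by passing through the canonical map $\omega\colon \mathcal{M}\to M$, and to verify in each direction that a sheaf morphism on $\mathcal{M}$ is equivalent data to a continuous $R$-linear-on-each-fibre function on $\bigcup_x M_x$. Once that dictionary is set up, both universal properties will translate formally into one another.

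For the direction ``internal implies external,'' I would take $\omega\colon\mathcal{M}\to M$ defined by $(x,m)\mapsto m$; this is continuous (restriction of a projection), is a sheaf morphism (each fibre $\mathcal{M}_x=\{x\}\times M_x$ maps isomorphically onto $M_x$), and has image $\bigcup_x M_x$. Given a sheaf morphism $\beta\colon\mathcal{M}\to N$ to an $R$-module $N$, each restriction $\beta_x$ vanishes at $0$, so $\beta$ is constant on every fibre of $\omega$: the only non-singleton fibre is $\omega^{-1}(0)=\{(x,0)\mid x\in X\}$, since $M_x\cap M_y=0$ for $x\neq y$. Because $\omega$ is a continuous surjection from a compact space onto a Hausdorff subspace of $M$, it is a quotient map, so $\beta$ descends to a continuous function $\beta'\colon\bigcup_x M_x\to N$ which is $R$-linear on each $M_x$. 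The internal universal property then supplies a unique extension $\tilde\beta\colon M\to N$ with $\tilde\beta\omega=\beta$, and uniqueness on $M$ follows from the uniqueness clause of the internal property.

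For the converse, I would first check that $\{M_x\}_{x\in X}:=\{\omega(\mathcal{M}_x)\}_{x\in X}$ is continuously indexed by $X$: the continuous map $\mathcal{M}\to M\times X$ given by $m\mapsto(\omega(m),\mu(m))$ has image $\{(n,x)\in M\times X\mid n\in M_x\}$, which is compact, hence closed in $M\times X$. Proposition \ref{PropsOfExtDirSum} already gives $M_x\cap M_y=0$ for $x\neq y$. If $\beta\colon\bigcup_x M_x\to N$ is continuous and $R$-linear on each $M_x$, then $\beta\circ\omega\colon\mathcal{M}\to N$ is continuous, and its restriction to $\mathcal{M}_x$ is the composite of the fibre isomorphism with $\beta|_{M_x}$, hence an $R$-module map; so $\beta\circ\omega$ is a sheaf morphism. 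The external universal property now yields a unique $\tilde\beta\colon M\to N$ with $\tilde\beta\omega=\beta\omega$, which on $\bigcup_x M_x=\omega(\mathcal{M})$ agrees with $\beta$; uniqueness transfers back through $\omega$ by the same argument.

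The only step requiring any real care is the quotient-map argument in the forward direction, which turns the sheaf-theoretic continuity of $\beta$ into genuine continuity of $\beta'$ on $\bigcup_x M_x$; once that is in place, both directions are immediate manipulations of the two universal properties.
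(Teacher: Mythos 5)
Your proof is correct and follows essentially the same route as the paper's: the forward direction identifies $\bigcup_x M_x$ with the quotient of $\mathcal{M}$ obtained by collapsing the zero section (the paper phrases this as collapsing $X\times\{1\}$ to a point; your compactness/Hausdorff argument just makes the quotient-map claim explicit), and the converse composes with $\omega$ and appeals to Proposition \ref{PropsOfExtDirSum} exactly as the paper does. The only difference is that you spell out the verification that $\{\omega(\mathcal{M}_x)\}$ is continuously indexed, which the paper leaves as ``follows easily from the definition of a sheaf.''
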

\begin{proof}
First suppose that $M$ is the internal direct sum of $\cal F$. Let $N$ be an $R$-module and let $\beta\colon {\cal M}\to N$ be a morphism. Noting that by definition $\bigcup_{x\in X} M_x$ is the quotient space of $\cal M$ under collapsing $X\times\{1\}$ to a point and $X\times\{1\}$ is mapped to $1\in N$ by $\beta$, the map $\beta$ factors through a continuous map $\bigcup M_x\to N$ which agrees with $\beta$ on each $M_x$. Therefore $\beta$ extends uniquely to a map $\tilde\beta M\to N$ with $\tilde\beta \omega =\beta$. So $M$ is the external direct sum of the sheaf $\cal M$.

Conversely suppose that $M$ is the external direct sum of $\cal M$ (with canonical morphism $\omega$). That the family $\cal F$ is continuously indexed by $X$ follows easily from the definition of a sheaf. Let $\beta\colon\bigcup_{x\in X} M_x\to N$ be a continuous function to an $R$-module $N$ which restricts to a morphism on each $M_x$. Then $\beta\omega$ is a sheaf morphism ${\cal M}\to N$ and so there is a unique ring morphism $M\to N$ extending $\beta\omega$---and hence a unique morphism extending $\beta$ itself, since any such morphism would extend $\beta\omega$. It only remains to check that $M_x\cap M_y=0$ if $x\neq y$. This follows from Proposition \ref{PropsOfExtDirSum}.
\end{proof}
We record the following simple lemma for later use.
\begin{lem}\label{LemSheafOfTrivs}
Let $({\cal M}, \mu, X)$ be a sheaf of $R$-modules, and suppose that there is at most one $X$ for which ${\cal M}_x$ is not the zero module. Then $\bigboxplus_X{\cal M}= {\cal M}_x$.
\end{lem}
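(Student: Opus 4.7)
The plan is to derive the lemma directly from the properties of the profinite direct sum recorded in Proposition \ref{PropsOfExtDirSum}, without needing to revisit the construction. Let $x_0 \in X$ be the (at most one) point with ${\cal M}_{x_0} \neq 0$; if no such $x_0$ exists the conclusion is immediate, so assume one does. Let $M = \bigboxplus_X \cal M$ with canonical morphism $\omega\colon {\cal M} \to M$, and set $M_x = \omega({\cal M}_x)$ for each $x \in X$.

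First I would record, from Proposition \ref{PropsOfExtDirSum}, the three facts: $M$ is topologically generated by the family $\{M_x\}_{x\in X}$; $\omega$ restricts to an isomorphism ${\cal M}_x \to M_x$ on each fibre; and the $M_x$ are mutually disjoint apart from $0$. Since $\omega$ is a module morphism on each fibre, $M_x = \{0\}$ whenever ${\cal M}_x = 0$. Therefore every generator in the family is either $0$ or lies in $M_{x_0}$, so in fact $M_{x_0}$ alone topologically generates $M$; equivalently, $M = \overline{M_{x_0}}$.

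The final step is to observe that $M_{x_0}$ is already closed in $M$: it is the image of the compact space ${\cal M}_{x_0}$ under the continuous map $\omega$, hence compact, and $M$ is Hausdorff. Combined with $M = \overline{M_{x_0}}$ this gives $M = M_{x_0}$, and then the fibre isomorphism $\omega|_{{\cal M}_{x_0}}\colon {\cal M}_{x_0} \to M_{x_0}$ from Proposition \ref{PropsOfExtDirSum} yields the canonical isomorphism $\bigboxplus_X {\cal M} \iso {\cal M}_{x_0}$.

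There is no real obstacle here; the only point requiring a moment's care is distinguishing ``topologically generates'' from ``equals'', which is resolved by the compactness argument in the last step. No genuine work at the level of the universal property or the explicit construction of Proposition \ref{SheavesConstr} is needed, because Proposition \ref{PropsOfExtDirSum} has already packaged exactly what is required.
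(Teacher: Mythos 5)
Your argument is correct, but it is not the one the paper uses. The paper's proof observes that ${\cal M}_{x_0}$ itself satisfies the defining universal property of the \emph{internal} direct sum of the family $\{{\cal M}_y\}_{y\in X}$ (all but one member being zero, the extension required by that property is trivially $\beta|_{{\cal M}_{x_0}}$ itself), and then invokes Proposition \ref{InternalExternal} to identify this with $\bigboxplus_X{\cal M}$. You instead take the external sum $M$ as given and deduce $M=M_{x_0}$ from the structural facts in Proposition \ref{PropsOfExtDirSum}: since all other fibres map to $0$, the image of $\omega$ is just $M_{x_0}$, which topologically generates $M$; and since $M_{x_0}$ is a submodule (the fibre restriction of $\omega$ is a module morphism) and is compact, hence closed, topological generation upgrades to equality, after which the fibre isomorphism ${\cal M}_{x_0}\iso M_{x_0}$ finishes the job. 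Both routes are short and sound. The paper's is marginally cleaner in that it never has to distinguish ``topologically generates'' from ``equals''; yours has the small advantage of not requiring one to re-verify a universal property or to check that the degenerate family is continuously indexed, since everything is read off from properties already established for the external sum. The only point I would ask you to make explicit is that $M_{x_0}$ is a submodule of $M$ (immediate from the definition of a sheaf morphism), since that is what justifies passing from ``the generators lie in $M_{x_0}$'' to ``the closed submodule generated is $M_{x_0}$''.
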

\begin{proof}
One may readily see that ${\cal M}_x$ is the internal direct sum of the ${\cal M}_y$ for $y\in Y$. So the result follows from Lemma \ref{InternalExternal}.
\end{proof}

\begin{prop}\label{DirSumsAndLimits}
Let $M$ be an $R$-module and ${\cal F}=\{M_x\}$ be a family of submodules indexed by a profinite space $X$. Suppose there exist inverse systems $(A_i, \phi_{i,j}, I)$ and $(X_i, f_{i,j}, I)$ where the $A_i$ are compact $R$-modules and the $X_i$ are profinite spaces such that 
\begin{enumerate}[(1)]
\item $X=\varprojlim X_i$
\item For every $i\in I$ we have $A_i = \bigboxplus_{x_i\in X_i} A_{i, x_i}$ for some collection of submodules $A_{i,x_i}$ continuously indexed by $X_i$.
\item $\phi_{i,j}(A_{i,t})\subseteq A_{j,f_{i,j}(x_i)}$ for every $i\geq j$ and every $x_i\in X_i$. 
\item For every $x=(x_i)\in X=\varprojlim X_i$ we have $M_x=\varprojlim A_{i, x_i}$
\item $M=\varprojlim A_i$ 
\end{enumerate}
Then ${\cal F}$ is continuously indexed by $X$ and $M$ is the profinite direct sum of ${\cal F}$.
\end{prop}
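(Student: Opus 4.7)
\emph{Proof plan.} The strategy is to realise the family ${\cal F}=\{M_x\}_{x\in X}$ as the inverse limit of the sheaves ${\cal M}_i$ naturally associated to the families $\{A_{i,x_i}\}_{x_i\in X_i}$, and then invoke Proposition \ref{ExtDirSumsAndLimits} together with the internal/external direct sum correspondence of Proposition \ref{InternalExternal}.

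First I would establish that $\cal F$ is continuously indexed by $X$. By Definition \ref{DefCtsIndex} this amounts to showing the subset $S=\{(x,m)\in X\times M\mid m\in M_x\}$ is closed. Assumption (2) says that each $S_i=\{(x_i,a_i)\in X_i\times A_i\mid a_i\in A_{i,x_i}\}$ is closed in $X_i\times A_i$. Using (1) and (5), $X\times M=\varprojlim(X_i\times A_i)$, and by (4) a pair $(x,m)$ lies in $S$ if and only if its image in $X_i\times A_i$ lies in $S_i$ for every $i$. Hence $S$ is the intersection of the preimages of the closed sets $S_i$ under the projection maps, and is therefore closed.

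Next, each $S_i$, with its projection to $X_i$ and the fibrewise $R$-module structure inherited from $A_{i,x_i}$, is a sheaf of compact $R$-modules, which I denote ${\cal M}_i$. Condition (3), together with the continuity of $\phi_{i,j}$ and $f_{i,j}$, shows that for $i\succcurlyeq j$ the map $(x_i,a_i)\mapsto(f_{i,j}(x_i),\phi_{i,j}(a_i))$ is a morphism of sheaves ${\cal M}_i\to{\cal M}_j$, and these assemble into an inverse system. Its inverse limit is a sheaf ${\cal M}$ over $X=\varprojlim X_i$ whose fibre over $x=(x_i)$ is $\varprojlim A_{i,x_i}$, which by (4) equals $M_x$. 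Thus ${\cal M}$ is precisely the sheaf associated to the continuously indexed family ${\cal F}$ from step one. By (2) each $A_i=\bigboxplus_{X_i}{\cal M}_i$, and Proposition \ref{ExtDirSumsAndLimits} yields
\[\bigboxplus_X{\cal M}=\varprojlim_i\bigboxplus_{X_i}{\cal M}_i=\varprojlim_i A_i=M,\]
with canonical morphism restricting on each fibre to the natural inclusion $M_x\hookrightarrow M$. Applying Proposition \ref{InternalExternal} now identifies $M$ as the internal direct sum of $\cal F$, completing the proof.

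The main technical obstacle is the identification in the middle step: verifying that the inverse limit of the sheaves ${\cal M}_i$ has fibre $M_x$ over $x$, rather than some larger submodule, and that the sheaf structure matches the one produced by $\cal F$ in step one. This relies essentially on hypothesis (4), which exactly asserts that the fibrewise inverse limit computes $M_x$, and on the closedness argument of step one to ensure we have an honest sheaf in the limit. Once those are in hand the remainder of the argument is a direct invocation of Propositions \ref{ExtDirSumsAndLimits} and \ref{InternalExternal}.
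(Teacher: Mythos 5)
Your proposal is correct and follows exactly the route the paper takes: realise $\cal F$ as the inverse limit of the sheaves attached to the families $\{A_{i,x_i}\}$, apply Proposition \ref{ExtDirSumsAndLimits}, and translate via Proposition \ref{InternalExternal}. The paper's proof is just a two-sentence summary of this argument, so your write-up is simply a more detailed version of the same proof.
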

\begin{proof}
This is a straightforward translation of Proposition \ref{ExtDirSumsAndLimits} into the language of internal direct sums using Proposition \ref{InternalExternal}. The various conditions of the proposition guarantee that the natural sheaf corresponding to the family $\cal F$ is an inverse limit of sheaves corresponding to the families $\{A_{i,x_i}\}$.
\end{proof}

The converse statement is that there always exists an expression of a profinite direct sum as an inverse limit of finite direct sums. Furthermore the summands may always be taken to be finite.
\begin{prop}\label{DirSumsGiveLimits}
Let $M$ be an $R$-module and ${\cal F}=\{M_x\}$ be a family of submodules indexed by a profinite space $X$. Suppose ${\cal F}$ is continuously indexed by $X$ and $M$ is the profinite direct sum of ${\cal F}$. Then there exist inverse systems $(A_i, \phi_{i,j}, I)$ and $(X_i, f_{i,j}, I)$ where the $A_i$ are finite $R$-modules and the $X_i$ are finite discrete spaces such that 
\begin{enumerate}[(1)]
\item $X=\varprojlim X_i$
\item For every $i\in I$ we have $A_i = \bigoplus_{x_i\in X_i} A_{i, x_i}$ for some collection of finite $R$-modules $A_{i,x_i}$ indexed by $X_i$.
\item $\phi_{i,j}(A_{i,x_i})\subseteq A_{j,f_{i,j}(x_i)}$ for every $i\geq j$ and every $x_i\in X_i$. 
\item For every $x=(x_i)\in X=\varprojlim X_i$ we have $M_x=\varprojlim A_{i, x_i}$
\item $M=\varprojlim A_i$ 
\end{enumerate} 
\end{prop}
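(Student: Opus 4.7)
The plan is to build the required inverse system by combining two independent approximations of $M = \bigboxplus_X \mathcal{M}$: finer clopen partitions of the indexing space $X$, and smaller open submodules of $M$. Each such pair will give a finite quotient of $M$ that splits as a direct sum indexed by the partition.

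First I would show that any finite clopen partition $\mathcal{P} = \{P_1, \ldots, P_n\}$ of $X$ produces an internal direct sum decomposition $M = \bigoplus_{k=1}^n M_{P_k}$, where $M_{P_k} = \bigboxplus_{P_k} \mathcal{M}|_{P_k}$. For each $k$, define a sheaf morphism $\omega_k\colon \mathcal{M} \to M$ by $\omega_k(m) = \omega(m)$ if $\mu(m) \in P_k$ and $\omega_k(m) = 0$ otherwise. This is continuous because $\mu^{-1}(P_k)$ is clopen in $\mathcal{M}$, and on each fiber it is either $\omega|_{\mathcal{M}_x}$ or the zero map. By the universal property, $\omega_k$ extends uniquely to $p_k\colon M \to M$. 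The sheaf identities $\sum_k \omega_k = \omega$, $p_k \omega_k = \omega_k$, and $p_j \omega_k = 0$ for $j \neq k$, together with uniqueness of extensions, force $\sum p_k = \id_M$, $p_k^2 = p_k$, and $p_j p_k = 0$. A parallel application of the universal property identifies $p_k(M)$ with $M_{P_k}$.

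Next, for any open submodule $U \subseteq M$ I would set $U_\mathcal{P} := \bigoplus_{k=1}^n (U \cap M_{P_k}) \subseteq U$. The quotient $M/U_\mathcal{P} = \bigoplus_k M_{P_k}/(U \cap M_{P_k})$ has each summand injecting into the finite module $M/U$, so $U_\mathcal{P}$ is open and $M/U_\mathcal{P}$ is a finite direct sum of finite submodules. Let $I$ be the directed poset of pairs $(U, \mathcal{P})$ ordered by $U' \subseteq U$ and $\mathcal{P}'$ refining $\mathcal{P}$. Define $A_{(U,\mathcal{P})} = M/U_\mathcal{P}$, $X_{(U,\mathcal{P})} = \mathcal{P}$ (as a finite discrete space), and $A_{(U,\mathcal{P}), P_k} = M_{P_k}/(U \cap M_{P_k})$. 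Conditions (2) and (3) of the proposition hold by construction, condition (1) is the standard fact that every profinite space is the inverse limit of its finite clopen quotients, and condition (5) follows because the family $\{U_\mathcal{P}\}$ is cofinal in the open submodules of $M$.

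The main obstacle is condition (4), the fiber identification $M_x = \varprojlim_i A_{i,x_i}$. I would compute this limit in stages: first over $U$ for fixed $\mathcal{P}$, which yields the closed submodule $M_{P_x(\mathcal{P})}$ where $P_x(\mathcal{P})$ is the partition class containing $x$; then the remaining limit over $\mathcal{P}$ reduces to showing $M_x = \varprojlim_\mathcal{P} M_{P_x(\mathcal{P})}$. For this I would apply Proposition \ref{ExtDirSumsAndLimits} to the inverse system of sheaves $(\mathcal{M}|_{P_x(\mathcal{P})})^+$ over $X$, where the superscript $+$ denotes extension by zero fibers over $X \setminus P_x(\mathcal{P})$. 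Since zero fibers contribute nothing to a profinite direct sum, $\bigboxplus_X (\mathcal{M}|_{P_x(\mathcal{P})})^+ = M_{P_x(\mathcal{P})}$, and because the clopen neighbourhoods of $x$ intersect in $\{x\}$ the inverse limit of these sheaves is the sheaf with fiber $\mathcal{M}_x$ at $x$ and zero elsewhere, whose direct sum is $M_x$ by Lemma \ref{LemSheafOfTrivs}. Proposition \ref{ExtDirSumsAndLimits} then yields the required identification.
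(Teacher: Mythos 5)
Your construction is sound and its skeleton coincides with the paper's: both index the system by pairs (finite clopen partition of $X$, open submodule of $M$) and take the resulting finite quotients, and your summands $M_{P_k}/(U\cap M_{P_k})$ are (via the second isomorphism theorem) the paper's $M(x_i)U/U$. The differences lie in how the two non-routine points are handled. You make the internal decomposition $M=\bigoplus_k M_{P_k}$ explicit by manufacturing orthogonal idempotents $p_k$ from extension-by-zero sheaf morphisms and the universal property -- a point the paper's proof leaves implicit -- and you get condition (5) from the cofinality of the $U_{\mathcal{P}}$ among open submodules rather than by invoking Proposition \ref{DirSumsAndLimits}. For the crux, condition (4), the routes genuinely diverge: the paper argues directly that the sub-poset of pairs $(R',U')$ with $M([x]_{R'})U'=M_xU'$ is directed and cofinal, using the continuity of the indexing in the form ``$\{y : M_y\subseteq M_xU\}$ is open''; you instead iterate the limit, reduce to showing $M_x=\bigcap_{P\ni x}M_P$, and deduce this from Proposition \ref{ExtDirSumsAndLimits} applied to the extension-by-zero sheaves $(\mathcal{M}|_P)^+$ together with Lemma \ref{LemSheafOfTrivs}. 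Your version buys a more formal, lemma-driven argument in which continuity of the indexing is absorbed into the sheaf formalism (at the cost of checking that the zero section is closed so that $(\mathcal{M}|_P)^+$ is a sheaf and that its direct sum is $M_P$); the paper's cofinality argument is more hands-on but avoids introducing the auxiliary inverse system of sheaves. Both are valid proofs.
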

\begin{proof}
Let $\cal R$ be the set of all equivalence relations on $X$ whose equivalence classes are clopen. Let 
\[I=\left\{ i=(R,U)\mid R\in{\cal R}, \,U \text{ an open submodule of }M\right\} \]
and make $I$ into a directed poset by declaring $(R,U)\succcurlyeq (R',U')$ if and only if $U\subseteq U'$ and $R\subseteq R'$ (that is, $xRy$ implies $xR' y$). For $i=(R,U)\in I$ define $X_i = X/R$, let $x_i = [x]_R$ be the equivalence class of $x\in X$ under $R$, let $A_{i,x_i} = M(x_i)U/U$ and $A_i = \bigoplus_{x_i\in X_i} A_{i,{x_i}}$. Here $M(x_i)$ is the submodule of $M$ generated by all $M_y$ for $y\in x_i =[x]_R$.

For $i=(R,U)\succcurlyeq (R',U')=j$ define $f_{ij}\colon X_i\to X_j$ to be the natural map of quotient spaces. Note that the inclusion $[x]_R\subseteq [x]_{R'}$ induces an inclusion $M(x_i)\subseteq M(x_j)$ and therefore induces maps $A_{i,x_i}\to A_{j,f_{ij}(x_i)}$ for all $x_i\in X_i$. Thus we also have a natural continuous morphism $\psi_{ij}\colon A_i\to A_j$ which is an epimorphism since both sides are generated by the images of the $M_y$ as $y$ ranges over all $y\in X$. Certainly $\{A_i, \psi_{ij}, I\}$ and $\{X_i, f_{ij}, I\}$ are inverse systems and $X= \varprojlim X_i$. So we have conditions (1)--(3).

Let $x\in X$ and let $x_i$ be the image of $x$ in $X_i$. We claim that $M_x=\varprojlim A_{i,x_i} = \varprojlim M(x_i)U/U$. First note that 
\[I'=\left\{ i'=(R', U')\mid M([x]_{R'})U'/U' = M_xU'/U' \right\} \]
is a directed sub-poset of $I$. For let $i'=(R',U')$ and $i''=(R'',U'')$ be in $I'$. Set $U=U'\cap U''$. Now since the family $\cal F$ is continuously indexed by $X$ the set of $y$ such that $M_y\subseteq M_x U$ is open in $X$. Then we may choose $R\in \cal R$ such that $R\subseteq R'\cap R''$ and $M_y\subseteq M_xU$ for all $y\in [x]_R$. Then certainly $i=(R,U)\succcurlyeq i', i''$. Furthermore we certainly have $M([x]_R)U=M_x U$ so $i\in I'$. The sub-poset $I'$ is also cofinal in $I$. For if $i=(R,U)\in I$ then again $\{y\in X\mid M_y\subseteq M_xU\}$ is open so we may find $R'\in \cal R$ with $R'\subseteq R$ and $[x]_{R'}\subseteq\{y\in X\mid M_y\subseteq M_xU\}$. Then $(R', U)\succcurlyeq (R,U)$ and $(R', U)\in I'$. It follows that
\[ \varprojlim_{i\in I} A_{i,{x_i}} = \varprojlim_{i\in I'} A_{i,{x_i}} = \varprojlim_{(R,U)\in I'} M([x]_R)U/U = \varprojlim_{U} M_xU/U = M_x\]
as claimed.

Finally we have a natural epimorphism $\rho\colon M\to \varprojlim A_i$. By point (4) each $M_x$ embeds in the inverse limit under this map, and by Proposition \ref{DirSumsAndLimits} the inverse limit of the $A_i$ is the profinite direct sum of the image of the family $\cal F$. Since $M$ is also this direct sum, the map $\rho$ is an isomorphism and  $M=\varprojlim A_i$ as required. 
\end{proof}
\begin{prop}\label{DirectSumsAndTor}
Let $({\cal M}, \mu, X)$ be a sheaf of compact $R$-modules where $R=\Zpiof{G}$ and let $M=\bigboxplus_X \cal M$. As usual identify each fibre ${\cal M}_x$ with its image $M_x$ in the profinite direct sum $M$. Let $F$ be a functor from  \Gmod[\pi]{C} to itself which is additive and commutes with inverse limits.  Then:
\begin{itemize}
\item for each $x\in X$, the natural map $M_x\to M$ gives a canonical embedding of $F(M_x)$ in $F(M)$
\item $F(M) = \bigboxplus_{x\in X} F(M_x)$
\end{itemize}
In particular this holds for $\Tor$ functors. 
\end{prop}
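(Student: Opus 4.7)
The plan is to reduce everything to finite data via Proposition \ref{DirSumsGiveLimits}, apply $F$ termwise, and then recognise the output as a profinite direct sum using Proposition \ref{DirSumsAndLimits}.

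First I would invoke Proposition \ref{DirSumsGiveLimits} to obtain directed inverse systems $(X_i, f_{ij}, I)$ of finite discrete spaces and $(A_i, \phi_{ij}, I)$ of finite $R$-modules, with $A_i = \bigoplus_{x_i \in X_i} A_{i, x_i}$, such that $X = \varprojlim X_i$, $M = \varprojlim A_i$, and for each $x \in X$ with image $x_i \in X_i$ one has $M_x = \varprojlim A_{i, x_i}$. Applying $F$ and using that $F$ commutes with inverse limits gives $F(M) = \varprojlim F(A_i)$ and $F(M_x) = \varprojlim F(A_{i, x_i})$; additivity of $F$ applied to the finite direct sum decomposition of $A_i$ produces $F(A_i) = \bigoplus_{x_i \in X_i} F(A_{i, x_i})$, and the transition maps $\phi_{ij}$ restrict compatibly to $A_{i, x_i} \to A_{j, f_{ij}(x_i)}$ by condition (3) of Proposition \ref{DirSumsGiveLimits}, so after applying $F$ the analogous summand-compatibility condition (3) required by Proposition \ref{DirSumsAndLimits} is preserved.

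For the first bullet, I would argue that $F(M_x) \to F(M)$ is injective by showing that the composite $F(M_x) \to F(M) \to F(A_i)$ factors as $F(M_x) \to F(A_{i, x_i}) \hookrightarrow F(A_i)$, where the first arrow is the structural projection out of the inverse limit $F(M_x) = \varprojlim F(A_{i, x_i})$ and the second is the inclusion of a direct summand. Given a nonzero $y \in F(M_x)$, by the inverse-limit description some projection to $F(A_{i, x_i})$ is nonzero, hence so is its image in $F(A_i)$, and therefore also in $F(M) = \varprojlim F(A_i)$. The only delicate bookkeeping is the factorisation through the summand $F(A_{i, x_i})$, which reduces to the corresponding module-level factorisation $M_x \to M \to A_i$ through $A_{i, x_i}$; this in turn follows from condition (3) of Proposition \ref{DirSumsGiveLimits} together with the equality $M_x = \varprojlim A_{i, x_i}$.

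With the embedding established, the second bullet is a direct application of Proposition \ref{DirSumsAndLimits} to the ambient module $F(M)$, the family $\{F(M_x)\}_{x \in X}$, and the inverse systems $(F(A_i))$, $(X_i)$ with summands $F(A_{i, x_i})$: conditions (1)--(5) have all been verified above, so the family is continuously indexed by $X$ and $F(M) = \bigboxplus_{x \in X} F(M_x)$. The particular case of $\Tor$ follows because each $\Tor^G_\bullet(N, -)$ is additive and commutes with inverse limits by the result of \cite{RZ00} cited earlier in the paper. The only real obstacle is the bookkeeping around the summand factorisation in the injectivity step; everything else is a mechanical combination of the two limit-and-direct-sum propositions.
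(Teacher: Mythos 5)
Your proof is correct and follows essentially the same route as the paper's: write $M$ as an inverse limit of finite direct sums via Proposition \ref{DirSumsGiveLimits}, push $F$ through using additivity and commutation with inverse limits, and reassemble the result as a profinite direct sum via Proposition \ref{DirSumsAndLimits}. Your explicit verification of injectivity in the first bullet is simply an unpacking of what the paper compresses into a citation of Proposition \ref{PropsOfExtDirSum}.
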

\begin{proof}
	Write $M$ as an inverse limit of finite direct sums $A_{i,x_i}$ as in Proposition \ref{DirSumsGiveLimits}. Since both finite direct sums and inverse limits commute with $F$ we have
\[F(M)=F\left(\varprojlim_i \bigoplus_{t\in X_i} A_{i,x_i}\right)=\varprojlim_i \bigoplus_{x_i\in X_i} F(A_{i,x_i}) = \bigboxplus_{x\in X} F(M_x) \]
as required.  The fact that the final result is a well-defined (internal) direct sum follows since the final inverse limit satisfies the conditions of Proposition \ref{DirSumsAndLimits}. In particular the canonical map $F(M_x)\to F(M)$ is an embedding by Lemma \ref{PropsOfExtDirSum}.
\end{proof} 

Next we remark upon relations between profinite direct sums and free modules. For the theory of free profinite modules over profinite spaces see Section 5.2 of \cite{RZ00}.
\begin{prop}\label{DirSumsAndFreeMods}
Let $\mu\colon Y\to X$ be a surjection of profinite spaces and let $R$ be a profinite ring. Then $R[\![Y]\!]$ is the internal profinite direct sum
\[R[\![Y]\!]=\bigboxplus_{x\in X}R[\![\mu^{-1}(x)]\!] \]
\end{prop}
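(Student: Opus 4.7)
The plan is to verify the five hypotheses of Proposition \ref{DirSumsAndLimits}. The starting point is to realise the map $\mu\colon Y\to X$ as an inverse limit of surjections between finite discrete spaces. I would take as index set the directed poset $I$ of pairs $i=(\mathcal{P}_i,\mathcal{Q}_i)$, where $\mathcal{P}_i$ is a finite clopen partition of $X$ and $\mathcal{Q}_i$ is a finite clopen partition of $Y$ refining $\mu^{-1}(\mathcal{P}_i)$. Setting $X_i = X/\mathcal{P}_i$ and $Y_i = Y/\mathcal{Q}_i$ produces an induced surjection $\mu_i\colon Y_i\to X_i$. Because any clopen partition of $Y$ may be refined so as to satisfy the compatibility condition by intersecting with the pieces of $\mu^{-1}(\mathcal{P})$, the system so obtained is cofinal in both the partition posets of $X$ and $Y$, so that $X=\varprojlim X_i$, $Y=\varprojlim Y_i$, and $\mu = \varprojlim \mu_i$.

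Next I would apply the standard inverse-limit description of the free profinite module (see Section 5.2 of \cite{RZ00}) to obtain $R[\![Y]\!] = \varprojlim_i R[\![Y_i]\!]$. Since each $Y_i$ is finite and partitioned by the fibres $\mu_i^{-1}(x_i)$ for $x_i\in X_i$, the module $R[\![Y_i]\!]$ admits an evident finite direct sum decomposition
\[R[\![Y_i]\!] = \bigoplus_{x_i\in X_i} R[\![\mu_i^{-1}(x_i)]\!],\]
which is trivially a profinite direct sum, the indexing space $X_i$ being finite and discrete. The transition maps in the system respect this fibrewise structure in the sense required by condition (3) of Proposition \ref{DirSumsAndLimits}, because a transition $Y_i\to Y_j$ sends a fibre of $\mu_i$ into a fibre of $\mu_j$.

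Finally, for each $x\in X$ with image $x_i\in X_i$, the identity $\mu^{-1}(x)=\varprojlim_i \mu_i^{-1}(x_i)$ is a standard consequence of the continuity of $\mu$ and the compactness of the profinite spaces involved. Applying the free module functor once more gives $R[\![\mu^{-1}(x)]\!] = \varprojlim_i R[\![\mu_i^{-1}(x_i)]\!]$. At this point all five hypotheses of Proposition \ref{DirSumsAndLimits} are in place, and its conclusion yields both the claimed internal profinite direct sum decomposition and the fact that the family $\{R[\![\mu^{-1}(x)]\!]\}_{x\in X}$ is continuously indexed by $X$.

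I do not expect any serious conceptual obstacle; the whole argument is essentially a bookkeeping exercise reducing the statement, via inverse limits, to the trivial case of a surjection between finite discrete spaces, where the corresponding decomposition of the free module is tautological. The only mildly fiddly point will be setting up the indexing poset $I$ so that the surjections $\mu_i$ are simultaneously cofinal approximations of $\mu$, $X$, and $Y$; but this is handled by the partition-refinement trick in the first paragraph.
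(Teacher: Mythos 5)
Your proof is correct and follows essentially the same route as the paper's: write $\mu$ as an inverse limit of surjections of finite spaces, use $R[\![Y]\!]=\varprojlim R[\![Y_i]\!]$ together with the tautological finite decomposition $R[\![Y_i]\!]=\bigoplus_{x_i} R[\![\mu_i^{-1}(x_i)]\!]$, and invoke Proposition \ref{DirSumsAndLimits}. The only difference is that you spell out the partition-refinement bookkeeping which the paper leaves implicit.
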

\begin{proof}
Write $\mu$ as an inverse limit of maps of finite spaces $\mu_i\colon Y_i\to X_i$. Then we have
\[ R[\![Y]\!] = \varprojlim R[\![Y_i]\!] = \varprojlim \bigoplus_{x\in X_i} R[\![\mu^{-1}_i(x)]\!] = \bigboxplus_{x\in X}R[\![\mu^{-1}(x)]\!] \]
noting that when $Y_i$ is a disjoint union of finitely many clopen sets $\mu^{-1}_i(x)$ the second equality follows immediately from the universal properties of finite direct sums and free modules. The fact that the final result is a well-defined (internal) direct sum follows since the inverse limit satisfies the conditions of Proposition \ref{DirSumsAndLimits}.
\end{proof}
There is one final proposition we will need.
\begin{prop}\label{InfGenSheaves}
Let $R=\Zpiof{G}$ where $\pi$ is a set of primes and $G$ is a profinite group. Let $({\cal M}, \mu, X)$ be a sheaf of compact $R$-modules and suppose that there exists $p\in\pi$ such that the fibres ${\cal M}_x$ admits a surjection to the trivial $R$-module $\F_p$ for infinitely many $x\in X$. Then $M=\bigboxplus_X \cal M$ is not finitely generated over $R$. 
\end{prop}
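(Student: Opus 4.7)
The plan is to argue by contradiction: assume $M$ is finitely generated, say by $k$ elements, and construct a surjection from $M$ onto a trivial $R$-module $\F_p^{n}$ with $n > k$, which is absurd because any $R$-linear image of a $k$-generated module inside a trivially-acted $\F_p^n$ has $\F_p$-dimension at most $k$ (the map is determined by its values on the generators, and the $R$-action on the target factors through the augmentation).

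To produce such a surjection, let $X' \subseteq X$ be the (infinite) set of indices $x$ for which there exists a surjection $\sigma_x \colon {\cal M}_x \twoheadrightarrow \F_p$, fix a finite subset $X_0 \subseteq X'$ with $|X_0| > k$, and, using that $X$ is Hausdorff and $X_0$ is finite, choose pairwise disjoint clopen neighbourhoods $W_y \subseteq X$ of the points $y \in X_0$. Proposition \ref{WeakProjections} applied to each $\sigma_y$ and the neighbourhood $W_y$ yields sheaf morphisms $\sigma^{(y)} \colon {\cal M} \to \F_p$ extending $\sigma_y$ and vanishing on ${\cal M}_z$ for every $z \notin W_y$. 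Assembling these into a single sheaf morphism
\[
\sigma \colon {\cal M} \longrightarrow \bigoplus_{y \in X_0} \F_p, \qquad \sigma(m) = \bigl(\sigma^{(y)}(m)\bigr)_{y \in X_0},
\]
and invoking the universal property of $M = \bigboxplus_X {\cal M}$ produces an $R$-linear map $\tilde\sigma \colon M \to \bigoplus_{y \in X_0} \F_p$. Disjointness of the $W_y$ ensures that, for each $y \in X_0$, the restriction of $\tilde\sigma$ to the copy $M_y \subseteq M$ (identified with ${\cal M}_y$ by Proposition \ref{PropsOfExtDirSum}) equals $\sigma_y$ into the $y$-th coordinate and is zero in every other coordinate. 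Hence the image of $\tilde\sigma$ contains the full $y$-th coordinate $\F_p$ for every $y \in X_0$, and since the target is finite, $\tilde\sigma$ is surjective. This contradicts $M$ being generated by $k < |X_0|$ elements.

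No single step looks like a serious obstacle: Proposition \ref{WeakProjections} does essentially all the work of translating the pointwise surjections into a genuine sheaf morphism, and disjointness of the supports makes the assembly trivial. The only place requiring a small amount of care is the final observation that a cyclically generated $R$-summand of $M$ maps into $\F_p^{|X_0|}$ with image of dimension at most one over $\F_p$ (because $\F_p$ carries the trivial $G$-action and $R = \Zpiof{G}$ acts through the augmentation $R \to \Z[\pi] \to \F_p$), so altogether the image has $\F_p$-dimension at most $k$.
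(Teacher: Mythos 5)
Your argument is correct and essentially the same as the paper's: pick finitely many points with surjections to $\F_p$, use disjoint clopen neighbourhoods and Proposition \ref{WeakProjections} to build sheaf morphisms with disjoint supports, assemble them into a surjection onto $\F_p^{n}$, and conclude that $M$ cannot be generated by fewer than $n$ elements. The only cosmetic differences are that you phrase it as a contradiction while the paper shows directly that $M$ surjects onto $\F_p^n$ for every $n$, and the remark "since the target is finite" is unnecessary (the image contains all coordinate subspaces, which already forces surjectivity); neither affects correctness.
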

\begin{proof}
Let $n$ be a natural number. It is sufficient to prove that for any $n$ there is a surjection $M\to \F_p^{n}$, since such a sum cannot be generated by fewer than $n$ elements and hence neither can $M$. 

Take points $x_1,\ldots, x_n$ in $X$ with ${\cal M}_{x_i}$ admitting a surjection $\phi_i\colon  {\cal M}_{x_i}\to \F_p$ and take disjoint clopen neighbourhoods $W_i$ of each $x_i$.  By Proposition \ref{WeakProjections} there is then a sheaf morphism $\beta_i\colon {\cal M}\to \F_p$ extending $\phi_i$ and vanishing outside $W_i$. This induces an epimorphism $\tilde  \beta_i\colon M\to \F_p$ which is a surjection when restricted to $M_{x_i}$ and vanishes on all $M_{x_j}$ for $j\neq i$. The product map to $\F_p^{n}$ is therefore also a surjection and we are done.
\end{proof}
\begin{rmk}
The requirement in the proposition is stronger than `infinitely many ${\cal M}_X$ are non-zero', and is necessary. For instance suppose there exist at least countably many distinct finite simple $R$-modules $S_n (n\in\N)$. This is the case unless $G$ is virtually pro-$p$ by Corollary 5.3.5 of \cite{benson98}. Then one may form the inverse limit 
\[M=\varprojlim_n \big( S_1\oplus\cdots\oplus S_n \big)\]
where the maps in the inverse limit are the obvious projections. One may see by Proposition \ref{DirSumsAndLimits} that $M$ is the direct sum of a sheaf whose base space is the one-point compactification of $\mathbb{N}$ and with infinitely many non-zero fibres. However each $(S_1\oplus\cdots\oplus S_n)$ in the limit may be generated by a single element, hence so can $M$.

For arbitrary $G$ and a collection $\famS=\{S_x\}_{x\in X}$ of subgroups continuously indexed by a space $X$ we note that every fibre of 
\[\Zpiof{G/\famS}=\bigboxplus_{x\in X} \Zpiof{G/S_x} \] 
surjects to the simple module $\F_p$ for any $p\in \pi$ and so $\Zpiof{G/\famS}$ is finitely generated if and only if $X$ is finite.
\end{rmk}

\renewcommand\thesection{Appendix \Alph{section}}
\section{Supplement on Graphs of Profinite Groups}\label{AppGofGs}
\renewcommand\thesection{\Alph{section}}

Here we will prove some propositions which are necessary for the results in Section \ref{SecGraphsOfPDn}. We did not include it in that section as it would have rather disrupted the flow of the paper.

\begin{prop}\label{NoFpSplittings}
Let $p$ be a prime, $G$ be a profinite group and $S$ a closed subgroup of $G$ such that $p^\infty$ divides  $[G:S]$. Then $\Fpof{G/S}$ has no non-zero $G$-invariant elements. In particular the natural augmentation map $\Fpof{G/S}\to \F_p$ does not split.
\end{prop}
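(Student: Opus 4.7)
The plan is to reduce the problem to a computation in the finite quotients $\F_p[G/SU]$ and then use the hypothesis on the index $[G:S]$ to rule out non-trivial compatible systems of invariants. The second assertion is immediate from the first: any $G$-linear splitting $\F_p \to \Fpof{G/S}$ would produce a non-zero $G$-invariant lift of $1$.

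First I would write
\[
\Fpof{G/S} \;=\; \varprojlim_U \F_p[G/SU]
\]
where $U$ runs over open normal subgroups of $G$. Each term $\F_p[G/SU]$ is a finite $G$-module on which $G$ acts transitively on the basis $G/SU$, so its $G$-invariants form the one-dimensional $\F_p$-subspace spanned by the orbit sum $\sigma_U := \sum_{x \in G/SU} x$. Taking $G$-invariants commutes with inverse limits, so a $G$-invariant element of $\Fpof{G/S}$ is a compatible family $(c_U\sigma_U)_U$ with $c_U \in \F_p$. For $U_1 \subseteq U_0$ (both open normal) the transition map $\F_p[G/SU_1]\to\F_p[G/SU_0]$ sends $\sigma_{U_1}$ to $[SU_0:SU_1]\cdot\sigma_{U_0}$ since each coset of $SU_0$ is the union of exactly $[SU_0:SU_1]$ cosets of $SU_1$. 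Compatibility therefore reads
\[
c_{U_1}\cdot[SU_0:SU_1] \;=\; c_{U_0} \quad\text{in }\F_p.
\]

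The key step is to show that for every open normal subgroup $U_0$ of $G$ there exists an open normal subgroup $U_1 \subseteq U_0$ with $p \mid [SU_0:SU_1]$; this will force $c_{U_0}=0$ via the identity above, and as $U_0$ was arbitrary this gives the claim. By Serre's definition of the supernatural index, $p^\infty\mid[G:S]$ means $\sup_V v_p([G:SV]) = \infty$, where $V$ runs over open normal subgroups of $G$. Pick $V$ with $v_p([G:SV]) > v_p([G:SU_0])$ and set $U_1 := U_0 \cap V$. Then $SU_1 \subseteq SV$, so the surjection $G/SU_1 \twoheadrightarrow G/SV$ forces $[G:SV] \mid [G:SU_1]$ as ordinary integers and hence $v_p([G:SU_1]) \geq v_p([G:SV]) > v_p([G:SU_0])$. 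Combined with the factorisation $[G:SU_1] = [G:SU_0]\cdot[SU_0:SU_1]$, this yields $p\mid[SU_0:SU_1]$, as required.

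I do not expect any serious obstacle: the only mildly delicate point is the index manipulation in the last paragraph, specifically checking that one can simultaneously arrange $U_1 \subseteq U_0$ and $v_p([G:SU_1]) > v_p([G:SU_0])$ despite the fact that $SU_1$ need not be contained in $SU_0$ (here one uses monotonicity of $v_p([G:S\,\cdot\,])$ under inclusion of the groups $SU$). Once that is in hand, the inductive cascade $c_{U_0}=0$ for all $U_0$ is immediate.
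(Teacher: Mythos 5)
Your proof is correct and takes essentially the same approach as the paper: pass to the finite quotients $\F_p[G/SU]$, observe that the $G$-invariants there are one-dimensional (spanned by the orbit sum), and show that the compatibility under transition maps forces the scalars to vanish because $p$ divides a relevant transition index. The one substantive point where you go beyond the paper is the last paragraph: the paper simply asserts a cofinal chain $U_0\supseteq U_1\supseteq\cdots$ with $\bigcap U_i = S$ and $p\mid[U_i:U_{i+1}]$, whereas you unwind the definition of the supernatural index to exhibit, for every $U_0$, a suitable $U_1$ with $p\mid[SU_0:SU_1]$; that extra care is legitimate (and the index manipulation via $U_1 = U_0\cap V$ and $SU_1\subseteq SV$ is correct), but it is filling in a detail the paper elides rather than taking a different route.
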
 
\begin{proof}
Write $S$ as the intersection of a sequence of open subgroups $U_i$ of $G$ such that $p\mid [U_i: U_{i+1}]$ for all $i$. Then 
\[\Fpof{G/S}=\varprojlim \Fpof{G/U_i}=\varprojlim \F_p [G/U_i] \]
Now assume that $\Fpof{G/S}$ has a non zero $G$-invariant element $m$, and let $m_i$ be its image in  $\F_p [G/U_i]$ for each $i$. Then the $m_i$ map to each other under the maps in the inverse system and for some $i$, the element $m_i$ is non-zero. Now the $G$-invariant elements of $\F_p [G/U_i]$ are precisely the elements
\[\lambda N_i := \sum_{gU_i\in G/U_i} \lambda gU_i \] 
for $\lambda\in\F_p$. However the image of $\lambda N_{i+1}$ in $ \F_p [G/U_i]$ is exactly
$\lambda [U_i:U_{i+1}] N_i$ for all $i$ and all $\lambda$, which vanishes since $p\mid [U_i: U_{i+1}]$. We have reached a contradiction.
\end{proof}
\begin{rmk}
The assumption that $p^\infty \mid [G:S]$ is of course necessary. One may readily see that the element
\[\left( q^{-k} N_k \right)_{k\geq 1} \in \varprojlim \F_p(\Z/q^k) = \F_p(\Z[q]) \]
is $\Z[q]$-invariant where $p$ and $q$ are any two distinct primes.
\end{rmk}
Before the next proposition we must recall a fact about the structure of fundamental groups of graphs of pro-$\pi$ groups. Specifically take a finite graph of profinite groups $(X,G_\bullet)$. Let $G^{\rm abs}$ be the fundamental group of $(X,G_\bullet)$ {\em as a graph of abstract groups}. Then the pro-$\pi$ fundamental group of $(X,G_\bullet)$ is exactly the completion of $G^{\rm abs}$ with respect to the topology
\[{\cal U}=\left\{ N\nsgp[f] G^{\rm abs}\mid G_i\cap N \nsgp[o] G_i \text{ and } G^{\rm abs}/N \text{ is a $\pi$-group}\right\} \]
where $\nsgp[f]$ and $\nsgp[o]$ mean normal subgroups of finite index and open normal subgroups respectively. See Proposition 6.5.1 of \cite{Ribes17} and the discussion leading to it.

Let $\cal C$ be an variety of finite groups closed under taking isomorphisms, subgroups, quotients and extensions. Let $\pi(\cal C)$ be the set of primes which divide the order of some finite groups in $\cal C$.
\begin{prop}\label{VxGrpsAreInfIndex}
Let $G$ be either a proper pro-$\cal C$ HNN extension $G=G_1\amalg_L$ or a proper pro-$\cal Ci$ amalgamated free product $G=G_1\amalg_L G_2$ where $L\neq G_i$ for each $i$. Then $p^\infty\mid [G:G_1]$ for all $p\in \pi(\cal C)$. 
\end{prop}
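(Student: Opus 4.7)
The two cases can be handled essentially independently. For the HNN case, the HNN relation $t^{-1}\bdy_0(l)t=\bdy_1(l)$ for $l\in L$ becomes the trivial identity $0=0$ in any abelian target once $G_1$ is killed (as $\bdy_i(L)\subseteq G_1$). Hence the assignment $G_1\mapsto 0$, $t\mapsto 1$ extends to a continuous surjection $G\twoheadrightarrow \widehat{\Z}_{\pi({\cal C})}$ whose target is pro-${\cal C}$ and has supernatural order $\prod_{q\in\pi({\cal C})}q^\infty$. Since $G_1$ lies in the kernel, $p^\infty \mid [G:G_1]$ for every $p\in\pi({\cal C})$.

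For the amalgamated case, fix $p\in\pi({\cal C})$ and $n\geq 1$; the plan is to produce a continuous surjection $G\twoheadrightarrow Q$ onto a finite group $Q\in{\cal C}$ in which the image of $G_1$ has index divisible by $p^n$, for then $p^n\mid [G:G_1]$ for every $n$. The first step is a reduction to the finite vertex-group case: using that $L\neq G_i$ and that $L$ is closed in each $G_i$ (hence inherits the induced profinite topology), choose open normal subgroups $N_i \nsgp[o] G_i$ with $G_i/N_i\in{\cal C}$, with $LN_i\neq G_i$ so the image of $L$ remains proper, and with the matching condition $N_1\cap L=N_2\cap L=:K$. This yields a continuous surjection $\rho\colon G\twoheadrightarrow H:=\bar G_1\amalg^{\cal C}_{\bar L}\bar G_2$, where $\bar G_i:=G_i/N_i$ and $\bar L:=L/K$ are finite groups in ${\cal C}$ with $\bar L \subsetneq\bar G_i$ on both sides.

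Now $H$ is the pro-${\cal C}$ completion of the abstract amalgamated product $\Gamma=\bar G_1\ast_{\bar L}\bar G_2$, which by classical Bass--Serre theory acts on its Bass--Serre tree with finite vertex stabilisers and is therefore virtually free. I would choose a torsion-free normal subgroup $F\nsgp[f]\Gamma$ of finite index with $\Gamma/F\in{\cal C}$, taking $F$ to be the preimage in $\Gamma$ of a sufficiently small open normal subgroup of $H$ meeting each $\bar G_i$ trivially (possible because $\bar G_i\hookrightarrow H$). Being torsion-free, $F$ acts freely on the tree, hence is free of rank $r\geq 1$; positivity follows from $\chi(\Gamma)=|\bar G_1|^{-1}+|\bar G_2|^{-1}-|\bar L|^{-1}\leq 0$ (with equality only if $[\bar G_i:\bar L]=2$ for both $i$, still giving $r=1$). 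Let $F_n$ be the characteristic subgroup with $F/F_n\cong(\Z/p^n)^r$; then $F_n\nsgp\Gamma$, and $Q:=\Gamma/F_n$ sits in a short exact sequence
\[
1\to (\Z/p^n)^r \to Q \to \Gamma/F\to 1
\]
with both outer terms in ${\cal C}$ (the kernel by closure of ${\cal C}$ under extensions from $\Z/p\in{\cal C}$), so $Q\in{\cal C}$. Since $\bar G_1\cap F=1$, the image of $\bar G_1$ in $Q$ has order $|\bar G_1|$, giving $[Q:\bar G_1]=|\Gamma/F|\,p^{nr}/|\bar G_1|$, divisible by $p^{nr}\geq p^n$. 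Composing $\rho$ with the quotient $H\twoheadrightarrow Q$ (which exists because $Q\in{\cal C}$ is a quotient of $\Gamma$, hence of its pro-${\cal C}$ completion $H$) yields the desired surjection.

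The main obstacle is the matched-edge-group reduction: producing $N_i\nsgp[o]G_i$ satisfying simultaneously $G_i/N_i\in{\cal C}$, $LN_i\neq G_i$, and $N_1\cap L=N_2\cap L$. This requires careful navigation of the interaction between the pro-${\cal C}$ topologies on $G_1$, $G_2$, and $L$; it is a standard (though not entirely trivial) consequence of the properness of the pro-${\cal C}$ amalgamated product and the fact that closed subgroups of profinite groups inherit the induced topology (cf.\ Chapter 6 of \cite{Ribes17}).
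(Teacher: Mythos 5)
Your proof is correct, and in the amalgamated case it takes a genuinely different route from the paper's (the HNN case is handled identically in both, by mapping onto $\Z[\pi({\cal C})]$ with $G_1$ in the kernel). The paper stays inside the profinite world: it passes to the kernel $G'$ of a single map $\phi\colon G\to P$ onto a finite ${\cal C}$-group in which $\phi(L)$ is proper in each $\phi(G_i)$, observes that $G'$ is the fundamental group of a graph of groups whose underlying graph has non-positive Euler characteristic $[P:\phi(G_1)]+[P:\phi(G_2)]-[P:\phi(L)]$ and hence contains a loop, and kills the vertex groups to obtain a free pro-${\cal C}$ quotient of positive rank, forcing $p^\infty\mid[G':G'_v]$; the conclusion follows since $G_1$ is a finite-index overgroup of a vertex group of $G'$. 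You instead descend to finite quotients: reduce to a finite amalgam $\bar G_1\ast_{\bar L}\bar G_2$, exploit its virtually free structure, and manufacture finite ${\cal C}$-quotients in which the image of $G_1$ has index divisible by $p^n$ for every $n$. Both arguments turn on the same Euler-characteristic count. What each buys: the paper's version leans on the (nontrivial) structure theory of open subgroups of proper profinite amalgams acting on the standard tree, but is then very short; yours trades that for classical Bass--Serre theory of abstract amalgams of finite groups, at the price of needing properness of the pro-${\cal C}$ amalgam of two finite ${\cal C}$-groups over a common (automatically open) subgroup --- true because ${\cal C}$ is extension-closed, cf.\ Section 9.2 of \cite{RZ00}, and this should be cited explicitly since it is precisely what makes your torsion-free $F$ exist. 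Finally, the ``matched-edge-group reduction'' you flag as the main obstacle is easier than you fear and needs no navigation between three separate topologies: since the given amalgam is proper, $G_1$, $G_2$ and $L$ are closed subgroups of $G$, so pick $g_i\in G_i\smallsetminus L$ and an open normal subgroup $U\nsgp[o]G$ with $g_i\notin LU$ for $i=1,2$, and set $N_i=U\cap G_i$; then $N_1\cap L=U\cap L=N_2\cap L$ automatically, $G_i/N_i\iso G_iU/U\leq G/U\in{\cal C}$, and $g_i\notin LN_i$ so $LN_i\neq G_i$.
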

\begin{proof}
In the HNN extension case there is a map $G\to \Z[\pi]$ whose kernel is the normal subgroup generated by $G_1$ and the result follows immediately. For the amalgamated free product case we must work a little harder. By the assumption of the theorem there is a map from $\phi\colon G\to P$ to a finite $\pi$-group such that $\phi(L)$ is a proper subgroup of $\phi(G_i)$ for each $G$. Then $G'=\ker \phi$ is the fundamental group of a graph of groups whose base graph is not a tree. More precisely the Euler characteristic of the graph is 
\[\chi= [P:\phi(G_1)]+[P:\phi(G_2)]-[P:\phi(L)] \]  
which is non-positive. One may see this either by considering the action of $G'$ on the standard tree of the graph of pro-$\cal C$ groups decomposition of $G$, or by translating to the language of abstract graphs of groups and back again using the discussion prior to the theorem. Since this graph now has a loop, the quotient of $G'$ by all its vertex groups is a non-trivial free pro-$\cal C$ group of rank $1-\chi$. This follows for example by the argument in Theorem 6.2.4 of \cite{Ribes17}, or using the above translation to the theory of graphs of abstract groups. Hence all vertex groups $G'_v$ of $G'$ have $p^\infty\mid [G':G'_v]$. Since for each $i$, $G$ and $G_i$ are finite index overgroups of $G'$ and $G'_v$ for some $v$ they also have this property as required.
\end{proof}

\bibliographystyle{alpha}
\bibliography{RelCoh.bib}
\end{document}